\numberwithin{equation}{section}
\definecolor{popblue}{RGB}{55,115,255}
\definecolor{lightbl}{RGB}{155,205,255}
\definecolor{depthbl}{RGB}{145,215,255}
\definecolor{fancyre}{RGB}{225,55,115}
\definecolor{darkblu}{RGB}{15,75,185}
\definecolor{mellowy}{RGB}{225,225,35}
\renewcommand{\tilde}[1]{\widetilde{#1}}
\renewcommand{\Bar}{\overline}
\renewcommand{\S}{\mathbb{S}}
\newcommand{\R}{\mathbb{R}}
\newcommand{\N}{\mathbb{N}}
\newcommand{\Z}{\mathbb{Z}}
\newcommand{\C}{\mathbb{C}}
\newcommand{\F}{\mathbb{F}}
\newcommand{\T}{\mathbb{T}}
\newcommand{\imp}{\;\Rightarrow\;}
\newcommand{\m}{\mathrm}
\newcommand{\lv}{\lVert}
\newcommand{\rv}{\rVert}
\newcommand{\al}{\alpha}
\newcommand{\be}{\beta}
\newcommand{\es}{\varnothing}
\newcommand{\ep}{\varepsilon}
\newcommand{\f}{\frac}
\newcommand{\sig}{\sigma}
\newcommand{\gam}{\gamma}
\newcommand{\del}{\delta}
\newcommand{\bn}{\binom}
\newcommand{\pd}{\partial}
\newcommand{\grad}{\nabla}
\newcommand{\bpm}{\begin{pmatrix}}
\newcommand{\epm}{\end{pmatrix}}
\newcommand{\loc}{\m{loc}}
\newcommand{\emb}{\hookrightarrow}
\renewcommand{\bigstar}{%
{
  \tikz[baseline=(center), scale=0.2] {
    \coordinate (center) at (0,0);
    \filldraw (0,0.4) -- (0.1,0) -- (0,-0.4) -- (-0.1,0) -- cycle;
    \filldraw (0.4,0) -- (0,0.1) -- (-0.4,0) -- (0,-0.1) -- cycle;
  }
}
}
\renewcommand{\le}{\leqslant}
\renewcommand{\ge}{\geqslant}
\newcommand{\norm}[1]{\left\lv#1\right\rv}
\newcommand{\bnorm}[1]{\Big\lv#1\Big\rv}
\newcommand{\tnorm}[1]{\lv#1\rv}
\newcommand{\bp}[1]{\Big(#1\Big)}
\renewcommand{\sp}[1]{\big(#1\big)}
\newcommand{\tp}[1]{(#1)}
\newcommand{\babs}[1]{\Big|#1\Big|}
\newcommand{\tabs}[1]{|#1|}
\newcommand{\bsb}[1]{\Big[{#1}\Big]}
\newcommand{\ssb}[1]{\big[{#1}\big]}
\newcommand{\tsb}[1]{[{#1}]}
\newcommand{\tcb}[1]{\{{#1}\}}
\providecommand{\tbr}[1]{\langle #1 \rangle}
\newcommand{\z}[1]{\mathring{#1}}
\renewcommand{\bf}[1]{\mathbf{#1}}
\newcommand{\ii}{\m{i}}
\newtheorem{prop}{\color{popblue}{Proposition}}[section]
\newtheorem{thm}[prop]{\color{popblue}{Theorem}}
\newtheorem{defn}[prop]{\color{popblue}{Definition}}
\newtheorem{lem}[prop]{\color{popblue}{Lemma}}
\newtheorem{coro}[prop]{\color{popblue}{Corollary}}
\newtheorem{rmk}[prop]{\color{popblue}{Remark}}
\newenvironment{customthm}[1]
{\innercustomthm}
{\endinnercustomthm}
\author{Noah Stevenson}
\address{
	Department of Mathematics\\
	Princeton University\\
	Princeton, NJ 08544, USA
}
\email[N. Stevenson]{stevenson@princeton.edu}
\thanks{N. Stevenson was supported by an NSF Graduate Research Fellowship}
\author{Ian Tice}
\address{
	Department of Mathematical Sciences\\
	Carnegie Mellon University\\
	Pittsburgh, PA 15213, USA
}
\email[I. Tice]{iantice@andrew.cmu.edu}
\thanks{I. Tice was supported by an NSF Grant (DMS \#2204912). }
\DeclareFontFamily{U}{cbgreek}{}
\DeclareFontShape{U}{cbgreek}{m}{n}{
  <-6>    grmn0500
  <6-7>   grmn0600
  <7-8>   grmn0700
  <8-9>   grmn0800
  <9-10>  grmn0900
  <10-12> grmn1000
  <12-17> grmn1200
  <17->   grmn1728
}{}
\DeclareFontShape{U}{cbgreek}{bx}{n}{
  <-6>    grxn0500
  <6-7>   grxn0600
  <7-8>   grxn0700
  <8-9>   grxn0800
  <9-10>  grxn0900
  <10-12> grxn1000
  <12-17> grxn1200
  <17->   grxn1728
}{}
\DeclareRobustCommand{\Qoppa}{%
  \text{\usefont{U}{cbgreek}{\normalorbold}{n}\symbol{21}}%
}
\DeclareRobustCommand{\sampi}{%
  \text{\usefont{U}{cbgreek}{\normalorbold}{n}\symbol{27}}%
}
\DeclareRobustCommand{\Sampi}{%
  \text{\usefont{U}{cbgreek}{\normalorbold}{n}\symbol{23}}%
}
\DeclareRobustCommand{\kkappa}{%
  \text{\usefont{U}{cbgreek}{\normalorbold}{n}\symbol{107}}%
}
\newcommand{\normalorbold}{%
  \ifnum\pdf@strcmp{\math@version}{bold}=\z@ bx\else m\fi
}
\title[Stationary wave solutions to shallow water equations]{Stationary wave solutions to two dimensional viscous shallow water equations: theory of small and large solutions}
\subjclass[2020]{Primary 35Q35, 35C07, 35B30; Secondary 47J07, 76A20, 35M30}
\keywords{viscous shallow water equations, large stationary waves, analytic global implicit function theorem, weighted Sobolev spaces}
\begin{document}

\begin{abstract}

We study a system of forced viscous shallow water equations with nontrivial bathymetry in two spatial dimensions.  We develop a well-posedness theory for small but arbitrary forcing data, as well as for a fixed data profile but large amplitude.  In the latter case, solutions may actually fail to exist for large amplitude, but in this case we prove that one of three physically meaningful breakdown scenarios occurs.  Through the use of implicit function theorem techniques and a priori estimates, we construct both spatially periodic and solitary (non-periodic but spatially localized) solutions.  The solitary case is substantially more complicated, requiring a delicate analysis in weighted Sobolev spaces.  To the best of our knowledge, these results constitute the first general construction of stationary wave solutions, large or otherwise, to the viscous shallow water equations and the first general analysis of large solitary wave solutions to any viscous free boundary fluid model.
\end{abstract}
\maketitle
\section{Introduction}

\subsection{The shallow water equations with applied force and bathymetry}\label{intro_-0}

In this paper we study a system of shallow water equations in two spatial dimensions that incorporates the effects of gravity, capillarity,  viscosity, laminar drag, applied forces, and variable bathymetry (bottom topography):
\begin{equation}\label{time-dependent variable-batheymetry shallow water equations}
    \begin{cases}
        \pd_th+\grad\cdot(\tp{h-b}v)=0,\\
        \pd_t\tp{(h-b)v}+\grad\cdot\tp{\tp{h-b}v\otimes v}+\al v-\mu\grad\cdot\tp{\tp{h-b}\mathbb{S}v}+\tp{h-b}\tp{g-\sig\Delta}\grad h={\upkappa} F,\\
        F=\int_b^hf(\cdot,y)\;\m{d}y+\tp{h-b}\grad\tp{\varphi(\cdot,h)}+\tp{\varphi I-\Xi}\tp{\cdot,h}\grad h+\xi(\cdot,h).
    \end{cases}
\end{equation}
Here the unknowns are the tangential fluid velocity field $v:\R^+\times\R^2\to\R^2$ and the fluid height $h:\R^+\times\R^2\to\R$.  The bottom topography is modeled via the bathymetry function $b:\R^2\to\R$.  We will always assume that the fluid height lies above the bathymetry, $h>b$, which means there are no `dry regions'.  The coefficient of laminar friction is $\al\in\R^+$, the viscosity is $\mu\in\R^+$, and the shallow water viscous stress tensor is
\begin{equation}\label{shallow water viscous stress tensor}
    \mathbb{S}v=\grad v+\grad v^{\m{t}}+2\tp{\grad\cdot v}I.
\end{equation}
The gravitational acceleration in the vertical direction is $g\ge0$ and the surface tension coefficient is $\sig\in\R^+$. The vector field $F:\R^+\times\R^2\to\R^2$, with units of length, is the profile of the forcing applied to the system while the coefficient $\upkappa\in\R$, with units of acceleration, indicates the strength of the applied forcing.

The system~\eqref{time-dependent variable-batheymetry shallow water equations} is a viscous variant of the classical Saint-Venant equations ($\sig=\mu={\upkappa}=0$). Its solutions approximate those of the free boundary incompressible Navier-Stokes system in the regimes in which either the fluid depth is very small or the solution has large characteristic wavelength.  As a result, the model is physically, mathematically, and practically relevant in describing the flow of a thin fluid film over the graphical surface determined by $b$. For details of the derivation of \eqref{time-dependent variable-batheymetry shallow water equations} from the Navier-Stokes system see Appendix \ref{appendix on derivation of SWE w/ bathymetry}.  

There are formal similarities between system~\eqref{time-dependent variable-batheymetry shallow water equations} and the barotropic compressible Navier-Stokes equations with density-dependent viscosity, in which $h-b$ and $v$ play the roles of the density and velocity, respectively.  This correspondence dictates how we shall refer to the individual equations in the shallow water system.  The first equation in~\eqref{time-dependent variable-batheymetry shallow water equations} is dubbed the continuity equation, and it describes how the surface height $h$ is transported and stretched by flow.  The second vector equation is called the momentum equation, and it asserts a balance of the advection term $\pd_t\tp{\tp{h-b}v}+\grad\cdot\tp{\tp{h-b}v\otimes v}$ by the sum of laminar drag $-\al v$, dissipation $\mu\grad\cdot\tp{\tp{h-b}\mathbb{S}v}$, gravity-capillary effects $-\tp{h-b}\tp{g-\sig\Delta}\grad h$, and applied force ${\upkappa}F$.  Notice that the role played by the bathymetry $b$ is only to modify the fluid depth $h-b$, which shows up as a sort of `variable coefficient'.  We emphasize that system~\eqref{time-dependent variable-batheymetry shallow water equations} is quasilinear and of mixed-type, as it involves both dissipative and hyperbolic-dispersive structures.

The third equation of~\eqref{time-dependent variable-batheymetry shallow water equations} shows that the term $F$ is a function of both the bathymetry and the height.  The precise form recorded there is derived by taking the shallow water limit of the free boundary Navier-Stokes equations acted on by a bulk force and surface stress. The function $f:\R^+\times\R^3\to\R^2$ is derived from the tangential part of the bulk force, while the functions $\varphi:\R^+\times\R^3\to\R$, $\Xi:\R^+\times\R^3\to\R^{2\times 2}_{\m{sym}}$, and $\xi\in\R^+\times\R^3\to\R^2$ are pieces of a surface stress tensor.  Again, we refer the reader to Appendix~\ref{appendix on derivation of SWE w/ bathymetry} for the precise details of the derivation.

\begin{figure}[!h]
    \centering
    \scalebox{1.42}{\includegraphics{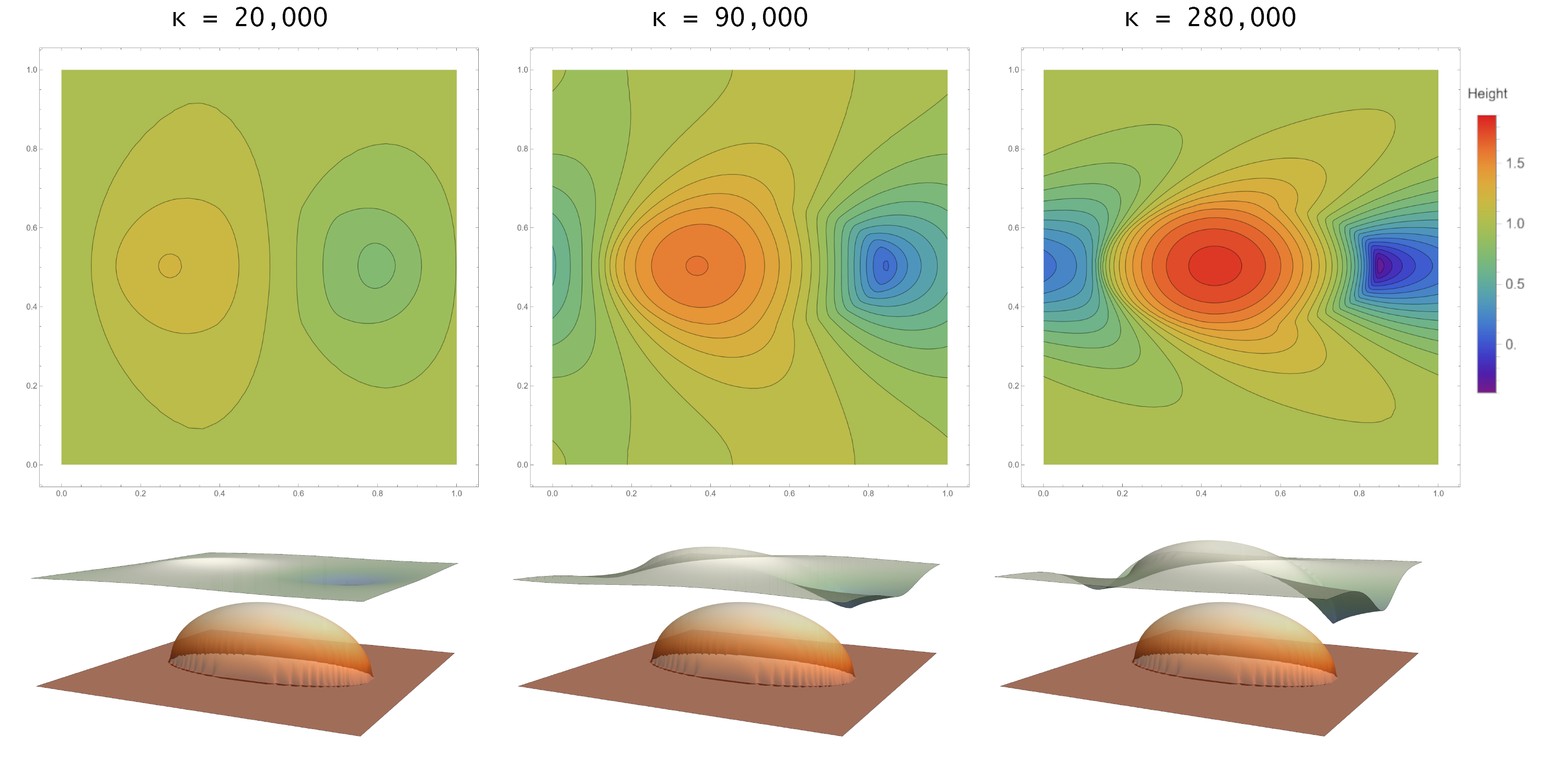}}
    \caption{
    Depicted here are the height data from numerical simulations of periodic stationary solutions to~\eqref{time-dependent variable-batheymetry shallow water equations}, in contour plot form (top) and graph form (bottom), for three values of applied forcing strength ${\upkappa}\in\tcb{20000,90000,280000}$. The bathymetry $b$ (shown in orange on the bottom) is a fixed half-ellipse with base at height $-2$ and maximum at height $0$. The forcing profile $F$ is a component of gravity in the $e_1$ direction, i.e. $\varphi=0$, $\Xi=0$, $\xi=0$, and $f=e_1$. The surface satisfies $h=1+h_0$ for an average zero perturbation $h_0$. The other physical parameters are $\al=0.65$, $g=0.5$, $\mu=0.3$, and $\sig=1.1$. This is meant to depict stationary shallow water flow over an inclined bathymetry.
    }
    \label{the progression figure}
\end{figure}

Our principal goal in this paper is to develop a well-posedness theory for stationary (time independent) solutions to~\eqref{time-dependent variable-batheymetry shallow water equations}.  We will construct both spatially periodic and non-periodic but localized and decaying solutions; we will refer to the former as the `periodic case' and the latter as the `solitary case'.  We will first construct small solutions for arbitrary data in a fixed open neighborhood of zero and then employ global implicit function theorem techniques to study how solutions behave as the amplitude of a fixed forcing profile is increased.  In the latter case we will derive quantitative `alternatives' that show that either the curves of solutions can be continued for all forcing data amplitudes or else degenerate in a specific, physically meaningful, manner.

Although we consider general $F$ of the form specified in~\eqref{time-dependent variable-batheymetry shallow water equations}, several specific choices warrant mentioning. One can model gravity-driven flow in the direction $\nu\in\S^1$ down an incline with the choice $F = \tp{h-b}\nu$, i.e. $f=\nu$, $\varphi=0$, $\Xi=0$, and $\xi=0$.  This scenario is depicted in Figures~\ref{the progression figure} and~\ref{the div-curl figure}.  On the other hand, the choice $F=(h-b)\grad p$, which is achieved by $f=0$, $\varphi=p$, $\Xi=p I$, and $\xi=0$ for some scalar surface pressure $p:\R^2\to\R$, models a local change in pressure above the free surface.  

\subsection{Equilibria and stationary reformulation}\label{intro_-1}

We now aim to  identify the equilibrium solutions to~\eqref{time-dependent variable-batheymetry shallow water equations} and then  reformulate~\eqref{time-dependent variable-batheymetry shallow water equations} perturbatively about these equilibria in a nondimensional fashion.  When $F=0$ and $b$ is a bounded function, the admissible steady solutions are constant and given by
\begin{equation}
    v_{\m{eq}}=0,\quad h_{\m{eq}}=\m{H}+\sup b\quad\text{for any choice of }\m{H}\in\R^+.
\end{equation}
Fixing an arbitrary choice of $\m{H}\in\R^+$, we may  nondimensionalize by selecting units with the effective normalization $(\m{H},\mu,\sig)\mapsto\tp{1,1,1}$.  This is a convenient choice, as positive viscosity and capillarity are crucial for our analytic techniques.  Define the length and time
\begin{equation}\label{nondimensional part 1}
    \m{L}=\mu^2/\sig,\quad\m{T}=\mu^3/\sig^2
\end{equation}
and the new (negative) bathymetry $\be$, velocity $\upupsilon$, and free surface height $\upeta$ as follows:
\begin{equation}\label{nondimensional part 2}
    b(x)=\sup b-\m{H}\be\tp{x/\m{L}},\quad v(t,x)=\tp{\m{L}/\m{T}}\upupsilon\tp{t/\m{T},x/\m{L}},\quad h(t,x)=\m{H}\tp{1+\upeta\tp{t/\m{T},x/\m{L}}}+\sup b.
\end{equation}
In turn, define the new data profile $\Upphi$, inverse slip coefficient $A$, capillary number $G$, and inverse Froude number $\kappa$ via
\begin{equation}\label{nondimensional part 3}
F(t,x)=\m{H}\Upphi\tp{t/\m{T},x/\m{L}},\quad A=\f{\al\m{T}}{\m{H}}=\f{\al\mu^3}{\m{H}\sig^2},\quad G=\f{g\m{H}\m{T}^2}{\m{L^2}}=\f{g\m{H}\mu^2}{\sig^2},\quad\kappa=\f{{\upkappa}\m{T}^2}{\m{L}}=\f{{\upkappa}\mu^4}{\sig^3}.
\end{equation}
Notice that $\be\ge 0$ with $\inf\be=0$; we work with the nonnegative quantity $\be$ here for convenience, but one should think of $-\be$ as the nondimensional description of the bathymetry. The positive fluid depth condition $h>b$ translates in these new variables to $1 + \upeta +\be > 0$. In terms of the quantities~\eqref{nondimensional part 2} and~\eqref{nondimensional part 3}, system~\eqref{time-dependent variable-batheymetry shallow water equations} becomes
\begin{equation}\label{time dependent nondimensionalized equations}
    \begin{cases}
        \pd_t\upeta+\grad\cdot(\tp{1+\be+\upeta}\upupsilon)=0,\\
        \pd_t\tp{\tp{1+\be+\upeta}\upupsilon}+\grad\cdot\tp{\tp{1+\be+\upeta}\upupsilon\otimes\upupsilon}+A\upupsilon-\grad\cdot\tp{\tp{1+\be+\upeta}\mathbb{S}\upupsilon}+(1+\be+\upeta)\tp{G-\Delta}\grad\upeta=\kappa\Upphi.
    \end{cases}
\end{equation}

As previously mentioned, our focus in this work is the not the full time dependent system~\eqref{time dependent nondimensionalized equations}, but rather the particular class of stationary (and thus global-in-time) solutions generated by generic time independent forcing.  We are therefore led to make the  stationary ansatz  $\pd_t\upeta=0$, $\pd_t\upupsilon=0$, and $\pd_t\Upphi=0$, which we formulate for new stationary unknowns as $\upeta(t,x)=\pmb{\eta}(x)$, $\upupsilon(t,x)=\pmb{u}(x)$, and $\Upphi(t,x)=\Phi(x)$.  In terms of the new unknowns, the system \eqref{time dependent nondimensionalized equations} becomes
\begin{equation}\label{stationary nondimensionalized equations}
    \begin{cases}
        \grad\cdot\tp{\tp{1+\be+\pmb{\eta}}\pmb{u}}=0,\\
        \grad\cdot\tp{\tp{1+\be+\pmb{\eta}}\pmb{u}\otimes\pmb{u}}+A\pmb{u}-\grad\cdot\tp{\tp{1+\be+\pmb{\eta}}\mathbb{S}\pmb{u}}+\tp{1+\be+\pmb{\eta}}\tp{G-\Delta}\grad\pmb{\eta}=\kappa\Phi.
    \end{cases}
\end{equation}
The final form of the momentum equation source term $\Phi$ in~\eqref{stationary nondimensionalized equations}, which we formally write as
\begin{equation}\label{formal form of the forcing phi}
    \Phi=\phi(\cdot,\pmb{\eta})+\grad\tp{\tp{1+\be+\pmb{\eta}}\psi\tp{\cdot,\pmb{\eta}}}+\tau(\cdot,\pmb{\eta})\grad\pmb{\eta},
\end{equation}
not only depends on $\pmb{\eta}$ but also involves the generic functions $\phi:\R^3\to\R^2$, $\psi:\R^3\to\R$, and $\tau:\R^3\to\R^{2\times 2}_{\m{sym}}$. This captures the general structure of the forcing present in the third equation of~\eqref{time-dependent variable-batheymetry shallow water equations} after the rescaling and the stationary ansatz. See Section~\ref{subsubsection for data discussion} for a more thorough discussion on these relationships.

We now discuss the necessity of nontrivial forcing in~\eqref{stationary nondimensionalized equations} for the generation of nontrivial solutions. As part of the proof of the later result Proposition~\ref{prop on maximally locally analytic curve of solutions}, we establish the following power-dissipation identity for solutions to system~\eqref{stationary nondimensionalized equations} with trivial forcing ($\kappa=0$):
\begin{equation}\label{the power dissipation identity}
    \int A|\pmb{u}|^2+\tp{1+\be+\pmb{\eta}}\tp{2^{-1}\tabs{\grad\pmb{u}+\grad\pmb{u}^{\m{t}}}^2+2\tabs{\grad\cdot\pmb{u}}^2}=0.
\end{equation}
Here the domain of integration is understood as $\R^2$ when the solutions are solitary and as a fundamental domain when the solutions are periodic.  When $A>0$ and $1+\be+\pmb{\eta}>0$, identity~\eqref{the power dissipation identity} implies that $\pmb{u}=0$. In turn from~\eqref{stationary nondimensionalized equations} we may then deduce that $\grad\pmb{\eta}=0$, and hence the solution is trivial. This tells us that nontrivial solitary or periodic stationary solutions to~\eqref{stationary nondimensionalized equations} cannot exist unless they are generated by a nontrivial stationary applied force or stress, meaning $\kappa \Phi \neq0$. 

\subsection{Previous work}

A number of closely related variants of our shallow water system have been derived and studied in the literature: see, for instance \cite{MR1324142,RevModPhys.69.931,MR1867089,MR1821555,MR1975092,MR2118849,MR2397996,MR4105349}.  The model we employ is most similar to those derived in the articles of Marche~\cite{MR2281291}, Bresch~\cite{MR2562163}, and Mascia~\cite{mascia_2010}; the article \cite{MR2562163} also contains a very thorough survey of the dynamic problem for \eqref{time-dependent variable-batheymetry shallow water equations} and related models with flat bathymetry. As our paper's focus is the stationary wave problem over smooth bottom topography, we will focus this literature review on work that accounts for bathymetry or studies time independent solutions.

Intimately related to the problem of stationary waves is that of traveling waves.  Within the context of the shallow water equations (both inviscid and viscous) there is a large existing literature analyzing special types of one dimensional traveling waves down an incline, known as roll waves. Dressler~\cite{MR0033717} gives the first construction of discontinuous roll wave solutions to the inviscid Saint-Venant equations. Merkin and Needham~\cite{1984RSPSA.394..259N,MR0853213} augment to Dressler's model an energy dissipation term and study the resulting roll waves; Hwang and Chang~\cite{MR0890282} provide further analyses. Chang, Cheng, and Prokopiou~\cite{PCC91} include capillary effects and produce solitary roll waves. A complete theory of linear and nonlinear stability of one dimensional roll wave solutions to both the inviscid and viscous shallow water equations is developed in the works~\cite{MR2784868,MR2813829,MR3600832,MR3219516,MR3449905,MR3933411}. The existence of two dimensional viscous gravity-capillary roll waves is addressed by Stevenson~\cite{stevenson2024shallow}. We also mention that Stevenson and Tice~\cite{stevenson2023shallow} establish the well-posedness for the small amplitude solitary traveling wave problem for system~\eqref{time-dependent variable-batheymetry shallow water equations} over trivial bathymetry; however large or stationary waves were left unaddressed. 

We next discuss the shallow water system with explicit bathymetric effects.  To the best of our knowledge there are no results for our specific shallow water model~\eqref{time-dependent variable-batheymetry shallow water equations}; nevertheless, there are some results on related models. Cherevko and Chupakhin~\cite{MR2502107,MR2531748} study shallow water equations on a rotating sphere and provide stationary solutions. De Valeriola and Van Schaftingen~\cite{MR3101789} study point vortex type stationary solutions and their desingularization within a system of shallow water equations with bathymetry called the lake equations; this is extended to multiple vortices by Cao and Liu~\cite{MR4285533}. Marangos and Porter~\cite{MR4340438} study linear shallow water theory over structured bathymetry. Ketcheson and Quezada de Luna~\cite{MR4252864} study the dispersive properties of linearized water waves over a periodic bathymetry and compare numerically with the Saint-Venant equations.

As the shallow water equations are derived from the free boundary incompressible Navier-Stokes equations, our work is also linked to the stationary and traveling wave problems for free boundary fluids. For the inviscid free boundary Euler equations, the water wave problem has been the target of intense study for well over a century. The interested reader is referred to the survey articles of Toland~\cite{Toland_1996}, Groves~\cite{Groves_2004}, Strauss~\cite{Strauss_2010}, and Haziot, Hur, Strauss, Toland, Wahl\'en, Walsh, and Wheeler~\cite{MR4406719}.

In contrast, the study of stationary and traveling wave solutions to viscous free boundary problems has only recently commenced.  An essential feature of these variants of the problem is that waves are generated by the application of stationary or traveling external forces and stresses. For experimental studies regarding these types of traveling waves we refer, for instance, to the work of Akylas, Cho, Diorio, and Duncan~\cite{CDAD_2011,DCDA_2011}, Duncan and Masnadi~\cite{MD_2017}, and Cho and Park~\cite{PC_2016,PC_2018}. 

Leoni and Tice~\cite{MR4630597} give the first mathematical construction of strictly traveling wave solutions to the free boundary incompressible Navier-Stokes equations. This result is generalized by Stevenson and Tice~\cite{MR4337506,stevenson2023wellposedness} to incompressible fluids with multiple layers and to compressible free boundary flows, and by Koganemaru and Tice~\cite{MR4609068,MR4785303} to periodic or inclined fluids and flows obeying the Navier slip boundary condition. Nguyen and Tice~\cite{MR4690615} construct traveling wave solutions to free boundary Darcy flow and, in the periodic case, analyze their stability. The existence of solitary stationary waves requires more difficult analysis than the strictly traveling cases; Stevenson and Tice~\cite{MR4787851} give the first construction of solitary stationary traveling wave solutions to the free boundary incompressible Navier-Stokes system. All of the aforementioned results on stationary and traveling waves in viscous fluids regard small amplitude solutions. The first general constructions of large amplitude traveling wave solutions are carried out by Nguyen~\cite{nguyen2023largetravelingcapillarygravitywaves} and Brownfield and Nguyen~\cite{MR4797733} for free boundary Darcy flow.

Within the context of these previous results on viscous traveling and stationary waves, one sees that the current paper furthers the theory by settling two outstanding questions about the shallow water system. First, in the case of both Reynolds and Bond numbers finite we resolve the problem of well-posedness for small amplitude stationary waves. Second, we provide the first study of generic large stationary wave solutions, further expanding the study of viscous traveling wave problems past the realm of small solutions. Another key feature of our results is that we handle both the cases of periodic and solitary waves. For the latter case our work stands as the first construction of large solitary waves solutions within the viscous stationary and traveling wave problem literature.

\subsection{Main results,  discussion, and outline}\label{section on main results and discussion}

We now come to the statement of our main theorems, the proofs of which can be found in Section~\ref{section on conclusions}.  Our results split into two categories based on whether we assume the solutions are periodic or not, with the latter case resulting in spatially localized solitary waves.  Within each category we prove two results, one on small solutions and one on large solutions. The former establishes the well-posedness of the system~\eqref{stationary nondimensionalized equations} with a fixed bathymetry $\be$ and $\kappa=1$, and forcing $\Phi$ generic and small (near $0$). The large data result then explores what happens for a fixed but arbitrary choice of forcing data profile but variable forcing strength $\kappa\in\R$.  Before stating any of these, a discussion of how we handle the forcing term $\Phi$ in~\eqref{stationary nondimensionalized equations} is required.

\subsubsection{Data technicalities}\label{subsubsection for data discussion}

 Equation~\eqref{formal form of the forcing phi} shows that the forcing profile $\Phi$ of the second equation in~\eqref{stationary nondimensionalized equations} depends nonlinearly on the free surface function $\pmb{\eta}$.  As we discuss in Appendix~\ref{appendix on derivation of SWE w/ bathymetry}, this dependence is not arbitrary and has a specific functional form $\pmb{\eta} \mapsto \Phi$ derived from the base free boundary Navier-Stokes system.  In specifying this dependence we run into some technical issues that merit discussion.

The first issue we encounter is that we would like to impose just enough regularity on $\Phi$ to produce strong solutions, say $\pmb{u}\in H^2_{\loc}(\R^2;\R^2)$ and $\pmb{\eta} \in H^3_{\loc}(\R^2)$ with $\Phi \in L^2_{\loc}(\R^2;\R^2)$.  Given that $\Phi$ contains terms of the form $\phi(\cdot,\pmb{\eta})$, we then run into the usual headache of dealing with (partial) composition with maps that are no better than measurable on a product.  Of course, this is an old problem, and a typical solution is to force the maps defining $\Phi$ to be Carath\'eodory functions, which imposes a.e. continuity in the second variable.    

The second issue is that we aim to produce solutions with the implicit function theorem for maps between Banach spaces: the standard local one for small solutions, and a specialized global one for large solutions. At a minimum, this requires continuous differentiability for the (Nemytskii) map  $\mathcal{X} \supseteq U \ni \pmb{\eta} \mapsto \Phi \in \mathcal{Y}$, where $\mathcal{X}$ and $\mathcal{Y}$ are (to be determined) Banach spaces of functions and $U \subseteq \mathcal{X}$ is an open set. Again, this is not a new problem, and one solution is to impose enough regularity on the maps defining the dependence of $\Phi$ on $\pmb{\eta}$ to invoke some version of the omega lemma (e.g. Lemma 2.4.18 in~\cite{MR960687}, or the main results of \cite{IKT_2013}) to get the needed continuous differentiability between $\mathcal{X}$ and $\mathcal{Y}$. For example, for the component term $\pmb{\eta} \mapsto \phi(\cdot,\pmb{\eta})$ we would expect to need $\phi$ to be at least continuously differentiable in its second argument, surpassing the Carath\'eodory mandate.

It turns out that the nonlinear structure of~\eqref{stationary nondimensionalized equations} allows us to formulate solutions as the zeros of a real analytic map between Banach spaces.  Exploiting this analytic structure is advantageous, as it allows us to employ an analytic global implicit function theorem of Chen, Walsh, and Wheeler~\cite{chen2023globalbifurcationmonotonefronts} (see also Theorem~\ref{thm on analytic global implicit function theorem} below), which gives us stronger information than other global implicit function theorems (e.g. Section II.6 in~\cite{MR2859263}). Furthermore, real analyticity also presents a convenient, if somewhat tricky to explain, mechanism for dealing with the above two issues.  We now aim to explain our technique by focusing on the simple case $\Phi = \phi(\cdot,\pmb{\eta})$.

Fix the map $\phi : \R^2 \times I \to \R^2$ for some interval $I \subseteq \R$. The discussion above suggests that we want $\phi(\cdot,\pmb{\eta}) \in \mathcal{Y}\emb L^2_\loc\tp{\R^2;\R^2}$  and the map $\mathcal{X} \supseteq U \ni \pmb{\eta} \mapsto \phi(\cdot,\pmb{\eta}) \in \mathcal{Y}$ to be real-analytic. This is relatively easy to do if we assume that $\phi$ has a special structure. Indeed, if we assume $\phi$ is continuous and a polynomial in its final variable, say $\phi(x,y) = \sum_{j=0}^\ell g_j(x) y^j$ for $g_j \in C^0\tp{\R^2;\R^2}\cap\mathcal{Y}$, then the pointwise evaluation $\Phi(x) = \phi(x,\pmb{\eta}(x))$ presents no trouble if we assume $\pmb{\eta} \in \mathcal{X}$ for $\mathcal{X}$ a unital Banach subalgebra of $(C^0\cap L^\infty)(\R^2;\R)$, and we can obtain analyticity of the map $\pmb{\eta} \mapsto \Phi$ by noting that $\mathcal{X} \ni \pmb{\eta} \mapsto \sum_{j=0}^\ell g_j\pmb{\eta}^j \in \mathcal{Y}$ is analytic when $\mathcal{X}\emb\mathcal{L}\tp{\mathcal{Y}}$ via the pointwise product.

This special polynomial form of $\phi$ is too restrictive, but the analysis presents a natural generalization. We first Curry the map $\phi$ in its second argument to think of $\phi(x,y) = \hat{\phi}(y)(x)$ for a given analytic map $\hat{\phi} : I \to \mathcal{Y}$.  There is now some technical hassle in making sense of the formal diagonal composition $\R^2\ni x \mapsto \hat{\phi}(\pmb{\eta}(x))(x) \in \R^2$ in the general cases in which we only have the embedding $\mathcal{Y}\emb L^2_{\loc}\tp{\R^2;\R^2}$ and pointwise evaluation is discontinuous.  We circumvent this issue and more in Appendix~\ref{appendix on analytic composition in unital Banach algebras} by developing a variant of the holomorphic functional calculus that allows us to \emph{uniquely} (subject to some mild technical assumptions) define a `Nemytskii map' $\Upomega_{\hat\phi}(\pmb{\eta}) \in \mathcal{Y}$ in such a way that the function $\mathcal{X}\supseteq U \ni \pmb{\eta} \mapsto \Upomega_{\hat{\phi}}(\pmb{\eta})\in\mathcal{Y}$ is analytic and also satisfies: $\Upomega_{\hat{\phi}}\tp{\pmb{\eta}} = \sum_{j=0}^\ell g_j\pmb{\eta}^j$ whenever $\hat{\phi}(y) = \sum_{j=0}^\ell g_jy^j$ is a polynomial with $g_j\in\mathcal{Y}$; and $\tsb{\Upomega_{\hat{\phi}}\tp{\pmb{\eta}}}\tp{x} = \hat{\phi}\tp{\pmb{\eta}\tp{x}}\tp{x}$ for $x\in\R^2$ when $\mathcal{Y}\emb C^0_\loc\tp{\R^2;\R^2}$.

The upshot of this discussion is that in the following theorem statements we will take $\Phi$ in~\eqref{stationary nondimensionalized equations}, whose formal expression we initially gave as~\eqref{formal form of the forcing phi}, to be rigorously defined as
\begin{equation}\label{percolate propane pulvarized paintbrush}
    \Phi = \Upomega_\phi\tp{\pmb{\eta}} + \grad\tp{\tp{1+\be+\pmb{\eta}}\Upomega_\psi\tp{\pmb{\eta}}} + \Upomega_\tau\tp{\pmb{\eta}}\grad\pmb{\eta}
\end{equation}
where $\Upomega$ is the Nemytskii map constructed by the holomorphic function calculus in Appendix~\ref{appendix on analytic composition in unital Banach algebras} as described above, and $\phi$, $\psi$, $\tau$ are generically selected real analytic maps defined on an interval subset of the real line with values in certain Banach spaces of locally square summable functions.
\subsubsection{Results for periodic functions}\label{subsubsection for results for periodic functions}

Our first two theorems consider solutions to system~\eqref{stationary nondimensionalized equations} in spaces of periodic functions. We fix a tuple of period lengths $L=\tp{L_1,L_2}\in\tp{\R^+}^2$ and let $\T^2_L=\tp{L_1\T}\times\tp{L_2\T}$ denote the $2$-torus of side lengths $L_1$ and $L_2$. Given any $s\in\N$ and a finite dimensional real vector space $V$ we let $H^s\tp{\T^2_L;V}$ denote the standard Sobolev space of $V$-valued $L$-periodic functions; also we let $\z{H}^s \tp{\T^2_L;V}\subset H^s\tp{\T^2_L; V}$ denote the closed subspace of functions with vanishing mean.

To properly state our first theorem on small periodic solutions, we now need to specify the appropriate Banach space for the data $\phi$, $\psi$, and $\tau$ of~\eqref{percolate propane pulvarized paintbrush} (whose role is discussed in Section~\ref{subsubsection for data discussion}). The precise definition is a bit technical (see Section~\ref{section on small solutions}, and in particular equation~\eqref{welcome to the}), so for now we shall content ourselves with denoting the data space
\begin{equation}\label{This is an annoying data space}
    \pmb{Y}(\T^2_L)=\pmb{A}\tp{L^2\tp{\T^2_L;\R^2}}\times\pmb{A}\tp{H^1\tp{\T^2_L}}\times\pmb{A}\tp{L^2\tp{\T^2_L;\R^{2\times 2}_{\m{sym}}}}
\end{equation}
and mentioning that $\pmb{A}(\mathcal{Y})$, for $\mathcal{Y}\in\tcb{L^2\tp{\T^2_L;\R^2},H^1\tp{\T^2_L},L^2\tp{\T^2_L;\R^{2\times 2}_{\m{sym}}}}$, is a Banach space of $\mathcal{Y}-$valued real analytic functions defined on a real interval, containing at least every entire function.

\begin{customthm}{1}[Proved in Theorem~\ref{thm on theory of small solutions}]\label{z_MAIN_THEOREM_1}
    Let $\be\in C^\infty\tp{\T^2_L}$ satisfy $\min\be=0$, $A>0$, and $G\ge0$. There exists positive radii $r_{\m{soln}},r_{\m{data}}\in\R^+$ and nonempty open balls $B_{\m{data}}=B(0,r_{\m{data}})\subset\pmb{Y}\tp{\T^2_L}$ and $B_{\m{soln}}=B(0,r_{\m{soln}})\subset H^2\tp{\T^2_L;\R^2}\times \z{H}^3 \tp{\T^2_L}$ with the property that for all $(\phi,\psi,\tau)\in B_{\m{data}}$ there exists a unique $\tp{\pmb{u},\pmb{\eta}}\in B_{\m{soln}}$ solving the system of equations~\eqref{stationary nondimensionalized equations}  with $\kappa=1$ and $\Phi$ determined by~\eqref{percolate propane pulvarized paintbrush}. Moreover, the mapping $(\phi,\psi,\tau)\mapsto\tp{\pmb{u},\pmb{\eta}}$ is real analytic. 
\end{customthm}
Theorem~\ref{z_MAIN_THEOREM_1} establishes the well-posedness of equations~\eqref{stationary nondimensionalized equations} in the class of small amplitude and periodic functions.  Our next main result studies large solutions in the periodic setting. In this case we make some further (mild) restrictions on the data: we require a bit more smoothness on the $\tau$ component and that the domain of analyticity is unbounded from above (so that the composition with tall free surfaces $\pmb{\eta}$ is well-defined). The hypotheses on the bathymetry $\be$ are the same as in Theorem~\ref{z_MAIN_THEOREM_1}.

\begin{customthm}{2}[Proved in Theorem~\ref{thm on theory of large solutions}]\label{z_MAIN_THEOREM_2}
    Let $\be\in C^\infty\tp{\T^2_L}$ satisfy $\min\be=0$, $A>0$, and $G\ge 0$. Set $a_\be=1+\max\be$, fix any choice of real analytic functions
    \begin{equation}
        (\phi,\psi,\tau):(-a_\be,\infty)\to L^2\tp{\T^2_L;\R^2}\times H^1\tp{\T^2_L}\times\tp{L^\infty\cap H^1}\tp{\T^2_L;\R^{2\times 2}_{\m{sym}}},
    \end{equation}
    and define the set of admissible solutions (corresponding to the above fixed data profile and variable forcing strength) as
    \begin{equation}\label{_PSS_}
        \pmb{S}=\tcb{(\pmb{u},\pmb{\eta},\kappa)\in H^2\tp{\T^2_L;\R^2}\times \z{H}^3 \tp{\T^2_L}\times\R\;:\;\min\tp{1+\be+\pmb{\eta}}>0,\;\text{system~\eqref{stationary nondimensionalized equations} is satisfied with~\eqref{percolate propane pulvarized paintbrush}}}.
    \end{equation}
    Then there exists a locally analytic curve $\mathscr{S}\subseteq\pmb{S}$, with $0\in\mathscr{S}$, that is parametrized by the continuous mapping $\R\ni s\mapsto\tp{\pmb{u}\tp{s},\pmb{\eta}\tp{s},\kappa\tp{s}}\in\mathscr{S}$ and satisfies $0\mapsto0$ as well as the limits
    \begin{equation}\label{__PERIODIC__BU}
        \lim_{|s|\to\infty}\bsb{\tnorm{\pmb{\eta}\tp{s}}_{L^\infty}+\f{1}{\min\tp{1+\be+\pmb{\eta}\tp{s}}}+|\kappa(s)|}=\infty.
    \end{equation}
\end{customthm}

Note that in Theorems~\ref{z_MAIN_THEOREM_1} and~\ref{z_MAIN_THEOREM_2} the periodicity allows us to set the nondimensionalized constant $G=0$; as one can see from the definition in~\eqref{nondimensional part 3}, this can occur due to vanishing gravitational field or other ratios vanishing. The limits~\eqref{__PERIODIC__BU} indicate that for a fixed forcing profile, either there are solutions in $\mathscr{S}$ for an unbounded set of forcing amplitudes $\kappa$, or else $\kappa$ remains bounded but the solutions degenerate by the free surface touching the bottom or diverging in supremum norm.   See Figures~\ref{the progression figure} and~\ref{the div-curl figure} for depictions of some of the periodic solutions considered by Theorem~\ref{z_MAIN_THEOREM_2}.  
\begin{figure}[!h]
    \centering
    \scalebox{1.056}{\includegraphics{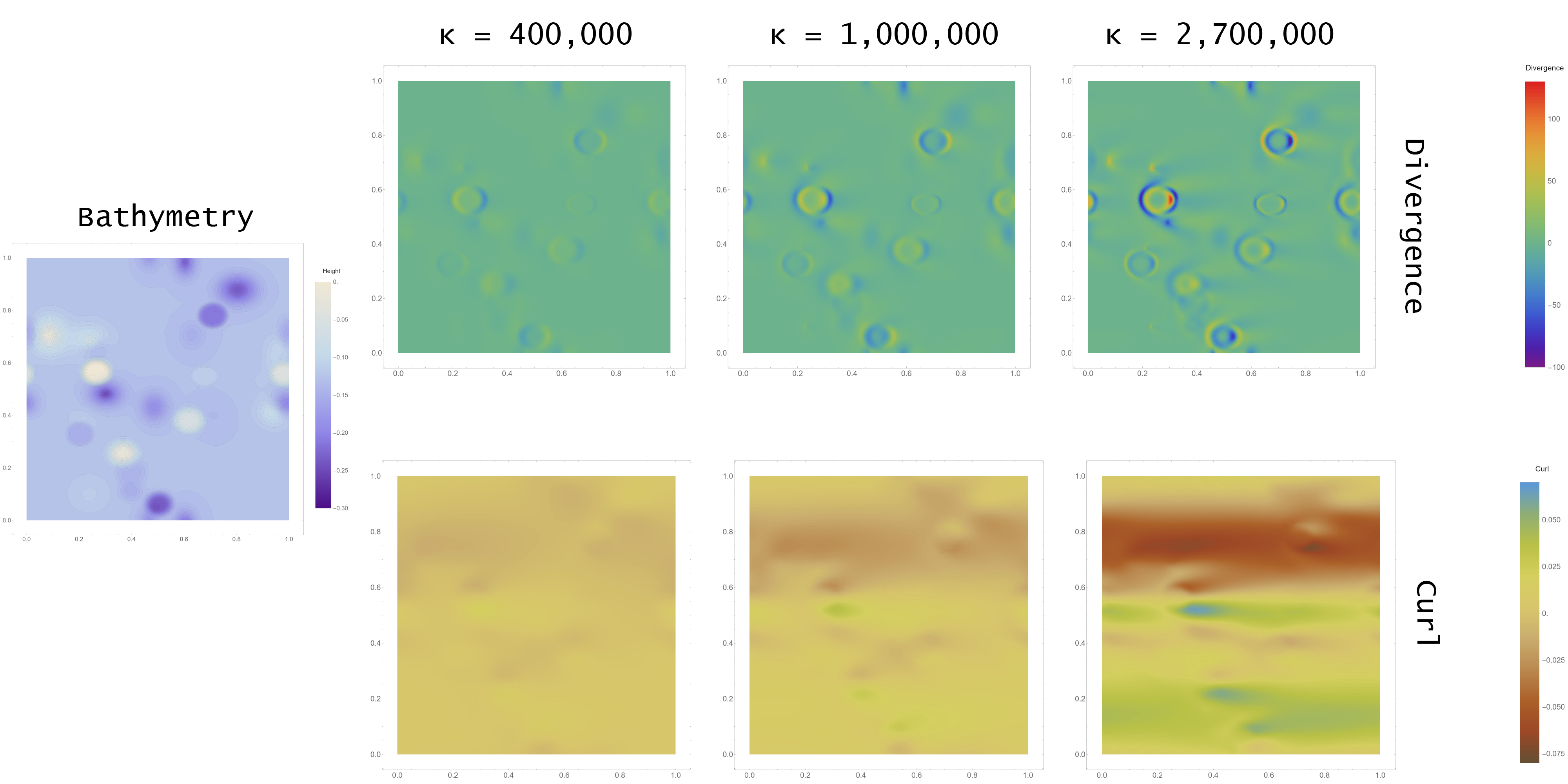}}
    \caption{Shown here are are numerical simulations of the divergence (top row) and curl (bottom row) of the tangential velocity $v$ of gravity driven shallow water flow over a fixed, but randomly generated bathymetry $b$ (shown on the left). These correspond to stationary solutions to system~\eqref{time-dependent variable-batheymetry shallow water equations} with forcing $F$ determined by $f=e_1$, $\varphi=0$, $\Xi=0$, and $\xi=0$. The forcing strength varies in the set ${\upkappa}\in\tcb{400000,1000000,2700000}$ while the other parameters $\al$, $g$, $\mu$, $\sig$, and equilibrium height are all set to $1$.
    }
    \label{the div-curl figure}
\end{figure}

\subsubsection{Results for solitary functions}\label{subsubsection on results for solitary functions}

Our final two main results are the analogs of Theorems~\ref{z_MAIN_THEOREM_1} and~\ref{z_MAIN_THEOREM_2} but in solitary (i.e. decaying at infinity) function spaces. The functional framework becomes significantly more complicated due to the fact that in this setting we must overcome a certain low-mode degeneracy in the estimates for the free surface. We achieve this by working with Sobolev spaces built on weighted Lebesgue spaces. The full details of these function spaces can be found in Sections~\ref{appendix on weighted sobolev spaces} and~\ref{appendix on weighted gradient sobolev spaces}, but for now we shall content ourselves with the following abbreviated definitions.  Given $1\le q\le\infty$, $0<\del<1$, and a finite dimensional real vector space $V$, we let $L^q_{\del}\tp{\R^2;V}$ denote the set of $f\in L^q\tp{\R^2;V}$ for which $\tbr{X}^{\del}f\in L^q\tp{\R^2;V}$, where $X:\R^2\to\R^2$ denotes the identity map and $\tbr{\cdot}$ is the  bracket~\eqref{NIHON BRAK}. Given $s\in\N$, we then define the weighted Sobolev space $W^{s,q}_{\del}\tp{\R^2;V}$ to be the collection of $f\in L^q_{\del}\tp{\R^2;V}$ such that for all multiindices $|\al|\le s$ we have $\pd^\al f\in L^q_{\del}\tp{\R^2;V}$. When $q=2$ we  write $H^s_{\del}\tp{\R^2;V}$ in place of $W^{s,2}_{\del}\tp{\R^2;V}$.

These weighted Sobolev spaces are appropriate container spaces for the velocity $\pmb{u}$ and  the data, but they are not quite adequate for the free surface function $\pmb{\eta}$.  For it we also need to introduce the weighted gradient Sobolev spaces, denoted by $\tilde{H}^s_{\del}\tp{\R^2}$ for $s\in\N^+$ and $0<\del<1$, and consisting  of $f\in L^{2/\del}\tp{\R^2}$ with $\grad f\in H^{s-1}_{\del}\tp{\R^2;\R^2}$, normed by the latter inclusion.  These spaces satisfy the embeddings $H^s_{\del}\tp{\R^2}\emb\tilde{H}^s_{\del}\tp{\R^2}\emb \tp{H^s_{\del}+W^{\infty,2/\del}}\tp{\R^2}$.

The solitary case necessitates imposing some admissibility conditions on the bathymetry profile $\be$.  Namely, we need its derivatives to be localized, which we impose by assuming that there exists $\del\in\tp{0,1}$ such that 
\begin{equation}\label{solitary assumptions on the bathymetry}
\be\in C^\infty\tp{\R^2},\;\grad\be\in H^\infty_\del\tp{\R^2;\R^2},\;\text{and}\;\inf\be=0.
\end{equation}

We now come to our third main theorem, which considers small solitary solutions.  In place of~\eqref{This is an annoying data space}, the solitary data space  is
\begin{equation}\label{data spaces for weighted small solutions}
    \pmb{Y}\tp{\R^2}=\pmb{A}\tp{L^2_{\scriptscriptstyle1/2}\tp{\R^2;\R^2}}\times\pmb{A}\tp{H^1_{\scriptscriptstyle1/2}\tp{\R^2}}\times\pmb{A}\tp{L^2_{\scriptscriptstyle1/2}\tp{\R^2;\R^{2\times 2}_{\m{sym}}}},
\end{equation}
whose precise definition is given at the start of Section~\ref{section on small solutions}.

\begin{customthm}{3}[Proved in Theorem~\ref{thm on theory of small solutions}]\label{z_MAIN_THEOREM_3}
    Let $\be$ be as in~\eqref{solitary assumptions on the bathymetry}, $A>0$, and $G>0$. There exists positive radii $\rho_{\m{soln}},\rho_{\m{data}}\in\R^+$ and nonempty open balls $B_{\m{data}}=B(0,\rho_{\m{data}})\subset\pmb{Y}\tp{\R^2}$ and $B_{\m{soln}}=B(0,\rho_{\m{soln}})\subset H^2_{\scriptscriptstyle1/2}\tp{\R^2;\R^2}\times\tilde{H}^3_{\scriptscriptstyle1/2}\tp{\R^2}$ with the property that for all $(\phi,\psi,\tau)\in B_{\m{data}}$ there exists a unique $\tp{\pmb{u},\pmb{\eta}}\in B_{\m{soln}}$ solving~\eqref{stationary nondimensionalized equations} with $\kappa =1$ and $\Phi$ determined by~\eqref{percolate propane pulvarized paintbrush}. Moreover, the mapping $\tp{\phi,\psi,\tau}\mapsto\tp{\pmb{u},\pmb{\eta}}$ is real analytic.
\end{customthm}
Theorem~\ref{z_MAIN_THEOREM_3} establishes the well-posedness of equations~\eqref{stationary nondimensionalized equations} in a class of small amplitude and solitary functions. Our final main theorem examines large solutions in the solitary case. As in the periodic case, we make some further minor restrictions on the class of data considered.

\begin{customthm}{4}[Proved in Theorem~\ref{thm on theory of large solutions}]\label{z_MAIN_THEOREM_4}
    Let $\be$ be as in~\eqref{solitary assumptions on the bathymetry}, $A>0$, and $G>0$. Set $a_\be=1+\sup\be$, fix any choice of real analytic functions
    \begin{equation}
        (\phi,\psi,\tau):\tp{-a_\be,\infty}\to L^2_{\scriptscriptstyle1/2}\tp{\R^2;\R^2}\times H^1_{\scriptscriptstyle1/2}\tp{\R^2}\times\tp{L^\infty_{\scriptscriptstyle1/2}\cap H^1_{\scriptscriptstyle1/2}}\tp{\R^2;\R^{2\times 2}_{\m{sym}}},
    \end{equation}
    and define the set of admissible solutions (corresponding to the above fixed data profile and variable forcing strength) as
    \begin{equation}\label{SSS}
        \pmb{S}=\tcb{\tp{\pmb{u},\pmb{\eta},\kappa}\in H^2_{\scriptscriptstyle1/2}\tp{\R^2;\R^2}\times\tilde{H}^3_{\scriptscriptstyle1/2}\tp{\R^2}\times\R\;:\;\inf\tp{1+\be+\pmb{\eta}}>0,\;\text{system~\eqref{stationary nondimensionalized equations} with~\eqref{percolate propane pulvarized paintbrush} is satisfied}}.
    \end{equation}
    There exists a locally analytic curve $\mathscr{S}\subseteq\pmb{S}$, with $0\in\mathscr{S}$, that is parametrized by the continuous mapping $\R\ni s\mapsto\tp{\pmb{u}(s),\pmb{\eta}(s),\kappa(s)}\in\mathscr{S}$ and satisfies $0\mapsto 0$ as well as the limits
    \begin{equation}\label{__SOLITARY__BU}
        \lim_{|s|\to\infty}\bsb{\tnorm{\pmb{\eta}\tp{s}}_{L^\infty}+\min\tcb{\tnorm{\pmb{u}(s)}_{L^q},\tnorm{\pmb{\eta}(s)}_{L^r}}+\f{1}{\inf\tp{1+\be+\pmb{\eta}(s)}}+|\kappa(s)|}=\infty
    \end{equation}
    for every choice of $q\in(4/3,2)$ and $r\in[4,\infty)$.
\end{customthm}

Note that in the stationary case we must set $G>0$ to produce solutions. Again, the limits~\eqref{__SOLITARY__BU} indicate that either solutions exist for an unbounded set of $\kappa$, or else they degenerate in a particular way.  The blowup scenarios from the periodic case remain but are augmented with a new option: the blowup of $\min\tcb{\tnorm{\pmb{u}(s)}_{L^q},\tnorm{\pmb{\eta}(s)}_{L^r}}\to\infty$ when $\limsup_{s\to\pm\infty}\tnorm{\pmb{\eta}\tp{s}}_{L^\infty}<\infty$, which indicates a sort of delocalization of both the velocity and free surface functions. We included this new option for purely technical reasons stemming from the a priori estimates developed in Proposition~\ref{prop on weighted a priori estimates version 1}; in principle, it might be possible to rule out this scenario by improving this proposition, but at the moment we lack the tools to do so.

\subsubsection{Discussion of techniques}\label{subsubsection on discussion of techniques}

We now turn to a discussion of our techniques for proving the main theorems.  At the highest level of abstraction, our general methods are the same in the periodic and solitary cases; what differs between the cases is the precise reformulation of \eqref{stationary nondimensionalized equations} and the specific functional framework. The main goal in reformulating the equations is to \emph{semilinearize} them, which we take to mean rewrite abstractly as
\begin{equation}\label{SeMiLiNeArIzAtIoN}
    P \Xi = N(\Xi) + \kappa F(\Xi)
\end{equation}
for an unknown $\Xi$, $P$ a linear differential operator, $N$ a nonlinear map acting at lower order than $P$, and $F$ given data also acting at lower order. We aim to do this in such a way that the operator $P$ is invertible, which then allows for the abstract fixed-point reformulation 
\begin{equation}\label{Xi is letter}
(I + \mathcal{K})(\Xi) = \Xi + \mathcal{K}(\Xi) = \kappa \mathcal{F}(\Xi) \text{ for } \mathcal{K} = P^{-1} \circ N \text{ and } \mathcal{F} = P^{-1}\circ F.
\end{equation}
The global implicit function theorems (GIFT) we are aware of all require some form of Fredholm condition on the derivative of the nonlinear map, and the $I + \mathcal{K}$ functional form above is convenient for verifying this by checking that $\mathcal{K}$ is a compact map. Moreover, this form lets us readily eliminate a number of the alternatives provided by the particular GIFT we use (see Theorem~\ref{thm on analytic global implicit function theorem}). The specific forms of $P$ and $N$ (and thus $\mathcal{K}$) differ in the periodic and solitary cases for a number of technical reasons, which we now discuss.

In both cases we assume that the fluid depth $1+\be+\pmb{\eta}$ is everywhere positive, which is convenient in that it permits us to arrive at systems equivalent to~\eqref{stationary nondimensionalized equations} by dividing each equation by the fluid depth. Once this is done for the momentum equation, one notices that the highest order term for $\pmb{u}$ is $\grad\cdot\mathbb{S}\pmb{u}$, while the highest order term for $\pmb{\eta}$ is $-\Delta\grad\pmb{\eta}$.  As these are both \emph{linear}, we have successfully semilinearized the momentum equation.  Unfortunately, this procedure does not so nicely semilinearize the continuity equation. The problem is that, while the top order contribution is indeed the linear term $\grad\cdot\pmb{u}$, the function space in which the equation must hold is a subset of the range of the divergence and hence is not closed under multiplication by smooth functions. Fortunately, in the periodic case there is a simple workaround for this issue:  in Lemma~\ref{lem on divergence equation reformulation} we establish that the continuity equation of~\eqref{stationary nondimensionalized equations} holds if and only if 
\begin{equation}\label{avg zero trick}
    \grad\cdot\pmb{u} + P_0\tp{\pmb{u}\cdot\grad\log\tp{1 + \be + \pmb{\eta}}}=0,
\end{equation}
where $P_0$ is the projection onto the space of mean-zero functions.  Equation~\eqref{avg zero trick} is a valid semilinear reformulation of the continuity equation that respects the target space.  Unfortunately, in the solitary case the projection operator $P_0$ is not even defined in our functional framework, so the above does not admit an analogous reformulation.

To semilinearize in the solitary case, we instead take an alternate route that does not start with dividing the equations by the depth, but rather changes the unknowns. Instead of considering the tangential velocity $\pmb{u}$ as the fundamental unknown we swap to the vector field  $u = \tp{1 + \be + \pmb{\eta}}\pmb{u}$, which is sometimes called the `discharge' in the shallow water literature.  Then the continuity equation greatly simplifies to $\grad\cdot u=0$, which we can encode into the domain function space for $u$ by restricting to the closed subspace of solenoidal fields. Upon turning our attention to the momentum equation in~\eqref{stationary nondimensionalized equations}, we see that the `depth dependent viscosity' nonlinearity in the highest order term for $\pmb{u}$, namely $\tp{1 + \be + \pmb{\eta}}\grad\cdot\mathbb{S}\pmb{u}$ crucially semilinearizes under the discharge swap to $\grad\cdot\mathbb{S}u$. The remaining issue is that the highest order term for $\pmb{\eta}$ is manifestly nonlinear: $-\tp{1 + \be + \pmb{\eta}}\Delta\grad\pmb{\eta}$. To semilinearize this capillary contribution, we now exploit a feature of the solitary case that is not reflected in the periodic setting: the function space for the free surface is a Banach algebra.  This permits us to make the \emph{nonlinear} but invertible change of unknown
\begin{equation}
    \eta = \tp{\tp{1 + \be + \pmb{\eta}}^2 - \tp{1 + \be}^2}/2
\end{equation}
and reformulate the free surface contributions in terms of the new unknown $\eta$. The upshot is that, up to lower order terms, the aforementioned capillary contribution transforms into the \emph{linear} expression $-\Delta\grad\eta$.   

The above reformulations provide the semilinear operator encoding for both cases of the system~\eqref{stationary nondimensionalized equations} as in~\eqref{SeMiLiNeArIzAtIoN}. We then select $P$ to be the entirety of the linear part so that $DN(0)=0$, meaning that $N$ is an at least quadratic and lower order nonlinearity.  By implementing Fourier multiplier and Fredholm techniques, we establish the invertibility of the operator $P$, which achieves the compact perturbation of the identity form as in~\eqref{Xi is letter}.  After selecting an appropriate functional framework, we are then in a position to read off the small data well-posedness results of Theorems~\ref{z_MAIN_THEOREM_1} and~\ref{z_MAIN_THEOREM_3}.  These follow from a direct application of the implicit function theorem, the hypotheses of which are easily verified thanks to the identity~\eqref{Xi is letter}.  Note that the small data theory does not require compactness of $\mathcal{K}$ or even that it is lower order; all we need is that $\mathcal{K}$ is at least quadratic.

We now turn our attention to the issue of how to select a functional framework appropriate for use in the abstract semilinear operator formulation.  The equations have a natural energy structure that suggests working in $L^2-$based Sobolev spaces. The problem with this is that the equations fail to provide good control of the low-Fourier-mode part of the free surface function; indeed, the best one can hope for is control over its gradient.  In the periodic setting this issue is readily dispatched with the Poincar\'e inequality by forcing $\pmb{\eta}$ to have vanishing average. On the other hand, in the solitary setting there is no such simple resolution, and the lack of control of the low-mode part of $\eta$ leads to disastrous problems: for example, the lack of $L^\infty$ bounds in the standard $L^2-$theory for $\eta$ would make the PDE coefficients unbounded or possibly degenerate, and the natural container space for $\eta$ fails to be complete, which negates the ability to use implicit function theorem techniques. We are thus forced to abandon the idea of working in $L^2\tp{\R^2}$-based spaces entirely. Instead, we turn to function spaces that provide stronger control at spatial infinity than the $L^2\tp{\R^2}$ norm. Our solution in this paper is to work in \emph{weighted} Sobolev spaces based on $L^2(\R^2:\m{m}_1)$ where $\m{m}_1$ is the absolutely continuous measure with density $\tbr{X}$. These weighted spaces still maintain a deep connection to the natural energy structure and make certain Fourier analytic techniques available. In principle, the low mode degeneracy could also be overcome by working in unweighted $L^p\tp{\R^2}$-based Sobolev spaces for $1<p<2$; however, the weighted $L^2$ spaces have additional technical advantages that we crucially exploit in the next component of our analysis, which makes them an indispensable tool for our use here.

As previously mentioned, we rely heavily on the compactness of $\mathcal{K}$ (and also $\mathcal{F}$) to construct large solutions to~\eqref{stationary nondimensionalized equations} in both the periodic and solitary cases.  In the periodic case compactness comes from the Rellich–Kondrachov theorem and the fact that $\mathcal{K}$ and $\mathcal{F}$ are lower order operators.  In the solitary case Rellich-Kondrachov compactness is unavailable due to the unboundedness of $\R^2$ and translation invariance of standard Sobolev norms.  However, weighted Sobolev spaces enjoy a weaker form of the compactness theorem that suffices for our purposes.  The full details of this result are found in Proposition~\ref{prop on embeddings of weighted Sobolev spaces},  but a rough summary is: the embedding $W^{k_1,r}_{\del_1}\tp{\R^2}\emb W^{k_0,r}_{\del_0}\tp{\R^2}$ is compact whenever the regularity and weight strength indices satisfy $k_1>k_0$ and $\del_1>\del_0$.  In other words, compactness is achieved by paying both in regularity and localization.  It turns out that the nonlinear structure of $N$ (and hence $\mathcal{K}$) actually helps make this payment in localization since the product of weighted Sobolev functions actually obeys a \emph{stronger} weighted condition.  For example, whenever $k\in\N$ and $r\in[1,\infty]$ satisfy $k>2/r$ and $\del,\rho\ge0$ the product map $W^{k,r}_{\del}\tp{\R^2}\times W^{k,r}_{\rho}\tp{\R^2}\to W^{k,r}_{\del+\rho}\tp{\R^2}$ is continuous (see Remark~\ref{remark on algebra property of weighted spaces}). Thus, the map $\mathcal{K}$, which is at least quadratic, is naturally  more localized than its argument.  Crucially, this is what makes it possible for us to invoke the modified compactness theorem and deduce that $\mathcal{K}$ is compact in the solitary setting, and the compactness of $\mathcal{F}$ follows similarly.

With the abstract form ~\eqref{Xi is letter} established with $\mathcal{K}$ and $\mathcal{F}$ compact, we are then in position to apply a GIFT.  Luckily, in our case the maps $\mathcal{K}$ and $\mathcal{F}$ turn out to be real analytic, which permits the application of a particularly strong form of the GIFT, Theorem~\ref{thm on analytic global implicit function theorem}, which is a minor modification of a GIFT proved by Chen, Walsh, and Wheeler~\cite{chen2023globalbifurcationmonotonefronts}.  This provides a maximal locally analytic curve of solutions, parametrized by the continuous map $\R\ni s\mapsto\tp{\Xi(s),\kappa(s)}$, to the fixed point reformulation~\eqref{Xi is letter} with $0\mapsto\tp{0,0}$. This curve is either a closed loop, or else for each of the limits $s\to\pm\infty$ one of the following terminal scenarios occurs: $(1)$ blowup, $(2)$ loss of compactness, or $(3)$ loss of Fredholm index zero.  The uniqueness of the trivial solution, which is essentially established via~\eqref{the power dissipation identity}, prohibits the closed loop scenario.  In turn, the compactness of $\mathcal{K}$ and $\mathcal{F}$ allows us to rule out alternatives $(2)$ and $(3)$, showing that blowup must occur.   More precisely, this means that as $s\to\pm\infty$, either the solution curve approaches the boundary of the domain of $\Xi$ or $|\kappa(s)|+\tnorm{\Xi(s)}\to\infty$.

The final thrust in completing the large solution analysis of Theorems~\ref{z_MAIN_THEOREM_2} and~\ref{z_MAIN_THEOREM_4} is transforming the solution curves $s\mapsto\tp{\Xi(s),\kappa(s)}$ to the abstract reformulation~\eqref{Xi is letter} back into solution curves $s\mapsto\tp{\pmb{u}\tp{s},\pmb{\eta}\tp{s},\kappa(s)}$ for the system~\eqref{stationary nondimensionalized equations} with~\eqref{percolate propane pulvarized paintbrush} and extracting refined information on their blowup.   The latter is accomplished with a host of a priori estimates for the full nonlinear system~\eqref{stationary nondimensionalized equations}, which allow us to conclude that blowup occurs due to growth in a coarser quantity (see~\eqref{__PERIODIC__BU} and~\eqref{__SOLITARY__BU}) than the norm used to define the function spaces.  The discrepancy in the blowup quantities between the periodic and stationary cases stems from the fact that a priori estimates in weighted Sobolev spaces are more delicate than the simple energy estimates available in the periodic setting.  

The blowup of each quantity in ~\eqref{__PERIODIC__BU} and~\eqref{__SOLITARY__BU} admits a distinct physical interpretation. The blowup $|\kappa(s)|\to\infty$ indicates that for the given forcing profile, solutions exist regardless of the forcing amplitude.  The blowup $1/\inf\tp{1 + \be + \pmb{\eta}\tp{s}}\to\infty$ means the free surface gets arbitrarily close to touching the bottom bathymetry profile.  The blowup of the remaining quantity indicates that specific integral norms (involving no derivatives) of the solution grow arbitrarily large. 

\subsubsection{Outline}

After providing our notational conventions in Section~\ref{section on conventions of notation}, the remainder of the paper is organized as follows. Section~\ref{section on analysis in weighted spaces} develops tools for using weighted Sobolev spaces (Section~\ref{appendix on weighted sobolev spaces}) and the weighted gradient Sobolev spaces (Section~\ref{appendix on weighted gradient sobolev spaces}).  Section~\ref{section on linear analysis} studies the linear operator $P$ in the semilinearization~\eqref{SeMiLiNeArIzAtIoN} for the periodic and solitary cases in tandem.  Section~\ref{section on semilinearization} works out the details of the semilinearization.  Section~\ref{section on large solutions and blow up criteria} applies the analytic global implicit function theorem (Section~\ref{subsection on analytic global implicit function theorem and solution curves}), produces large solutions, and refines the blowup scenarios in the periodic (Section~\ref{subsection on blow up criteria in the periodic case}) and solitary (Section~\ref{subsectrion on blow up criteria in the solitary case}) cases separately.   Section~\ref{section on conclusions} records the proofs of the main theorems.  The remainder of the paper is appendices.  In Appendix~\ref{appendix on tools from analysis} we record useful tools from analysis.  Appendix~\ref{appendix on derivation of SWE w/ bathymetry} contains the details of the derivation of the shallow water equations from the free boundary Navier-Stokes system over general  bathymetry.
   
\subsection{Conventions of notation}\label{section on conventions of notation}

We write $\N = \tcb{0,1,2,\dots}$ for the naturals, $\N^+ = \N\setminus\tcb{0}$, and $\R^+ = (0,\infty)$.  The notation $\mathfrak{a}\lesssim\mathfrak{b}$  indicates that there exists a constant $C\in\R^+$, depending only on the parameters mentioned in context, for which $\mathfrak{a}\le C\mathfrak{b}$; to add emphasis on the dependence of $C$ on one or more particular parameters $\mathfrak{X},\dots,\mathfrak{Y}$, we will write $\mathfrak{a}\lesssim_{\mathfrak{X},\dots,\mathfrak{Y}}\mathfrak{b}$. The expression of the equivalence of quantities $\mathfrak{a}$ and $\mathfrak{b}$ is written $\mathfrak{a}\asymp\mathfrak{b}$ and means that both $\mathfrak{a}\lesssim\mathfrak{b}$ and $\mathfrak{b}\lesssim\mathfrak{a}$ are true.  For $d\in\N^+$ the bracket is the function $\tbr{\cdot}:\C^d\to[1,\infty)$ with action given by
\begin{equation}\label{NIHON BRAK}
    \tbr{x}=\tp{1 + \tabs{x_1}^2 + \cdots + \tabs{x_d}^2}^{1/2}\text{ for }x = \tp{x_1,\dots,x_d}\in\C^d.
\end{equation}
If $\mathfrak{f}:\mathfrak{A}\to\mathfrak{B}$, we denote its image by $\m{img}\tp{\mathfrak{f}} = \tcb{\mathfrak{f}\tp{\mathfrak{a}}\in\mathfrak{B}\;:\;\mathfrak{a}\in\mathfrak{A}}$. If $E$ is a subset of a topological space, the closure of $E$ is written $\Bar{E}$.  The Fourier transform and its inverse on the space of tempered distributions $\mathscr{S}^\ast\tp{\R^d;\C}$, which are normalized to be unitary on $L^2\tp{\R^d;\C}$, are denoted by $\mathscr{F}$ and $\mathscr{F}^{-1}$, respectively. For functions $\chi:\R^d\to\C$ we write $\chi(D)$ to be the Fourier multiplication operator with symbol $\xi\mapsto\chi(\xi)$.

The $\R^d$-gradient is denoted by $\grad=\tp{\pd_1,\dots,\pd_d}$ and the divergence of a vector field $U:\R^d\to\R^d$ is $\grad\cdot U = \sum_{j=1}^d\pd_j\tp{U\cdot e_j}$. For $k\in\N\cup\tcb{\infty}$ we let $C^k(\mathfrak{A};\mathfrak{B})$ denote the set of $k$-times continuously differentiable functions on $\mathfrak{A}$ with values in $\mathfrak{B}$.

\section{Analysis in weighted spaces}\label{section on analysis in weighted spaces}

The purpose of this section is to introduce two types of weighted spaces that play a role in our analysis of system~\eqref{stationary nondimensionalized equations} in solitary function spaces.  These are the weighted Sobolev spaces and the weighted gradient spaces. The benefits of working in weighted spaces are two-fold. Firstly, weighted estimates on the gradient of the free surface overcome the low mode degeneracies present in the equations (as discussed in Section~\ref{subsubsection on discussion of techniques}) and provide global control in the supremum norm.  Secondly, the product of functions belonging to weighted spaces has better decay at infinity than either of the factors; this leads us to important compactness in the mapping properties of our nonlinearities.

We are including this concise development of material here for the readers' convenience. However, we are not asserting that any of the content in this section will be a surprise to experts in the area and hence such readers are encouraged to skip directly to the following section. 

\subsection{Weighted Sobolev spaces}\label{appendix on weighted sobolev spaces}

Let us begin with the definition of the weighted Sobolev spaces of functions on $\R^d$. Recall that our notation is that $X:\R^d\to\R^d$ denotes the identity map.
\begin{defn}[Weighted Sobolev spaces]\label{defn of weighted Sobolev spaces}
    Let $V$ be a finite dimensional real or complex vector space. Let $s\in\N$, $\del\in[0,\infty)$, and $p\in[1,\infty]$. We define
    \begin{equation}
        W^{s,p}_\del\tp{\R^d;V}=\tcb{f\in W^{s,p}\tp{\R^d;V}\;:\;\tbr{X}^\del f\in W^{s,p}\tp{\R^d;V}},\quad\tnorm{f}_{W^{s,p}_\del}=\tnorm{\tbr{X}^\del f}_{W^{s,p}}.
    \end{equation}
    When $s=0$ we shall write $L^p_\del\tp{\R^d;V}$ in place of $W^{0,p}_\del\tp{\R^d;V}$ and if $p=2$, we write $H^s_\del\tp{\R^d;V}$ in place of $W^{s,2}_\del\tp{\R^d;V}$.
\end{defn}

\begin{rmk}[Completeness]\label{rmk on completeness}
    The linear mapping $W^{s,p}_\del\tp{\R^d;V}\ni f\mapsto\tbr{X}^\del f\in W^{s,p}\tp{\R^d;V}$ is an isometric isomorphism and so the weighted Sobolev spaces inherit completeness. Moreover, by the observation that this mapping preserves the collection of smooth functions with compact support, we readily deduce density of this space in $W^{s,p}_\del\tp{\R^d;V}$ whenever $p<\infty$.
\end{rmk}

The main cases of interest in studying the weighted Sobolev spaces (for our application) are when $p\in\tcb{2,\infty}$, but we shall occasionally make use of the broader scale. Our first two results of this subsection study equivalent norms.

\begin{prop}[Equivalent norm on the weighted Sobolev spaces]\label{prop on equivalent norms in the weighted Sobolev spaces}
    Suppose that $s\in\N$, $\del\in[0,\infty)$, and $p\in[1,\infty]$. For all $f\in W^{s,p}\tp{\R^d;V}$ it holds that $f\in W^{s,p}_\del\tp{\R^d;V}$ if and only if for all $\al\in\N^d$ with $|\al|\le s$ we have $\pd^\al f\in L^p_\del\tp{\R^d;V}$; moreover, the following norm equivalence holds
    \begin{equation}\label{basic weighted norm equivalence}
        \tnorm{f}_{W^{s,p}_\del}\asymp\bp{\sum_{|\al|\le s}\tnorm{\pd^\al f}^p_{L^p_\del}}^{1/p}
    \end{equation}
    with the obvious modifications when $p=\infty$ and with implicit constants depending only on the dimension, $\del$, $p$, and $s$.  Moreover, if $\ep>0$ then we have the continuous embedding $W^{s,p}_{\del+\ep}\tp{\R^d;V} \hookrightarrow W^{s,p}_\del\tp{\R^d;V}$.
\end{prop}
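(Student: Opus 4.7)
The plan is to reduce the norm equivalence to a direct application of the Leibniz formula after pinning down how derivatives act on the bracket weight $\tbr{X}^\del$. The key preparatory estimate, proved by an easy induction on $|\beta|$ starting from $\pd_j\tbr{X}^\del = \del X_j\tbr{X}^{\del-2}$ and $|X_j|\le\tbr{X}$, is the pointwise bound
\begin{equation*}
    |\pd^\beta\tbr{X}^\del|\lesssim_{\del,|\beta|}\tbr{X}^{\del-|\beta|}\quad\text{for every }\beta\in\N^d.
\end{equation*}
Since $\tbr{X}\ge 1$, this also yields the cruder but often sufficient bound $|\pd^\beta\tbr{X}^\del|\lesssim\tbr{X}^\del$.

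For the forward direction of~\eqref{basic weighted norm equivalence}, I would apply Leibniz to $\pd^\al(\tbr{X}^\del f)$, estimate the weight factors with the display above, and pull through the $L^p$ norm to obtain
\begin{equation*}
    \tnorm{\pd^\al\tp{\tbr{X}^\del f}}_{L^p}\lesssim\sum_{\be\le\al}\tnorm{\tbr{X}^\del\pd^{\al-\be}f}_{L^p}=\sum_{\be\le\al}\tnorm{\pd^{\al-\be}f}_{L^p_\del},
\end{equation*}
and then sum over $|\al|\le s$ (with the customary modification at $p=\infty$) to get the $\lesssim$ half of~\eqref{basic weighted norm equivalence}. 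For the reverse inequality I would solve Leibniz for the top-order term,
\begin{equation*}
    \tbr{X}^\del\pd^\gamma f=\pd^\gamma\tp{\tbr{X}^\del f}-\sum_{0<\be\le\gamma}\binom{\gamma}{\be}\pd^\be\tp{\tbr{X}^\del}\pd^{\gamma-\be}f,
\end{equation*}
and argue by induction on $|\gamma|\in\tcb{0,\dots,s}$ that $\tnorm{\pd^\gamma f}_{L^p_\del}\lesssim\tnorm{\tbr{X}^\del f}_{W^{s,p}}$. The base case $|\gamma|=0$ is immediate. In the inductive step, the first term on the right is trivially controlled by $\tnorm{\tbr{X}^\del f}_{W^{s,p}}$, while each remainder term is pointwise dominated by a multiple of $\tbr{X}^\del|\pd^{\gamma-\be}f|$ with $|\gamma-\be|<|\gamma|$, so the induction hypothesis closes the estimate.

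The continuous embedding $W^{s,p}_{\del+\ep}\emb W^{s,p}_\del$ then follows instantly from the equivalent-norm characterization and the pointwise bound $\tbr{X}^\del\le\tbr{X}^{\del+\ep}$, which holds because $\tbr{X}\ge 1$. I do not expect a substantive obstacle here; the only mild bookkeeping lies in the reverse direction, where one must consistently collapse $\tbr{X}^{\del-|\be|}$ to $\tbr{X}^\del$ on the remainder terms so that the induction on $|\gamma|$ only appeals to lower-order weighted norms already controlled by the hypothesis.
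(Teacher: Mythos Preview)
Your proposal is correct and follows essentially the same approach as the paper. Both arguments rely on the Leibniz rule together with the pointwise bound $|\pd^\beta\tbr{X}^\del|\lesssim\tbr{X}^\del$, handling the $\lesssim$ direction by direct expansion and the $\gtrsim$ direction by induction on the order of the derivative; the paper phrases the inductive step as a reverse triangle inequality, which is exactly your ``solve Leibniz for the top-order term'' move in slightly different packaging.
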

\begin{proof}
    The continuous embedding $L^{p}_{\del+\ep}\tp{\R^d;V} \hookrightarrow L^{p}_\del\tp{\R^d;V}$ is trivial, which means that the second assertion follows immediately from the first.  We will now prove the first assertion.

    Suppose initially that $f\in W^{s,p}_\del\tp{\R^d;V}$.  We shall show that $\pd^\al f\in L^p_\del\tp{\R^d;V}$ and the `$\gtrsim$' bound in~\eqref{basic weighted norm equivalence} holds for all $|\al|\le s$ by employing a finite induction.  For $k\in\tcb{0,\dots,s}$ let $\mathbb{P}(k)$ denote the proposition that 
    \begin{equation}\label{induction hypothesis}
        \bp{\sum_{|\al|\le k}\tnorm{\pd^\al f}_{L^p_\del}^p}^{1/p}\lesssim\tnorm{f}_{W^{s,p}_\del}
    \end{equation}
    for an implicit constant depending only on $k$, $\del$, $p$, and the dimension.  The truth of the base case $\mathbb{P}(0)$ is straightforward since
    \begin{equation}
        \tnorm{f}_{L^p_\del} = \tnorm{\tbr{X}^\del f}_{L^p} \le \tnorm{\tbr{X}^\del f}_{W^{s,p}} = \tnorm{f}_{W^{s,p}_\del}.
    \end{equation}
    Now let us fix $k\in\tcb{0,\dots, s-1}$ and assume that $\mathbb{P}(k)$ holds.  As $s\ge k+1$, we have that
    \begin{equation}\label{[][]}
         \bp{\sum_{|\al|=k+1}\tnorm{\pd^\al\tp{\tbr{X}^\del f}}_{L^p}^p}^{1/p} \le \tnorm{f}_{W^{s,p}_\del}.
    \end{equation}
    For any $|\al|=k+1$ we have by the Leibniz rule and the triangle inequality
    \begin{equation}\label{uno reverse}
        \tnorm{\pd^\al\tp{\tbr{X}^\del f}}_{L^p}\ge\tnorm{\tbr{X}^\del\pd^\al f}_{L^p}-\sum_{\be<\al}\bn{\al}{\be}\tnorm{\pd^{\al-\be}\tp{\tbr{X}^\del}\pd^\be f}_{L^p}.
    \end{equation}
    The weight $\tbr{X}^\del$ has the property that $|\pd^\gamma\tp{\tbr{X}^\del}|\le C_\gamma\tbr{X}^{\del-|\gam|}\le C_\gamma\tbr{X}^\del$ and hence from~\eqref{uno reverse} we deduce that for some $C$ depending on $k$ and $d$
    \begin{equation}\label{[][][]}
        \tnorm{\pd^\al\tp{\tbr{X}^\del f}}_{L^p_\del}\ge\tnorm{\pd^\al f}_{L^p_\del}-C\sum_{\be<\al}\tnorm{\pd^\be f}_{L^p_\del}.
    \end{equation}
    We combine~\eqref{[][]} and~\eqref{[][][]} with the induction hypothesis to deduce 
    \begin{equation}
          c\bp{\sum_{|\al|=k+1}\tnorm{\pd^\al f}^p_{L^p_\del}}^{1/p}-C\tnorm{f}_{W^{s,p}} \le \tnorm{f}_{W^{s,p}}
    \end{equation}
    for some suitably universal $c,C\in\R^+$. This shows that $\mathbb{P}(k+1)$ is true, and so finite induction shows that $\mathbb{P}(s)$ is true, as desired.

    We now prove the opposite inequality, assuming that $f\in W^{s,p}\tp{\R^d;V}$ satisfies $\pd^\al f\in L^p_\del\tp{\R^d;V}$ for every $|\al|\le s$.  Then, thanks to the triangle inequality and the Leibniz rules we have
    \begin{equation}
        \tnorm{\tbr{X}^\del f}_{W^{s,p}}\lesssim\sum_{|\al|\le s}\sum_{\be\le\al}\tnorm{\pd^{\al-\be}\tp{\tbr{X}^\del}\pd^\be f}_{L^p}.
    \end{equation}
    Again we use the properties of the weights mentioned above to deduce $|\pd^{\al-\be}\tp{\tbr{X}^\del}|\lesssim\tbr{X}^\del$ and hence the `$\lesssim$' inequality in~\eqref{basic weighted norm equivalence} is established and so too is the inclusion $f\in W^{s,p}_\del\tp{\R^d;V}$.
\end{proof}

When $p=2$ the weighted Sobolev spaces of Definition~\ref{defn of weighted Sobolev spaces} are Hilbert and based on $L^2$. Therefore, Fourier  methods can be expected to play a significant role. Our next result gives a particularly useful choice of inner product and norm for Fourier analytic tasks. In order to state the following result, we need to recall a choice of inner product on non integer indexed Sobolev spaces.

\begin{defn}[A choice of Sobolev inner product]\label{defn of a choice of inner product}
    Let $V$ be a complex and finite dimensional inner product space and $\del\in[0,\infty)$. We define the inner product $\tbr{\cdot,\cdot}_{H^\del}^\star$ on $H^\del\tp{\R^d;V}$ via
    \begin{equation}
        \tbr{f,g}_{H^\del}^\star=\sum_{|\al|\le m}\int_{\R^d}\bp{\tbr{\pd^\al f(x),\pd^\be g(x)}_V + \sigma \int_{B(0,1)}\f{\tbr{\pd^\al f(x+h)-\pd^\al f(x),\pd^\al g(x+h)-\pd^\al g(x)}_V}{|h|^{d+2\sig}}\;\m{d}h}\;\m{d}x
    \end{equation}
    where $\del=m+\sig$ with $m\in\N$ and $\sig\in [0,1)$. For more information on Besov spaces and non-integer order Sobolev spaces (and their norms) we refer the reader to Leoni~\cite{MR3726909,MR4567945}, specifically Chapter 17 in the former and Part 2 in the latter. 
\end{defn}
\begin{rmk}[Pseudolocality]\label{rmk on pseudolocality}
    The inner product from Definition~\ref{defn of a choice of inner product} has the property that if $f,g\in H^\del\tp{\R^d;V}$ satisfy $\m{dist}\tp{\m{supp}\tp{f},\m{supp}\tp{g}} > 2$, then $\tbr{f,g}_{H^\del}^\star=0$. 
\end{rmk}

\begin{prop}[Equivalent norm and inner product on weighted square summable Sobolev spaces]\label{prop on equivalent norm on weighted square summable Sobolev spaces}
    Let $V$ be a finite dimensional complex inner product space and suppose that $s\in\N$, $\del\in[0,\infty)$. The following hold.
    \begin{enumerate}
        \item For $f\in H^s\tp{\R^d;V}$ we have that $f\in H^s_\del\tp{\R^d;V}$ if and only if for all $\al\in\N^d$ with $|\al|\le s$ we have $\mathscr{F}[\pd^\al f]\in H^\del\tp{\R^d;V}$; moreover we have the norm equivalence
        \begin{equation}\label{Fourier space norm equivalence}
            \tnorm{f}_{H^s_\del}\asymp\bp{\sum_{|\al|\le s}\tnorm{\mathscr{F}[\pd^\al f]}^2_{H^\del}}^{1/2}
        \end{equation}
        for implicit constants depending only on the dimension, $s$, and $\del$.
        \item $H^s_\del\tp{\R^d;V}$ is Hilbert and the following defines an inner product
        \begin{equation}
            \tbr{f,g}_{H^s_\del}^{\bigstar}=\sum_{|\al|\le s}\tbr{\mathscr{F}[\pd^\al f],\mathscr{F}[\pd^\al g]}^\star_{H^\del}
        \end{equation}
        where $\tbr{\cdot,\cdot}_{H^\del}^\star$ is given in Definition~\ref{defn of a choice of inner product}.
        \item If $f,g\in H^s_\del\tp{\R^d;V}$ satisfy $\m{dist}\tp{\m{supp}\mathscr{F}[f],\m{supp}\mathscr{F}[g]}>2$, then $\tbr{f,g}_{H^s_\del}^{\bigstar}=0$.
    \end{enumerate}
\end{prop}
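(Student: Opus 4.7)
I will prove assertion (1) first; once it is in hand, parts (2) and (3) follow essentially by inspection. The guiding principle is the identity $\mathscr{F}[x^\al g]=i^{|\al|}\pd^\al\mathscr{F}[g]$, which, upgraded by Plancherel, converts multiplication by $\tbr{X}^\del$ in physical space into $\del$-order smoothness on the Fourier side. Invoking Proposition~\ref{prop on equivalent norms in the weighted Sobolev spaces}, it suffices to establish the pointwise equivalence $\tnorm{g}_{L^2_\del}\asymp\tnorm{\mathscr{F}[g]}_{H^\del}$ for every $g\in L^2\tp{\R^d;V}$, since applying this to $g=\pd^\al f$ and summing over $|\al|\le s$ immediately yields~\eqref{Fourier space norm equivalence}.

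For this pointwise equivalence I plan to use the classical identification (treated in the Leoni references cited in Definition~\ref{defn of a choice of inner product}) of the Slobodeckij-style norm $\tnorm{\cdot}^\star_{H^\del}$ with the Bessel-potential norm $h\mapsto\tnorm{\tbr{D}^\del h}_{L^2}$. This identification requires a small check that truncating the fractional-difference integral to $B(0,1)$ rather than integrating over $\R^d$ preserves the equivalence class; this is standard, as the contribution from $|h|>1$ is controlled by the derivative $L^2$ norms already present in Definition~\ref{defn of a choice of inner product}. Once this equivalence is available, applying it to $h=\mathscr{F}[g]$, together with the unitarity identity $\mathscr{F}[\mathscr{F}[g]](\xi)=g(-\xi)$ and the change of variables $\xi\mapsto-\xi$, yields
\begin{equation*}
    \tnorm{\mathscr{F}[g]}_{H^\del}^2\asymp\int_{\R^d}\tbr{\xi}^{2\del}\tabs{\mathscr{F}[\mathscr{F}[g]](\xi)}^2\;\m{d}\xi=\int_{\R^d}\tbr{\xi}^{2\del}\tabs{g(-\xi)}^2\;\m{d}\xi=\tnorm{g}_{L^2_\del}^2,
\end{equation*}
which concludes the argument for (1).

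Turning to (2) and (3): the formula defining $\tbr{\cdot,\cdot}_{H^s_\del}^{\bigstar}$ is a finite sum of sesquilinear Hermitian summands, so it inherits these properties; positive definiteness and the induced Hilbert space structure then follow from the norm equivalence in (1) combined with the completeness already noted in Remark~\ref{rmk on completeness}. For (3) I observe that $\mathscr{F}[\pd^\al f]=(i\xi)^\al\mathscr{F}[f]$ preserves supports, so the disjoint-support hypothesis transfers to each pair $\mathscr{F}[\pd^\al f]$, $\mathscr{F}[\pd^\al g]$, whence the pseudolocality of Remark~\ref{rmk on pseudolocality} forces every summand of $\tbr{f,g}_{H^s_\del}^{\bigstar}$ to vanish. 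The only real obstacle in the whole argument is the bookkeeping step of reconciling the truncated Slobodeckij-style norm of Definition~\ref{defn of a choice of inner product} with the Bessel-potential norm; after that step Plancherel carries out the remaining work.
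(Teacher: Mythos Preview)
Your proposal is correct and follows essentially the same route as the paper: reduce to the termwise equivalence $\tnorm{g}_{L^2_\del}\asymp\tnorm{\mathscr{F}[g]}_{H^\del}$ via Proposition~\ref{prop on equivalent norms in the weighted Sobolev spaces}, then invoke the Fourier-side characterization of $H^\del$ to identify $\mathscr{F}:L^2_\del\to H^\del$ as a bounded isomorphism, with (2) and (3) following from Definition~\ref{defn of a choice of inner product} and Remark~\ref{rmk on pseudolocality}. The only difference is that you spell out the Bessel-potential/Slobodeckij equivalence and the Plancherel computation explicitly, whereas the paper simply cites ``the Fourier space characterization of Sobolev spaces'' for this step.
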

\begin{proof}
    It suffices to justify the first item, since the second and third items will then follow by Definition~\ref{defn of a choice of inner product} and Remark~\ref{rmk on pseudolocality}. Thanks to Proposition~\ref{prop on equivalent norms in the weighted Sobolev spaces} we are assured that
    \begin{equation}
        \tnorm{f}_{H^s_\del}\asymp\bp{\sum_{|\al|\le s}\tnorm{\pd^\al f}^2_{L^2_\del}}^{1/2}.
    \end{equation}
    Now by the Fourier space characterization of Sobolev spaces, we know that
    \begin{equation}
        \mathscr{F}:L^2_\del\tp{\R^d;V}\to H^\del\tp{\R^d;V}
    \end{equation}
    is a bounded linear isomorphism and so the claimed norm equivalence in~\eqref{Fourier space norm equivalence} now follows.
\end{proof}

Next, we study products of weighted Sobolev functions.
\begin{prop}[Weighted space product estimates]\label{prop on product estimates}
    Let $\F=\R$ or $\F=\C$. Suppose that $s\in\N^+$, $\del,\rho\in[0,\infty)$, and $1\le p_0,p_1,q_0,q_1,t\le\infty$ satisfy~\eqref{exponent relationships}. Let $f_0\in\tp{L^{q_0}_\rho\cap W^{s,p_0}_\rho}\tp{\R^d;\F}$ and $f_1\in\tp{L^{q_1}_\del\cap W^{s,p_1}_\del}\tp{\R^d;\F}$. Then, the pointwise product $f_0f_1$ belongs to $W^{s,t}_{\rho+\del}\tp{\R^d;\F}$ and obeys the estimate
    \begin{equation}
        \tnorm{f_0f_1}_{W^{s,t}_{\rho+\del}}\lesssim\tnorm{f_0}_{L^{q_0}_\rho}\tnorm{f_1}_{W^{s,p_1}_\del}+\tnorm{f_0}_{W^{s,p_0}_\rho}\tnorm{f_1}_{L^{q_1}_\del}.
    \end{equation}
    for an implicit constant depending only on $d$, $s$, $p_0$, $p_1$, $q_0$, $q_1$, $t$, $\rho$, and $\del$.
\end{prop}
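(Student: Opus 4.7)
The plan is to reduce the weighted estimate to a classical unweighted Moser-type product bound via a pointwise factorization of the weight, and then to prove the latter by a standard Leibniz--H\"older--Gagliardo--Nirenberg procedure. The central observation is that
\[
\tbr{X}^{\rho+\del}(f_0 f_1) = (\tbr{X}^\rho f_0)(\tbr{X}^\del f_1),
\]
so upon setting $g_0 = \tbr{X}^\rho f_0$ and $g_1 = \tbr{X}^\del f_1$, Definition~\ref{defn of weighted Sobolev spaces} immediately supplies
\[
\tnorm{f_0 f_1}_{W^{s,t}_{\rho+\del}} = \tnorm{g_0 g_1}_{W^{s,t}}, \quad \tnorm{f_0}_{W^{s,p_0}_{\rho}} = \tnorm{g_0}_{W^{s,p_0}}, \quad \tnorm{f_0}_{L^{q_0}_{\rho}} = \tnorm{g_0}_{L^{q_0}},
\]
together with the analogous pair of identities for $f_1$ and $g_1$. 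Consequently, the weighted product estimate is equivalent to the unweighted Moser inequality
\[
\tnorm{g_0 g_1}_{W^{s,t}} \lesssim \tnorm{g_0}_{L^{q_0}} \tnorm{g_1}_{W^{s, p_1}} + \tnorm{g_0}_{W^{s,p_0}} \tnorm{g_1}_{L^{q_1}},
\]
and all weight-related bookkeeping disappears in a single move.

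For the unweighted bound I would expand $\pd^\al(g_0 g_1) = \sum_{\be \le \al} \binom{\al}{\be} \pd^{\al-\be} g_0 \cdot \pd^\be g_1$ via the Leibniz rule for each multi-index $|\al| \le s$, and then estimate each term in $L^t$ by H\"older's inequality as $\tnorm{\pd^{\al - \be} g_0}_{L^{a_\be}} \tnorm{\pd^\be g_1}_{L^{b_\be}}$ with paired exponents $1/a_\be + 1/b_\be = 1/t$. The exponents $a_\be, b_\be$ are chosen so that the Gagliardo--Nirenberg interpolation inequality delivers
\[
\tnorm{\pd^{\al-\be} g_0}_{L^{a_\be}} \lesssim \tnorm{g_0}_{L^{q_0}}^{|\be|/s} \tnorm{g_0}_{W^{s,p_0}}^{1 - |\be|/s}, \qquad \tnorm{\pd^\be g_1}_{L^{b_\be}} \lesssim \tnorm{g_1}_{L^{q_1}}^{1 - |\be|/s} \tnorm{g_1}_{W^{s,p_1}}^{|\be|/s}.
\]
Multiplying the two and setting $A = \tnorm{g_0}_{L^{q_0}} \tnorm{g_1}_{W^{s,p_1}}$ and $B = \tnorm{g_0}_{W^{s,p_0}} \tnorm{g_1}_{L^{q_1}}$, the right-hand side takes the form $A^{|\be|/s} B^{1-|\be|/s}$; the weighted AM--GM inequality then yields $A^{|\be|/s} B^{1 - |\be|/s} \le \tfrac{|\be|}{s} A + \tfrac{s - |\be|}{s} B \lesssim A + B$, which is the desired additive structure. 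The endpoint cases $|\be| \in \{0, s\}$ land on one of the two summands directly, bypassing the interpolation step.

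The main technical wrinkle will be verifying that the scaling relations packaged into~\eqref{exponent relationships} are exactly what is needed to make Gagliardo--Nirenberg applicable with the stated parameters $\theta_\be = |\be|/s$ for every admissible intermediate $\be$, and to accommodate the endpoint exponent choices $p_j = \infty$ or $q_j = \infty$, where substitutes such as direct $L^\infty$ control (or an interpolation through BMO) must be invoked in place of Gagliardo--Nirenberg. These are well-documented manipulations in the Moser calculus literature and present no genuine analytic obstacle; crucially, none of them reintroduce the weights, since the reduction $f_j \leadsto g_j$ has already decoupled the weighted structure from the differential structure.
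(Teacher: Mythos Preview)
Your approach is correct and essentially identical to the paper's: the paper's one-line proof invokes the weight factorization $\tbr{X}^{\rho+\del}f_0f_1=(\tbr{X}^\rho f_0)(\tbr{X}^\del f_1)$ and then cites the unweighted high-low product estimate (Theorem~\ref{thm on high low product estimates}), whose proof is precisely the Leibniz--H\"older--Gagliardo--Nirenberg--Young argument you sketch. The only difference is packaging: the paper has already isolated the unweighted Moser bound as a separate result, whereas you reprove it inline.
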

\begin{proof}
This is a simple application of Theorem~\ref{thm on high low product estimates} on the product $\tbr{X}^{\rho+\del}f_0f_1=\tp{\tbr{X}^\rho f_0}\tp{\tbr{X}^\del f_1}$.
\end{proof}

To conclude this subsection, we record a number of useful embedding results.

\begin{prop}[Embeddings of weighted Sobolev spaces]\label{prop on embeddings of weighted Sobolev spaces}
    Let $V$ be a finite dimensional real or complex vector space, $p_0,p_1\in[1,\infty]$, $s_0,s_1\in\N$, and $\del_0,\del_1\in[0,\infty)$ satisfy $p_1\le p_0$, $s_0\le s_1$, and $\del_0\le\del_1$.  Further suppose that 
\begin{equation}\label{numerology}
    \frac{1}{p_1} - \frac{s_1-s_0}{d} \le  \frac{1}{p_0}, \text{ and } 
    \frac{1}{p_1} - \frac{s_1-s_0}{d} < 0 \text{ if } \tp{p_0,p_1}\in\tcb{\infty}\times[1,\infty).     
\end{equation}
Then the following hold.
    \begin{enumerate}
        \item We have the continuous embedding $W^{s_1,p_1}_{\del_1}\tp{\R^d;V}\emb W^{s_0,p_0}_{\del_0}\tp{\R^d;V}$.
        \item If $\del_0<\del_1$ and the left hand inequality in~\eqref{numerology} is strict, then the embedding from the previous item is compact.
    \end{enumerate}
\end{prop}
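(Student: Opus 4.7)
The plan for part (1) is to reduce to the classical unweighted Sobolev embedding via the isometric isomorphism $f\mapsto\tbr{X}^{\del_1}f$. Given $f\in W^{s_1,p_1}_{\del_1}\tp{\R^d;V}$, Definition~\ref{defn of weighted Sobolev spaces} places $\tbr{X}^{\del_1}f$ in $W^{s_1,p_1}\tp{\R^d;V}$, and the numerology~\eqref{numerology} is precisely the hypothesis for the standard Sobolev embedding to push this into $W^{s_0,p_0}\tp{\R^d;V}$. Since $\del_0-\del_1\le 0$, the weight $\tbr{X}^{\del_0-\del_1}$ together with all of its partial derivatives is uniformly bounded, so a Leibniz expansion shows that $\tbr{X}^{\del_0}f=\tbr{X}^{\del_0-\del_1}\cdot\tbr{X}^{\del_1}f$ lies in $W^{s_0,p_0}\tp{\R^d;V}$. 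By definition this is the inclusion $f\in W^{s_0,p_0}_{\del_0}\tp{\R^d;V}$, with the attendant norm bound.

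For part (2), I would run a cutoff decomposition that separates an interior piece, handled by classical Rellich-Kondrachov, from a tail that vanishes uniformly thanks to the weight gap $\del_0<\del_1$. Fix $\chi\in C_c^\infty\tp{\R^d;[0,1]}$ with $\chi=1$ on $B(0,1)$ and $\supp\chi\subseteq B(0,2)$, and set $\chi_R\tp{x}=\chi\tp{x/R}$. Given a bounded sequence $\tcb{f_n}\subset W^{s_1,p_1}_{\del_1}\tp{\R^d;V}$, split $f_n=\chi_R f_n+\tp{1-\chi_R}f_n$. The interior piece $\chi_R f_n$ is supported in $\bar{B(0,2R)}$ and bounded in $W^{s_1,p_1}\tp{\R^d;V}$; since the left inequality in~\eqref{numerology} is strict, classical Rellich-Kondrachov on this bounded domain extracts a subsequence converging in $W^{s_0,p_0}$. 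On $\bar{B(0,2R)}$ the weight $\tbr{X}^{\del_0}$ is pinched between positive constants, so the same subsequence converges in $W^{s_0,p_0}_{\del_0}\tp{\R^d;V}$.

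The crux of the argument is the uniform tail estimate $\sup_n\tnorm{\tp{1-\chi_R}f_n}_{W^{s_0,p_0}_{\del_0}}\to 0$ as $R\to\infty$. Using Proposition~\ref{prop on equivalent norms in the weighted Sobolev spaces} together with the Leibniz rule, for each $|\al|\le s_0$ I would write
\begin{equation*}
    \pd^\al\tp{\tp{1-\chi_R}f_n}=\tp{1-\chi_R}\pd^\al f_n-\sum_{0<\gam\le\al}\bn{\al}{\gam}\tp{\pd^\gam\chi_R}\pd^{\al-\gam}f_n,
\end{equation*}
with every summand supported in $\tcb{|x|\ge R}$. On this set $\tbr{X}^{\del_0}\le CR^{\del_0-\del_1}\tbr{X}^{\del_1}$, while $|\pd^\gam\chi_R|\lesssim R^{-|\gam|}$, so the $L^{p_0}_{\del_0}$-norm of each summand is bounded by a constant times $R^{\del_0-\del_1}\tnorm{f_n}_{W^{s_0,p_0}_{\del_1}}$. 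Applying part (1) with target weight $\del_1$ controls the latter by $\tnorm{f_n}_{W^{s_1,p_1}_{\del_1}}$, which is uniformly bounded in $n$. Since $\del_0-\del_1<0$, the tails vanish uniformly. A standard diagonal extraction along a sequence $R_k\to\infty$ then produces a Cauchy subsequence of $\tcb{f_n}$ in $W^{s_0,p_0}_{\del_0}\tp{\R^d;V}$. The only real technical hazard lies in keeping the decrement factor $R^{\del_0-\del_1}$ coherent with the simultaneous loss of regularity and integrability in the interior step; once part (1) is in hand this bookkeeping is routine and I do not anticipate a genuine obstruction.
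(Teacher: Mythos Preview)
Your argument is correct and mirrors the paper's proof closely: part (1) via the isometry $f\mapsto\tbr{X}^{\del_1}f$ followed by the standard Sobolev embedding and multiplication by the bounded symbol $\tbr{X}^{\del_0-\del_1}$, and part (2) via a cutoff split with a Rellich--Kondrachov interior piece and a tail that decays like $R^{\del_0-\del_1}$ thanks to the weight gap. The only cosmetic difference is that the paper frames compactness as total boundedness of the unit ball (finding a finite $\ep$-net for each $\ep$) rather than via sequential extraction and diagonalization, but the substance is identical.
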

\begin{proof}
    If $p_0 = p_1$ then the embedding $W^{s_1,p_1}_{\del_1}\tp{\R^d;V}\emb W^{s_0,p_0}_{\del_0}\tp{\R^d;V}$ is a trivial consequence of Proposition~\ref{prop on equivalent norms in the weighted Sobolev spaces}. Note that if $s_0=s_1$ and the hypotheses inequalities are satisfied then necesarily we have $p_0=p_1$. So it remains to consider the case that $p_1 < p_0$ and $s_0 < s_1$.  In this case we observe that if $Y$ is a Banach space such that $W^{s_1,p_1}\tp{\R^d;V}\emb Y$, then the injective map
    \begin{equation}\label{observation embedding}
        W^{s_1,p_1}_\del\tp{\R^d;V}\ni f\mapsto\tbr{X}^\del f\in Y
    \end{equation}
    is continuous. 
    
    The embedding from the first item then follows by combining observation~\eqref{observation embedding}, interpolation, Proposition~\ref{prop on equivalent norms in the weighted Sobolev spaces}, the Morrey inequality when $1/p_1 -(s_1-s_0)/d < 0$ and $p_0\le\infty$, the Gagliardo-Nirenberg-Sobolev inequality when $p_0<\infty$ and $1/p_1 - (s_1 - s_0)/d>0$, and the critical Sobolev embedding when $1/p_1-(s_1 - s_0)/d = 0$ and $p_0<\infty$. This completes the proof of the first item.
    
    We now turn to the proof of the second item, which requires strict inequality on the left of~\eqref{numerology}.  Note that this requires $s_0 < s_1$, as $s_0 =s_1$ in the inequality implies that $p_0 < p_1$, which is not allowed.  
        
    To prove the second item it suffices to show that the closed unit ball $B \subset W^{s_1,p_1}_{\del_1}\tp{\R^d;V}$ is totally bounded in $W^{s_0,p_0}_{\del_0}\tp{\R^d;V}$, i.e. that for every $\ep \in \R^+$ we have that 
    \begin{equation}\label{compactness}
        B \subset\bigcup_{i=1}^m B_{W^{s_0,p_0}_{\del_0}}(f_i,\ep) \text{ for some } m \in \N^+ \text{ and }  f_1,\dotsc,f_m \in W^{s_0,p_0}_{\del_0}\tp{\R^d;V}.
    \end{equation}
    To this end, fix $\ep \in \R^+$ and let $\varphi\in C^\infty_{\m{c}}\tp{B(0,2)}$ satisfy $0 \le \varphi \le 1$ and $\varphi=1$ on $\Bar{B(0,1)}$.   For $R\in[1,\infty)$ we write $\varphi_R(x)=\varphi(x/R)$.
        
    If $f\in B$  then we may use the first item to estimate
    \begin{multline}\label{tail estimates}
        \tnorm{\tp{1-\varphi_R}f}_{W^{s_0,p_0}_{\del_0}} \lesssim 
        \tnorm{\tbr{X}^{\del_0-\del_1}(1-\varphi_R)}_{W^{s_0,\infty}} \tnorm{f}_{W^{s_0,p_0}_{\del_1}}
        \\
        \lesssim 
        \tnorm{\tbr{X}^{\del_0-\del_1}(1-\varphi_R)}_{W^{s_0,\infty}} \tnorm{f}_{W^{s_1,p_1}_{\del_1}}
        \lesssim
        R^{\del_0-\del_1} \le\ep/3,
    \end{multline}
    provided that  $R\gtrsim 1+\tp{1/\ep}^{1/(\del_1-\del_0)}$.  Assume that $R$ is fixed and satisfies this condition, and define the set
    \begin{equation}
        F=\tcb{\varphi_{R} f\vert_{B(0,2R)}  \;:\;f\in B}\subset W^{s_1,p_1}_{[0]}\tp{B(0,2R);V} = \overline{C^\infty_{\m{c}}(B(0,2R);V)},
    \end{equation}
    where closure on the right is with respect to the $W^{s_1,p_1}\tp{B(0,2R);V}$ norm.  Note that the nonstandard subscript $[0]$ on the right hand side is meant to distinguish the notation for this standard space from our weighted space notation.  We have the trivial bound
    \begin{equation}
        \sup_{f \in F} \norm{f}_{W^{s_1,p_1}_{[0]}} \lesssim 1,
    \end{equation}
    and the Rellich-Kondrachov theorem assures that the embedding $W^{s_1,p_1}_{[0]}\tp{B(0,2R);V}\emb W^{s_0,p_0}_{[0]}\tp{B(0,2R);V}$ is compact, which together imply that the set $F$ is totally bounded in the latter space.  Therefore, if we let $C\in\R^+$ denote the embedding constant for $W_{[0]}^{s_0,p_0}\tp{B(0,2R);V}\emb W^{s_0,p_0}_{\del_0}\tp{\R^d;V}$ (where the subspace identification is made via zero extension, which by minor abuse of notation we do not indicate with an operator) then we have the existence of $m\in\N^+$ and functions $\tcb{g_i}_{i=1}^m\subset F$ such that
    \begin{equation}\label{almost compactness}
        F\subset \bigcup_{i=1}^m B_{W^{s_0,p_0}_{[0]}}\tp{g_i,\ep/3C} \emb \bigcup_{i=1}^m B_{W^{s_0,p_0}_{\delta_0}}\tp{g_i,\ep/3}. 
    \end{equation}
    By definition, for each $i\in\tcb{1,\dots,m}$ there exists $f_i\in B$ such that $g_i=\varphi_R f_i \vert_{B(0,2R)}$, which in turn means the zero extension of $g_i$ is $\varphi_R f_i$.  

    Now let $f \in B$.  By~\eqref{almost compactness} there exists $i\in\tcb{1,\dots,m}$ such that $\varphi_R f \in B_{W^{s_0,p_0}_{\del_0}}\tp{\varphi_R f_i,\ep/3}$. Using this and the tail estimates of~\eqref{tail estimates}, we may then bound
    \begin{equation}
        \tnorm{f-f_i}_{W^{s_0,p_0}_{\del_0}}\le\tnorm{\varphi_Rf-\varphi_{R}f_i}_{W^{s_0,p_0}_{\del_0}}+\tnorm{\tp{1-\varphi_R}f}_{W^{s_0,p_0}_{\del_0}}+\tnorm{\tp{1-\varphi_R}f_i}_{W^{s_0,p_0}_{\del_0}}<\ep/3+\ep/3+\ep/3=\ep.
    \end{equation}
    Thus, the covering~\eqref{compactness} is established, completing the proof.    
\end{proof}

\begin{rmk}[Algebra property of weighted spaces]\label{remark on algebra property of weighted spaces}
    By combining Propositions~\ref{prop on product estimates} and~\ref{prop on embeddings of weighted Sobolev spaces} we find that the pointwise product map
    \begin{equation}
        W^{s,p}_\del\tp{\R^d;\F}\times W^{s,p}_\rho\tp{\R^d;\F} \ni (f,g) \mapsto fg \in W^{s,p}_{\del+\rho}\tp{\R^d;\F}
    \end{equation}
    is continuous whenever $s>d/p$, $\del,\rho\ge0$.
\end{rmk}

\subsection{Weighted gradient Sobolev spaces}\label{appendix on weighted gradient sobolev spaces}

This subsection analyzes the functional framework for the free surface unknown in the solitary case of~\eqref{stationary nondimensionalized equations}. Note that in contrast with Section~\ref{appendix on weighted sobolev spaces}, we are specializing here to dimension $d=2$ and integrability exponent $p=2$.

\begin{defn}[Weighted gradient Sobolev spaces]\label{defn of weighted gradient sobolev spaces}
    Let $\F=\R$ or $\F=\C$. Given $s\in\N^+$ and $\del\in(0,1)$ we define
    \begin{equation}
        \tilde{H}^s_\del\tp{\R^2;\F}=\tcb{f\in L^{2/\del}\tp{\R^2;\F}\;:\;\grad f\in H^{s-1}_\del\tp{\R^2;\F^2}},\quad\tnorm{f}_{\tilde{H}^s_\del}=\tnorm{\grad f}_{H^{s-1}_\del}.
    \end{equation}
\end{defn}
Our first task is to establish completeness of the weighted gradient Sobolev spaces. The main tool that helps us here is the following weighted estimate for the Riesz potential operator.

\begin{lem}[Weighted estimates for Riesz potential operators]\label{lem weighted estimates for riesz potential operator}
    Let
    \begin{equation}
        \mathcal{I}_1f(x)=\int_{\R^2}\f{f(x-y)}{2\pi |y|}\;\m{d}y
    \end{equation}
    denote the Riesz potential operator of order $1$ and let $\del\in(0,1)$, $0\le\tilde{\del}<\del$, and $q=2(\del-\tilde{\del})^{-1}$. Then
    \begin{equation}
        \mathcal{I}_1:L^2_\del\tp{\R^2;\F}\to L^q_{\tilde{\del}}\tp{\R^2;\F}.
    \end{equation}
    is a bounded linear operator.
\end{lem}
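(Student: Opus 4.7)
The plan is to apply the classical Stein--Weiss weighted Hardy--Littlewood--Sobolev inequality twice, using the elementary decomposition $\langle x\rangle^{\tilde\del}\lesssim 1+|x|^{\tilde\del}$ on $\R^2$ to reduce the bracket-weighted estimate to two estimates with power weights.

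First I would observe the pointwise equivalence $\langle x\rangle^{\tilde\del}\asymp 1+|x|^{\tilde\del}$ for $\tilde\del\ge 0$, which via the triangle inequality reduces the desired bound to the two estimates
\begin{equation}
\|\mathcal{I}_1 f\|_{L^q}\lesssim \|f\|_{L^2_\del}\quad\text{and}\quad \||x|^{\tilde\del}\mathcal{I}_1 f\|_{L^q}\lesssim \|f\|_{L^2_\del}.
\end{equation}
Next I would invoke the Stein--Weiss inequality on $\R^2$: for $1<p\le q<\infty$, $0<\lambda<2$, and real parameters $\alpha,\beta$ satisfying $\alpha+\beta\ge 0$, $\alpha<2/q$, $\beta<2/p'$, and $1/p-1/q=(\lambda-\alpha-\beta)/2$, one has
\begin{equation}
\||x|^{-\alpha}\mathcal{I}_\lambda g\|_{L^q(\R^2)}\lesssim \||x|^\beta g\|_{L^p(\R^2)}.
\end{equation}
I would apply this with $\lambda=1$, $p=2$, and $q=2(\del-\tilde\del)^{-1}$ in two distinct ways. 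The choice $(\alpha,\beta)=(-\tilde\del,\del)$ satisfies every Stein--Weiss hypothesis: $\alpha+\beta=\del-\tilde\del\ge 0$, the bound $-\tilde\del<\del-\tilde\del$ holds since $\del>0$, $\del<1$ by hypothesis, and the scaling identity collapses to the defining relation $q=2(\del-\tilde\del)^{-1}$; it yields $\||x|^{\tilde\del}\mathcal{I}_1 f\|_{L^q}\lesssim \||x|^\del f\|_{L^2}$. The alternative choice $(\alpha,\beta)=(0,\del-\tilde\del)$ likewise verifies the Stein--Weiss hypotheses, since $0+(\del-\tilde\del)\ge 0$, $\del-\tilde\del\in(0,1)$, and the same scaling relation is in force; this yields $\|\mathcal{I}_1 f\|_{L^q}\lesssim \||x|^{\del-\tilde\del}f\|_{L^2}$.

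To conclude, I would combine these two bounds using the elementary pointwise inequalities $|x|^a\le \langle x\rangle^a$ for $a\ge 0$ and $\langle x\rangle^{\del-\tilde\del}\le \langle x\rangle^\del$, so that each power-weighted $L^2$ norm on the right-hand side is dominated by $\|\langle x\rangle^\del f\|_{L^2}=\|f\|_{L^2_\del}$. The main obstacle is purely bookkeeping, namely the careful verification of all four Stein--Weiss hypotheses for each of the two parameter choices; no further analytic difficulty arises, since Stein--Weiss is a classical tool and the target exponent $q=2(\del-\tilde\del)^{-1}$ is precisely the one forced by its scaling identity.
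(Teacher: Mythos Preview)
Your proof is correct and takes a genuinely different route from the paper's. The paper invokes a general weighted fractional integration theorem of Duarte and Silva (their Proposition~2.5), which gives boundedness of $\mathcal{I}_\alpha:L^p(\R^d:v)\to L^q(\R^d:w)$ under Muckenhoupt-type conditions on the weight pair $(v,w)$, and then appeals to a companion lemma characterizing which inhomogeneous power weights $\langle X\rangle^a$ belong to the relevant class $A^\alpha_{p,q}$. Your approach instead reduces the inhomogeneous bracket weight to homogeneous power weights via $\langle x\rangle^{\tilde\del}\asymp 1+|x|^{\tilde\del}$ and applies the classical Stein--Weiss inequality twice. Your route is more elementary and self-contained, relying only on a textbook inequality and an explicit parameter check; the paper's route is a single black-box application of a more modern and general result that handles the bracket weights directly, and would adapt more readily to other weight profiles. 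Both arrive at the same conclusion with the same numerology $q=2(\del-\tilde\del)^{-1}$, which is forced by scaling in either framework.
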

\begin{proof}
    The claimed operator bound is a special consequence of the more general estimates of Proposition 2.5 in Duarte and Silva~\cite{MR4593125}. There the authors establish that the fractional integration operators $\mathcal{I}_\al:L^p(\R^d:v)\to L^q(\R^d:w)$ (where these are Lebesgue spaces over $\R^d$ equipped with the absolutely continuous measures $v\;\m{d}x$ and $w\;\m{d}x$ respectively) are bounded whenever $1<p\le q<\infty$, $0<\al<d$, $(v,w)\in A^\al_{p,q}$, and $v^{1/(p-1)},\;w\in A_\infty$ (with $A^\al_{p,q}$ and $A_\infty$ denoting certain classes of measures satisfying Muckenhoupt-type conditions).

    We invoke this result in the special case of $d=2$, $\al=1$, $p=2$, $q=2\tp{\del-\tilde{\del}}^{-1}$, $v=\tbr{X}^{2\del}$, and $w=\tbr{X}^{q\tilde{\del}}$ (so that $L^2(\R^2:v)=L^2_\del\tp{\R^2}$ and $L^q\tp{\R^2:w}=L^q_{\tilde{\del}}\tp{\R^d}$). We are assured that the hypothesis $(v,w)\in A^{1}_{2,q}$ is satisfied thanks to Lemma 3.3 in~\cite{MR4593125}, where Duarte and Silva characterize which inhomogeneous power type measures belong to $A^\al_{p,q}$.
\end{proof}

\begin{rmk}\label{remark on the I1 operator}
    Thanks to the computations in Chapter 4 Section B in Folland~\cite{MR1357411} we may identify the convolution operator $\mathcal{I}_1$ from Lemma~\ref{lem weighted estimates for riesz potential operator} with the Fourier multiplication operator $|\grad|^{-1}$. As a result we have the operator identity $\mathcal{I}_1\grad = \mathcal{R}$ where $\mathcal{R} = \tp{\mathcal{R}_1,\mathcal{R}_2}$ is the vector of Riesz transforms.
\end{rmk}

With the weighted estimates for Riesz potentials in hand, we now prove completeness.

\begin{prop}[Completeness of the weighted gradient spaces]\label{prop on completeness of the weighted gradient spaces}
    Let $s\in\N^+$ and $\del\in(0,1)$. The following hold.
    \begin{enumerate}
        \item We have the embedding $\tilde{H}^s_\del\tp{\R^2;\F}\emb L^{2/\del}\tp{\R^2;\F}$.
        \item $\tilde{H}^s_\del\tp{\R^2;\F}$ is complete.
    \end{enumerate}
\end{prop}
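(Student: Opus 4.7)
The plan is to obtain item (1) as a consequence of the weighted Riesz potential estimate of Lemma~\ref{lem weighted estimates for riesz potential operator} combined with the Riesz inversion identity $f=-\sum_{j=1}^2 \mathcal{R}_j^2 f$, which holds for any $f\in L^p(\R^2;\F)$ with $1<p<\infty$ (it is the Fourier multiplier identity coming from $\sum_j \xi_j^2/|\xi|^2=1$). Given $f\in\tilde{H}^s_\del(\R^2;\F)$, the definition provides $f\in L^{2/\del}(\R^2;\F)$, and since $\del\in(0,1)$ we have $2/\del\in(2,\infty)$, so this inversion applies. Using the identity $\mathcal{R}=\mathcal{I}_1\grad$ recorded in Remark~\ref{remark on the I1 operator}, I would rewrite $\mathcal{R}_j f=\mathcal{I}_1\pd_j f$ and then bound
\begin{equation}
    \tnorm{f}_{L^{2/\del}}\le\sum_{j=1}^2\tnorm{\mathcal{R}_j\mathcal{R}_j f}_{L^{2/\del}}\lesssim\sum_{j=1}^2\tnorm{\mathcal{I}_1\pd_j f}_{L^{2/\del}}\lesssim\sum_{j=1}^2\tnorm{\pd_j f}_{L^2_\del}\lesssim\tnorm{\grad f}_{H^{s-1}_\del}=\tnorm{f}_{\tilde{H}^s_\del},
\end{equation}
where the second inequality uses $L^{2/\del}$-boundedness of the Riesz transforms (Calder\'on--Zygmund theory, since $1<2/\del<\infty$), and the third inequality is Lemma~\ref{lem weighted estimates for riesz potential operator} applied with $\tilde\del=0$ and $q=2/\del$.

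For item (2), let $\tcb{f_n}_{n\in\N}$ be Cauchy in $\tilde{H}^s_\del(\R^2;\F)$. By the definition of the norm, $\tcb{\grad f_n}$ is Cauchy in $H^{s-1}_\del(\R^2;\F^2)$, which is complete by Remark~\ref{rmk on completeness}, so there exists $g\in H^{s-1}_\del(\R^2;\F^2)$ with $\grad f_n\to g$. Item (1) applied to the differences shows that $\tcb{f_n}$ is also Cauchy in $L^{2/\del}(\R^2;\F)$, which is complete, so $f_n\to f$ in $L^{2/\del}$. Both $f_n\to f$ and $\grad f_n\to g$ hold in the sense of tempered distributions (using the continuous embeddings $L^{2/\del}\emb\mathscr{S}^\ast$ and $H^{s-1}_\del\emb L^2\emb\mathscr{S}^\ast$), so $\grad f_n\to\grad f$ distributionally, forcing $\grad f=g\in H^{s-1}_\del$. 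Hence $f\in\tilde{H}^s_\del$ and $\tnorm{f_n-f}_{\tilde{H}^s_\del}=\tnorm{\grad(f_n-f)}_{H^{s-1}_\del}\to 0$.

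The main obstacle is item (1), specifically the rigorous justification of the chain $f=-\sum_j \mathcal{R}_j^2 f=-\sum_j\mathcal{R}_j\mathcal{I}_1\pd_j f$ at the tempered-distribution level and on $L^{2/\del}$. This requires two checks: first, that $f$ lies in an $L^p$ space where the inversion is valid (handled by the hypothesis $f\in L^{2/\del}$ with $2/\del>2$); and second, that composing the weighted Riesz potential bound with the unweighted Riesz transform bound is compatible in the right target space (handled by the specific choice $\tilde\del=0$ and $q=2/\del$ in Lemma~\ref{lem weighted estimates for riesz potential operator}, together with $1<2/\del<\infty$). Once these checks are in place, the remaining steps for item (2) are routine completeness arguments.
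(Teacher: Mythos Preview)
Your proposal is correct and follows essentially the same route as the paper: for item (1) you use $\mathcal{I}_1\pd_j=\mathcal{R}_j$ together with Lemma~\ref{lem weighted estimates for riesz potential operator} at $\tilde\del=0$, then the $L^{2/\del}$-boundedness of the Riesz transforms and the inversion $\mathcal{R}\cdot\mathcal{R}=-I$, exactly as the paper does (only written componentwise rather than vectorially); item (2) is the same standard completeness argument.
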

\begin{proof}
    If $f\in\tilde{H}^s_\del\tp{\R^2;\F}$, then $\tnorm{\grad f}_{L^2_\del}\lesssim\tnorm{f}_{\tilde{H}^s_\del}$. Now we apply the Riesz potential operator to $\grad f$ and use Lemma~\ref{lem weighted estimates for riesz potential operator} (with $\tilde{\del}=0$) and the fact that $\mathcal{I}_1\grad=\mathcal{R}$ (see Remark~\ref{remark on the I1 operator}). This gives $\tnorm{\mathcal{R}f}_{L^{2/\del}}\lesssim\tnorm{\grad f}_{L^2_\del}$. Now we use the identity $\mathcal{R}\cdot\mathcal{R}=-I$ and apply $\mathcal{R}\cdot$ to $\mathcal{R}f$ and use the boundedness of Riesz transforms on $L^{2/\del}\tp{\R^d;\F}$ to conclude the proof of the first item.

    Now suppose that $\tcb{f_n}_{n\in\N}\subset\tilde{H}^s_\del\tp{\R^2;\F}$ is Cauchy. Thanks to the embedding of the first item and the completeness of $L^{2/\del}\tp{\R^2;\F}$ we are assured of the existence of $f\in L^{2/\del}\tp{\R^2;\F}$ for which $f_n\to f$ as $n\to\infty$ in the norm on $L^{2/\del}\tp{\R^2;\F}$. On the other hand, thanks to the definition of the norm on $\tilde{H}^s_\del\tp{\R^2;\F}$, we also know that the sequence $\tcb{\grad f_n}_{n\in\N}\subset H^{s-1}_\del\tp{\R^2;\F^2}$ is Cauchy. Remark~\ref{rmk on completeness} establishes completeness of this weighted Sobolev space and so there exists $g\in H^{s-1}_\del\tp{\R^2;\F^2}$ such that $\grad f_n\to g$ as $n\to\infty$ in this norm. By taking the limits in the sense of distributions, we find that  $\grad f=g$ and so $f\in\tilde{H}^{s}_\del\tp{\R^2;\F}$. Finally, we conclude by noting that $\tnorm{f-f_n}_{\tilde{H}^s_\del}=\tnorm{g-\grad f_n}_{H^{s-1}_\del}\to0$ as $n\to\infty$.
\end{proof}

Our next tasks are to develop an important equivalent norm on the weighted gradient spaces and analyze refined low mode embeddings.
\begin{prop}[Equivalent norms and low mode embeddings]\label{prop on equivalent norms and low mode embeddings}
    Let $\F=\R$ or $\F=\C$, $s\in\N^+$, $\del\in(0,1)$, and $\varphi\in C^\infty_{\m{c}}\tp{B(0,2)}$ be a radial function satisfying $\varphi=1$ on $\Bar{B(0,1)}$. The following hold for $f\in\tilde{H}^s_\del\tp{\R^2;\F}$.
    \begin{enumerate}
        \item $\grad\varphi(D)f\in L^2_\del\tp{\R^2;\F^2}$ and $(1-\varphi(D))f\in H^s_\del\tp{\R^2;\F}$; moreover, we have the norm equivalence
        \begin{equation}\label{the low high equivalent norm}
            \tnorm{f}_{\tilde{H}^s_\del}\asymp\tp{\tnorm{\varphi(D)\grad f}_{L^2_\del}^2+\tnorm{(1-\varphi(D))f}^2_{H^s_\del}}^{1/2}
        \end{equation}
        for implicit constants depending only on $s$, $\del$, and $\varphi$.
        \item Given $\tilde{\del}\in[0,\del)$ and $q=2\tp{\del-\tilde{\del}}^{-1}$ we have that $\varphi(D)f\in W^{\infty,q}_{\tilde{\del}}\tp{\R^2;\F}$; moreover, for any $t\in\N$ we have the estimate
        \begin{equation}\label{refined information on the low modes}
            \tnorm{\varphi(D)f}_{W^{t,q}_{\tilde{\del}}}\lesssim\tnorm{f}_{\tilde{H}^1_\del},
        \end{equation}
        for an implicit constant depending only on $\del$, $\tilde{\del}$, $t$, and $\varphi$.
    \end{enumerate}
\end{prop}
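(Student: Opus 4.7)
The proof treats the two items in sequence, both relying on the splitting $f = \varphi(D)f + (1-\varphi(D))f$ and on standard Fourier-multiplier bounds on weighted spaces.

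For the forward direction of item~1, the easy piece is $\grad\varphi(D)f = \varphi(D)\grad f$, which lies in $L^2_\del$ because $\check\varphi$ is Schwartz and convolution with a Schwartz kernel is bounded on $L^2_\del$ via the splitting $\tbr{x}^\del \lesssim \tbr{x-y}^\del\tbr{y}^\del$ followed by Young's inequality. To show $(1-\varphi(D))f\in H^s_\del$, I would invoke Proposition~\ref{prop on equivalent norms in the weighted Sobolev spaces} and check $\pd^\al(1-\varphi(D))f\in L^2_\del$ for all $|\al|\le s$. When $|\al|\ge 1$ this is straightforward: commute the multiplier past the derivative and use that $\pd^\al f \in L^2_\del$ is inherited from $\grad f\in H^{s-1}_\del$, combined with the boundedness of $I-\varphi(D)$ on $L^2_\del$. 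The substantive step is the case $|\al|=0$, for which I would write
\begin{equation*}
    (1-\varphi(D))f = \sum_{j=1}^2 m_j(D)\pd_j f, \qquad m_j(\xi) = -i\xi_j(1-\varphi(\xi))/|\xi|^2.
\end{equation*}
Because $1-\varphi$ vanishes identically on $\bar B(0,1)$, the symbol $m_j$ is smooth on all of $\R^2$ and satisfies the Mikhlin-type bounds $|\pd^\gam m_j(\xi)|\lesssim (1+|\xi|)^{-1-|\gam|}$; since the weight $\tbr{X}^{2\del}$ belongs to the Muckenhoupt class $A_2(\R^2)$ for $\del\in(0,1)$, the weighted H\"ormander-Mikhlin theorem gives boundedness of $m_j(D)$ on $L^2_\del$, closing the bound with $\tnorm{f}_{\tilde H^s_\del}$ on the right.

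For the reverse direction of item~1, $\grad(1-\varphi(D))f\in H^{s-1}_\del$ is automatic from the hypothesis via Proposition~\ref{prop on equivalent norms in the weighted Sobolev spaces}, while $\grad\varphi(D)f$ has compact Fourier support in $\bar B(0,2)$, so the identity $\pd^\al\grad\varphi(D)f = [(i\xi)^\al\chi](D)\grad\varphi(D)f$ with $\chi\in C^\infty_{\m{c}}$ equal to one on $\bar B(0,2)$ promotes it from $L^2_\del$ to $H^{s-1}_\del$ by another convolution-with-Schwartz estimate. Adding these gives $\grad f\in H^{s-1}_\del$. To obtain $f\in L^{2/\del}$, I would split $f=\varphi(D)f+(1-\varphi(D))f$: the high-frequency piece lies in $L^{2/\del}$ via an embedding of $H^s_\del$ afforded by Proposition~\ref{prop on embeddings of weighted Sobolev spaces}, and the low-frequency piece is handled by exactly the Riesz argument used for item~2 below, specialized to $\tilde\del=0$.

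For item~2, I would combine the identity $\mathcal{R}\cdot\mathcal{R} = -I$ with the relation $\mathcal{R}=\mathcal{I}_1\grad$ from Remark~\ref{remark on the I1 operator} to write $\varphi(D)f = -\mathcal{R}\cdot\mathcal{I}_1(\varphi(D)\grad f)$. Since $\varphi(D)\grad f\in L^2_\del$ with norm controlled by $\tnorm{f}_{\tilde H^1_\del}$, Lemma~\ref{lem weighted estimates for riesz potential operator} places $\mathcal{I}_1(\varphi(D)\grad f)$ in $L^q_{\tilde\del}$, and boundedness of the Riesz transforms on $L^q_{\tilde\del}$ --- guaranteed by $\tbr{X}^{q\tilde\del}\in A_q(\R^2)$, whose verification reduces to the inequality $\del<2$ --- closes the bound at $t=0$. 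For general $t\in\N$, the function $\pd^\beta\varphi(D)f$ has Fourier support in $\bar B(0,2)$, so the identity $\pd^\beta\varphi(D)f = [(i\xi)^\beta\chi](D)\varphi(D)f$ with $\chi\in C^\infty_{\m{c}}$ equal to one on $\bar B(0,2)$ reduces matters to convolution with a Schwartz kernel acting on $L^q_{\tilde\del}$. I expect the main obstacle to be organizing these Fourier-multiplier bounds cleanly across the two items; the crucial inputs are weighted H\"ormander-Mikhlin for the $|\al|=0$ contribution to item~1 and the Riesz potential/transform pairing for item~2, both essentially off-the-shelf once membership of $\tbr{X}^{2\del}$ in $A_2(\R^2)$ and of $\tbr{X}^{q\tilde\del}$ in $A_q(\R^2)$ has been verified.
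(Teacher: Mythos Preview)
Your proposal is correct and follows essentially the same route as the paper: the same multiplier decomposition $(1-\varphi(D))f=\sum_j m_j(D)\pd_j f$ for item~1, the same Riesz potential/transform pairing $\varphi(D)f=-\mathcal{R}\cdot\mathcal{I}_1(\varphi(D)\grad f)$ with Muckenhoupt $A_q$ bounds for item~2 at $t=0$, and the same Bernstein-type promotion via convolution with a Schwartz kernel for higher $t$. The only notable difference is that where you invoke the weighted H\"ormander--Mikhlin theorem via $\tbr{X}^{2\del}\in A_2$, the paper instead appeals to its in-house multiplier result Proposition~\ref{prop on improved multiplier bounds}; also, your discussion of recovering $f\in L^{2/\del}$ in the reverse direction of item~1 is unnecessary since $f\in\tilde H^s_\del$ is already assumed.
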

\begin{proof}
    Let us begin by proving `$\gtrsim$' in~\eqref{the low high equivalent norm}. For $j\in\tcb{1,2}$ we define the multipliers
    \begin{equation}
        m_j(\xi)=\tp{1-\varphi(\xi)}\f{\xi_j}{2\pi\ii|\xi|^2}\quad\text{which satisfy}\quad\sum_{j=1}^2m_j(D)\pd_j=(1-\varphi(D)).
    \end{equation}
    We use Proposition~\ref{prop on improved multiplier bounds}, the fact that As $[m_j]_{1,-1}<\infty$ (see~\eqref{the hypotheses on the multiplier}), and Remark~\ref{rmk on gen = reg on natty}) to deduce the estimate
    \begin{equation}
        \tnorm{(1-\varphi(D))f}_{H^s_\del}\le\sum_{j=1}^2\tnorm{m_j(D)\pd_jf}_{H^s_\del}\lesssim\tnorm{\grad f}_{H^{s-1}_\del}=\tnorm{f}_{\tilde{H}^{s}_\del}.
    \end{equation}
    On the other hand, the low mode estimate is even easier, since we need only use the first item of Proposition~\ref{prop on a multiplier bound an weighted Bernstein inequalities} followed by Proposition~\ref{prop on embeddings of weighted Sobolev spaces} to deduce that
    \begin{equation}
        \tnorm{\varphi(D)\grad f}_{L^2_\del}\lesssim\tnorm{\grad f}_{L^2_\del}\lesssim\tnorm{\grad f}_{H^{s-1}_\del}=\tnorm{f}_{\tilde{H}^s_\del}.
    \end{equation}

    Next, we turn our attention to the `$\lesssim$' inequality of~\eqref{the low high equivalent norm}. We begin with the triangle inequality
    \begin{equation}\label{{}-{}}
        \tnorm{f}_{\tilde{H}^s_\del}\le\tnorm{\grad\varphi(D)f}_{H^{s-1}_\del}+\tnorm{(1-\varphi(D))\grad f}_{H^{s-1}_\del}.
    \end{equation}
    For the low mode piece above we need only look to the second item of Proposition~\ref{appendix on harmonic analysis in weighted square summable Sobolev spaces}, which permits the estimate $\tnorm{\grad\varphi(D)f}_{H^{s-1}_\del}\lesssim\tnorm{\grad\varphi(D)f}_{L^2_\del}$. For the high mode piece in~\eqref{{}-{}} we instead quote the boundedness of $\grad: H^s_\del\tp{\R^2;\F}\to H^{s-1}_\del\tp{\R^2;\F^2}$ from Proposition~\ref{prop on improved multiplier bounds} and Remark~\ref{rmk on gen = reg on natty}. This completes the proof of the first item.

    We now prove the second item. We first establish the estimate~\eqref{refined information on the low modes} with $t=0$. Combining the weighted bounds for Riesz potential operators from Lemma~\ref{lem weighted estimates for riesz potential operator} with the first item, we find that
    \begin{equation}\label{:::}
        \tnorm{\mathcal{I}_1\grad\varphi(D)f}_{L^q_{\tilde{\del}}}\lesssim\tnorm{\grad\varphi(D)f}_{L^2_\del}\lesssim\tnorm{f}_{\tilde{H}^1_\del}.
    \end{equation}
    Now, by arguing as in the proof of Proposition~\ref{prop on completeness of the weighted gradient spaces} we know that $\mathcal{I}_1\grad=\mathcal{R}$. We again would like to use $\mathcal{R}\cdot\mathcal{R}+I=0$ to then relate the above estimate to one on $\varphi(D)f$ itself. To achieve this we require an auxiliary result on the boundedness of the Riesz transform operators on certain weighted Lebesgue spaces.  Namely, we claim that
    \begin{equation}\label{Reisz potential boundedness}
        \mathcal{R}_j:L^q_{\tilde{\del}}\tp{\R^2;\F}\to L^q_{\tilde{\del}}\tp{\R^2;\F} \text{ is a bounded linear operator for } j\in\tcb{1,2}.
    \end{equation}
    To establish the claim we first note that $L^q_{\tilde{\del}}\tp{\R^2}=L^q(\R^2:\tbr{X}^{q\tilde{\del}})$, where the space on the right is the Lebesgue space equipped with the measure $\tbr{X}^{q\tilde{\del}}\;\m{d}x$.  As a consequence of the first corollary in Section 4.2 in Chapter 5 of Stein~\cite{MR1232192}, we have that~\eqref{Reisz potential boundedness} holds provided that $\tbr{X}^{q\tilde{\del}}\in A_q$, where $A_q$ denotes the space of Muckenhoupt measures. Thanks to the first equation in Section 3.2 in Duarte and Silva~\cite{MR4593125}, we deduce that this condition holds as soon as $\tilde{\del}<2(1-1/q)$, but this latter condition is indeed satisfied since $\tilde{\del}<1$ and $q>2$ by hypothesis. Therefore, the claim~\eqref{Reisz potential boundedness} holds, and together with~\eqref{:::} provides the estimate
    \begin{equation}\label{case s = 0 bound}
        \tnorm{\varphi(D)f}_{L^q_{\tilde{\del}}}=\tnorm{\mathcal{R}\cdot\mathcal{R}\varphi(D)f}_{L^q_{\tilde{\del}}}\lesssim\tnorm{\mathcal{I}_1\grad\varphi(D)f}_{L^q_{\tilde{\del}}}\lesssim\tnorm{f}_{\tilde{H}^1_\del}.
    \end{equation}

     We now establish estimate~\eqref{refined information on the low modes} for $t>0$. Let $\tilde{\varphi}\in C^\infty_{\m{c}}(B(0,4))$ be another radial function such that $\tilde{\varphi}=1$ on $\Bar{B(0,2)}$. Notice that $\varphi(D)f=\tilde{\varphi}(D)\varphi(D)f=K\ast\tp{\varphi(D)f}$ where $K=\mathscr{F}^{-1}[\tilde{\varphi}]$. Since $\tilde{\varphi}$ is a Schwartz function, we have that $K\in\mathscr{S}\tp{\R^2;\F}$ (and all of its derivatives are Schwartz as well).  We then deduce that $\varphi(D)f$ is smooth and for all $\al\in\N^2$ we have $\pd^\al\varphi(D)f=(\pd^\al K)\ast\tp{\varphi(D)f}$. For any $\psi\in\mathscr{S}\tp{\R^d;\F}$ the linear map $L^q_{\tilde{\del}}\tp{\R^2;\F}\ni g\mapsto g\ast\psi\in L^q_{\tilde{\del}}\tp{\R^2;\F}$ is easily seen to be continuous, and so we deduce (using also~\eqref{case s = 0 bound}) that $\tnorm{\pd^\al\varphi(D)f}_{L^q_{\tilde{\del}}}\lesssim\tnorm{\varphi(D)f}_{L^q_{\tilde{\del}}}\lesssim\tnorm{f}_{\tilde{H}^1_\del}$ for an implicit constant depending on $K$, $\del$, $\tilde{\del}$, $q$, and $\al$. Estimate~\eqref{refined information on the low modes} now follows after heeding to Proposition~\ref{prop on equivalent norms in the weighted Sobolev spaces}.
\end{proof}

\begin{rmk}[Embedding of the weighted Sobolev spaces]\label{rmk on embedding of the weighted Sobolev spaces}
    By using the equivalent norm from the first item of Proposition~\ref{prop on equivalent norms and low mode embeddings} and the multiplier estimate from the first item of Proposition~\ref{prop on a multiplier bound an weighted Bernstein inequalities} it is straightforward to deduce the embedding $H^s_\del\tp{\R^2;\F}\emb\tilde{H}^s_\del\tp{\R^2;\F}$.
\end{rmk}

We now provide a weighted gradient space analog of Proposition~\ref{prop on embeddings of weighted Sobolev spaces}.
\begin{prop}[Inclusion relations]\label{prop on inclusion relations}
    Let $s,s_0,s_1\in\N^+$ and $\del,\del_0,\del_1\in\tp{0,1}$. The following hold.
    \begin{enumerate}
        \item If $s_0\le s_1$ and $\del_0\le\del_1$ then $\tilde{H}^{s_1}_{\del_1}\tp{\R^2;\F}\emb\tilde{H}^{s_0}_{\del_0}\tp{\R^2;\F}$.
        \item If $s_0<s_1$ and $\del_0<\del_1$ then the embedding from the first item is compact.
        \item If $s\ge 2$ and $r\in[2,\infty)$ the embedding $\tilde{H}^s_\del\tp{\R^2;\F}\emb\tp{C^0\cap L^\infty\cap\dot{W}^{1,r}}\tp{\R^2;\F}$ is compact.
    \end{enumerate}
\end{prop}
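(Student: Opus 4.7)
The plan is to base all three parts on the equivalent norm for $\tilde{H}^s_\del$ from the first item of Proposition~\ref{prop on equivalent norms and low mode embeddings}, which splits $\tnorm{f}_{\tilde{H}^s_\del}$ into a low-frequency contribution $\tnorm{\varphi(D)\grad f}_{L^2_\del}$ and a high-frequency contribution $\tnorm{(1-\varphi(D))f}_{H^s_\del}$.  Each piece can then be fed into the embedding/compactness machinery of Proposition~\ref{prop on embeddings of weighted Sobolev spaces}, supplemented by the refined low-mode control in the second item of Proposition~\ref{prop on equivalent norms and low mode embeddings}.

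For item (1), the continuous embedding $L^2_{\del_1}\emb L^2_{\del_0}$ handles the low-mode piece, while Proposition~\ref{prop on embeddings of weighted Sobolev spaces} applied with $p_1=p_0=2$ handles the high-mode piece via $H^{s_1}_{\del_1}\emb H^{s_0}_{\del_0}$; the equivalent-norm decomposition then assembles these into the desired continuous inclusion.  For item (2), given a bounded sequence $\{f_n\}\subset\tilde{H}^{s_1}_{\del_1}$, the strict inequalities $s_0<s_1$ and $\del_0<\del_1$ make the embedding $H^{s_1}_{\del_1}\emb H^{s_0}_{\del_0}$ compact, producing a subsequence along which $(1-\varphi(D))f_n$ converges in $H^{s_0}_{\del_0}$.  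For the low-mode part, the observation $\varphi(D)\grad f_n=\grad\varphi(D)f_n$ is spectrally localized in $B(0,2)$, so by the weighted Bernstein inequalities (Proposition~\ref{prop on a multiplier bound an weighted Bernstein inequalities}) its $L^2_{\del_1}$ bound self-improves to an $H^1_{\del_1}$ bound; then the strict-weight compact embedding $H^1_{\del_1}\emb L^2_{\del_0}$ extracts a further convergent subsequence, and the equivalent norm recombines both pieces into convergence in $\tilde{H}^{s_0}_{\del_0}$.

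For item (3) we treat the three target seminorms separately for a bounded sequence $\{f_n\}\subset\tilde{H}^s_\del$ with $s\ge 2$.  The $\dot{W}^{1,r}$ piece reduces to showing $\{\grad f_n\}$ is relatively compact in $L^r\tp{\R^2;\F^2}$, which follows from Proposition~\ref{prop on embeddings of weighted Sobolev spaces} with $(p_1,p_0,s_1,s_0,\del_1,\del_0)=(2,r,s-1,0,\del,0)$, where the strictness conditions are met for every $r\in[2,\infty)$ since $s-1\ge 1$ and $\del>0$.  For compactness into $L^\infty$, the high-mode part $(1-\varphi(D))f_n$ is bounded in $H^s_\del$ and embeds compactly into $L^\infty$ by Proposition~\ref{prop on embeddings of weighted Sobolev spaces} (the critical strict inequality $1/2-s/2<0$ holds as $s\ge 2$), whereas by the second item of Proposition~\ref{prop on equivalent norms and low mode embeddings} the low-mode part $\varphi(D)f_n$ is uniformly bounded in $W^{t,q}_{\tilde\del}\tp{\R^2;\F}$ for any $t\in\N$ and any $\tilde\del\in(0,\del)$ with $q=2\tp{\del-\tilde\del}^{-1}$, and choosing $t$ sufficiently large yields a compact embedding into $L^\infty$ by another application of Proposition~\ref{prop on embeddings of weighted Sobolev spaces}.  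Passing to a common subsequence and using that uniform limits of continuous functions are continuous upgrades the $L^\infty$ convergence to convergence in $C^0\cap L^\infty$.

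The technical crux throughout is the low-mode contribution in items (2) and (3): the naive inclusion $L^2_{\del_1}\emb L^2_{\del_0}$ lacks the regularity required for compactness, and the Fourier localization supplied by $\varphi(D)$ (together with the weighted Bernstein inequalities and the refined estimate~\eqref{refined information on the low modes}) is precisely what converts the bare $L^2_\del$ control into bounds in higher-regularity weighted spaces where the strict-parameter compactness theorem applies.
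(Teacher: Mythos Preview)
Your argument is correct but follows a different route from the paper in items~(1) and~(2).  For item~(1) the paper simply invokes the definition $\tnorm{f}_{\tilde{H}^s_\del}=\tnorm{\grad f}_{H^{s-1}_\del}$ and the embedding $H^{s_1-1}_{\del_1}\emb H^{s_0-1}_{\del_0}$ from Proposition~\ref{prop on equivalent norms in the weighted Sobolev spaces}; your low/high split via Proposition~\ref{prop on equivalent norms and low mode embeddings} works but is unnecessary here.  The real contrast is item~(2): the paper argues directly on the gradient, extracting strong convergence of $\grad f_n$ in $H^{s_0-1}_{\del_0}$ via the compact embedding of Proposition~\ref{prop on embeddings of weighted Sobolev spaces}, pairing it with weak $L^{2/\del_0}$ convergence of $f_n$ (by reflexivity), and using distributional limits to identify $\grad f=g$.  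Your approach instead keeps the low/high decomposition, upgrades the band-limited low-mode gradient from $L^2_{\del_1}$ to $H^1_{\del_1}$ via the Bernstein inequality, and then extracts strong limits for both pieces separately; completeness of $\tilde{H}^{s_0}_{\del_0}$ assembles them.  Your route is more uniform with your treatment of items~(1) and~(3) and sidesteps the weak-convergence identification step, at the cost of invoking an extra auxiliary result (Bernstein).  The paper's route is shorter for this item and uses only the definition of the norm.  For item~(3) both arguments use the low/high split in essentially the same way, with minor differences in which intermediate space houses the low-mode part.
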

\begin{proof}
    The first item follows from the inclusions established in Proposition \ref{prop on equivalent norms in the weighted Sobolev spaces}.  We now prove the second item.  Suppose that $\tcb{f_n}_{n\in\N}\subset\tilde{H}^{s_1}_{\del_1}\tp{\R^2;\F}$ is a bounded sequence. Thanks to the definition of the norm on this space we have that the sequence of gradients $\tcb{\grad f_n}_{n\in\N}\subset H^{s_1-1}_{\del_1}\tp{\R^2;\F^2}$ is also bounded; moreover, from the first item and Proposition~\ref{prop on completeness of the weighted gradient spaces} we have that $\tcb{f_n}_{n\in\N}\subset L^{2/\del_0}\tp{\R^2;\F}$ is also a bounded sequence. Due to the reflexivity of this Lebesgue space and Proposition~\ref{prop on embeddings of weighted Sobolev spaces} we are assured the existence of $f\in L^{2/\del_0}\tp{\R^2;\F}$ and $g\in H^{s_0-1}_{\del_0}\tp{\R^2;\F}$ with the property that (along a subsequence we neglect to relabel)
    \begin{equation}\label{___do you hear the street sweeper it is a very nice street sweeper___}
        f_n\rightharpoonup f\;\text{in }L^{2/\del_0}\tp{\R^2;\F}\quad\text{and}\quad\grad f_n\to g\;\text{in }H^{s_0-1}_{\del_0}\tp{\R^2;\F^2}
    \end{equation}
    as $n\to\infty$, where the former convergence is weak and the latter is strong. By considering distributional limits, we deduce that $g=\grad f$ and so $f\in\tilde{H}^{s_0}_{\del_0}\tp{\R^2;\F}$. The strong convergence in~\eqref{___do you hear the street sweeper it is a very nice street sweeper___} paired with the definition of the norm on $\tilde{H}^{s_0}_{\del_0}\tp{\R^2;\F}$ implies that $f_n\to f$ strongly in this space as $n\to\infty$.

    We now prove the third item. Let $\tcb{f_n}_{n\in\N}\subset\tilde{H}^s_\del\tp{\R^2;\F}$ be a bounded sequence. We split into a low mode and low mode compliment sequence with $\varphi$ as in Proposition~\ref{prop on equivalent norms and low mode embeddings}. By this result (and also the second item of Proposition~\ref{prop on a multiplier bound an weighted Bernstein inequalities}) we find then that the sequences $\tcb{\varphi(D)f_n}_{n\in\N}\subset W^{1,\infty}_{\del/2}\tp{\R^2;\F}$, $\tcb{\grad\varphi(D)f_n}_{n\in\N}\subset H^{2}_\del\tp{\R^2;\F^2}$, and $\tcb{\tp{1-\varphi(D)}f_{n}}_{n\in\N}\subset H^2_\del\tp{\R^2;\F}$ are all bounded.  We have that the embeddings $W^{1,\infty}_{\del/2}\tp{\R^2;\F}\emb \tp{C^0\cap L^\infty}\tp{\R^2;\F}$, $H^2_\del\tp{\R^2;\F}\emb \tp{C^0\cap L^\infty\cap W^{1,r}}\tp{\R^2;\F}$ are all compact by Proposition~\ref{prop on embeddings of weighted Sobolev spaces}; therefore, we can extract a (non-labeled) subsequence with $\tcb{\varphi(D)f_n}_{n\in\N}$ Cauchy in $\tp{C^0\cap L^\infty\cap\dot{W}^{1,r}}\tp{\R^2;\F}$ and $\tcb{\tp{1-\varphi\tp{D}}f_n}_{n\in\N}$ Cauchy in $\tp{C^0\cap L^\infty\cap W^{1,r}}\tp{\R^2;\F}$. By summing and passing to the limit, we obtain the sought after compact embedding.
    \end{proof}

We next record an important density result.

\begin{prop}[Density of functions of bounded support]\label{prop on density of functions of bounded support}
    For $\F=\R$ or $\F=\C$, $s\in\N^+$, and $\del\in\tp{0,1}$ we have that $C^\infty_{\m{c}}\tp{\R^2;\F}\subset\tilde{H}^s_\del\tp{\R^2;\F}$ is dense.
\end{prop}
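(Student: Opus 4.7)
The plan is to split $f\in\tilde{H}^s_\del\tp{\R^2;\F}$ via the projectors from Proposition~\ref{prop on equivalent norms and low mode embeddings} and handle each piece separately. Fix a radial $\varphi\in C^\infty_{\m{c}}(B(0,2);\R)$ with $\varphi=1$ on $\Bar{B(0,1)}$, and decompose $f=f_L+f_H$ with $f_L=\varphi(D)f$ and $f_H=(1-\varphi(D))f$. The first item of Proposition~\ref{prop on equivalent norms and low mode embeddings} places $f_H$ in $H^s_\del\tp{\R^2;\F}$, where $C^\infty_{\m{c}}$ is dense by Remark~\ref{rmk on completeness}; composing with the embedding $H^s_\del\emb\tilde{H}^s_\del$ from Remark~\ref{rmk on embedding of the weighted Sobolev spaces} immediately yields approximation of $f_H$ by $C^\infty_{\m{c}}$ functions in the weighted gradient norm.

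The low-frequency piece $f_L$ is smooth but in general lacks $L^2_\del$ decay, so I will approximate it via a spatial cutoff. With $\chi\in C^\infty_{\m{c}}(B(0,2);\R)$ equal to $1$ on $\Bar{B(0,1)}$ and $\chi_R(x)=\chi(x/R)$, let $g_R=\chi_R f_L\in C^\infty_{\m{c}}\tp{\R^2;\F}$; the goal becomes $\grad g_R\to\grad f_L$ in $H^{s-1}_\del$ as $R\to\infty$. Two a priori bounds on derivatives of $f_L$ drive the estimate. First, the second item of Proposition~\ref{prop on equivalent norms and low mode embeddings} applied at $\tilde{\del}=0$ gives $\pd^\be f_L\in L^{2/\del}\tp{\R^2;\F}$ for every multi-index $\be$. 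Second, writing $\pd^\be\grad f_L=\varphi_\be(D)\grad f$ for the $C^\infty_{\m{c}}$ symbol $\varphi_\be(\xi)=(2\pi\ii\xi)^\be\varphi(\xi)$ and invoking the weighted multiplier bound of Proposition~\ref{prop on a multiplier bound an weighted Bernstein inequalities} produces $\pd^\be\grad f_L\in L^2_\del\tp{\R^2;\F^2}$ for every $\be$.

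Expanding $\grad(g_R-f_L)=(\grad\chi_R)f_L+(\chi_R-1)\grad f_L$ and applying Leibniz to $\pd^\al\grad(g_R-f_L)$ for $|\al|\le s-1$, every summand has one of two forms. The first form is $(\chi_R-1)\pd^\al\grad f_L$, which vanishes in $L^2_\del$ by dominated convergence, since the integrand is dominated by $4\tabs{\pd^\al\grad f_L}^2\tbr{X}^{2\del}\in L^1$ and converges pointwise to $0$. The second form is $(\pd^\gam\chi_R)(\pd^\be f_L)$ with $|\gam|\ge 1$; its support lies in $\tcb{R\le|x|\le 2R}$ and $\tnorm{\pd^\gam\chi_R}_{L^\infty}\lesssim R^{-|\gam|}$. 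H\"older on the annulus with exponent pair $(1/\del,1/(1-\del))$ then yields
\begin{equation}
\tnorm{(\pd^\gam\chi_R)\pd^\be f_L}_{L^2_\del}^2\lesssim R^{-2|\gam|+2}\bp{\int_{|x|\ge R}\tabs{\pd^\be f_L}^{2/\del}\;\m{d}x}^{\del},
\end{equation}
which tends to $0$ as $R\to\infty$ by the $L^{2/\del}$ tail decay of $\pd^\be f_L$. Summing the two approximations produces a sequence in $C^\infty_{\m{c}}$ converging to $f$ in $\tilde{H}^s_\del$.

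The main technical obstacle is working around the fact that $f_L\in L^2_\del$ is not guaranteed, which rules out a direct dominated-convergence argument on the term $(\grad\chi_R)f_L$. Exploiting the band-limited structure of $f_L$ through the second item of Proposition~\ref{prop on equivalent norms and low mode embeddings} to obtain $L^{2/\del}$-control on all its derivatives, and then absorbing the resulting weight mismatch by the H\"older estimate on the annulus, is the heart of the argument.
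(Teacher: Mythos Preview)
Your proof is correct, but it is organized differently from the paper's. The paper truncates $f$ directly by $\varphi_R f$ (with $\varphi_R$ a spatial cutoff), estimates $\tnorm{f-\varphi_Rf}_{\tilde{H}^s_\del}$ via the low--high frequency decomposition together with the weighted product estimates of Proposition~\ref{prop on product estimates}, and then mollifies the compactly supported result. You instead perform the frequency splitting $f=f_L+f_H$ first, dispatch $f_H\in H^s_\del$ by the already-known density of $C^\infty_{\m{c}}$ there (Remark~\ref{rmk on completeness} plus the embedding $H^s_\del\emb\tilde{H}^s_\del$), and reserve the spatial cutoff for the smooth piece $f_L$, where the $W^{\infty,2/\del}$ control from Proposition~\ref{prop on equivalent norms and low mode embeddings} combined with the annular H\"older estimate handles the commutator terms. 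Your route sidesteps both the product-estimate machinery and the mollification step, at the cost of invoking density in $H^s_\del$ as a black box; the paper's route is more self-contained but heavier. Both arguments ultimately rest on the same key input: the $L^{2/\del}$ integrability of the low-frequency part and its derivatives.
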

\begin{proof}
    Fix $\varphi\in C^\infty_{\m{c}}\tp{\R^2}$ radial with $\m{supp}\tp{\varphi}\subset B(0,2)$ and $\varphi=1$ on $\Bar{B(0,1)}$. We begin by arguing that the subspace of functions of bounded support is dense. To this end, fix $f\in\tilde{H}^s_\del\tp{\R^2;\F}$. For any $\psi\in C^\infty_{\m{c}}\tp{\R^2;\F}$ we may use Propositions~\ref{prop on product estimates} and~\ref{prop on equivalent norms and low mode embeddings} to deduce that the pointwise product $\psi f$ belongs to $\tilde{H}^s_\del\tp{\R^2;\F}$, as
    \begin{multline}
        \tnorm{\psi f}_{\tilde{H}^s_{\del}}\le\tnorm{\psi\grad f}_{H^{s-1}_{\del}}+\tnorm{\varphi(D)f\grad\psi}_{H^{s-1}_\del}+\tnorm{\tp{1-\varphi(D)}f\grad\psi}_{H^{s-1}_\del}
        \lesssim\tnorm{\psi}_{W^{s-1,\infty}}\tnorm{\grad f}_{H^{s-1}_\del}\\+\tnorm{\varphi(D)f}_{W^{s-1,2/\del}}\tnorm{\grad\psi}_{W^{s-1,2/(1-\del)}_\del}+\tnorm{\tp{1-\varphi(D)}f}_{H^{s-1}_\del}\tnorm{\grad\psi}_{W^{s-1,\infty}}\lesssim_\psi\tnorm{f}_{\tilde{H}^{s}_\del}.
    \end{multline}
    Now, given $R\in[1,\infty)$ we take $\psi=\varphi_R$, where $\varphi_R=\varphi(x/R)$. The above shows that $\varphi_Rf\in\tilde{H}^s_{\del}\tp{\R^2;\F}$ and is a function of bounded support for every $R$.  Now we shall study the convergence of $\varphi_Rf$ to the function $f$ as $R\to\infty$. Initially, we have
    \begin{equation}
        \tnorm{f-\varphi_R f}_{\tilde{H}^s_\del}\le\tnorm{\tp{1-\varphi_R}\grad f}_{H^{s-1}_\del}+\tnorm{\varphi(D)f\grad\varphi_R}_{H^{s-1}_\del}+\tnorm{\tp{1-\varphi(D)}f\grad\varphi_R}_{H^{s-1}_\del}.
    \end{equation}
    From the support conditions on $\varphi$, we have $\grad\varphi_R=(1-\varphi_{R/2})\grad\varphi_R$ and so we can give this additional factor to the $f$ contributions in the penultimate term above. We then use the product estimates of Proposition~\ref{prop on product estimates} with the norm equivalence of the first item of Proposition~\ref{prop on equivalent norms and low mode embeddings} and the bounds
    \begin{equation}
        \tnorm{\grad\varphi_{R}}_{W^{s-1,2/(1-\del)}_\del}\lesssim_\varphi 1 
        \text{ and  }
        \tnorm{\grad\varphi_R}_{W^{s-1,\infty}}\lesssim_\varphi R^{-1}
    \end{equation}
    to deduce that
    \begin{equation}
        \tnorm{f-\varphi_Rf}_{\tilde{H}^s_\del}\lesssim\tnorm{\tp{1-\varphi_R}\grad f}_{H^{s-1}_\del}+\tnorm{\tp{1-\varphi_{R/2}}\varphi(D)f}_{W^{s-1,2/\del}}+R^{-1}\tnorm{f}_{\tilde{H}^s_\del}.
    \end{equation}
    Now we have an expression in which the vanishing limit as $R\to\infty$ is clear, since $f\in\tilde{H}^{s}_\del\tp{\R^2;\F}$ implies that $\grad f\in H^{s-1}_\del\tp{\R^2;\F^2}$ and (by the second item of Proposition~\ref{prop on equivalent norms and low mode embeddings}) $\varphi(D)f\in W^{s-1,2/\del}\tp{\R^2;\F}$. The density of functions of bounded support now follows.

    We conclude by showing the density of smooth and compactly supported functions. Given any $f\in\tilde{H}^s_\del\tp{\R^2;\F}$ and an $\ep\in\R^+$, the above argument grants us the existence of $g\in\tilde{H}^s_\del\tp{\R^2;\F}$ such that $g$ has bounded support and $\tnorm{f-g}_{\tilde{H}^s_\del}<\ep/2$. For any $\mu>0$ we then let $g_\mu=g\ast\upvarphi_{\mu}$, with $\upvarphi_\mu(x)=\tp{\mu^2\int_{\R^2}\varphi}^{-1}\varphi(x/\mu)$ denoting the standard sequence of mollifiers generated by $\varphi$. Since $g\in L^{2/\del}(\R^2;\F)$ by the first item of Proposition~\ref{prop on completeness of the weighted gradient spaces}, we know that $g_\mu\in L^{2/\del}\tp{\R^2;\F}$ as well.  Moreover, the compactness of the supports of $\varphi$ and $g$ implies that $g_\mu$ also has compact support.  Finally, since $\grad g_\mu=\tp{\grad g}\ast\upvarphi_\mu$ and $\grad g\in H^{s-1}_\del\tp{\R^2;\F^2}$, standard properties of mollification operators imply that $\grad g_\mu\to\grad g$ in $H^{s-1}_\del\tp{\R^2;\F^2}$ as $\mu\to0$. In particular, $g_\mu\in\tilde{H}^s_\del\tp{\R^2;\F}$ and there exists $\upmu>0$ such that $\tnorm{g-g_{\upmu}}_{\tilde{H}^{s}_\del}<\ep/2$. It then follows that $\tnorm{f-g_\upmu}_{\tilde{H}^{s}_\del}<\ep$.
\end{proof}

The remainder of this subsection is devoted to nonlinear analysis in the weighted gradient spaces.

\begin{prop}[Product estimates in weighted gradient spaces]\label{prop on product estimates in weighted gradient spaces}
    Let $\F=\R$ or $\F=\C$, $\N\ni s\ge 2$, and $\del\in(0,1)$. The following hold.
    \begin{enumerate}
        \item Given any $\tilde{\del}\in[0,\del)$ we have the embedding $\tilde{H}^s_\del\tp{\R^2;\F}\emb L^\infty_{\tilde{\del}}\tp{\R^2;\F}$.
        \item $\tilde{H}^s_\del\tp{\R^2;\F}$ is a Banach algebra under pointwise multiplication. In fact, for any $\rho\in\tp{0,1}$ and $\max\tcb{\del,\rho}\le\mu<\min\tcb{1,\rho+\del}$ the pointwise product map
        \begin{equation}
            \tilde{H}^s_\del\tp{\R^2;\F}\times\tilde{H}^s_\rho\tp{\R^2;\F}\ni\tp{f,g}\mapsto fg\in\tilde{H}^s_\mu\tp{\R^2;\F}
        \end{equation}
        is continuous.
        \item Given any $\tilde{\del}\in[0,\del)$ and $\rho\in[0,\infty)$ the pointwise product map
        \begin{equation}\label{a wonderful product map}
            \tilde{H}^s_\del\tp{\R^2;\F}\times H^s_\rho\tp{\R^2;\F}\ni(f,g)\mapsto fg\in H^{s}_{\rho+\tilde{\del}}\tp{\R^2;\F}
        \end{equation}
        is continuous.
    \end{enumerate}
\end{prop}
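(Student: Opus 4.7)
The plan is to prove the three items in sequence, leveraging the low/high frequency decomposition of Proposition~\ref{prop on equivalent norms and low mode embeddings}, the weighted product estimate of Proposition~\ref{prop on product estimates}, and the embeddings of Proposition~\ref{prop on embeddings of weighted Sobolev spaces}. Throughout, I fix a radial cutoff $\varphi \in C^\infty_{\m{c}}\tp{B(0,2)}$ with $\varphi = 1$ on $\Bar{B(0,1)}$ and write $f = \varphi(D)f + (1-\varphi(D))f$.

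\textbf{Item 1.} For the embedding $\tilde{H}^s_\del \emb L^\infty_{\tilde{\del}}$, I would handle the two frequency pieces separately. By the first item of Proposition~\ref{prop on equivalent norms and low mode embeddings}, the high-frequency piece $(1-\varphi(D))f$ lies in $H^s_\del$; since $s \ge 2 > d/2 = 1$, Proposition~\ref{prop on embeddings of weighted Sobolev spaces} yields the continuous embedding $H^s_\del \emb L^\infty_{\tilde{\del}}$ for any $\tilde{\del} \le \del$. The second item of Proposition~\ref{prop on equivalent norms and low mode embeddings} puts the low-frequency piece $\varphi(D)f$ in $W^{t,q}_{\tilde{\del}}$ for every $t \in \N$ with $q = 2(\del-\tilde{\del})^{-1} > 2$. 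Choosing $t$ sufficiently large that $1/q - t/d < 0$ and applying Proposition~\ref{prop on embeddings of weighted Sobolev spaces} again gives $\varphi(D)f \in L^\infty_{\tilde{\del}}$, and summing the two contributions completes the proof.

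\textbf{Item 2.} Given $\mu$ in the allowed range, I first verify that $fg \in L^{2/\mu}$. From Item 1 with $\tilde{\del}=0$ and from Proposition~\ref{prop on completeness of the weighted gradient spaces} we have $f \in L^\infty \cap L^{2/\del}$ and $g \in L^\infty \cap L^{2/\rho}$; by interpolation $f \in L^p$ and $g \in L^{p'}$ for all $p \in [2/\del,\infty]$ and $p' \in [2/\rho,\infty]$. The strict inequality $\mu < \del + \rho$ ensures the existence of $p,p'$ with $1/p + 1/p' = \mu/2$, $1/p \le \del/2$, $1/p' \le \rho/2$, and H\"older's inequality then yields $fg \in L^{2/\mu}$. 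To control $\grad(fg) = f\grad g + g \grad f$ in $H^{s-1}_\mu$, I would handle $f\grad g$ (the other term is symmetric) by splitting $f$. The low piece $\varphi(D)f$ lies in $W^{s-1,\infty}_{\tilde{\del}}$ for $\tilde{\del} = \mu - \rho \in [0,\del)$ (valid since $\mu \ge \rho$ and $\mu < \rho+\del$) by Proposition~\ref{prop on equivalent norms and low mode embeddings} followed by Proposition~\ref{prop on embeddings of weighted Sobolev spaces}; then Proposition~\ref{prop on product estimates} gives $\varphi(D)f \cdot \grad g \in H^{s-1}_{\tilde{\del}+\rho} = H^{s-1}_\mu$. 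The high piece $(1-\varphi(D))f \in H^s_\del \emb L^\infty_\del$ (Proposition~\ref{prop on embeddings of weighted Sobolev spaces}, using $s > 1$), so invoking Proposition~\ref{prop on product estimates} with $q_0=\infty$, $p_1=2$ produces $(1-\varphi(D))f\cdot \grad g \in H^{s-1}_{\del+\rho} \emb H^{s-1}_\mu$, the last embedding being Proposition~\ref{prop on embeddings of weighted Sobolev spaces} together with $\mu \le \del + \rho$.

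\textbf{Item 3.} For the continuity of the product $\tilde{H}^s_\del \times H^s_\rho \to H^s_{\tilde{\del} + \rho}$, the $L^2_{\tilde{\del}+\rho}$ membership of $fg$ is immediate: Item 1 gives $f \in L^\infty_{\tilde{\del}}$, which combined with $g \in L^2_\rho$ yields $\tnorm{fg}_{L^2_{\tilde{\del}+\rho}} \lesssim \tnorm{f}_{L^\infty_{\tilde{\del}}} \tnorm{g}_{L^2_\rho}$. For the derivative estimates I would again split $f$ and appeal to Proposition~\ref{prop on product estimates}. The low piece $\varphi(D)f \in W^{s,\infty}_{\tilde{\del}}$ from Proposition~\ref{prop on equivalent norms and low mode embeddings} combined with Proposition~\ref{prop on embeddings of weighted Sobolev spaces} produces $\varphi(D)f \cdot g \in H^s_{\tilde{\del}+\rho}$. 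The high piece obeys the weighted Sobolev algebra bound via Proposition~\ref{prop on product estimates} with $q_0=\infty$ and $p_1=2$ applied to the factors $(1-\varphi(D))f \in H^s_\del \emb L^\infty_\del$ and $g \in H^s_\rho$, placing the product in $H^s_{\del+\rho}\emb H^s_{\tilde{\del}+\rho}$ by Proposition~\ref{prop on embeddings of weighted Sobolev spaces} since $\tilde{\del} \le \del$. The stated bound for the full product map follows by summing the contributions and using the equivalent norm~\eqref{the low high equivalent norm}.

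\textbf{Main obstacle.} The subtlest step is the $L^{2/\mu}$ integrability in Item 2 when $\mu$ is close to $\del+\rho$: the interpolation argument is tight and crucially exploits the strict inequality $\mu < \min\tcb{1,\rho+\del}$, as well as the joint use of Item 1 and the $L^{2/\del}$, $L^{2/\rho}$ containment from Proposition~\ref{prop on completeness of the weighted gradient spaces}. The rest of the proof amounts to careful bookkeeping of exponent choices in Proposition~\ref{prop on product estimates} to ensure each nonlinear term is estimated by a factor that is genuinely $L^\infty$ (obtained via Proposition~\ref{prop on equivalent norms and low mode embeddings} or Proposition~\ref{prop on embeddings of weighted Sobolev spaces}) paired with a factor that lies in the correct weighted $L^2$-based space.
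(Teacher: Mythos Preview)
Your proof is correct and for Items~1 and~3 follows the paper's approach essentially verbatim. The genuine difference is in Item~2: the paper proves the items in the order 1,~3,~2 and then leverages Item~3 to handle Item~2 by first splitting $fg$ via $\varphi(D)f$, bounding the high-frequency piece $\tp{1-\varphi(D)}f\cdot g$ in $H^s_\mu\emb\tilde H^s_\mu$ directly through Item~3, and treating the remaining low-low piece $\varphi(D)f\cdot\varphi(D)g$ with a Bernstein argument exploiting that its Fourier support lies in $B(0,4)$. Your route is more elementary: you verify the qualitative membership $fg\in L^{2/\mu}$ by interpolating between $L^\infty$ (from Item~1) and $L^{2/\del}$, $L^{2/\rho}$ (from Proposition~\ref{prop on completeness of the weighted gradient spaces}), and then control $\grad(fg)$ in $H^{s-1}_\mu$ via the Leibniz rule and a low/high split on each factor separately. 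This avoids the bandlimited analysis entirely at the cost of the (tight) interpolation step. One small remark: when you invoke Proposition~\ref{prop on product estimates} for the high-frequency pieces with $q_0=\infty$, $p_1=2$, you should also specify the companion exponents $p_0,q_1$; at the borderline $s=2$ the choice $p_0=2$, $q_1=\infty$ fails (since $\grad g\in H^1_\rho$ need not lie in $L^\infty_\rho$), but $p_0=q_1=4$ works via $H^2_\del\emb W^{1,4}_\del$ and $H^1_\rho\emb L^4_\rho$.
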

\begin{proof}
    Let $\varphi\in C^\infty_{\m{c}}\tp{B(0,2)}$ be a radial function with $\varphi=1$ on $\Bar{B(0,1)}$. The first item follows from Propositions~\ref{prop on embeddings of weighted Sobolev spaces} and~\ref{prop on equivalent norms and low mode embeddings}:
    \begin{equation}
        \tnorm{f}_{L^\infty_{\tilde{\del}}}\le\tnorm{\varphi(D)f}_{L^\infty_{\tilde{\del}}}+\tnorm{\tp{1-\varphi(D)}f}_{L^\infty_{\tilde{\del}}}\lesssim\tnorm{\varphi(D)f}_{W^{2,q}_{\tilde{\del}}}+\tnorm{(1-\varphi(D))f}_{H^2_\del}\lesssim\tnorm{f}_{\tilde{H}^2_\del},
    \end{equation}
    where we have taken $q=2\tp{\del-\tilde{\del}}^{-1}$. This gives the first item.

    Next, we shall prove the third item - so fix $f\in\tilde{H}^s_\del\tp{\R^2;\F}$ and $g\in H^s_\rho\tp{\R^2;\F}$. We split $f$ into low and high modes and invoke Proposition~\ref{prop on product estimates} (and Remark~\ref{remark on algebra property of weighted spaces}) to bound
    \begin{equation}\label{combo_0}
        \tnorm{fg}_{H^s_{\rho+\tilde{\del}}}\lesssim\tnorm{\varphi(D)f}_{W^{s,\infty}_{\tilde{\del}}}\tnorm{g}_{H^s_\rho}+\tnorm{\tp{1-\varphi(D)}f}_{H^s_{\tilde{\del}}}\tnorm{g}_{H^s_\rho}.
    \end{equation}
    Then, by Propositions~\ref{prop on equivalent norms and low mode embeddings} and~\ref{prop on embeddings of weighted Sobolev spaces} we have
    \begin{equation}\label{combo_1}
        \tnorm{\varphi(D)f}_{W^{s,\infty}_{\tilde{\del}}}\lesssim\tnorm{f}_{\tilde{H}^1_\del},\quad\tnorm{(1-\varphi(D))f}_{H^s_{\tilde{\del}}}\lesssim\tnorm{f}_{\tilde{H}^s_\del}.
    \end{equation}
    Synthesize~\eqref{combo_0} and~\eqref{combo_1} to get~\eqref{a wonderful product map}.

    Finally, we prove the second item. We shall frequently use Remark~\ref{rmk on embedding of the weighted Sobolev spaces}. For $f\in\tilde{H}^s_\del\tp{\R^2;\F}$ and $g\in\tilde{H}^s_\rho\tp{\R^2;\F}$ it holds
    \begin{equation}\label{._.}
        \tnorm{fg}_{\tilde{H}^s_\mu}\lesssim\tnorm{\varphi(D)fg}_{\tilde{H}^s_\mu}+\tnorm{\tp{1-\varphi(D)}fg}_{H^s_\mu}.
    \end{equation}
    The final term in~\eqref{._.} can be given a suitable upper bound by using the continuity of~\eqref{a wonderful product map} combined with Proposition~\ref{prop on embeddings of weighted Sobolev spaces} and the first item of Proposition~\ref{prop on equivalent norms and low mode embeddings}:
    \begin{equation}\label{UWU}
        \tnorm{(1-\varphi(D))fg}_{H^s_\mu}\lesssim\tnorm{\tp{1-\varphi(D)}f}_{H^s_\del}\tnorm{g}_{\tilde{H}^s_\rho}\lesssim\tnorm{f}_{\tilde{H}^s_\del}\tnorm{g}_{\tilde{H}^s_\rho}.
    \end{equation}
    On the other hand, for the penultimate term of~\eqref{._.} we split $g$ into high and low modes and apply the same argument as~\eqref{UWU} to the high mode piece
    \begin{equation}
        \tnorm{\varphi(D)fg}_{\tilde{H}^s_\mu}\lesssim\tnorm{\varphi(D)f\varphi(D)g}_{\tilde{H}^s_\mu}+\tnorm{f}_{\tilde{H}^s_\del}\tnorm{g}_{\tilde{H}^s_\rho}.
    \end{equation}
    We are thus left considering the product of the low mode pieces. Thanks to the second item of Proposition~\ref{prop on a multiplier bound an weighted Bernstein inequalities}, the fact that $\m{supp}\mathscr{F}[\varphi(D)f\varphi(D)g]\subset B(0,4)$, H\"older's inequality, and the embedding from the first item we have
    \begin{multline}
        \tnorm{\varphi(D)f\varphi(D)g}_{\tilde{H}^s_\mu}\lesssim\tnorm{\varphi(D)f\grad\varphi(D)g}_{L^2_\mu}+\tnorm{\grad\varphi(D)f\varphi(D)g}_{L^2_\mu}\\\lesssim\tnorm{\varphi(D)f,\varphi(D)g}_{L^\infty_{\tilde{\del}}\times L^\infty_{\tilde{\rho}}}\tnorm{\varphi(D)f,\varphi(D)g}_{\tilde{H}^1_\del\times\tilde{H}^1_\rho}\lesssim\tnorm{f}_{\tilde{H}^s_\del}\tnorm{g}_{\tilde{H}^s_\rho}
    \end{multline}
    where $\tilde{\del}\in(0,\del)$ and $\tilde{\rho}\in(0,\rho)$ satisfy $\mu\le\min\tcb{\tilde{\del}+\rho,\del+\tilde{\rho}}$. This completes the proof.
\end{proof}

For technical convenience when dealing with analytic composition, we would like to make a minimal addition to the weighted gradient Sobolev spaces in order to obtain a unital Banach algebra. Therefore, we introduce the following variation on Definition~\ref{defn of weighted gradient sobolev spaces}.

\begin{defn}[Extended weighted gradient Sobolev spaces]\label{defn on extended weighted gradient spaces}
Given $\F=\R$ or $\F=\C$, $s\in\N^+$, and $\del\in\tp{0,1}$ we define
\begin{equation}
    {_{\m{e}}}\tilde{H}^s_\del\tp{\R^2;\F}=\tcb{f\in\F+L^{2/\del}\tp{\R^2;\F}\;:\;\grad f\in H^{s-1}_\del\tp{\R^2;\F^2}},\quad\tnorm{f}_{{_{\m{e}}}\tilde{H}^s_\del}=\tp{|\bf{c}(f)|^2+\tnorm{\grad f}_{H^{s-1}_\del}^2}^{1/2}
\end{equation}
where $\bf{c}:\F+L^{2/\del}\tp{\R^2;\F}\to\F$ is the unique function satisfying $f-\bf{c}(f)\in L^{2/\del}\tp{\R^2;\F}$ for all $f\in\F+L^{2/\del}\tp{\R^2;\F}$. When $\F=\R$ we shall write ${_{\m{e}}}\tilde{H}^s_\del\tp{\R^2}$ in place of ${_{\m{e}}}\tilde{H}^s_\del\tp{\R^2;\R}$.
\end{defn}

Basic properties of the extended weighted gradient spaces are enumerated in the next result.

\begin{prop}[Properties of extended weighted gradient spaces]\label{prop on properties of extended weighted gradient spaces}
    Let $\F=\R$ or $\F=\C$, $s\in\N^+$ and $\del\in(0,1)$. The following hold.
    \begin{enumerate}
        \item The map
        \begin{equation}
            {_{\m{e}}}\tilde{H}^s_\del\tp{\R^2;\F}\ni f\mapsto\tp{\bf{c}(f),f-\bf{c}(f)}\in\F\times\tilde{H}^s_\del\tp{\R^2;\F}
        \end{equation}
        is a linear isometric isomorphism.
        \item ${_{\m{e}}}\tilde{H}^s_\del\tp{\R^2;\F}$ is a Banach space and $\tilde{H}^s_{\del}\tp{\R^2;\F}\emb{_{\m{e}}}\tilde{H}^s_\del\tp{\R^2;\F}$ as a codimension 1 subspace.
        \item If $s\ge 2$, then ${_{\m{e}}}\tilde{H}^s_\del\tp{\R^2;\F}\emb \tp{C^0\cap L^\infty}\tp{\R^2;\F}$.
        \item If $s\ge 2$, then ${_{\m{e}}}\tilde{H}^s_\del\tp{\R^2;\F}$ is a unital Banach algebra where the constant function $1$ is the unit; moreover for all $f,g\in{_{\m{e}}}\tilde{H}^s_\del\tp{\R^2;\F}$ we have $\bf{c}\tp{fg}=\bf{c}\tp{f}\bf{c}\tp{g}$.
    \end{enumerate}
\end{prop}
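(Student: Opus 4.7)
The plan is to derive the four items in sequence, with items~1 and~2 amounting to a tautological repackaging of the direct sum $\F \oplus \tilde{H}^s_\del\tp{\R^2;\F}$ and items~3 and~4 following by grafting the structural results of the preceding subsection onto this decomposition. For item~1, I would first verify that the linear functional $\bf{c}$ is well-defined. The key observation is that since $2/\del\in\tp{2,\infty}$ is finite, any nonzero constant function on $\R^2$ has infinite $L^{2/\del}$ norm, so $\F\cap L^{2/\del}\tp{\R^2;\F}=\tcb{0}$; this forces the decomposition $f=c+g$ with $c\in\F$ and $g\in L^{2/\del}\tp{\R^2;\F}$ to be unique, making $\bf{c}$ a bona fide linear functional. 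The mapping $f\mapsto\tp{\bf{c}(f),f-\bf{c}(f)}$ is then manifestly a linear bijection onto $\F\times\tilde{H}^s_\del\tp{\R^2;\F}$ with inverse $\tp{c,g}\mapsto c+g$, and since $\grad f=\grad\tp{f-\bf{c}(f)}$ (because $\bf{c}(f)$ is constant), the defining norm on ${_{\m{e}}}\tilde{H}^s_\del\tp{\R^2;\F}$ coincides with the $\ell^2$ product norm on the target, giving the isometric isomorphism of item~1.

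Item~2 is then a direct consequence: completeness of ${_{\m{e}}}\tilde{H}^s_\del\tp{\R^2;\F}$ is inherited from the completeness of $\F$ and of $\tilde{H}^s_\del\tp{\R^2;\F}$ (the latter by Proposition~\ref{prop on completeness of the weighted gradient spaces}), the inclusion $\tilde{H}^s_\del\tp{\R^2;\F}\subset{_{\m{e}}}\tilde{H}^s_\del\tp{\R^2;\F}$ is isometric because every element of the former has vanishing $\bf{c}$-value, and the codimension-$1$ claim identifies this subspace with $\ker\bf{c}$. For item~3, I would combine this decomposition with the compact embedding $\tilde{H}^s_\del\tp{\R^2;\F}\emb\tp{C^0\cap L^\infty}\tp{\R^2;\F}$ supplied by the third item of Proposition~\ref{prop on inclusion relations} (valid for $s\ge 2$), noting that constants are trivially continuous and bounded.

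The main content is item~4. Given $f,g\in{_{\m{e}}}\tilde{H}^s_\del\tp{\R^2;\F}$, I would write $f=a+\tilde{f}$ and $g=b+\tilde{g}$ with $a=\bf{c}(f)$, $b=\bf{c}(g)$, and $\tilde{f},\tilde{g}\in\tilde{H}^s_\del\tp{\R^2;\F}$, and expand
\begin{equation*}
    fg=ab+\tp{a\tilde{g}+b\tilde{f}+\tilde{f}\tilde{g}}.
\end{equation*}
The parenthesized remainder lies in $\tilde{H}^s_\del\tp{\R^2;\F}$: the linear terms $a\tilde{g}$ and $b\tilde{f}$ belong trivially, while $\tilde{f}\tilde{g}$ is controlled by the Banach algebra property in the second item of Proposition~\ref{prop on product estimates in weighted gradient spaces} with $\rho=\mu=\del$, the admissibility $\del\le\del<\min\tcb{1,2\del}$ being guaranteed by $\del\in\tp{0,1}$. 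Uniqueness of the decomposition from item~1 then yields $\bf{c}\tp{fg}=ab=\bf{c}(f)\bf{c}(g)$, and the submultiplicative norm bound follows from the triangle inequality, the estimate $\tabs{ab}\le\tabs{a}\tabs{b}$, and the cited product estimate applied to $\tilde{f}\tilde{g}$; unitality is immediate since $\bf{c}(1)=1$ and $\grad 1=0$ give $\tnorm{1}_{{_{\m{e}}}\tilde{H}^s_\del}=1$. The only step requiring care is this verification of parameters in the product estimate, and beyond that I do not anticipate any substantive obstacles.
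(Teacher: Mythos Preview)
Your proposal is correct and follows essentially the same approach as the paper, which dispatches items~1 and~2 as trivial consequences of the definition and Proposition~\ref{prop on completeness of the weighted gradient spaces}, and items~3 and~4 as immediate from Proposition~\ref{prop on product estimates in weighted gradient spaces}. Your write-up simply unpacks these citations in more detail; the only minor difference is that for item~3 you invoke the third item of Proposition~\ref{prop on inclusion relations} rather than the first item of Proposition~\ref{prop on product estimates in weighted gradient spaces}, but either route yields the required $C^0\cap L^\infty$ embedding.
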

\begin{proof}
    The first and second items are trivial given Definition~\ref{defn of weighted gradient sobolev spaces} and Proposition~\ref{prop on completeness of the weighted gradient spaces}. Similarly, the third and fourth items follow immediately from Proposition~\ref{prop on product estimates in weighted gradient spaces}.
\end{proof}

The final result of this subsection computes the spectrum of elements of the extended weighted gradient spaces and records mapping properties of the resolvent.

\begin{prop}[Admissibility of the extended weighted gradient spaces]\label{prop on admissability of the extended weighted gradient spaces}
    Let $\N\ni s\ge 2$, $\del\in\tp{0,1}$. The unital Banach algebra ${_{\m{e}}}\tilde{H}^s_\del\tp{\R^2;\C}$ is admissible in the sense of Definition~\ref{defn of admissable Banach algebras}.
\end{prop}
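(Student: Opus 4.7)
The admissibility condition of Definition~\ref{defn of admissable Banach algebras}, which underpins the holomorphic functional calculus of Appendix~\ref{appendix on analytic composition in unital Banach algebras}, requires (beyond the unital Banach algebra structure and embedding into $C^0\cap L^\infty$ already supplied by Proposition~\ref{prop on properties of extended weighted gradient spaces}) a precise identification of the spectrum $\sigma(f)$ of each $f\in{_{\m{e}}}\tilde{H}^s_\del\tp{\R^2;\C}$ together with mapping and continuity properties of the resolvent $\lambda\mapsto(f-\lambda)^{-1}$ on the algebra. The plan proceeds in three stages.

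First, I would pin down the spectrum. Writing $f=c+\tilde{f}$ with $c=\bf{c}(f)$ and $\tilde{f}\in\tilde{H}^s_\del\tp{\R^2;\C}$, the embedding $\tilde{H}^s_\del\emb L^\infty_{\tilde\del}$ from Proposition~\ref{prop on product estimates in weighted gradient spaces}(1), valid for any $\tilde\del\in(0,\del)$, yields $\tilde{f}(x)\to 0$ as $|x|\to\infty$, so $\overline{\m{img}(f)}=\m{img}(f)\cup\tcb{c}$ is compact in $\C$. For $\lambda\in\overline{\m{img}(f)}$, the function $f-\lambda$ either vanishes at some point (when $\lambda\in\m{img}(f)$) or vanishes at spatial infinity (when $\lambda=c$), so it fails to be bounded below and hence cannot be inverted in $C^0_b$; via the embedding from Proposition~\ref{prop on properties of extended weighted gradient spaces}(3) this rules out invertibility in ${_{\m{e}}}\tilde{H}^s_\del$ as well, giving $\lambda\in\sigma(f)$.

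Second, for $\lambda\notin\overline{\m{img}(f)}$, I would construct the inverse explicitly. With $\kappa=\inf_{\R^2}|f-\lambda|>0$ and $d=1/(c-\lambda)$, define $g=1/(f-\lambda)$ pointwise and write $g=d+\tilde{g}$ where $\tilde{g}=-\tilde{f}/\ssb{(c-\lambda)(f-\lambda)}$. The pointwise bound $|\tilde{g}|\le\kappa^{-1}|c-\lambda|^{-1}|\tilde{f}|$ combined with Proposition~\ref{prop on completeness of the weighted gradient spaces}(1) places $\tilde{g}$ in $L^{2/\del}$. The remaining requirement $\grad\tilde{g}\in H^{s-1}_\del$ comes from the identity $\grad\tilde{g}=-\grad\tilde{f}/(f-\lambda)^2$, with higher derivatives yielding polynomial combinations of $\grad^j\tilde{f}$ divided by powers of $f-\lambda$. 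These are estimated by iteratively combining the pointwise boundedness of $(f-\lambda)^{-k}$ with the weighted product and embedding results of Propositions~\ref{prop on product estimates},~\ref{prop on embeddings of weighted Sobolev spaces}, and~\ref{prop on product estimates in weighted gradient spaces}, tracking the quantitative dependence in $\kappa$ to produce the resolvent bounds demanded by admissibility. Holomorphy of $\lambda\mapsto(f-\lambda)^{-1}$ on $\C\setminus\sigma(f)$ is then automatic from the unital Banach algebra structure.

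The principal technical hurdle lies in the endpoint case $s=2$: there $\grad\tilde{f}\in H^1_\del\tp{\R^2;\C^2}$ fails to embed into $L^\infty$ in two dimensions, so naive product estimates that demand $W^{s-1,\infty}$ regularity of the multipliers $(f-\lambda)^{-k}$ break down at the quadratic cross-term $|\grad\tilde{f}|^2/(f-\lambda)^3$ produced when differentiating $\grad\tilde{g}$ a second time. The resolution is to trade Morrey-type $L^\infty$ control for a gain in the weight index: the critical Sobolev embedding $H^1_\del\emb L^q_\del$ for any $q\in[2,\infty)$ from Proposition~\ref{prop on embeddings of weighted Sobolev spaces} forces $|\grad\tilde{f}|^2\in L^2_{2\del}$, which then embeds into $L^2_\del$ with slack to spare. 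General $s\ge 2$ is handled inductively using the Banach algebra property of Proposition~\ref{prop on product estimates in weighted gradient spaces}(2) in tandem with the same weighted product estimates.
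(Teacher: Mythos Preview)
Your proposal is correct and follows the same architecture as the paper: identify $\sigma(f)=\overline{\m{img}(f)}$ via the $C^0\cap L^\infty$ embedding and decay of $\tilde f$, construct the pointwise resolvent and verify membership in ${_{\m{e}}}\tilde H^s_\del$ with structured bounds, handle the base case $s=2$ separately, and then induct on $s$ using the algebra structure.

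The only substantive difference is your treatment of the quadratic cross-term $\pd_i f\,\pd_j f$ at $s=2$. You invoke the weighted critical embedding $H^1_\del\tp{\R^2}\emb L^4_\del\tp{\R^2}$ (a valid instance of Proposition~\ref{prop on embeddings of weighted Sobolev spaces}) and H\"older to place the product in $L^2_{2\del}\subset L^2_\del$. The paper instead uses the algebraic identity
\[
\pd_i f\,\pd_j f=\pd_i\pd_j\tp{f^2/2}-f\,\pd_i\pd_j f,
\]
bounding the first piece via the Banach-algebra property $f^2\in{_{\m{e}}}\tilde H^2_\del$ and the second via $f\in L^\infty$. Both routes deliver the same structured estimate $\lesssim\kappa^{-3}\tnorm{f}^2_{{_{\m{e}}}\tilde H^2_\del}$. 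Your embedding argument is the more direct of the two and does not need the algebra product at this step; the paper's identity avoids the weight-doubling detour entirely and is purely pointwise-algebraic. Either is a complete proof.
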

\begin{proof}
    The third and fourth items of Proposition~\ref{prop on properties of extended weighted gradient spaces} show that the first item of Definition~\ref{defn of admissable Banach algebras} is satisfied. Now it is elementary using these embeddings to deduce that for all $f\in{_{\m{e}}}\tilde{H}^s_\del\tp{\R^2;\C}$ the inclusion $\C\setminus\m{spec}(f)\subseteq\C\setminus\Bar{\m{img}(f)}$ we omit the details for the sake of brevity. 
    
    We shall fix $f\in{_{\m{e}}}\tilde{H}^s_\del\tp{\R^2;\C}$ and $z\in\C\setminus\Bar{\m{img}\tp{f}}$ and prove (via induction on $s$) that the pointwise inverse $(z-f)^{-1}$ belongs to $\tilde{H}^s_\del\tp{\R^2;\C}$ and there exists a locally bounded function $\bf{C}_s:[0,\infty)^2\to\R^+$ increasing in both arguments such that we may estimate
    \begin{equation}\label{IH resolvent estimate}
        \tnorm{\tp{z-f}^{-1}}_{{_{\m{e}}}\tilde{H}^s_\del}\le\bf{C}_s\tp{\m{dist}_{\C}\tp{\Bar{\m{img}\tp{f}},z}^{-1},\tnorm{f}_{{_{\m{e}}}\tilde{H}^s_\del}}.
    \end{equation}
    The base case is $s=2$. Set $\ep=\m{dist}_{\C}\tp{\Bar{\m{img}\tp{f}},z}\in\R^+$. Firstly we calculate that $\tnorm{\tp{z-f}^{-1}}_{L^\infty}\le\ep^{-1}$ and $\tnorm{(z-f)^{-1}-(z-\bf{c}\tp{f})^{-1}}_{L^{2/\del}}\le\ep^{-2}\tnorm{f-\bf{c}(f)}_{L^{2/\del}}\lesssim\ep^{-2}\tnorm{f-\bf{c}\tp{f}}_{\tilde{H}^2_\del}$ (thanks to the first item of Proposition~\ref{prop on completeness of the weighted gradient spaces}). So to prove that $(z-f)^{-1}\in{_{\m{e}}}\tilde{H}^2_\del\tp{\R^2;\C}$ (with the correct form of the estimates), we need only estimate $\grad\tp{\tp{z-f}^{-1}}$ in the space $H^1_\del\tp{\R^2;\C^2}$ - which reduces to estimating the following three inclusions for all $i,j\in\tcb{1,2}$:
    \begin{equation}\label{three base case inclusions}
        (z-f)^{-2}\pd_if,\;\tp{z-f}^{-2}\pd_i\pd_j f,\;\tp{z-f}^{-3}\pd_if\pd_jf\in L^2_\del\tp{\R^2;\C}.
    \end{equation}
    The first two of these are very simple to estimate (using implicitly Proposition~\ref{prop on equivalent norms in the weighted Sobolev spaces}):
    \begin{equation}
        \tnorm{(z-f)^{-1}\pd_if}_{L^2_\del}+\tnorm{\tp{z-f}^{-2}\pd_i\pd_jf}_{L^2_\del}\lesssim\ep^{-2}\tnorm{\grad f}_{H^1_\del}=\ep^{-2}\tnorm{f}_{{_{\m{e}}}\tilde{H}^2_\del}.
    \end{equation}
    The final inclusion of~\eqref{three base case inclusions} is a tad more complicated to estimate. We shall use the identity 
    \begin{equation}\label{",,,"}
        \pd_i f\pd_j f=\pd_i\pd_j\tp{f^2/2}-f\pd_i\pd_j f
    \end{equation}
    and reduce to estimating the resulting two terms. For the former we use again Proposition~\ref{prop on equivalent norms in the weighted Sobolev spaces} and also the fourth item of Proposition~\ref{prop on properties of extended weighted gradient spaces}, 
    \begin{equation}
        \tnorm{(z-f)^{-3}\pd_i\pd_j\tp{f^2/2}}_{L^2_\del}\lesssim\ep^{-3}\tnorm{f^2}_{{_{\m{e}}}\tilde{H}^2_\del}\lesssim\ep^{-3}\tnorm{f}^2_{{_{\m{e}}}\tilde{H}^2_\del}.
    \end{equation}
    For the latter inclusion and estimate of~\eqref{",,,"} we also invoke the third item of Proposition~\ref{prop on properties of extended weighted gradient spaces}:
    \begin{equation}
        \tnorm{(z-f)^{-3}f\pd_i\pd_j f}_{L^2_\del}\lesssim\ep^{-3}\tnorm{f}_{L^\infty}\tnorm{f}_{{_{\m{e}}}\tilde{H}^s_\del}\lesssim\ep^{-3}\tnorm{f}^2_{{_{\m{e}}}\tilde{H}^s_\del}.
    \end{equation}
    By synthesizing the above information and estimates we indeed find that $(z-f)^{-1}\in{_{\m{e}}}\tilde{H}^2_\del\tp{\R^2;\C}$ with $\bf{c}\tp{\tp{z-f}^{-1}}=\tp{z-\bf{c}(f)}^{-1}$ and with the resolvent estimate
    \begin{equation}
        \tnorm{\tp{z-f}^{-1}}_{{_{\m{e}}}\tilde{H}^2_\del}\lesssim\ep^{-1}+\ep^{-2}\tnorm{f}_{{_{\m{e}}}\tilde{H}^2_\del}+\ep^{-3}\tnorm{f}^2_{{_{\m{e}}}\tilde{H}^2_\del}
    \end{equation}
    with implicit constants depending only on $\del$. This obeys the necessary structure of~\eqref{IH resolvent estimate} and hence the base case is established.

    Now let us fix $\N\ni s\ge 2$ and suppose the induction hypotheses and estimate~\eqref{IH resolvent estimate} hold at regularity index $s$; we endeavor to prove the same for $s+1$. So let $f\in{_{\m{e}}}\tilde{H}^{s+1}_\del\tp{\R^2;\C}$ and $z\not\in\Bar{\m{img}\tp{f}}$. The previous argument has already established that $(z-f)^{-1}\in \C+L^{2/\del}\tp{\R^2;\C}$ and therefore we need only verify that $\grad\tp{\tp{z-f}^{-1}}\in H^s_\del\tp{\R^2;\C^2}$ with a structured estimate. Given $i\in\tcb{1,2}$ we check that
    \begin{equation}\label{-_..._-}
        \pd_i\tp{\tp{z-f}^{-1}}=\tp{\tp{z-f}^{-2}-\tp{z-\bf{c}\tp{f}}^{-2}}\pd_if+\tp{z-\bf{c}\tp{f}}^{-2}\pd_i f.
    \end{equation}
    By the induction hypothesis, the fourth item of Proposition~\ref{prop on properties of extended weighted gradient spaces}, and Definition~\ref{defn on extended weighted gradient spaces} we know that $\tp{z-f}^{-2}-\tp{z-\bf{c}\tp{f}}^{-2}\in\tilde{H}^s_\del\tp{\R^d;\C}$. Hence for the first term on the right hand side of~\eqref{-_..._-} we may invoke the third item of Proposition~\ref{prop on product estimates in weighted gradient spaces}, the fourth item of Proposition~\ref{prop on properties of extended weighted gradient spaces}, and the induction hypothesis to estimate
    \begin{equation}
        \tnorm{\tp{\tp{z-f}^{-2}-\tp{z-\bf{c}\tp{f}}^{-2}}\pd_if}_{H^s_\del}\lesssim\tnorm{\tp{z-f}^{-1}}_{{_{\m{e}}}\tilde{H}^s_\del}^2\tnorm{f}_{\tilde{H}^{s+1}_\del}\le\bf{C}_s\tp{\m{dist}_{\C}\tp{\Bar{\m{img}\tp{f}},z}^{-1},\tnorm{f}_{{_{\m{e}}}\tilde{H}^s_\del}}^2\tnorm{f}_{\tilde{H}^{s+1}_\del}.
    \end{equation}
    For the final term in~\eqref{-_..._-} there is nothing to do since $(z-\bf{c}(f))^{-2}\in\C$ and so a correctly structured estimate is clearly obtainable. Synthesizing the above estimates shows that the structured bound~\eqref{IH resolvent estimate} with $s$ replaced by $s+1$ holds for $\bf{C}_{s+1}$ given by
    \begin{equation}
        \bf{C}_{s+1}(\theta,\phi)=\theta+c_0\tp{\bf{C}_s\tp{\theta,c_1\phi}^2+\theta^2}\phi
    \end{equation}
    for a constants $c_0,c_1\in\R^+$ depending only on $s$ and $\del$. It is easily verified that $\bf{C}_{s+1}$ is locally bounded and increasing in both arguments. The induction, and with it the proof, is complete.
\end{proof}

\section{Linear analysis}\label{section on linear analysis}

This section analyzes a trivial forcing linearization of system~\eqref{stationary nondimensionalized equations} in both periodic and solitary function spaces. Our strategy is to decouple the bathymetric contributions as a compact remainder from the main translation invariant linear part. We then establish well-posedness of this latter part through  energy arguments and multiplier theorems.  The contribution of the bathymetric part is then accounted for with Fredholm theory.

\subsection{Principal part analysis}\label{subsection on principal part analysis}

In this subsection we study the linear system of equations 
\begin{equation}\label{principal part linear system}
    \begin{cases}
        \grad\cdot u=\psi,\\
        Au-\grad\cdot\mathbb{S}u+\tp{G-\Delta}\grad\eta=\phi,
    \end{cases}
\end{equation}
for the unknown vector and scalar fields $u:\R^2\to\R^2$ and $\eta:\R^2\to\R$ with the data scalar $\psi:\R^2\to\R$ and vector $\phi:\R^2\to\R^2$. Here $A,G\ge0$ are parameters, and we shall always assume $A>0$ and we take $G>0$ as well in the solitary case. The analysis of this subsection corresponds in a sense to the linearization of system~\eqref{stationary nondimensionalized equations} with trivial bathymetry.

Our first result establishes the well-posedness of system~\eqref{principal part linear system} in spaces of periodic functions.

\begin{prop}[Principal part analysis - periodic case]\label{prop on principal part linear analysis periodic case}
    Assume that $A>0$, $G\ge0$, and $s\in\N$. For each $\tp{\psi,\phi}\in \z{H}^{1+s}\tp{\T_L^2}\times H^s\tp{\T_L^2;\R^2}$ there exists a unique $\tp{u,\eta}\in H^{2+s}\tp{\T_L^2;\R^2}\times \z{H}^{3+s}\tp{\T_L^2}$ such that system~\eqref{principal part linear system} is satisfied.
\end{prop}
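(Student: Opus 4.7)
The plan is to exploit the linear structure of~\eqref{principal part linear system} by decoupling $u$ and $\eta$ via the divergence of the momentum equation, and then inverting on $\T^2_L$ with Fourier series. The starting point is the elementary identity $\grad\cdot\mathbb{S}u = \Delta u + 3\grad\tp{\grad\cdot u}$, which follows directly from~\eqref{shallow water viscous stress tensor}. Substituting the continuity equation $\grad\cdot u = \psi$ into this formula rewrites the momentum equation of~\eqref{principal part linear system} equivalently as
\begin{equation}\label{PPL_MOM}
    \tp{A-\Delta}u + \tp{G-\Delta}\grad\eta = \phi + 3\grad\psi,
\end{equation}
and taking the divergence of~\eqref{PPL_MOM} (again using $\grad\cdot u = \psi$) produces the scalar fourth order equation
\begin{equation}\label{PPL_ETA}
    \tp{G-\Delta}\Delta\eta = \grad\cdot\phi + 4\Delta\psi - A\psi,
\end{equation}
which no longer involves $u$.

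First I would solve~\eqref{PPL_ETA} for $\eta\in\z{H}^{3+s}\tp{\T^2_L}$ by Fourier series. At nonzero frequency $k$, the symbol of $\tp{G-\Delta}\Delta$ has modulus $\tp{G+|k|^2}|k|^2$, bounded below by a positive constant thanks to the discreteness of the spectrum of $-\Delta$ on the torus; crucially this works even when $G=0$. Hence the reciprocal symbol defines a Fourier multiplier gaining four orders of regularity on the subspace of mean zero functions. Since $\psi \in \z{H}^{1+s}$ and $\phi\in H^s$, the right hand side of~\eqref{PPL_ETA} is automatically mean zero and lies in $H^{s-1}\tp{\T^2_L}$, and this inversion delivers a unique $\eta\in\z{H}^{3+s}\tp{\T^2_L}$.

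Second I would solve~\eqref{PPL_MOM} for $u$. Since $A>0$, the operator $A-\Delta$ is an isomorphism from $H^{s+2}\tp{\T^2_L;\R^2}$ onto $H^s\tp{\T^2_L;\R^2}$ by Fourier multiplication with $\tp{A+|k|^2}^{-1}$; feeding in the previously constructed $\eta$ and the data on the right hand side of~\eqref{PPL_MOM} yields $u\in H^{2+s}\tp{\T^2_L;\R^2}$. Compatibility with the continuity equation, which was used in deriving~\eqref{PPL_ETA}, then requires a check: taking the divergence of~\eqref{PPL_MOM} and applying~\eqref{PPL_ETA} gives $\tp{A-\Delta}\grad\cdot u = \tp{A-\Delta}\psi$, whence invertibility of $A-\Delta$ concludes $\grad\cdot u=\psi$.

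Uniqueness follows from the standard energy identity. Setting $\tp{\psi,\phi}=0$ and testing the momentum equation against $u$ with integration by parts, the coupling term $\int u\cdot\tp{G-\Delta}\grad\eta$ vanishes after further integration by parts (because $\grad\cdot u=0$), leaving $A\tnorm{u}_{L^2}^2 + \int\p{|\grad u|^2 + 3|\grad\cdot u|^2} = 0$. This forces $u=0$, whereupon~\eqref{PPL_MOM} reduces to $\tp{G-\Delta}\grad\eta=0$; combined with the mean zero constraint, Fourier inversion on nonzero modes then forces $\eta=0$. I do not anticipate any serious obstacle; the only slightly delicate point is confirming that the case $G=0$ is not lost at the inversion step for~\eqref{PPL_ETA}, which is salvaged by the discreteness of the nonzero spectrum on the torus.
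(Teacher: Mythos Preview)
Your proposal is correct and follows essentially the same approach as the paper: both decouple $\eta$ via the scalar equation $\tp{G-\Delta}\Delta\eta=\grad\cdot\phi-\tp{A-4\Delta}\psi$ and then recover $u$ from $\tp{A-\Delta}u=\phi+3\grad\psi-\tp{G-\Delta}\grad\eta$, with uniqueness handled by the energy identity. The only cosmetic difference is that you derive these formulas by taking the divergence of the momentum equation, whereas the paper simply writes down the inverses~\eqref{linear eta} and~\eqref{linear u} and verifies they solve the system; your dissipation expression $\int|\grad u|^2+3|\grad\cdot u|^2$ is the integrated-by-parts form of the paper's pointwise $\int\tfrac12|\grad u+\grad u^{\m t}|^2+2|\grad\cdot u|^2$.
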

\begin{proof}
    Let us first establish uniqueness. Due to the linearity of the equations, it is sufficient to prove that if $(\psi,\phi)=0$ then $(u,\eta)=0$. We achieve this through a simple integration by parts argument. By testing the second equation in~\eqref{principal part linear system} with $u$ in the $L^2\tp{\T_L^2;\R^2}$ inner product and integrating by parts, we see that
    \begin{equation}\label{the uniqueness identity}
        0=\int_{\T^2_L}\tp{Au-\grad\cdot\mathbb{S}u+\tp{G-\Delta}\grad\eta}\cdot u = \int_{\T^2_L} A |u|^2 +  2^{-1}\tabs{\grad u+\grad u^{\m{t}}}^2+2\tabs{\grad\cdot u}^2  \ge   A\int_{\T^2_L}|u|^2.
    \end{equation}
    As $A>0$, we learn that $u=0$. Now the second equation in~\eqref{principal part linear system} reduces to $\tp{G-\Delta}\grad\eta=0$. Since $G\ge0$ and $\mathscr{F}[\eta](0)=0$, we deduce that $\eta=0$.  

    Now we verify the existence of solutions. Given $\tp{\psi,\phi}\in\z{H}^{1+s}\tp{\T^2_L}\times H^s\tp{\T^2_L;\R^2}$ we define $\eta\in \z{H}^{3+s}\tp{\T^2_L}$ and $u\in H^{2+s}\tp{\T^2_L;\R^2}$ according to 
    \begin{equation}\label{linear eta}
        \eta=\tp{G-\Delta}^{-1}\Delta^{-1}\tp{\grad\cdot\phi-\tp{A-4\Delta}\psi}
    \end{equation}
    and
    \begin{equation}\label{linear u}
        u=\tp{A-\Delta}^{-1}\tp{\phi+3\grad\psi-\tp{G-\Delta}\grad\eta}.
    \end{equation}
    We compute
    \begin{equation}
        \grad\cdot u=\tp{A-\Delta}^{-1}\tp{3\Delta\psi+\tp{A-4\Delta}\psi}=\psi,
    \end{equation}
    which verifies that the first equation in~\eqref{principal part linear system} is satisfied.  Next, we compute 
    \begin{equation}\label{divergence of linear stress}
        \grad\cdot\mathbb{S}u=\Delta u+3\grad\grad\cdot u=\Delta u+3\grad\psi.
    \end{equation}
    Therefore,
    \begin{equation}
        Au-\grad\cdot\mathbb{S}u=\tp{A-\Delta}u-3\grad\psi=\phi-\tp{G-\Delta}\grad\eta
    \end{equation}
    so the second equation in~\eqref{principal part linear system} is satisfied as well. Existence has been verified, and with that the proof is complete.
\end{proof}

Our next result establishes the well-posedness of system~\eqref{principal part linear system} when $\psi=0$ in spaces of functions whose members vanish at infinity. The first equation in~\eqref{principal part linear system} is just the linear constraint that the velocity vector has vanishing divergence or, equivalently, is \emph{solenoidal}. It is therefore convenient (both here and in what follows) to encode this condition into the function spaces via the introduction of the notation
\begin{equation}\label{the definition of the solenoidal space}
    {_{\m{sol}}}H^s_\del\tp{\R^2;\R^2} = \tcb{u\in H^s_\del\tp{\R^2;\R^2}\;:\;\grad\cdot u=0},\quad s\in\N,\quad\del\ge0
\end{equation}
and focus just on the second equation in~\eqref{principal part linear system}.

The proof, while in spirit is the same as that of Proposition~\ref{prop on principal part linear analysis periodic case}, is complicated by the fact that we are essentially barred by the subsequent nonlinear analysis from using the standard $L^2$-based Sobolev spaces. We instead establish well-posedness in weighted $L^2$-based Sobolev spaces as to ensure enough control is gained over the free surface unknown. The reader is referred to Sections~\ref{appendix on weighted sobolev spaces} and~\ref{appendix on weighted gradient sobolev spaces} for the definitions and properties of the weighed Sobolev and weighted gradient Sobolev spaces, respectively.

\begin{prop}[Principal part analysis - solitary case]\label{prop on principal part analysis solitary case}
    Assume that $A,G>0$, $s\in\N$, and $\del\in(0,1)$. For each $\phi\in H^s_\del\tp{\R^2;\R^2}$ there exists a unique $(u,\eta)\in{_{\m{sol}}}H^{2+s}_\del\tp{\R^2;\R^2}\times\tilde{H}^{3+s}_\del\tp{\R^2}$ such that system~\eqref{principal part linear system} is satisfied with $\psi=0$.
\end{prop}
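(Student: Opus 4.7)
The plan is to mirror the periodic argument in Proposition~\ref{prop on principal part linear analysis periodic case}, replacing the elementary Fourier-series manipulations by the weighted multiplier and Riesz-potential tools of Sections~\ref{appendix on weighted sobolev spaces} and~\ref{appendix on weighted gradient sobolev spaces}. The argument splits into uniqueness via an energy identity and existence via explicit Fourier formulas whose mapping properties must be checked in the weighted functional setting.

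For uniqueness I assume $\tp{u,\eta}\in{_{\m{sol}}}H^{2+s}_\del\tp{\R^2;\R^2}\times\tilde H^{3+s}_\del\tp{\R^2}$ solves the homogeneous system and test the momentum equation against $u$ in $L^2$. Every factor lies in $L^2$: indeed $u$, $\grad\cdot\mathbb{S}u\in L^2$ via the embedding $H^{2+s}_\del\emb H^{2+s}$, and $\tp{G-\Delta}\grad\eta\in L^2$ since $\grad\eta\in H^{s+2}_\del\emb H^{s+2}$. Using density of $C^\infty_{\m{c}}$ in both weighted spaces (Remark~\ref{rmk on completeness} and Proposition~\ref{prop on density of functions of bounded support}) to justify integration by parts, the viscous stress contribution becomes $\tfrac12\tnorm{\grad u+\grad u^{\m{t}}}_{L^2}^2+2\tnorm{\grad\cdot u}_{L^2}^2$ and the capillary contribution vanishes:
\begin{equation*}
    \int_{\R^2}\tp{G-\Delta}\grad\eta\cdot u=-\int_{\R^2}\tp{G\eta-\Delta\eta}\grad\cdot u=0.
\end{equation*}
The residual identity $A\tnorm{u}_{L^2}^2+\tfrac12\tnorm{\grad u+\grad u^{\m{t}}}_{L^2}^2=0$ forces $u=0$ because $A>0$. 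The equation then reduces to $\tp{G-\Delta}\grad\eta=0$; as $G+4\pi^2|\xi|^2>0$ everywhere (using $G>0$), Fourier analysis gives $\grad\eta=0$, so $\eta$ is constant, and $\eta\in L^{2/\del}\tp{\R^2}$ with $2/\del<\infty$ forces $\eta=0$.

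For existence I would apply $\grad\cdot$ to the momentum equation and use $\grad\cdot u=0$ to obtain the scalar equation $\tp{G-\Delta}\Delta\eta=\grad\cdot\phi$, which motivates the explicit formula $\eta=\tp{G-\Delta}^{-1}\Delta^{-1}\grad\cdot\phi$. A direct Fourier computation then shows $\phi-\tp{G-\Delta}\grad\eta=\mathbb{P}\phi$, where $\mathbb{P}=I-\grad\Delta^{-1}\grad\cdot$ is the Leray projector, so the choice $u=\tp{A-\Delta}^{-1}\mathbb{P}\phi$ yields a divergence-free field for which~\eqref{principal part linear system} is satisfied by construction. To verify the weighted mapping properties of $u$, I note that each component of $\mathbb{P}$ is the identity plus a composition of Riesz transforms $\mathcal{R}_i\mathcal{R}_j$, and these are bounded on $L^2_\del\tp{\R^2}$ because $\tbr{X}^{2\del}$ is an $A_2$ Muckenhoupt weight for $\del\in(0,1)$ (exactly the reasoning used already for~\eqref{Reisz potential boundedness}); hence $\mathbb{P}\phi\in H^s_\del$. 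The symbol $\tp{A+4\pi^2|\xi|^2}^{-1}$ is a smooth multiplier of order $-2$, so Proposition~\ref{prop on improved multiplier bounds} (together with Remark~\ref{rmk on gen = reg on natty}) gives $\tp{A-\Delta}^{-1}:H^s_\del\to H^{s+2}_\del$, placing $u$ in ${_{\m{sol}}}H^{2+s}_\del$.

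The handling of $\eta$ is the main obstacle and requires the full weighted gradient framework. For $\grad\eta$ the Fourier identity $\widehat{\grad\eta}=\tp{G+4\pi^2|\xi|^2}^{-1}\tp{\xi\otimes\xi/|\xi|^2}\hat\phi$ realizes $\grad\eta$ as $\tp{G-\Delta}^{-1}$ applied to Riesz-transform combinations of $\phi$, and the same multiplier reasoning places $\grad\eta$ in $H^{s+2}_\del\tp{\R^2;\R^2}$. The delicate zeroth-order bound on $\eta$ itself exploits the factorization $\Delta^{-1}\grad\cdot=-\mathcal{I}_1\mathcal{R}\cdot$ (Remark~\ref{remark on the I1 operator}): the weighted Riesz-potential bound of Lemma~\ref{lem weighted estimates for riesz potential operator} with $\tilde\del=0$ yields $\mathcal{I}_1\mathcal{R}\cdot\phi\in L^{2/\del}\tp{\R^2}$, and the Bessel-type operator $\tp{G-\Delta}^{-1}$, being convolution with an $L^1\tp{\R^2}$ kernel, preserves $L^{2/\del}$. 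Together these establish $\eta\in\tilde H^{3+s}_\del$. The low-frequency singularity of $\Delta^{-1}$ in the formula for $\eta$ is the source of all the difficulty—in an unweighted $L^2$ framework $\eta$ would not even be definable as a function—and is precisely why the weighted gradient spaces were engineered in Section~\ref{appendix on weighted gradient sobolev spaces} to impose control on $\grad\eta$ while demanding only the weaker $L^{2/\del}$ bound on $\eta$, with Lemma~\ref{lem weighted estimates for riesz potential operator} serving as the conversion tool from the weighted control on $\phi$.
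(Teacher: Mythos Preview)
Your proof is correct and follows essentially the same strategy as the paper's: energy-testing for uniqueness (with the density of $C^\infty_{\m{c}}$ in $\tilde{H}^{3+s}_\del$ handling the capillary term, since $\eta\notin L^2$ in general) and explicit Fourier formulas for existence, verified via the weighted multiplier and Riesz-potential machinery of Sections~\ref{appendix on weighted sobolev spaces}--\ref{appendix on weighted gradient sobolev spaces}. The only differences are cosmetic: you invoke Muckenhoupt $A_2$ theory for the Riesz transforms where the paper uses its Proposition~\ref{prop on improved multiplier bounds} directly, and you apply $(G-\Delta)^{-1}$ last as an $L^1$-kernel convolution on $L^{2/\del}$, whereas the paper absorbs it into the single multiplier $m_G(\xi)=\tp{G+4\pi^2|\xi|^2}^{-1}\xi\otimes\xi/|\xi|^2$ to produce $q=\grad\eta\in H^{2+s}_\del$ and then recovers $\eta=\mathcal{I}_1\mathcal{R}\cdot q\in L^{2/\del}$.
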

\begin{proof}
    As in the proof of Proposition~\ref{prop on principal part linear analysis periodic case}, we begin by establishing uniqueness of solutions; assume that system~\eqref{principal part linear system} is satisfied by $(u,\eta)\in{_{\m{sol}}}H^{2+s}_\del\tp{\R^2;\R^2}\times\tilde{H}^{3+s}\tp{\R^2}$ with $\phi=0$ and $\psi=0$. Our plan is to establish the analog of~\eqref{the uniqueness identity}, but now we have to be slightly more careful as $\eta \notin L^2(\R^2)$ in general. By testing the equation with $u$ and integrating by parts we again derive the identity
    \begin{equation}\label{and he waddled away}
        \int_{\R^2}A|u|^2+2^{-1}|\grad u+\grad u^{\m{t}}|^2+2\tabs{\grad\cdot u}^2=\babs{\int_{\R^2}\tp{G-\Delta}\grad\eta\cdot u}.
    \end{equation}
    Next, we argue that the right hand side of the above vanishes identically. To achieve this we fix $\ep\in\R^+$ and employ  
    Proposition~\ref{prop on density of functions of bounded support} to obtain $\tilde{\eta}\in C^\infty_{\m{c}}\tp{\R^2}$ such that $\tnorm{\eta-\tilde{\eta}}_{\tilde{H}^{3+s}_\del}\le\ep$. Then we are free write $\eta=\tilde{\eta}+\tp{\eta-\tilde{\eta}}$ in the right hand side of~\eqref{and he waddled away} and integrate by parts in the $\tilde{\eta}$ term to exploit $\grad\cdot u=0$. Hence,
    \begin{equation}
        \babs{\int_{\R^2}\tp{G-\Delta}\grad\eta\cdot u}\le\babs{\int_{\R^2}\tp{G-\Delta}\grad\tp{\eta-\tilde{\eta}}\cdot u}\lesssim\ep\tnorm{u}_{L^2},
    \end{equation}
    and since $\ep>0$ was arbitrary we find that the expression on the left must be zero. In turn, thanks to~\eqref{and he waddled away} we deduce that $u=0$ and then $\tp{G-\Delta}\grad\eta=0$. As $G>0$ and $\grad\eta\in L^2_\del\tp{\R^2;\R^2}\emb L^2\tp{\R^2;\R^2}$, it follows from Plancherel's theorem that $\grad\eta=0$ and hence $\eta$ is a constant. The first item of Proposition~\ref{prop on completeness of the weighted gradient spaces} shows that $\tilde{H}^{3+s}_\del\tp{\R^2}$ embeds into $L^{2/\del}\tp{\R^2}$, so it must follow that $\eta=0$.

    We now prove the existence of solutions. Fix $\phi\in H^s_\del\tp{\R^2;\R^2}$. Our goal is to make sense of the formulas~\eqref{linear eta} and~\eqref{linear u} (with $\psi=0$) but in the context of the weighted spaces. The former is a bit trickier and requires a more complicated approach than in the periodic case.  We first define the auxiliary function 
    \begin{equation}
        q=\tp{G-\Delta}^{-1}\tp{\Delta^{-1}\grad\grad\cdot\phi}=m_G(D)\phi\quad\text{with}\quad m_G(\xi)=\f{1}{G+4\pi^2|\xi|^2}\f{\xi\otimes\xi}{|\xi|^2}.
    \end{equation}
    Since $G>0$ the (components of the) multiplier $m_{G}$ satisfy the hypotheses of Proposition~\ref{prop on improved multiplier bounds} with $k=1$ and $r=-2$; therefore, we can invoke this result (taking heed of Remark~\ref{rmk on gen = reg on natty}) to deduce that $q\in H^{2+s}_\del\tp{\R^2;\R^2}$,  noting that $q$ is real-valued since $\Bar{m_{G}(-\xi)}=m_G\tp{\xi}$.

    We now aim to define $\eta$ to be the function in the weighted gradient Sobolev space whose gradient agrees with $q$.  Indeed, upon noting $\mathcal{I}_1\mathcal{R}\cdot=\Delta^{-1}\grad\cdot$ (see Remark~\ref{remark on the I1 operator}) we set
    \begin{equation}\label{eta in terms of q}
        \eta=\mathcal{I}_1\mathcal{R}\cdot q.
    \end{equation}
    Due to the embedding $H^{2+s}_\del\tp{\R^2;\R^2}\emb L^2_\del\tp{\R^2;\R^2}$ (proved in Proposition~\ref{prop on embeddings of weighted Sobolev spaces}), the boundedness of $\mathcal{R}\cdot:L^2_\del\tp{\R^2;\R^2}\to L^2_\del\tp{\R^2}$ (proved in Proposition~\ref{prop on improved multiplier bounds} and Remark~\ref{rmk on gen = reg on natty}), and the boundedness of $\mathcal{I}_1:L^2_\del\tp{\R^2}\to L^{2/\del}\tp{\R^2}$ (proved in Lemma~\ref{lem weighted estimates for riesz potential operator}),  we see that~\eqref{eta in terms of q} defines $\eta\in L^{2/\del}\tp{\R^2}$. Since we already know that $q\in H^{2+s}_\del\tp{\R^2;\R^2}$, we obtain $\eta\in\tilde{H}^{3+s}_\del\tp{\R^2}$ as soon as we verify that 
    \begin{equation}
        \grad\eta=\Delta^{-1}\grad\grad\cdot q=(G-\Delta)^{-1}\tp{\Delta^{-1}\grad\grad\cdot}^2\phi=q.
    \end{equation}
    
    Next, we define the velocity according to
    \begin{equation}
        u=\tp{A-\Delta}^{-1}\tp{\phi-\tp{G-\Delta}\grad\eta}.
    \end{equation}
    By repeated applications of Proposition~\ref{prop on improved multiplier bounds} and Remark~\ref{rmk on gen = reg on natty}, together with an observation as above that the relevant symbols preserve realness,   we deduce from the inclusions $\phi\in H^{s}_\del\tp{\R^2;\R^2}$ and $\grad\eta\in H^{2+s}_\del\tp{\R^2;\R^2}$ that $u\in H^{2+s}_{\del}\tp{\R^2;\R^2}$. It is now a simple calculation to verify that $\grad\cdot u=0$, $(A-\Delta)u=Au-\grad\cdot\mathbb{S}u$, and the second equation in~\eqref{principal part linear system} is satisfied.  This completes the existence proof.
\end{proof}

\subsection{Inclusion of bathymetric remainders}\label{subsection on bathymetric remainder}

The goal of this subsection is again to study the linearization of system~\eqref{stationary nondimensionalized equations} but, in contrast with Section~\ref{subsection on principal part analysis}, we shall allow for general bathymetry. The form of this linearization is slightly different in the periodic and solitary cases due to a change of unknowns made in the latter case.

The following lemma is crucial for our analysis of the periodic case. We use the notation $P_0f=f-\tp{L_1L_2}^{-1}\int_{\T^2_L}f$ to denote the projection onto the subspace of functions of vanishing average.
\begin{lem}[Divergence equation reformulation]\label{lem on divergence equation reformulation}
    Suppose that $v\in H^1\tp{\T^2_{L};\R^2}$, $f\in\tp{C^0\cap H^1}\tp{\T^2_{L}}$, and $g\in \z{L}^2\tp{\T^2_L}$ solve the equation
    \begin{equation}\label{broken divergence equation}
        \grad\cdot v+P_0\tp{v\cdot\grad f}=g.
    \end{equation}
    Then
    \begin{equation}\label{the claimed identity in the divergence equation lemma}
        \f{1}{L_1L_2}\int_{\T^2_L}v\cdot\grad f=\bp{\int_{\T^2_L}e^f}^{-1}\int_{\T^2_L}ge^f
    \end{equation}
\end{lem}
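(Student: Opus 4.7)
The plan is to exploit the identity $\grad\cdot(e^f v) = e^f(\grad\cdot v + v\cdot\grad f)$, which identifies $e^f$ as an integrating factor that collapses the two leading terms of the equation into a single divergence. The first step is to unpack the projection: setting $c := (L_1 L_2)^{-1}\int_{\T^2_L} v\cdot\grad f$, which is precisely the scalar we are after, the hypothesis rearranges to $\grad\cdot v + v\cdot\grad f = g + c$. Multiplying through by $e^f$ then converts the left-hand side into $\grad\cdot(e^f v)$, so integrating over the torus is designed to annihilate the gradient term entirely and leave behind an algebraic equation for $c$.

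Before integrating I would run a short regularity check to justify that the divergence integrates to zero. Since $f \in C^0(\T^2_L)$ and $\T^2_L$ is compact, $f$ is uniformly bounded and $e^f \in L^\infty(\T^2_L)$; combining this with $f \in H^1$ and the chain rule gives $\grad(e^f) = e^f \grad f \in L^2(\T^2_L;\R^2)$, so in fact $e^f \in (L^\infty \cap H^1)(\T^2_L)$. Sobolev embedding on the compact torus provides $v \in L^q(\T^2_L;\R^2)$ for every $q \in [1,\infty)$, and putting these together shows $e^f v \in W^{1,1}(\T^2_L;\R^2)$. Periodicity then yields $\int_{\T^2_L} \grad\cdot(e^f v) = 0$.

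With the divergence term eliminated, integrating the identity $\grad\cdot(e^f v) = e^f(g + c)$ over $\T^2_L$ leaves a purely algebraic scalar relation between $\int_{\T^2_L} g e^f$, $c$, and $\int_{\T^2_L} e^f$. Since $e^f > 0$ pointwise and $\T^2_L$ has finite measure, $\int_{\T^2_L} e^f$ is a strictly positive finite constant, so one may solve for $c$ to obtain the claimed identity. The only non-algebraic step is the regularity verification, and even that is essentially routine given the compactness of the torus and $f \in C^0 \cap H^1$; beyond that the proof is a one-line integration by parts once the integrating factor has been introduced.
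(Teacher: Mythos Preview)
Your approach is essentially identical to the paper's: both introduce the integrating factor $e^f$, use $\grad\cdot(e^f v)=e^f(\grad\cdot v+v\cdot\grad f)$, integrate over the torus so the divergence drops out, and solve the resulting scalar equation for the mean of $v\cdot\grad f$. The only difference is that you spell out the regularity check ensuring $e^f v\in W^{1,1}$ so the divergence theorem applies, which the paper leaves implicit.
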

\begin{proof}
    We compute 
    \begin{equation}
        \grad\cdot\tp{e^fv}=e^f\tp{\grad\cdot v+v\cdot\grad f}=e^f\bp{g-\f{1}{L_1L_2}\int_{\T^2_L}v\cdot\grad f}.
    \end{equation}
    The result then follows by integrating over $\T^2_L$, applying the divergence theorem, and rearranging.
\end{proof}

We now arrive at the main linear analysis result for the  periodic case.

\begin{prop}[Linear analysis with bathymetry - periodic case]\label{prop on linear analysis with bathymetry - periodic case}
    Let $s\in\N$, $A>0$, $G\ge0$, and $\be\in C^\infty\tp{\T^2_L}$ with $\min\be=0$. The following hold.
    \begin{enumerate}
        \item The linear map $\mathcal{P}:H^{2+s}\tp{\T^2_L;\R^2}\times \z{H}^{3+s}\tp{\T^2_L}\to \z{H}^{1+s}\tp{\T^2_L}\times H^s\tp{\T^2_L;\R^2}$ given via
        \begin{equation}\label{periodic principal part}
            \mathcal{P}\tp{u,\eta}=\tp{\grad\cdot u,Au-\grad\cdot\mathbb{S}u+\tp{G-\Delta}\grad\eta}
        \end{equation}
        is a bounded linear isomorphism.
        \item The linear map $\mathcal{L}_\be: H^{1+s}\tp{\T^2_L;\R^2}\times \z{H}^{2+s}\tp{\T^2_L;\R^2}\to \z{H}^{1+s}\tp{\T^2_L}\times H^s\tp{\T^2_L;\R^2}$ given via
        \begin{equation}\label{periodic bathymetry remainder}
            \mathcal{L}_\be\tp{u,\eta}=\tp{P_0\tp{u\cdot\grad\log\tp{1+\be}},-A\be\tp{1+\be}^{-1}u-\mathbb{S}u\grad\log\tp{1+\be}}
        \end{equation}
        is bounded.
        \item The map $(\mathcal{P}+\mathcal{L}_\be):H^{2+s}\tp{\T^2_L;\R^2}\times \z{H}^{3+s}\tp{\T^2_L}\to \z{H}^{1+s}\tp{\T^2_L}\times H^s\tp{\T^2_L;\R^2}$ is a bounded linear isomorphism.
    \end{enumerate}
\end{prop}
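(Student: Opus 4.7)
Item (1) is essentially a restatement of Proposition~\ref{prop on principal part linear analysis periodic case}: boundedness of $\mathcal{P}$ is immediate from~\eqref{periodic principal part}, bijectivity is exactly the content of that proposition, and the open mapping theorem promotes the bijection to an isomorphism. Item (2) follows from standard multiplication estimates in Sobolev spaces on the torus: the hypothesis $\min\be=0$ gives $1+\be\ge 1$, so $\log(1+\be)$, $\grad\log(1+\be)$, and $\be\tp{1+\be}^{-1}$ all lie in $C^\infty\tp{\T^2_L}$, and $P_0$ is a bounded projection on every $H^k\tp{\T^2_L}$.

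For item (3), the plan is to factor $\mathcal{P}+\mathcal{L}_\be=\mathcal{P}\circ\tp{I+\mathcal{P}^{-1}\mathcal{L}_\be}$, viewed as operators on the Banach space $X=H^{2+s}\tp{\T^2_L;\R^2}\times \z{H}^{3+s}\tp{\T^2_L}$. The auxiliary bounded operator $\mathcal{P}^{-1}\mathcal{L}_\be:X\to X$ admits the factorization
\begin{equation*}
X\emb H^{1+s}\tp{\T^2_L;\R^2}\times \z{H}^{2+s}\tp{\T^2_L}\xrightarrow{\mathcal{L}_\be}\z{H}^{1+s}\tp{\T^2_L}\times H^s\tp{\T^2_L;\R^2}\xrightarrow{\mathcal{P}^{-1}}X,
\end{equation*}
in which the initial inclusion is compact by the Rellich--Kondrachov theorem. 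Hence $\mathcal{P}^{-1}\mathcal{L}_\be$ is compact and the Fredholm alternative reduces item (3) to injectivity of $I+\mathcal{P}^{-1}\mathcal{L}_\be$, equivalently of $\mathcal{P}+\mathcal{L}_\be$.

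To verify injectivity, suppose $\tp{u,\eta}\in X$ satisfies $\tp{\mathcal{P}+\mathcal{L}_\be}\tp{u,\eta}=0$. The first component reads $\grad\cdot u+P_0\tp{u\cdot\grad\log\tp{1+\be}}=0$, which is precisely the hypothesis of Lemma~\ref{lem on divergence equation reformulation} with $f=\log\tp{1+\be}$ and $g=0$; the conclusion yields $\int_{\T^2_L}u\cdot\grad\log\tp{1+\be}=0$, so the projection is superfluous and the equation collapses to $\grad\cdot\tp{\tp{1+\be}u}=0$. Multiplying the second component by $1+\be$ and using the Leibniz identity $\grad\cdot\tp{\tp{1+\be}\mathbb{S}u}=\tp{1+\be}\grad\cdot\mathbb{S}u+\tp{1+\be}\mathbb{S}u\grad\log\tp{1+\be}$, we obtain the equivalent system
\begin{equation*}
Au-\grad\cdot\tp{\tp{1+\be}\mathbb{S}u}+\tp{1+\be}\tp{G-\Delta}\grad\eta=0.
\end{equation*}
Testing against $u$ in $L^2\tp{\T^2_L;\R^2}$ and integrating by parts, the capillary-gravity pairing vanishes because $(1+\be)u$ is solenoidal (integrating $\tp{1+\be}u\cdot\grad\eta$ and $\tp{1+\be}u\cdot\grad\Delta\eta$ both yield zero against $\grad\cdot\tp{(1+\be)u}$), while the viscous stress term is handled via $\mathbb{S}u:\grad u=2^{-1}\tabs{\grad u+\grad u^{\m{t}}}^2+2\tabs{\grad\cdot u}^2$, producing the coercive identity
\begin{equation*}
A\tnorm{u}_{L^2}^2+\int_{\T^2_L}\tp{1+\be}\bsb{2^{-1}\tabs{\grad u+\grad u^{\m{t}}}^2+2\tabs{\grad\cdot u}^2}=0.
\end{equation*}
Since $A>0$ and $1+\be\ge 1$, this forces $u=0$; the remaining equation $\tp{1+\be}\tp{G-\Delta}\grad\eta=0$ then gives $\tp{G-\Delta}\grad\eta=0$, and Fourier analysis on $\T^2_L$ combined with the vanishing mean of $\eta$ delivers $\eta=0$. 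The main obstacle, and the subtlety that dictates the exact form of $\mathcal{L}_\be$, is orchestrating the $\tp{1+\be}$-weighted reformulation so that Lemma~\ref{lem on divergence equation reformulation} simultaneously eliminates the projection $P_0$ in the first equation and renders $\tp{1+\be}u$ solenoidal, which in turn is precisely what allows the $\grad\eta$ pairing to vanish.
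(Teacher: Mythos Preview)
Your proof is correct and follows essentially the same approach as the paper's: both use Rellich--Kondrachov to establish compactness of the bathymetric remainder, reduce to injectivity via Fredholm theory, and then prove triviality of the kernel by invoking Lemma~\ref{lem on divergence equation reformulation} to recast the first equation as $\grad\cdot\tp{\tp{1+\be}u}=0$ and exploiting this solenoidal condition in an energy identity for the second equation. The only cosmetic difference is that you factor $\mathcal{P}+\mathcal{L}_\be=\mathcal{P}\circ\tp{I+\mathcal{P}^{-1}\mathcal{L}_\be}$ and apply the Fredholm alternative to the compact perturbation of the identity, whereas the paper argues directly that $\mathcal{P}+\mathcal{L}_\be$ is Fredholm of index zero; the two are equivalent.
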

\begin{proof}
    The first item is an operator-theoretic restatement of Proposition~\ref{prop on principal part linear analysis periodic case}. 

    To prove the second item, we first note that $\be\ge 0$ is smooth, and hence $(1+\be)^{-1}$ and $\log\tp{1+\be}$ are smooth functions as well.  Using this and simple applications of Theorem~\ref{thm on high low product estimates} shows that each term boundedly maps into the stated space.

    We now turn to the proof of the third item.   The Rellich–Kondrachov theorem implies that the inclusion map $H^{2+s}\tp{\T^2_L;\R^2}\times \z{H}^{3+s}\tp{\T^2_L}\emb H^{1+s}\tp{\T^2_L;\R^2}\times \z{H}^{2+s}\tp{\T^2_L}$ is compact; from this and the mapping properties established in the second item, we find that $\mathcal{L}_\be$ restricted to the former domain is a compact operator. We combine this fact with the first item to deduce that
    \begin{equation}
        \mathcal{P}+\mathcal{L}_\be:H^{2+s}\tp{\T^2_L;\R^2}\times \z{H}^{3+s}\tp{\T^2_L}\to \z{H}^{1+s}\tp{\T^2_L}\times H^s\tp{\T^2_L;\R^2}
    \end{equation}
    is a Fredholm operator with index zero; hence, it is a linear isomorphism if and only if $\m{ker}\tp{\mathcal{P}+\mathcal{L}_\be}=\tcb{0}$. 
    
    To complete the proof of the third item, we will now show that $\mathcal{P}+\mathcal{L}_\be$ is injective.  Suppose that $(u,\eta)\in\m{ker}\tp{\mathcal{P}+\mathcal{L}_\be}$.  Simple  algebraic manipulations then reveal that $(u,\eta)$ satisfy
    \begin{equation}\label{linear equations with bathymetry}
        \begin{cases}
            \grad\cdot u+P_0\tp{u\cdot\grad\log\tp{1+\be}}=0,\\
            A\tp{1+\be}^{-1}u+(1+\be)^{-1}\grad\cdot\tp{\tp{1+\be}\mathbb{S}u}+\tp{G-\Delta}\grad\eta=0.
        \end{cases}
    \end{equation}
    We can use Lemma~\ref{lem on divergence equation reformulation} to simplify the first equation in system~\eqref{linear equations with bathymetry}: we find that $P_0\tp{u\cdot\grad\log\tp{1+\be}}=u\cdot\grad\log\tp{1+\be}$, and hence the first equation is equivalent to
    \begin{equation}
        \grad\cdot\tp{\tp{1+\be}u}=0.
    \end{equation}
    Therefore, if we test the second equation in~\eqref{linear equations with bathymetry} with $(1+\be)u$ and integrate by parts, we learn that
    \begin{equation}
        \int_{\T^2_L}A|u|^2+(1+\be)\tp{2^{-1}|\grad u+\grad u^{\m{t}}|^2+2|\grad\cdot u|^2}=0,
    \end{equation}
    and thus $u =0$.     In turn, this implies that $(G-\Delta)\grad\eta=0$, and since $G\ge0$ and $\mathscr{F}[\eta]\tp{0}=0$ we must have $\eta=0$ as well, and injectivity is proved.  
\end{proof}

To conclude this section we give the main result on the linear analysis in the solitary case. Recall the definition of the extended weighted gradient spaces (and the function $\bf{c}$) from Definition~\ref{defn on extended weighted gradient spaces} and the notation for the space of solenoidal vector fields introduced in~\eqref{the definition of the solenoidal space}. We shall also use the following quadratic form notation for $m\in\tcb{1,2}$:
\begin{equation}\label{quadratic form notation}
    Q_m:\R^2\times\R^2\to\R^{2\times 2}_{\m{sym}},\quad Q_m(v,w)= m\tp{v\cdot w}I+v\otimes w+w\otimes v,\text{ and } Q_m(v)=Q_m(v,v).
\end{equation}

\begin{prop}[Linear analysis with bathymetry - solitary case]\label{prop on linear analysis with bathymetry solitary case}
    Let $s\in\N$, $A,G>0$, $\del,\rho,\mu\in(0,1)$ satisfy $\rho-\del<\mu<\rho$, and let $\be\in{_{\m{e}}}\tilde{H}^\infty_\del\tp{\R^2}$ with $\inf\be=0$. The following hold.
    \begin{enumerate}
        \item The linear map $\mathcal{P}_\be:{_{\m{sol}}}H^{2+s}_\rho\tp{\R^2;\R^2}\times\tilde{H}^{3+s}_\rho\tp{\R^2}\to H^s_\rho\tp{\R^2;\R^2}$ given via
        \begin{equation}\label{solitary principal part map}
            \mathcal{P}_\be\tp{u,\eta}= A_\be u-\grad\cdot\mathbb{S}u+\tp{G-\Delta}\grad\eta, \text{ for } A_\be=\f{A}{1+\bf{c}\tp{\be}},
        \end{equation}
        is a bounded linear isomorphism.
        \item The linear map $\mathcal{L}_\be:H^{1+s}_\mu\tp{\R^2;\R^2}\times\tilde{H}^{2+s}_\mu\tp{\R^2}\to H^s_\rho\tp{\R^2;\R^2}$ given via
        \begin{multline}\label{solitary remainder map}
            \mathcal{L}_\be\tp{u,\eta}=-\f{A\tp{\be-\bf{c}\tp{\be}}}{\tp{1+\be}\tp{1+\bf{c}\tp{\be}}}u+\grad\cdot Q_2\tp{\grad\log\tp{1+\be},u}\\-\tp{(1+\be)^{-1}\tp{G-\Delta}\grad\tp{1+\be}}\eta+\grad\cdot Q_1\tp{\grad\tp{1+\be},\grad\tp{\tp{1+\be}^{-1}\eta}}
        \end{multline}
        is well-defined and continuous.
        \item The map $\tp{\mathcal{P}_\be+\mathcal{L}_\be}:{_{\m{sol}}}H^{2+s}_\rho\tp{\R^2;\R^2}\times\tilde{H}^{3+s}_\rho\tp{\R^2}\to H^s_\rho\tp{\R^2;\R^2}$ is a bounded linear isomorphism.
    \end{enumerate}
\end{prop}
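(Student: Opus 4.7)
My plan is to handle the three items in order. For item (1), observe that $\mathcal{P}_\be$ coincides with the principal part operator of Proposition~\ref{prop on principal part analysis solitary case} once the scalar coefficient $A$ is replaced by $A_\be = A/(1 + \bf{c}(\be))$. Since $\be \in {_{\m{e}}}\tilde{H}^\infty_\del$ satisfies $\inf \be = 0$ and the fluctuation $\be - \bf{c}(\be)$ lies in $L^{2/\del}\tp{\R^2}$ (decaying in an averaged sense by the first item of Proposition~\ref{prop on completeness of the weighted gradient spaces}), the central value $\bf{c}(\be)$ must be nonnegative, so $A_\be > 0$. The isomorphism claim then follows immediately from Proposition~\ref{prop on principal part analysis solitary case} applied with the positive coefficient $A_\be$, since the argument there did not exploit any specific value of $A$.

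For item (2), the task is to verify that each summand in the definition~\eqref{solitary remainder map} boundedly maps into $H^s_\rho\tp{\R^2;\R^2}$. The useful preliminary observations are: $\grad\be \in H^\infty_\del\tp{\R^2;\R^2}$ (as the gradient strips off the constant part); by admissibility of ${_{\m{e}}}\tilde{H}^\infty_\del$ (Proposition~\ref{prop on admissability of the extended weighted gradient spaces}) and the functional calculus of Appendix~\ref{appendix on analytic composition in unital Banach algebras}, $\tp{1+\be}^{-1} \in {_{\m{e}}}\tilde{H}^\infty_\del$ with central part $\tp{1+\bf{c}(\be)}^{-1}$, so the fluctuation $\tp{1+\be}^{-1} - \tp{1+\bf{c}(\be)}^{-1}$ lies in $\tilde{H}^\infty_\del\tp{\R^2}$; and $\grad\log\tp{1+\be} = \tp{1+\be}^{-1}\grad\be \in H^\infty_\del\tp{\R^2;\R^2}$ after using Proposition~\ref{prop on product estimates in weighted gradient spaces} item 3. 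Each coefficient in $\mathcal{L}_\be$ is built from these ingredients, and the boundedness follows from systematic application of the weighted product estimates in Propositions~\ref{prop on product estimates} and~\ref{prop on product estimates in weighted gradient spaces}. The inequality hypothesis $\rho - \del < \mu < \rho$ is precisely what allows the weight sums from the product estimates to land at or above $\rho$ after accounting for the small loss in the $\tilde{H}_\del$ product rule.

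For item (3), the compact weighted embeddings $H^{2+s}_\rho \emb H^{1+s}_\mu$ and $\tilde{H}^{3+s}_\rho \emb \tilde{H}^{2+s}_\mu$ (from Propositions~\ref{prop on embeddings of weighted Sobolev spaces} and~\ref{prop on inclusion relations}, enabled by $\mu < \rho$) allow us to factor $\mathcal{L}_\be$ as a compact operator from ${_{\m{sol}}}H^{2+s}_\rho \times \tilde{H}^{3+s}_\rho$ to $H^s_\rho$. Combined with item (1), this makes $\mathcal{P}_\be + \mathcal{L}_\be$ Fredholm of index zero, reducing the proof to verifying injectivity. The decisive algebraic simplification is
\begin{equation}
    A_\be - \f{A\tp{\be-\bf{c}\tp{\be}}}{\tp{1+\be}\tp{1+\bf{c}\tp{\be}}} = \f{A}{1+\be},
\end{equation}
so upon multiplication by $1+\be$ the vanishing system becomes the divergence-form linearization of the discharge-variable nonlinear system at zero, namely $\grad\cdot u = 0$ together with
\begin{equation}
    A u - \grad\cdot\bp{\tp{1+\be}\mathbb{S}\tp{\f{u}{1+\be}}} + \tp{1+\be}\tp{G-\Delta}\grad\tp{\f{\eta}{1+\be}} = 0.
\end{equation}
Testing the second equation against $u/(1+\be)$ in $L^2\tp{\R^2;\R^2}$ and integrating by parts, using the solenoidal constraint on $u$ to kill the capillary-gravity contribution, yields a power-dissipation identity analogous to~\eqref{the power dissipation identity} that forces $u = 0$. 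The residual equation $\tp{1+\be}\tp{G-\Delta}\grad\tp{\eta/(1+\be)} = 0$, combined with Plancherel and the $L^{2/\del}$ embedding from Proposition~\ref{prop on completeness of the weighted gradient spaces}, then yields $\eta = 0$, as in the proof of Proposition~\ref{prop on principal part analysis solitary case}.

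The main obstacle I anticipate is the rigorous justification of the energy identity in the weighted solitary setting: the capillary-gravity integration by parts does not make sense directly since neither $\eta$ nor $u/(1+\be)$ is compactly supported, and must be legitimized by an approximation argument leaning on the density of compactly supported smooth functions (Proposition~\ref{prop on density of functions of bounded support}) together with careful weighted product bounds to control the boundary-like contributions in the limit. This closely parallels, but is technically more delicate than, the corresponding step in the proof of Proposition~\ref{prop on principal part analysis solitary case} due to the presence of the nonconstant coefficient $1+\be$.
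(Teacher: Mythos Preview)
Your approach is essentially identical to the paper's and is correct in its overall structure: item (1) via Proposition~\ref{prop on principal part analysis solitary case}, item (2) via weighted product estimates, and item (3) via Fredholm index zero plus an energy/dissipation identity obtained by testing against $(1+\be)^{-1}u$.

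One minor slip: the phrase ``upon multiplication by $1+\be$'' is not what happens, and your displayed equation is inconsistent with it. The sum $(\mathcal{P}_\be + \mathcal{L}_\be)(u,\eta)=0$ directly yields
\[
\frac{A}{1+\be}\,u - \grad\cdot\bp{(1+\be)\,\mathbb{S}\bp{\frac{u}{1+\be}}} + (1+\be)(G-\Delta)\grad\bp{\frac{\eta}{1+\be}} = 0,
\]
with first term $A(1+\be)^{-1}u$, not $Au$; no multiplication by $1+\be$ is involved. The paper records three auxiliary identities (one for each of the drag, viscous, and gravity--capillary blocks) to arrive at this form. Your test function $(1+\be)^{-1}u$ then gives the correct dissipation identity, so this slip does not affect the argument. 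Also note that for the third summand in $\mathcal{L}_\be$ the paper splits $\eta$ into low and high frequency pieces (via Proposition~\ref{prop on equivalent norms and low mode embeddings}) before applying product estimates; this is needed because $\eta$ lies only in $\tilde{H}^{2+s}_\mu$, not in a standard weighted $H$-space.
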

\begin{proof}
    A simple application of Proposition~\ref{prop on equivalent norms in the weighted Sobolev spaces} reveals that the mapping~\eqref{solitary principal part map} is well-defined and continuous. This map is invertible thanks to Proposition~\ref{prop on principal part analysis solitary case}, so the first item is proved.

    To prove the second item we begin by recording some facts. By combining the first and second items of Proposition~\ref{prop on equivalent norms and low mode embeddings} with Proposition~\ref{prop on embeddings of weighted Sobolev spaces}, we deduce the continuous embedding  $\tilde{H}^\infty_\del\tp{\R^2}\emb W^{\infty,\infty}_{\tilde{\del}}\tp{\R^2}$ for any $\tilde{\del}\in[0,\del)$. On the other hand, since $\be\ge0$ we combine Proposition~\ref{prop on admissability of the extended weighted gradient spaces} with Corollary~\ref{coro on analytic superposition real case} to deduce the inclusions $\log\tp{1+\be},\tp{1+\be}^{-1},\be\tp{1+\be}^{-1}\in{_{\m{e}}}\tilde{H}^\infty_\del\tp{\R^2}$. The first item of Proposition~\ref{prop on properties of extended weighted gradient spaces} implies that $\be-\bf{c}\tp{\be}\in\tilde{H}^\infty_\del\tp{\R^2}$, while its fourth item tells us that $\tilde{H}_\del^\infty\tp{\R^2}\subset{_{\m{e}}}\tilde{H}^\infty_\del\tp{\R^2}$ is an ideal.
    
    There are four terms in the expression~\eqref{solitary remainder map} for $\mathcal{L}_\be$ that we need to estimate. The first two of these can be felled with the aforementioned facts and Proposition~\ref{prop on product estimates}. Indeed (taking $\tilde{\del}=\rho-\mu<\del$):
    \begin{equation}
        \bnorm{\f{A\tp{\be-\bf{c}\tp{\be}}}{\tp{1+\be}\tp{1+\bf{c}\tp{\be}}}u}_{H^s_\rho}\lesssim\bnorm{\f{A\tp{\be-\bf{c}\tp{\be}}}{\tp{1+\be}\tp{1+\bf{c}\tp{\be}}}}_{W^{s,\infty}_{\tilde{\del}}}\tnorm{u}_{H^s_\mu}\lesssim\tnorm{u}_{H^s_\mu}
    \end{equation}
    and
    \begin{equation}
        \tnorm{\grad\cdot Q_2\tp{\grad\log\tp{1+\be},u}}_{H^s_\rho}\lesssim\tnorm{\grad\log\tp{1+\be}}_{W^{1+s,\infty}_{\tilde{\del}}}\tnorm{u}_{H^{1+s}_\mu}\lesssim\tnorm{u}_{H^{1+s}_\mu}.
    \end{equation}
    To estimate the third term in the expression for $\mathcal{L}_\be$ we split $\eta=\varphi(D)\eta+\tp{1-\varphi(D)}\eta$ with a frequency cut-off function as in Proposition~\ref{prop on equivalent norms and low mode embeddings}. For the low mode contribution, we use that $\varphi(D)\eta\in W^{\infty,\infty}_{\tilde{\mu}}\tp{\R^2}$ (with $\max\tcb{0,\rho-\del}<\tilde{\mu}<\mu$) and $\grad\be\in H^\infty_\del\tp{\R^2;\R^2}$:
    \begin{equation}
        \tnorm{\tp{\tp{1+\be}^{-1}\tp{G-\Delta}\grad\tp{1+\be}}\varphi\tp{D}\eta}_{H^s_\rho}\lesssim\tnorm{\tp{1+\be}^{-1}}_{W^{s,\infty}}\tnorm{\tp{G-\Delta}\grad\be}_{H^s_\del}\tnorm{\varphi(D)\eta}_{W^{s,\infty}_{\tilde{\mu}}}\lesssim\tnorm{\eta}_{\tilde{H}^{2+s}_\mu}.
    \end{equation}
    For the high mode contribution we instead take $\grad\be\in W^{s,\infty}_\del\tp{\R^2;\R^2}$ and $\tp{1-\varphi\tp{D}}\eta\in H^{2+s}_\mu\tp{\R^2}$, which grants us the estimate
    \begin{equation}
        \tnorm{\tp{\tp{1+\be}^{-1}\tp{G-\Delta}\grad\tp{1+\be}}\tp{1-\varphi\tp{D}}\eta}_{H^s_\rho}\lesssim\tnorm{\tp{1+\be}^{-1}}_{W^{s,\infty}}\tnorm{\tp{G-\Delta}\grad\be}_{W^{s,\infty}_\del}\tnorm{\tp{1-\varphi(D)}\eta}_{H^s_\mu}\lesssim\tnorm{\eta}_{\tilde{H}^{2+s}_\mu}.
    \end{equation}
    In order to estimate the final term in~\eqref{solitary remainder map} we argue as follows. By the second item of Proposition~\ref{prop on product estimates in weighted gradient spaces} we get
    \begin{equation}
        \tnorm{(1+\be)^{-1}\eta}_{\tilde{H}^{2+s}_\mu}\lesssim\tp{|\tp{1+\bf{c}\tp{\be}}^{-1}|+\tnorm{\tp{1+\be}^{-1}-\tp{1+\bf{c}\tp{\be}}^{-1}}_{\tilde{H}^{2+s}_{\del}}}\tnorm{\eta}_{\tilde{H}^{2+s}_\mu}\lesssim\tnorm{\eta}_{\tilde{H}^{2+s}_\mu}.
    \end{equation}
    In turn, we may then bound
    \begin{equation}
        \tnorm{\grad\cdot Q_1\tp{\grad\tp{1+\be},\grad\tp{\tp{1+\be}^{-1}\eta}}}_{H^s_\rho}\lesssim\tnorm{\grad\be}_{W^{1+s,\infty}_\del}\tnorm{\grad\tp{(1+\be)^{-1}\eta}}_{H^{1+s}_\mu}\lesssim\tnorm{\eta}_{\tilde{H}^{2+s}_\mu}.
    \end{equation}
    By combining the above estimates, we then find that $\mathcal{L}_\be$ obeys the mapping properties stated in the second item.

    Finally, to prove the third item we will use Fredholm theory as in the proof of Proposition~\ref{prop on linear analysis with bathymetry - periodic case}.  Due to Propositions~\ref{prop on embeddings of weighted Sobolev spaces} and~\ref{prop on inclusion relations}, the inclusion map
    \begin{equation}\label{the compact embedding solitary case}
        H^{2+s}_\rho\tp{\R^2;\R^2}\times\tilde{H}^{3+s}_{\rho}\tp{\R^2}\emb H^{1+s}_{\mu}\tp{\R^2;\R^2}\times\tilde{H}^{2+s}_\mu\tp{\R^2}
    \end{equation}
    is compact and so from the mapping properties of $\mathcal{L}_\be$ established in the second item we find that the restriction of $\mathcal{L}_\be$ to the space on the left of~\eqref{the compact embedding solitary case} is a compact linear operator. Therefore, the sum $\mathcal{P}_\be+\mathcal{L}_\be$ is a Fredholm operator of index zero (as $\mathcal{P}_\be$ is invertible by the first item),  which means that $\mathcal{P}_\be+\mathcal{L}_\be$ is invertible if and only if it has trivial kernel.  Suppose then that $(u,\eta)\in\m{ker}\tp{\mathcal{P}_\be+\mathcal{L}_\be}$. The following auxiliary identities hold:
    \begin{equation}
        A_\be-\f{A\tp{\be-\bf{c}\tp{\be}}}{\tp{1+\be}\tp{1+\bf{c}\tp{\be}}}=\f{A}{1+\be},
    \end{equation}
    \begin{equation}
        -\mathbb{S}u+Q_2\tp{\grad\log\tp{1+\be},u}=\tp{1+\be}\mathbb{S}\tp{\tp{1+\be}^{-1}u},
    \end{equation}
    and for $\al=1+\be$ and $\zeta=\tp{1+\be}^{-1}\eta$:
    \begin{equation}
        \Delta\grad\eta=\Delta\grad\tp{\al\zeta}=\al\Delta\grad\zeta+\zeta\Delta\grad\al+\grad\cdot Q_1\tp{\grad\al,\grad\zeta}.
    \end{equation}
    Thus, $\tp{\mathcal{P}_\be+\mathcal{L}_\be}(u,\eta)=0$ if and only if
    \begin{equation}
        A\tp{1+\be}^{-1}u-\grad\cdot\tp{\tp{1+\be}\mathbb{S}\tp{\tp{1+\be}^{-1}u}}+\tp{1+\be}\tp{G-\Delta}\grad\tp{\tp{1+\be}^{-1}\eta}=0,
    \end{equation}
    but the identity $\grad\cdot u=0$ is built into the domain of $u$.  We then test the above equation with $(1+\be)^{-1}u$ and integrate by parts to deduce that
    \begin{equation}\label{___+++___}
        \int_{\R^2}A|\tp{1+\be}^{-1}u|^2\le\babs{\int_{\R^2}\tp{G-\Delta}\grad\tp{\tp{1+\be}^{-1}\eta}\cdot u}.
    \end{equation}
    We next use the inclusion $(1+\be)^{-1}\eta\in\tilde{H}_\rho^{3+s}\tp{\R^2}$ and the same density argument employed in the proof of Proposition~\ref{prop on principal part analysis solitary case} to deduce that the right hand side of~\eqref{___+++___} vanishes.  In turn, since $A>0$ we learn that $u=0$ and  $(G-\Delta)\grad\tp{(1+\be)^{-1}\eta}=0$.  Since $G>0$ and $\grad\tp{\tp{1+\be}^{-1}\eta}\in L^2\tp{\R^2;\R^2}$, we then learn that $(1+\beta)^{-1}\eta$ is a constant function that also belongs to $L^{2/\rho}\tp{\R^2}$ and so must be trivial.  This completes the proof that $\mathcal{P}_\be+\mathcal{L}_\be$ is injective, and hence the proof of the third item.  
\end{proof}

\section{Semilinearization}\label{section on semilinearization}

The goal of this section is to show that the system~\eqref{stationary nondimensionalized equations} with~\eqref{percolate propane pulvarized paintbrush} is equivalent to a modified system of equations that are semilinear in the sense that the equations are the sum of a linear part and a lower order nonlinear remainder. As it turns out, this linear part is the one we have already studied in Section~\ref{subsection on bathymetric remainder}. Once this is established, we make yet another reformulation of the equations that will be useful in our subsequent analysis.  By inverting the linear part of the semilinearization, we obtain a representation of solutions as zeros of a compact perturbation of the identity map on an appropriate Banach space. 

\subsection{Domains and forcing}\label{subsection on functional framework and forcing}

The purpose of this subsection is to introduce open sets for the collection of free surfaces in both the periodic and solitary cases. We then encode the forcing function~\eqref{percolate propane pulvarized paintbrush} within these spaces and verify basic mapping properties.

\begin{lem}[Free surface domain - periodic case]\label{lem on free surface domain, periodic case}
    Let $\be\in C^\infty\tp{\T^2_L}$ with $\min\be=0$. The following hold.
    \begin{enumerate}
        \item The sets $\bf{O}_\be$ and, for $\ep\in\tp{0,1}$, $\bf{O}_{\be}^\ep$ given by
        \begin{equation}
            \bf{O}_\be=\tcb{\eta\in C^0\tp{\T^2_L}\;:\;\min\tp{1+\be+\eta}>0},\quad\bf{O}_\be^\ep=\tcb{\eta\in\bf{O}_\be\;:\;\min\tp{1+\be+\eta}>\ep}
        \end{equation}
        are open in $C^0\tp{\T^2_L}$ and satisfy $\bigcup_{0<\ep<1}\bf{O}_\be^\ep=\bf{O}_\be$.
        \item Given any $\N\ni s\ge 2$, the sets $\bf{O}_\be\cap\z{H}^{s}\tp{\T^2_L}$ and $\bf{O}_\be^\ep\cap\z{H}^{s}\tp{\T^2_L}$ are open in $\z{H}^s\tp{\T^2_L}$.
        \item The sets $\bf{O}_\be\cap \z{H}^1\tp{\T^2_L}$ and $\bf{O}_\be^\ep\cap\z{H}^1\tp{\T^2_L}$ are open in $\tp{C^0\cap\z{H}^1}\tp{\T^2_L}$.
    \end{enumerate}
\end{lem}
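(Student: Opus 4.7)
The plan is to reduce all three items to a single continuity observation. Define the functional $\Theta : C^0\tp{\T^2_L}\to\R$ by $\Theta(\eta)=\min\tp{1+\be+\eta}$. Since for any two continuous functions $f,g$ on a compact set one has $\sabs{\min f-\min g}\le\tnorm{f-g}_{L^\infty}$, the functional $\Theta$ is $1$-Lipschitz with respect to the $C^0$ norm and hence continuous. This is the entire analytic content of the lemma; everything else is a formal preimage argument combined with continuous embeddings.

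For the first item, observe that $\bf{O}_\be=\Theta^{-1}\tp{(0,\infty)}$ and $\bf{O}_\be^\ep=\Theta^{-1}\tp{(\ep,\infty)}$, so both are preimages of open subsets of $\R$ under a continuous map and are therefore open in $C^0\tp{\T^2_L}$. The inclusion $\bigcup_{0<\ep<1}\bf{O}_\be^\ep\subseteq\bf{O}_\be$ is immediate. For the reverse, given $\eta\in\bf{O}_\be$ set $c=\Theta\tp{\eta}\in\R^+$ and choose any $\ep\in\tp{0,\min\tcb{c,1}}$; then $\eta\in\bf{O}_\be^\ep$.

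For the second and third items, it suffices to verify that the natural inclusions into $C^0\tp{\T^2_L}$ are continuous, as then openness passes from $C^0$ to the stronger topology via a second preimage argument. For the second item, the Sobolev embedding on the two-dimensional torus yields the continuous inclusion $\z{H}^s\tp{\T^2_L}\emb C^0\tp{\T^2_L}$ whenever $s>1$, which holds in particular for $s\ge 2$. For the third item, the inclusion $\tp{C^0\cap\z{H}^1}\tp{\T^2_L}\emb C^0\tp{\T^2_L}$ is continuous directly from the definition of the intersection norm. In both cases, $\bf{O}_\be$ and $\bf{O}_\be^\ep$ intersected with the respective subspace are exactly the preimages of the corresponding open subsets of $C^0\tp{\T^2_L}$, completing the proof.

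There is no serious obstacle in this lemma; the only subtlety worth flagging is that one must use $s\ge 2$ (or any $s>1$) to invoke the Sobolev embedding into continuous functions on $\T^2_L$, which is why the regularity threshold appears in the statement of item~(2). The restriction to zero-mean subspaces $\z{H}^s$ plays no role in this argument, since openness is inherited by any continuously embedded subspace.
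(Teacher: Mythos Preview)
Your proof is correct and follows exactly the approach the paper uses: the paper's own proof simply declares items~(1) and~(3) ``elementary'' and notes that item~(2) follows from the supercritical Sobolev embedding $H^s\tp{\T^2_L}\emb C^0\tp{\T^2_L}$ for $s>1$. You have supplied the elementary details the paper omits, and your continuity-of-$\Theta$ plus preimage argument is precisely what underlies the paper's terse justification.
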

\begin{proof}
    The first item and third items are elementary.  The second item follows from the fact that $s>1$ allows for the supercritical Sobolev embedding $H^{s}\tp{\T^2_L}\emb C^0\tp{\T^2_L}$. 
\end{proof}

In the following result we let $\Upomega$ denote the composition operator of Corollary~\ref{coro on compound analytic superposition real case}.

\begin{prop}[Data map - periodic case]\label{prop on data map, periodic case}
    Let $\be\in C^\infty\tp{\T^2_L}$ with $\min\be=0$ and set $a_\be=1+\max\be$.  Fix a trio of (vector valued) real analytic mappings
    \begin{equation}
        \phi:(-a_\be,\infty)\to L^2\tp{\T^2_L;\R^2},\quad\psi:(-a_\be,\infty)\to H^1\tp{\T^2_L},\quad\tau:(-a_\be,\infty)\to \tp{L^\infty\cap H^1}\tp{\T^2_L;\R^{2\times 2}_{\m{sym}}}.
    \end{equation}
    The following hold.
    \begin{enumerate}
        \item The forcing map $\mathcal{F}_\be:\bf{O}_\be\cap \z{H}^1\tp{\T^2_L}\to L^2\tp{\T^2_L;\R^2}$ given via
        \begin{equation}\label{the forcing function periodic case}
            \mathcal{F}_\be\tp{\eta}=\tp{1+\be+\eta}^{-1}\tp{\Upomega_\phi\tp{\eta}+\grad\tp{\tp{1+\be+\eta}\Upomega_\psi\tp{\eta}}+\Upomega_\tau\tp{\eta}\grad\eta}
        \end{equation}
        is well-defined and real analytic.
        \item For any $\ep\in\tp{0,1}$ the restriction $\mathcal{F}_\be:\bf{O}_\be^\ep\cap \z{H}^1\tp{\T^2_L}\to L^2\tp{\T^2_L;\R^2}$ maps bounded sets to bounded sets.
        \item For any $\ep\in\tp{0,1}$ and $\N\ni s\ge 2$ the restriction $\mathcal{F}_\be:\bf{O}^\ep_\be\cap \z{H}^s\tp{\T^2_L}\to L^2\tp{\T^2_L;\R^2}$ maps bounded sets to precompact sets.
    \end{enumerate}
\end{prop}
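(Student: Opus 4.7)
The plan is to assemble $\mathcal{F}_\be$ from a small collection of real-analytic and bounded-linear building blocks furnished by the appendix's holomorphic functional calculus, then invoke Rellich-Kondrachov for the precompactness claim. First I would recognize $C^0\tp{\T^2_L}$ as a commutative admissible unital Banach algebra whose spectrum coincides with $\m{img}(\cdot)$. For $\eta\in\bf{O}_\be$, compactness of $\T^2_L$ and continuity of $\eta$ force $\m{spec}(\eta)$ to be a compact subset of $(-a_\be,\infty)$, which is the analyticity domain of $\phi$, $\psi$, and $\tau$. The compound analytic superposition corollary then yields real-analytic Nemytskii maps $\Upomega_\phi:\bf{O}_\be\to L^2\tp{\T^2_L;\R^2}$, $\Upomega_\psi:\bf{O}_\be\to H^1\tp{\T^2_L}$, and $\Upomega_\tau:\bf{O}_\be\to\tp{L^\infty\cap H^1}\tp{\T^2_L;\R^{2\times 2}_{\m{sym}}}$, and analyticity of the resolvent in $C^0$ gives an analytic map $\eta\mapsto\tp{1+\be+\eta}^{-1}\in C^0$. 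All of these remain analytic after restriction to $\bf{O}_\be\cap\z{H}^1\tp{\T^2_L}\subset\tp{C^0\cap\z{H}^1}\tp{\T^2_L}$, which is open by Lemma~\ref{lem on free surface domain, periodic case}.

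For item 1, I would distribute the gradient to rewrite
\begin{equation*}
\mathcal{F}_\be(\eta)=\tp{1+\be+\eta}^{-1}\Upomega_\phi(\eta)+\grad\Upomega_\psi(\eta)+\tp{1+\be+\eta}^{-1}\grad(\be+\eta)\,\Upomega_\psi(\eta)+\tp{1+\be+\eta}^{-1}\Upomega_\tau(\eta)\grad\eta,
\end{equation*}
and verify that each summand defines a real-analytic map into $L^2\tp{\T^2_L;\R^2}$, relying on Theorem~\ref{thm on high low product estimates}, the Sobolev embedding $H^1\tp{\T^2_L}\emb L^p\tp{\T^2_L}$ for $p<\infty$, and the $L^\infty$ control provided by the $C^0$ component of the domain. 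Analyticity then propagates through finite sums and continuous bilinear products of analytic Banach-space-valued maps. For item 2, the uniform bound $\tnorm{\tp{1+\be+\eta}^{-1}}_{L^\infty}\le\ep^{-1}$ on $\bf{O}_\be^\ep$, combined with the continuity of each $\Upomega$ factor into its target and the continuity of the multiplications and of $\grad$, readily yields the bounded-to-bounded property via the same expansion.

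For item 3, I would invoke the Rellich-Kondrachov compact embedding $\z{H}^s\tp{\T^2_L}\emb\tp{C^0\cap\z{H}^1}\tp{\T^2_L}$ for $s\ge 2$. A bounded sequence $\tcb{\eta_n}\subset\bf{O}_\be^\ep\cap\z{H}^s$ admits a subsequence converging in $\tp{C^0\cap\z{H}^1}\tp{\T^2_L}$ to some $\eta$; uniform convergence preserves the lower bound $\min(1+\be+\eta_n)\ge\ep$, so $\eta\in\bf{O}_\be$, and the continuity of $\mathcal{F}_\be$ from item 1 yields $\mathcal{F}_\be(\eta_n)\to\mathcal{F}_\be(\eta)$ in $L^2$, establishing the precompactness. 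The hardest part, I anticipate, is the detail in item 1 of verifying the summand $\tp{1+\be+\eta}^{-1}\Upomega_\psi(\eta)\grad\eta$ lies in $L^2$: in dimension two $H^1\not\emb L^\infty$, so multiplying $\Upomega_\psi(\eta)\in H^1$ by $\grad\eta\in L^2$ is delicate, and the argument must route through the $L^\infty$ control on $\eta$ itself and the bilinear product estimates of Theorem~\ref{thm on high low product estimates}, which together with the appendix's composition framework handle the 2D endpoint issues uniformly.
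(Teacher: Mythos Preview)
Your overall architecture matches the paper's: build $\mathcal{F}_\be$ from the Nemytskii operators supplied by Corollary~\ref{coro on compound analytic superposition real case}, combine them via bounded bilinear products, and invoke Rellich--Kondrachov for item~3. The gap is your choice of admissible algebra. You take $\mathcal{X}=C^0(\T^2_L)$, but the corollary's standing hypothesis is that the pointwise product $\mathcal{Y}\times\mathcal{X}\to\mathcal{Y}$ be continuous, and multiplication by a merely continuous function does not preserve $H^1$: in the Leibniz expansion of $\grad(gf)$ one term is $f\grad g$, and $g\in C^0$ gives no control on $\grad g$ at all. Hence with $\mathcal{X}=C^0$ the corollary cannot produce $\Upomega_\psi:\bf{O}_\be\to H^1$ or $\Upomega_\tau:\bf{O}_\be\to L^\infty\cap H^1$; it only delivers $\Upomega_\phi\to L^2$. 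For the same reason you obtain $(1+\be+\eta)^{-1}$ only in $C^0$, which is too weak to handle the products appearing after you distribute the gradient.

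The paper's remedy is to work in the larger algebra $\mathcal{X}=(C^0\cap H^1)(\T^2_L;\C)$, which Proposition~\ref{prop on admissability in the periodic setting} certifies as admissible. The extra $H^1$ content in $\mathcal{X}$ is exactly the derivative information you are missing: for instance $(L^\infty\cap H^1)\times(C^0\cap H^1)\to L^\infty\cap H^1$ now holds (both factors carry $L^\infty$ bounds, so Theorem~\ref{thm on high low product estimates} applies with $q_0=q_1=\infty$, $p_0=p_1=t=2$), and Corollary~\ref{coro on analytic superposition real case} sends $\eta\mapsto(1+\be+\eta)^{\pm1}$ analytically into $C^0\cap H^1$ rather than just $C^0$. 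Your closing worry about the summand $(1+\be+\eta)^{-1}\Upomega_\psi(\eta)\grad\eta$ is a symptom of the same defect; the 2D borderline $H^1\not\hookrightarrow L^\infty$ you flag is genuine and is not patched by Sobolev embedding of $\Upomega_\psi(\eta)$ alone but by upgrading the algebra from the outset so that every factor carrying a derivative also carries an $L^\infty$ bound. Once you make that single change, your distributed-gradient expansion and the remainder of the argument go through, and your treatment of item~3 is essentially the paper's verbatim.
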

\begin{proof}
    Proposition~\ref{prop on admissability in the periodic setting} guarantees that the unital Banach algebra $\tp{C^0\cap H^1}\tp{\T^2_L;\C}$ is admissible in the sense of Definition \ref{defn of admissable Banach algebras}; consequently, we may apply Corollary~\ref{coro on compound analytic superposition real case} to learn that that the maps
    \begin{multline}\label{been the mountain}
        \Upomega_\phi:\bf{O}_\be\cap \z{H}^1\tp{\T^2_L}\to L^2\tp{\T^2_L;\R^2},\quad\Upomega_\psi:\bf{O}_\be\cap \z{H}^1\tp{\T^2_L}\to H^1\tp{\T^2_L},\\\Upomega_\tau:\bf{O}_\be\cap \z{H}^1\tp{\T^2_L}\to \tp{L^\infty\cap H^1}\tp{\T^2_L;\R^{2\times 2}_{\m{sym}}}
    \end{multline}
    are all well-defined and real analytic, and that their restrictions to $\bf{O}_\be^\ep\cap \z{H}^1\tp{\T^2_L}$ all map bounded sets to bounded sets.  On the other hand, due to Corollary~\ref{coro on analytic superposition real case}, the mappings
    \begin{equation}\label{going to china}
        \bf{O}_\be\cap \z{H}^1\tp{\T^2_L}\ni\eta\mapsto(1+\be+\eta),\;\tp{1+\be+\eta}^{-1}\in\tp{C^0\cap H^1}\tp{\T^2_L;\C}
    \end{equation}
    are also well-defined and real analytic, with their restrictions to $\bf{O}_\be^\ep\cap\z{H}^1\tp{\T^2_L}$ mapping bounded sets to bounded sets.

    The map $\mathcal{F}_\be$ of~\eqref{the forcing function periodic case} combines the maps of~\eqref{been the mountain} and~\eqref{going to china} (and $\grad\eta$) via simple products, which can been handled with Theorem~\ref{thm on high low product estimates}.  Thus,  $\mathcal{F}_\be$ is well-defined and real analytic. The same argument also shows that the second item holds.

    Finally, we prove the third item.  Thanks to the Rellich–Kondrachov theorem, the inclusion map $\z{H}^s\tp{\T^2_L}\emb\tp{C^0\cap H^1}\tp{\T^2_L}$ is compact, from which it follows that the inclusion map $\bf{O}^\ep_\be\cap \z{H}^s\tp{\T^2_L}\emb\bf{O}^{\ep/2}_\be\cap \z{H}^1\tp{\T^2_L}$ maps bounded sets to precompact sets.  The second item requires $\mathcal{F}_\be$ to be at least continuous, so it maps these precompact sets to precompact sets.
\end{proof}

We now turn our attention to the solitary case. In contrast with the periodic case, we additionally study an analytic change of unknowns. We remind the reader that the definition of the extended weighted gradient spaces is given in Definition~\ref{defn on extended weighted gradient spaces}.

\begin{lem}[Free surface domains - solitary case]\label{lem on free surface domsains, solitary case}
    Let $\del,\rho\in(0,1)$ and $\be\in{_{\m{e}}}\tilde{H}^\infty_\del\tp{\R^2}$ with $\inf\be=0$. The following hold.
    \begin{enumerate}
        \item The sets $\bf{O}_\be$, $\bf{U}_\be$, and for $\ep\in\tp{0,1}$, $\bf{O}_\be^\ep$ and $\bf{U}_\be^\ep$ given by
        \begin{equation}
            \bf{O}_\be=\tcb{\eta\in\tp{C^0\cap L^\infty}\tp{\R^2}\;:\;\inf\tp{1+\be+\eta}>0},\quad\bf{O}_\be^\ep=\tcb{\eta\in\bf{O}_\be\;:\;\inf\tp{1+\be+\eta}>\ep},
        \end{equation}
        and
        \begin{equation}
            \bf{U}_\be=\tcb{\eta\in\tp{C^0\cap L^\infty}(\R^2) \;:\; \inf\tp{\tp{1+\be}^2+2\eta}>0},\quad\bf{U}_\be^\ep=\tcb{\eta\in\bf{U}_\be\;:\;\inf\tp{\tp{1+\be}^2+2\eta}>\ep^2}
        \end{equation}
        are open in $\tp{C^0\cap L^\infty}\tp{\R^2}$ and satisfy $\bigcup_{0<\ep<1}\bf{O}_\be^\ep=\bf{O}_\be$ and $\bigcup_{0<\ep<1}\bf{U}_\be^\ep=\bf{U}_\be$.
        \item Let $\N\ni s\ge 2$, $r\in[2,\infty]$, and $\mathcal{X}\in\tcb{\tilde{H}^s_\rho\tp{\R^2},\tp{C^0\cap L^\infty\cap\dot{W}^{1,r}}\tp{\R^2}}$. The sets $\bf{O}\cap\mathcal{X}$, $\bf{O}_\ep\cap\mathcal{X}$, $\bf{U}\cap\mathcal{X}$, and $\bf{U}_\ep\cap\mathcal{X}$ are all open in $\mathcal{X}$.
        \item The functions $\mathcal{T}_\be:\bf{O}_\be\to\bf{U}_\be$ and $\mathcal{T}_\be^{-1}:\bf{U}_\be\to\bf{O}_\be$ given by
        \begin{equation}
            \mathcal{T}_\be\tp{\eta}=\f{1}{2}\tp{\tp{1+\be+\eta}^2-\tp{1+\be}^2},\quad\mathcal{T}_\be^{-1}\tp{\eta}=\sqrt{2\eta+\tp{1+\be}^2}-\tp{1+\be}
        \end{equation}
        are well-defined, real analytic, and mutually inverse; moreover, for every $\ep\in\tp{0,1}$ we have that the restrictions satisfy $\mathcal{T}_\be:\bf{O}_\be^\ep\to\bf{U}_\be^\ep$, $\mathcal{T}_\be^{-1}:\bf{U}_\be^\ep\to\bf{O}_\be^\ep$ and map bounded sets to bounded sets.
        \item Let $\mathcal{X}$ be one of the spaces from the second item. Then $\mathcal{T}_\be:\bf{O}_\be\cap\mathcal{X}\to\bf{U}_\be\cap\mathcal{X}$ and $\mathcal{T}_\be^{-1}:\bf{U}_\be\cap\mathcal{X}\to\bf{O}_\be\cap\mathcal{X}$ are well-defined and real analytic.
        \item For any $\ep\in\tp{0,1}$ and $\mathcal{X}$ as in the second item the maps $\mathcal{T}_\be:\bf{O}^\ep_\be\cap\mathcal{X}\to\bf{U}^\ep_\be\cap\mathcal{X}$ and $\mathcal{T}_\be^{-1}:\bf{U}^\ep_\be\cap\mathcal{X}\to\bf{O}_\be^\ep\cap\mathcal{X}$ map bounded sets to bounded sets.
    \end{enumerate}
\end{lem}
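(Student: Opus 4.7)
For the first item, I would note that for fixed $\be \in (C^0 \cap L^\infty)(\R^2)$ the maps $\eta \mapsto \inf(1+\be+\eta)$ and $\eta \mapsto \inf((1+\be)^2 + 2\eta)$ are $1$-Lipschitz into $\R$, so the four sets arise as preimages of open half-lines and are open in $(C^0 \cap L^\infty)(\R^2)$. The stated unions over $\ep \in (0,1)$ follow directly from the definitions of $\bf{O}_\be$ and $\bf{U}_\be$, since any element with strictly positive infimum lies at a positive distance from zero. For the second item, I would just invoke the continuous embeddings $\mathcal{X} \emb (C^0 \cap L^\infty)(\R^2)$ guaranteed by the third item of Proposition~\ref{prop on inclusion relations} (when $\mathcal{X} = \tilde{H}^s_\rho$ with $s \ge 2$) or by trivial inclusion (when $\mathcal{X} = (C^0 \cap L^\infty \cap \dot{W}^{1,r})$), and then transfer openness by taking preimages.

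For the third item, the algebraic identity $(1+\be+\eta)^2 = (1+\be)^2 + 2\mathcal{T}_\be(\eta)$ shows that $\eta \in \bf{O}_\be$ is equivalent to $\mathcal{T}_\be(\eta) \in \bf{U}_\be$, and the equivalence carries over to the $\ep$-versions by inspection, since the map $t \mapsto \sqrt{t}$ takes $(\ep^2,\infty)$ onto $(\ep,\infty)$. Mutual inversion of $\mathcal{T}_\be$ and $\mathcal{T}_\be^{-1}$ is a direct algebraic check; real analyticity on $\bf{O}_\be$ is trivial because $\mathcal{T}_\be$ is quadratic in $\eta$, and on $\bf{U}_\be$ it follows from continuity of the pointwise square root on open subsets of $(0,\infty)$ combined with the positive-lower-bound condition defining $\bf{U}_\be$. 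The bounded-to-bounded property on $\bf{O}_\be^\ep$, $\bf{U}_\be^\ep$ follows from the elementary estimate $|\mathcal{T}_\be(\eta)| \le (1+\tnorm{\be}_{L^\infty}+\tnorm{\eta}_{L^\infty}/2)\tnorm{\eta}_{L^\infty}$ and the analogous bound for $\sqrt{2\zeta + (1+\be)^2}$ using $\ep^2$ as a lower bound on the radicand.

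The main work is in items four and five, namely promoting the set-theoretic analyticity to the level of the function space $\mathcal{X}$. For $\mathcal{T}_\be$ this is the easier direction: writing $\mathcal{T}_\be(\eta) = (1+\be)\eta + \eta^2/2$, I would split $1+\be = (1+\bf{c}(\be)) + (\be - \bf{c}(\be))$ via the first item of Proposition~\ref{prop on properties of extended weighted gradient spaces}, so that the constant piece is harmless and the tilde piece $\be - \bf{c}(\be) \in \tilde{H}^\infty_\del$ multiplies $\eta \in \tilde{H}^s_\rho$ using the Banach algebra bound of the second item of Proposition~\ref{prop on product estimates in weighted gradient spaces}, where the weight index $\mu = \rho$ is admissible after first embedding $\be$ into $\tilde{H}^\infty_{\min(\del,\rho)}$; the quadratic term $\eta^2/2$ is handled by the same Banach algebra property, and the analogous computation in $(C^0 \cap L^\infty \cap \dot{W}^{1,r})$ is elementary via the product rule. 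For $\mathcal{T}_\be^{-1}$, the hard part will be interpreting the square root as a real-analytic operation between Banach spaces. The plan is to work in the extended algebra ${_{\m{e}}}\tilde{H}^s_\rho\tp{\R^2}$, where $2\eta + (1+\be)^2$ lies as soon as $\eta \in \tilde{H}^s_\rho$, and to invoke Proposition~\ref{prop on admissability of the extended weighted gradient spaces} to access the analytic functional calculus of Corollary~\ref{coro on analytic superposition real case} applied to the principal branch of the square root, which is holomorphic on a neighborhood of the bounded closed set where $2\eta + (1+\be)^2$ takes its values (guaranteed by $\eta \in \bf{U}_\be^\ep$). Finally one subtracts $(1+\be)$; the resulting element has constant part $\sqrt{(1+\bf{c}(\be))^2} - (1+\bf{c}(\be)) = 0$, so $\mathcal{T}_\be^{-1}(\eta) \in \tilde{H}^s_\rho\tp{\R^2}$ as required. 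The analogous statement for $(C^0 \cap L^\infty \cap \dot{W}^{1,r})$ uses that this space is itself a unital admissible Banach algebra, with invertibility governed by essential image, so the same holomorphic functional calculus argument applies. The bounded-to-bounded claim of item five is then a consequence of the continuity of the analytic maps produced above together with the uniform lower bound $\ep$ (respectively $\ep^2$) that controls all the operator norms in the functional calculus.
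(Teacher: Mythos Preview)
Your treatment of items one through three, of $\mathcal{T}_\be$ in items four and five, and of $\mathcal{T}_\be^{-1}$ in the case $\mathcal{X}=(C^0\cap L^\infty\cap\dot W^{1,r})(\R^2)$ is essentially the paper's argument. The gap is in your handling of $\mathcal{T}_\be^{-1}$ when $\mathcal{X}=\tilde H^s_\rho(\R^2)$.

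You assert that $2\eta+(1+\be)^2$ lies in ${_{\m{e}}}\tilde H^s_\rho(\R^2)$ once $\eta\in\tilde H^s_\rho(\R^2)$, and then run the functional calculus there. But the hypotheses only give $\be\in{_{\m{e}}}\tilde H^\infty_\del(\R^2)$ with $\del,\rho\in(0,1)$ \emph{independent}; when $\rho>\del$ the inclusion $\tilde H^\infty_\del\hookrightarrow\tilde H^s_\rho$ fails, so $(1+\be)^2$ need not belong to ${_{\m{e}}}\tilde H^s_\rho$. If you instead drop to the common algebra ${_{\m{e}}}\tilde H^s_\mu$ with $\mu=\min\{\del,\rho\}$ (which you implicitly do elsewhere), the functional calculus produces $\sqrt{2\eta+(1+\be)^2}-(1+\be)\in\tilde H^s_\mu$, and the vanishing of the constant part does not by itself recover the stronger weight $\rho$. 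So as written the argument does not land in the correct target space.

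The paper repairs exactly this by rewriting
\[
\mathcal{T}_\be^{-1}(\eta)=\frac{2\eta}{\sqrt{2\eta+(1+\be)^2}+(1+\be)}.
\]
The reciprocal of the denominator is computed via the functional calculus in ${_{\m{e}}}\tilde H^s_\mu$ (admissibility plus Corollary~\ref{coro on analytic superposition real case}, with the uniform lower bound $1+\be+\sqrt{\cdots}\ge 1$ keeping one well away from the singularity), and then the explicit factor $\eta\in\tilde H^s_\rho$ restores the weight: the product map ${_{\m{e}}}\tilde H^s_\mu\times\tilde H^s_\rho\to\tilde H^s_\rho$ is bounded by Proposition~\ref{prop on product estimates in weighted gradient spaces} and Proposition~\ref{prop on properties of extended weighted gradient spaces}. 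This conjugate factorization is the missing idea; once you insert it your plan goes through.
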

\begin{proof}
    The first item is trivial. The second item follows from the first and the fact that each choice of $\mathcal{X}$ embeds into $\tp{C^0\cap L^\infty}\tp{\R^2}$ (see the third item of Proposition~\ref{prop on properties of extended weighted gradient spaces}). The third item is an easy consequence of Proposition~\ref{prop on admissability in the periodic setting} (with $s=0$) and Corollary~\ref{coro on analytic superposition real case}.

    For the forth and fifth items we consider the various cases of $\mathcal{X}$ separately. The case $\mathcal{X}=\tp{C^0\cap L^\infty\cap\dot{W}^{1,r}}\tp{\R^2}$ is the easiest. Thanks to Proposition~\ref{prop on preliminary admissibility in the solitary case} we see that $\mathcal{X}$ is the real-valued subspace of an admissible unital Banach algebra in the sense of Definition~\ref{defn of admissable Banach algebras}; moreover, the hypotheses on $\be$ and $r$ ensure that $\be\in\mathcal{X}$ in this case. Therefore, we readily read off the mapping properties of $\mathcal{T}_\be$ and $\mathcal{T}_\be^{-1}$ claimed in the fourth and fifth items for this specific $\mathcal{X}$ as a consequence of Corollary~\ref{coro on analytic superposition real case}.

    Next, we handle the case $\mathcal{X}=\tilde{H}^s_\rho\tp{\R^2}$. We will rewrite $\mathcal{T}_\be$ and its inverse $\mathcal{T}_\be^{-1}$ in certain ways to read off the correct mapping properties. For the forward mapping note that we may write
    \begin{equation}\label{who is that and why are they doing those strange things}
        \mathcal{T}_\be\tp{\eta}=\f{1}{2}\eta^2+\tp{\be-\bf{c}\tp{\be}}\eta+\tp{1+\bf{c}\tp{\be}}\eta
    \end{equation}
    where $\bf{c}$ is the function in Definition~\ref{defn on extended weighted gradient spaces}. As $\mathcal{T}_\be$ is a polynomial, real analyticity and the mapping of bounded sets to bounded sets follow as soon as we verify the continuity of each constituent product. The first and second terms on the right hand side of~\eqref{who is that and why are they doing those strange things} are handled with the second item of Proposition~\ref{prop on product estimates in weighted gradient spaces}, while the final term is linear.

    To study $\mathcal{T}_\be^{-1}$ we write
    \begin{equation}
        \mathcal{T}_\be^{-1}\tp{\eta}=2\tp{\sqrt{2\eta+\tp{1+\be}^2}+\tp{1+\be}}^{-1}\eta.
    \end{equation}
    Let $\mu=\min\tcb{\del,\rho}\in\tp{0,1}$. By noting the continuity of the product map ${_{\m{e}}}\tilde{H}^s_\mu\tp{\R^2}\times\tilde{H}^s_\rho\tp{\R^2}\to\tilde{H}^s_\rho\tp{\R^2}$ (which is a consequence of the second item of Proposition~\ref{prop on product estimates in weighted gradient spaces} and the fourth item of Proposition~\ref{prop on properties of extended weighted gradient spaces}) we reduce to proving that
    \begin{equation}
        \bf{U}_\be\cap{_{\m{e}}}\tilde{H}^s_\mu\tp{\R^2}\ni\eta\mapsto\tp{\sqrt{2\eta+\tp{1+\be}^2}+\tp{1+\be}}^{-1}\in{_{\m{e}}}\tilde{H}^{s}_\mu\tp{\R^2}
    \end{equation}
    is well-defined, real analytic, and its restriction to $\bf{U}_\be^\ep\cap{_{\m{e}}}\tilde{H}^s_\mu\tp{\R^2}$, for any $\ep\in\tp{0,1}$, maps bounded sets to bounded sets. Now thanks to Proposition~\ref{prop on admissability of the extended weighted gradient spaces} we are assured that ${_{\m{e}}}\tilde{H}^s_\del\tp{\R^2}$ is the real-valued subspace of an admissible unital Banach algebra. This permits repeated applications of Corollary~\ref{coro on analytic superposition real case} to yield the sought after properties.
\end{proof}

We now come to the solitary case analog of Proposition~\ref{prop on data map, periodic case} where again $\Upomega$ is the composition operator from Corollary~\ref{coro on compound analytic superposition real case}.

\begin{prop}[Data map - solitary case]\label{prop on data map - solitary case}
    Let $\del,\rho\in\tp{0,1}$, $\be\in{_{\m{e}}}\tilde{H}^\infty_\del\tp{\R^2}$ with $\inf\be=0$, and set $a_\be=1+\sup\be$. Fix a trio of (vector valued) real analytic mappings
    \begin{equation}
        \phi:(-a_\be,\infty)\to L^2_\rho\tp{\R^2;\R^2},\quad\psi:(-a_\be,\infty)\to H^1_\rho\tp{\R^2},\quad\tau:(-a_\be,\infty)\to\tp{L^\infty_\rho\cap H^1_\rho}\tp{\R^2;\R^{2\times 2}_{\m{sym}}}.
    \end{equation}
    The following hold.
    \begin{enumerate}
        \item For any $r\in[2,\infty]$ the forcing map $\mathcal{F}_\be:\bf{U}_\be\cap\tp{C^0\cap L^\infty\cap\dot{W}^{1,r}}\tp{\R^2}\to L^2_\rho\tp{\R^2;\R^2}$ given via
        \begin{equation}\label{what even is helicity}
            \mathcal{F}_\be(\eta)=\Upomega_\phi\circ\mathcal{T}_\be^{-1}\tp{\eta}+\grad\tp{\sqrt{2\eta+\tp{1+\be}^2}\Upomega_\psi\circ\mathcal{T}_\be^{-1}\tp{\eta}}+\Upomega_\tau\circ\mathcal{T}_\be^{-1}\tp{\eta}\grad\tp{\mathcal{T}_\be^{-1}\tp{\eta}}
        \end{equation}
        is well-defined and real analytic.
        \item For any $\ep\in\tp{0,1}$ the restriction $\mathcal{F}_\be:\bf{U}_\be^\ep\cap\tp{C^0\cap L^\infty\cap\dot{W}^{1,r}}\tp{\R^2}\to L^2_\rho\tp{\R^2;\R^2}$ maps bounded sets to bounded sets.
        \item For any $\ep\in(0,1)$ and $\N\ni s\ge 2$ the restriction $\mathcal{F}_\be:\bf{U}_\be^\ep\cap\tilde{H}^s_\rho\tp{\R^2}\to L^2_\rho\tp{\R^2;\R^2}$ maps bounded sets to precompact sets.
    \end{enumerate}
\end{prop}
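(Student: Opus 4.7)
The plan is to mirror the proof of Proposition~\ref{prop on data map, periodic case}, substituting the solitary-case tools developed in Section~\ref{section on analysis in weighted spaces} and Section~\ref{subsection on functional framework and forcing} for their periodic analogues. For item (1), I split $\mathcal{F}_\be$ into the three summands appearing in~\eqref{what even is helicity} and analyze each independently. The foundation is the fourth item of Lemma~\ref{lem on free surface domsains, solitary case}, which makes the change of unknowns
\begin{equation}
    \mathcal{T}_\be^{-1}:\bf{U}_\be\cap\tp{C^0\cap L^\infty\cap\dot{W}^{1,r}}(\R^2)\to\bf{O}_\be\cap\tp{C^0\cap L^\infty\cap\dot{W}^{1,r}}(\R^2)
\end{equation}
real analytic, while Proposition~\ref{prop on preliminary admissibility in the solitary case} identifies $\tp{C^0\cap L^\infty\cap\dot{W}^{1,r}}(\R^2;\C)$ as the real subspace of an admissible unital Banach algebra. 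Corollary~\ref{coro on compound analytic superposition real case} then provides the analyticity of $\Upomega_\phi$, $\Upomega_\psi$, $\Upomega_\tau$ as maps from $\bf{O}_\be\cap\tp{C^0\cap L^\infty\cap\dot{W}^{1,r}}(\R^2)$ into $L^2_\rho(\R^2;\R^2)$, $H^1_\rho(\R^2)$, and $(L^\infty_\rho\cap H^1_\rho)(\R^2;\R^{2\times 2}_{\m{sym}})$, respectively, and composition with $\mathcal{T}_\be^{-1}$ preserves analyticity.

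It remains to assemble the three summands in $L^2_\rho(\R^2;\R^2)$. The first summand $\Upomega_\phi\circ\mathcal{T}_\be^{-1}(\eta)$ is immediate. For the middle summand, I use the identity $\sqrt{2\eta+(1+\be)^2}=\mathcal{T}_\be^{-1}(\eta)+(1+\be)$ to express the square root as an affine analytic function of $\mathcal{T}_\be^{-1}(\eta)$ with values in $\tp{C^0\cap L^\infty\cap\dot{W}^{1,r}}(\R^2)$, noting that $\be\in\tp{C^0\cap L^\infty\cap\dot{W}^{1,r}}(\R^2)$ follows from the third item of Proposition~\ref{prop on properties of extended weighted gradient spaces} together with $\grad\be\in H^\infty_\del(\R^2;\R^2)\emb L^r(\R^2;\R^2)$; multiplying by $\Upomega_\psi\circ\mathcal{T}_\be^{-1}(\eta)\in H^1_\rho(\R^2)$ via Proposition~\ref{prop on product estimates} places the product in $H^1_\rho(\R^2)$, so its gradient lies in $L^2_\rho(\R^2;\R^2)$. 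For the last summand, I estimate the pointwise product of $\Upomega_\tau\circ\mathcal{T}_\be^{-1}(\eta)\in(L^\infty_\rho\cap H^1_\rho)(\R^2;\R^{2\times 2}_{\m{sym}})$ against $\grad\mathcal{T}_\be^{-1}(\eta)\in L^r(\R^2;\R^2)$ by H\"older, using the $L^\infty_\rho$ control at the endpoint $r=2$, the weighted Sobolev embedding $H^1_\rho\emb L^q_\rho$ ($q<\infty$) at $r=\infty$, and interpolating between these for intermediate $r$ via the relation $1/s+1/r=1/2$.

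Item (2) follows from the same decomposition by tracking the bounded-to-bounded conclusions in the fifth item of Lemma~\ref{lem on free surface domsains, solitary case} and Corollary~\ref{coro on compound analytic superposition real case}. Item (3) then follows from the continuity of $\mathcal{F}_\be$ in item (1) combined with the compact embedding $\tilde{H}^s_\rho(\R^2)\emb\tp{C^0\cap L^\infty\cap\dot{W}^{1,r}}(\R^2)$ supplied by the third item of Proposition~\ref{prop on inclusion relations}: a bounded subset of $\bf{U}_\be^\ep\cap\tilde{H}^s_\rho(\R^2)$ is relatively compact in $\tp{C^0\cap L^\infty\cap\dot{W}^{1,r}}(\R^2)$, uniform closeness of infima along convergent subsequences confines its closure to $\bf{U}_\be^{\ep/2}$, and $\mathcal{F}_\be$ sends this compact set to a relatively compact subset of $L^2_\rho(\R^2;\R^2)$. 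The main obstacle I anticipate is coordinating the weighted product estimates with the full range $r\in[2,\infty]$; the combined $L^\infty_\rho\cap H^1_\rho$ control on $\Upomega_\tau\circ\mathcal{T}_\be^{-1}$ is precisely what makes the H\"older interpolation clean at both endpoints.
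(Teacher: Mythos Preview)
Your approach is essentially the paper's, with one organizational difference worth noting. The paper first introduces the auxiliary map $\mathcal{G}_\be=\mathcal{F}_\be\circ\mathcal{T}_\be$ on $\bf{O}_\be\cap\tp{C^0\cap L^\infty\cap\dot{W}^{1,r}}(\R^2)$, which collapses the expression to the periodic-looking form
\[
\mathcal{G}_\be(\eta)=\Upomega_\phi(\eta)+\grad\bigl((1+\be+\eta)\Upomega_\psi(\eta)\bigr)+\Upomega_\tau(\eta)\grad\eta,
\]
analyzes this (no square roots to track), and then recovers $\mathcal{F}_\be=\mathcal{G}_\be\circ\mathcal{T}_\be^{-1}$ via Lemma~\ref{lem on free surface domsains, solitary case}. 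You instead compose with $\mathcal{T}_\be^{-1}$ first and carry the square root through; the content is identical, but the paper's route saves some bookkeeping. Your more explicit H\"older treatment of the $\Upomega_\tau$ summand is a welcome addition over the paper's terse ``simple bounded products.''

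One small caution: your appeal to Proposition~\ref{prop on product estimates} for the middle summand is not literally applicable, since $\sqrt{2\eta+(1+\be)^2}$ lies in $L^\infty\cap\dot{W}^{1,r}$ rather than $L^{q_0}_0\cap W^{1,p_0}_0$ (it is bounded below, hence not in any $L^{p_0}$). The product rule plus H\"older does the job directly, and the $\grad\be$ piece separates off cleanly since $\grad\be\in H^\infty_\del\emb L^\infty$. For item~(3) both you and the paper invoke the compact embedding from Proposition~\ref{prop on inclusion relations}; the paper picks $r=2$ but any finite $r\ge 2$ works.
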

\begin{proof}
    Due to the properties of the bi-analytic change of unknowns $\mathcal{T}_\be$ from the third, fourth, and fifth items of Lemma~\ref{lem on free surface domsains, solitary case} we may reduce to studying the map $\mathcal{G}_\be:\bf{O}_\be\cap\tp{C^0\cap L^\infty\cap\dot{W}^{1,r}}\tp{\R^2}\to L^2_\rho\tp{\R^2;\R^2}$ given by
    \begin{equation}
        \mathcal{G}_\be\tp{\eta}=\mathcal{F}_\be\circ\mathcal{T}_\be\tp{\eta}=\Upomega_{\phi}\tp{\eta}+\grad\tp{\tp{1+\beta+\eta}\Upomega_\psi\tp{\eta}}+\Upomega_\tau\tp{\eta}\grad\eta.
    \end{equation}
    Our first claim is that $\mathcal{G}_\be$ is well-defined, real analytic, and obeys the restriction bounded sets mapping property. We may proceed with the same strategy as that of Proposition~\ref{prop on data map, periodic case}. The unital Banach algebra $\tp{C^0\cap L^\infty\cap\dot{W}^{1,r}}\tp{\R^2;\C}$ is admissible thanks to Proposition~\ref{prop on preliminary admissibility in the solitary case} and so Corollary~\ref{coro on compound analytic superposition real case} is in play and tells us that each of the maps $\Upomega_\phi$, $\Upomega_\psi$, and $\Upomega_\tau$ are real analytic and for every $\ep\in\tp{0,1}$ they map bounded subsets of $\bf{U}^\ep_\be\cap\tp{C^0\cap L^\infty\cap\dot{W}^{1,r}}\tp{\R^2}$ to bounded subsets of their respective image spaces. The map $\mathcal{G}_\be$ then combines these compound Nemytskii operators via simple bounded products. 

    By using these established properties of $\mathcal{G}_\be$, the identity $\mathcal{F}_\be=\mathcal{G}_\be\circ\mathcal{T}_\be^{-1}$, and the mapping properties of $\mathcal{T}_\be^{-1}$ from Lemma~\ref{lem on free surface domsains, solitary case}, we readily deduce the first and second items.

    The third item is a consequence of the compact embedding $\bf{U}^\ep_\be\cap\tilde{H}^s_\rho\tp{\R^2}\emb\bf{U}^{\ep/2}_\be\cap\tp{C^0\cap L^\infty\cap\dot{W}^{1,2}}\tp{\R^2}$ (which holds due to the third item of Proposition~\ref{prop on inclusion relations}) and the first and second items.
\end{proof}

\subsection{Operator decomposition}\label{subsection on operator decomposition}

Our goal now is to reformulate the stationary, variable bathymetry, viscous shallow water system~\eqref{stationary nondimensionalized equations} with~\eqref{percolate propane pulvarized paintbrush} as an equation for an operator acting between open subsets of Banach spaces.  We then decompose this operator in a semilinear fashion - writing it as the sum of a top order linear part and a properly nonlinear and lower order remainder. For technical reasons, we do not do this in a unified way; rather, the periodic and solitary settings are given slightly different operators and decompositions.

We first discuss the periodic setting. Recall from Lemma~\ref{lem on divergence equation reformulation} that the operator $P_0$ is the projection onto the space of mean zero functions and the periodic data maps $\mathcal{F}_\be$ are given in Proposition~\ref{prop on data map, periodic case}.
\begin{prop}[Operator formulation - periodic case]\label{prop on operator formulation periodic case}
    Let $\be\in C^\infty\tp{\T^2_L}$ satisfy $\min\be=0$, $A>0$, $G\ge0$, and $\kappa\in[0,\infty)$. Consider the mapping $\mathcal{Q}_\be:H^2\tp{\T^2_L;\R^2}\times [\bf{O}_\be\cap \z{H}^3\tp{\T^2_L}] \to\z{H}^1\tp{\T^2_L}\times L^2\tp{\T^2_L;\R^2}$ defined via
    \begin{equation}\label{operator encoding periodic case}
        \mathcal{Q}_\be\tp{u,\eta}=\tp{\grad\cdot u+P_0\tp{u\cdot\grad\log\tp{1+\be+\eta}},u\cdot\grad u+A\tp{1+\be+\eta}^{-1}u-\grad\cdot\mathbb{S}u-\mathbb{S}u\grad\log\tp{1+\be+\eta}+\tp{G-\Delta}\grad\eta}.
    \end{equation}
    The following hold.
    \begin{enumerate}
        \item $\mathcal{Q}_\be$ is well-defined and real-analytic.
        \item For all $\tp{u,\eta}\in H^2\tp{\T^2_L;\R^2}\times [\bf{O}_\be\cap \z{H}^3\tp{\T^2_L}]$ the identity $\mathcal{Q}_\be\tp{u,\eta}=(0,\kappa\mathcal{F}_\be\tp{\eta})$ holds if and only if system~\eqref{stationary nondimensionalized equations} is satisfied with velocity $\pmb{u}=u$, free surface $\pmb{\eta}=\eta$, and forcing $\Phi=\tp{1+\be+\eta}\mathcal{F}_\be(\eta)$.
    \end{enumerate}
\end{prop}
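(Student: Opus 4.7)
The plan is to handle the two items in sequence, relying on the analytic superposition toolkit from Corollary~\ref{coro on analytic superposition real case} (invoked via the admissibility result Proposition~\ref{prop on admissability in the periodic setting}), the algebra property of $H^s\tp{\T^2_L}$ for $s\ge 2$, and the divergence reformulation Lemma~\ref{lem on divergence equation reformulation}.

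For item $(1)$, I would first verify the claimed target spaces componentwise. Since $u\in H^2\tp{\T^2_L;\R^2}$ and the torus divergence theorem kills its mean, the inclusion $\grad\cdot u\in\z{H}^1\tp{\T^2_L}$ is automatic. For $\eta\in\bf{O}_\be\cap\z{H}^3\tp{\T^2_L}$, the supercritical embedding $H^3\emb C^0$ makes the compositions $\log\tp{1+\be+\eta}$ and $\tp{1+\be+\eta}^{-1}$ pointwise well-defined, and Corollary~\ref{coro on analytic superposition real case} applied to the real-analytic maps $y\mapsto\log\tp{1+\be+y}$ and $y\mapsto\tp{1+\be+y}^{-1}$ yields real-analytic dependence on $\eta$ with values in $H^3\tp{\T^2_L}$. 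The algebra property of $H^2$ then places $u\cdot\grad\log\tp{1+\be+\eta}$ in $H^2\tp{\T^2_L;\R}$, which lies in $\z{H}^1\tp{\T^2_L}$ after application of $P_0$. Similar considerations via Theorem~\ref{thm on high low product estimates} and the two-dimensional Sobolev embeddings show that each of $u\cdot\grad u$, $A\tp{1+\be+\eta}^{-1}u$, $\grad\cdot\mathbb{S}u$, $\mathbb{S}u\grad\log\tp{1+\be+\eta}$, and $\tp{G-\Delta}\grad\eta$ lies in $L^2\tp{\T^2_L;\R^2}$. Real analyticity of $\mathcal{Q}_\be$ then follows because it is a finite combination of bounded linear differential operators, continuous bilinear pointwise multiplications, and the real-analytic Nemytskii-type maps just identified.

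For item $(2)$, I would argue both directions of the equivalence, with the main (albeit mild) obstacle being the reverse direction, which requires recovering the original continuity equation from its $P_0$-modified counterpart. For the forward direction, starting from~\eqref{stationary nondimensionalized equations} with $\Phi=\tp{1+\be+\eta}\mathcal{F}_\be\tp{\eta}$, I would expand the continuity equation via the product rule as $\tp{1+\be+\eta}\tp{\grad\cdot u+u\cdot\grad\log\tp{1+\be+\eta}}=0$ and divide by the positive factor to obtain $\grad\cdot u+u\cdot\grad\log\tp{1+\be+\eta}=0$. Integration over $\T^2_L$ then forces $\int_{\T^2_L}u\cdot\grad\log\tp{1+\be+\eta}=0$, so $P_0$ acts trivially on this term and the first component of $\mathcal{Q}_\be\tp{u,\eta}$ vanishes. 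The identities $\grad\cdot\tp{\tp{1+\be+\eta}u\otimes u}=\tp{1+\be+\eta}u\cdot\grad u+u\grad\cdot\tp{\tp{1+\be+\eta}u}$ and $\grad\cdot\tp{\tp{1+\be+\eta}\mathbb{S}u}=\tp{1+\be+\eta}\grad\cdot\mathbb{S}u+\mathbb{S}u\grad\tp{1+\be+\eta}$, combined with the just-established continuity relation, reduce the momentum equation after division by $\tp{1+\be+\eta}$ to precisely the second component identity $\mathcal{Q}_\be\tp{u,\eta}=\tp{0,\kappa\mathcal{F}_\be\tp{\eta}}$.

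The reverse direction is where Lemma~\ref{lem on divergence equation reformulation} performs the essential work. Assuming $\mathcal{Q}_\be\tp{u,\eta}=\tp{0,\kappa\mathcal{F}_\be\tp{\eta}}$, I would apply the lemma with $v=u$, $f=\log\tp{1+\be+\eta}$, and $g=0$ to conclude $\int_{\T^2_L}u\cdot\grad\log\tp{1+\be+\eta}=0$, so $P_0$ again acts as the identity on this nonlinear term; rearranging the first component equation then yields $\grad\cdot u+u\cdot\grad\log\tp{1+\be+\eta}=0$, which after multiplication by $\tp{1+\be+\eta}>0$ becomes the original continuity equation. With this restored continuity relation in hand, multiplying the second component equation through by $\tp{1+\be+\eta}$ and running in reverse the Leibniz manipulations of the forward direction recovers the original momentum equation with $\Phi=\tp{1+\be+\eta}\mathcal{F}_\be\tp{\eta}$, completing the equivalence.
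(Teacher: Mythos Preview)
Your proposal is correct and follows essentially the same route as the paper: item~(1) is handled via Proposition~\ref{prop on admissability in the periodic setting} and Corollary~\ref{coro on analytic superposition real case} for the Nemytskii maps together with product estimates (the paper invokes Theorem~\ref{thm on high low product estimates} explicitly where you cite the $H^2$ algebra property, but these are equivalent here), and item~(2) hinges on Lemma~\ref{lem on divergence equation reformulation} exactly as you use it. The only cosmetic difference is that the paper treats the continuity-equation equivalence as a single chain of if-and-only-ifs rather than separating forward and reverse directions, and your phrasing ``the real-analytic maps $y\mapsto\log\tp{1+\be+y}$'' would be more precisely stated as applying the scalar analytic functions $\log$ and $z\mapsto z^{-1}$ to the algebra element $1+\be+\eta$.
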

\begin{proof}
    The operator $\mathcal{Q}_\be$ is composed of linear terms and of products of the functions $u$, $\eta$, $\log\tp{1+\be+\eta}$, and $\tp{1+\be+\eta}^{-1}$,  so the first item follows as soon as we verify boundedness of these products and analyticity of the latter two nonlinearities. The combination of Corollary~\ref{coro on analytic superposition real case} and Proposition~\ref{prop on admissability in the periodic setting} shows that the mappings
    \begin{equation}
        \z{H}^3\tp{\T^2_L}\in\eta\mapsto\tp{1+\be+\eta}^{-1},\log\tp{1+\be+\eta}\in H^3\tp{\T^2_L}
    \end{equation}
    are real analytic. For the boundedness of the products of $\mathcal{Q}_\be$, we make repeated use Theorem~\ref{thm on high low product estimates} to see that for any $\zeta\in H^3\tp{\T^2_L}$ we have the estimates:
    \begin{equation}
        \tnorm{P_0\tp{u\cdot\grad\zeta}}_{H^1}\lesssim\tnorm{u}_{L^\infty}\tnorm{\grad\zeta}_{H^1}+\tnorm{u}_{H^1}\tnorm{\grad\zeta}_{L^\infty}\lesssim\tnorm{u}_{H^2}\tnorm{\zeta}_{H^3},
    \end{equation}
    \begin{equation}
        \tnorm{u\cdot\grad u}_{L^2}\lesssim\tnorm{u}_{L^\infty}\tnorm{\grad u}_{L^2}\lesssim\tnorm{u}_{H^2}^2,\quad\tnorm{\zeta u}_{L^2}\lesssim\tnorm{\zeta}_{H^3}\tnorm{u}_{H^2},
    \end{equation}
    \begin{equation}
        \tnorm{\mathbb{S}u\grad\zeta}_{L^2}\lesssim\tnorm{u}_{H^1}\tnorm{\grad\zeta}_{L^\infty}\lesssim\tnorm{u}_{H^2}\tnorm{\zeta}_{H^3}.
    \end{equation}
    This completes the proof of the first item.

    We now prove the second item.  By using Lemma~\ref{lem on divergence equation reformulation} and the bound $\min\tp{1+\be+\eta}>0$, we see that $\grad\cdot u+P_0\tp{u\cdot\grad\log\tp{1+\be+\eta}}=0$ if and only if $\grad\cdot u+u\cdot\grad\log\tp{1+\be+\eta}=0$ if and only if $\grad\cdot\tp{\tp{1+\be+\eta}u}=0$. To check the equivalence in the momentum equation we merely multiply the second component of $\mathcal{Q}_\be(u,\eta)$ by $(1+\be+\eta)$ and then perform algebraic manipulations until the left hand side of system~\eqref{stationary nondimensionalized equations} is revealed.
\end{proof}

In the next result we take the operator $\mathcal{Q}_\be$ of~\eqref{operator encoding periodic case} and write is as the sum of the linear operators $\mathcal{P}$ and $\mathcal{L}_\be$ from Proposition~\ref{prop on linear analysis with bathymetry - periodic case} and a compact nonlinear remainder.

\begin{prop}[Operator decomposition - periodic case]\label{prop on operator decomposition periodic case}
    Let $\be$, $A$, $G$, and $\mathcal{Q}_\be$ be as in Proposition~\ref{prop on operator formulation periodic case}. The following hold.
    \begin{enumerate}
        \item The map $\mathcal{N}_\be:W^{1,4}\tp{\T^2_L;\R^2}\times [\bf{O}_\be\cap\z{H}^2\tp{\T^2_L}] \to \z{H}^1\tp{\T^2_L}\times L^2\tp{\T^2_L;\R^2}$ given via
        \begin{equation}
            \mathcal{N}_\be\tp{u,\eta}=\bp{P_0\bp{u\cdot\grad\log\bp{1+\f{\eta}{1+\be}}},u\cdot\grad u-\f{Au\eta}{\tp{1+\be}\tp{1+\be+\eta}}-\mathbb{S}u\grad\log\bp{1+\f{\eta}{1+\be}}}
        \end{equation}
        is well-defined and real analytic; moreover, $\mathcal{N}_\be\tp{0,0}=0$ and $D\mathcal{N}_\be\tp{0,0}=0$.
        \item For every $\ep\in\tp{0,1}$ the restriction of $\mathcal{N}_\be$ to $W^{1,4}\tp{\T^2_L;\R^2}\times [\bf{O}_\be^\ep\cap \z{H}^2\tp{\T^2_L}]$ maps bounded sets to bounded sets.
        \item For every $\ep\in\tp{0,1}$ the restriction of $\mathcal{N}_\be$ to $H^2\tp{\T^2_L;\R^2}\times [\bf{O}_\be^\ep\cap \z{H}^3\tp{\T^2_L}]$ maps bounded sets to precompact sets.
        \item For all $(u,\eta)\in H^2\tp{\T^2_L;\R^2}\times [\bf{O}_\be\cap \z{H}^3\tp{\T^2_L}]$ we have the operator decomposition identity
        \begin{equation}\label{operator decomposition identity, periodic case}
            \mathcal{Q}_\be\tp{u,\eta}=\mathcal{P}\tp{u,\eta}+\mathcal{L}_\be\tp{u,\eta}+\mathcal{N}_\be\tp{u,\eta},
        \end{equation}
        where $\mathcal{Q}_\be$, $\mathcal{P}$, and $\mathcal{L}_\be$ are the operators given by~\eqref{operator encoding periodic case}, \eqref{periodic principal part}, and~\eqref{periodic bathymetry remainder}, respectively.
    \end{enumerate}
\end{prop}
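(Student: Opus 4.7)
The natural starting point is the algebraic identity of item (4), since it exposes the structural origin of $\mathcal{N}_\be$ and the other items can be read off from it. Expanding $\log\tp{1+\be+\eta}=\log\tp{1+\be}+\log\tp{1+\eta/\tp{1+\be}}$ splits the first component of $\mathcal{Q}_\be$ into the first components of $\mathcal{P}$, $\mathcal{L}_\be$, and $\mathcal{N}_\be$, and similarly handles the $\mathbb{S}u\grad\log$ term in the momentum component. For the $u$-multiplier in the momentum, a direct calculation gives
\begin{equation*}
    \f{A}{1+\be+\eta}=A-\f{A\be}{1+\be}-\f{A\eta}{\tp{1+\be}\tp{1+\be+\eta}},
\end{equation*}
which matches the three contributions from $\mathcal{P}$, $\mathcal{L}_\be$, and $\mathcal{N}_\be$, respectively. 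All remaining terms of $\mathcal{Q}_\be$ and $\mathcal{P}+\mathcal{L}_\be$ balance transparently, yielding~\eqref{operator decomposition identity, periodic case}.

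With the decomposition in hand, the claims $\mathcal{N}_\be\tp{0,0}=0$ and $D\mathcal{N}_\be\tp{0,0}=0$ of item (1) follow by inspection: each summand is either a purely quadratic-in-$u$ convective term $u\cdot\grad u$, a bilinear product containing both $u$ and $\eta$ through the factor $A\eta/\sb{\tp{1+\be}\tp{1+\be+\eta}}$, or a product $\mathbb{S}u\grad\log\tp{1+\eta/(1+\be)}$ or $u\cdot\grad\log\tp{1+\eta/(1+\be)}$, the last two being of order $|u||\eta|$ near the origin since the logarithm vanishes to first order at $\eta=0$. For the real analyticity assertion, the map $\eta\mapsto\log\tp{1+\eta/(1+\be)}$ is real analytic from $\bf{O}_\be\cap\z{H}^2\tp{\T^2_L}$ into $H^2\tp{\T^2_L}$ by combining the admissibility of the unital Banach algebra $H^2\tp{\T^2_L;\C}$ (Proposition~\ref{prop on admissability in the periodic setting}) with the analytic Nemytskii calculus of Corollary~\ref{coro on analytic superposition real case}; an identical argument handles $\eta\mapsto\tp{1+\be+\eta}^{-1}$. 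The remaining bilinear structure is controlled by Theorem~\ref{thm on high low product estimates} together with the 2D Sobolev embeddings $W^{1,4}\tp{\T^2_L}\emb L^\infty\tp{\T^2_L}$ and $H^1\tp{\T^2_L}\emb L^p\tp{\T^2_L}$ for every $p<\infty$: for instance $\tnorm{u\cdot\grad\zeta}_{H^1}\lesssim\tnorm{u}_{W^{1,4}}\tnorm{\zeta}_{H^2}$ and $\tnorm{\mathbb{S}u\grad\zeta}_{L^2}\lesssim\tnorm{u}_{W^{1,4}}\tnorm{\zeta}_{H^2}$ with $\zeta=\log\tp{1+\eta/(1+\be)}$, together with $\tnorm{u\cdot\grad u}_{L^2}\lesssim\tnorm{u}^2_{W^{1,4}}$ and $\tnorm{u\eta}_{L^2}\lesssim\tnorm{u}_{L^4}\tnorm{\eta}_{L^4}$. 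Item (2) is then immediate from the bounded-sets-to-bounded-sets property supplied by Corollary~\ref{coro on analytic superposition real case}, because restricting $\eta$ to $\bf{O}_\be^\ep$ keeps $\tp{1+\be+\eta}^{-1}$ and $\log\tp{1+\eta/(1+\be)}$ uniformly bounded in $H^2$ on bounded $\eta$-sets.

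Finally, item (3) is obtained by factoring the restricted map through a compactly embedded space of reduced regularity. The Rellich–Kondrachov theorem provides compactness of the inclusion $H^2\tp{\T^2_L;\R^2}\times\z{H}^3\tp{\T^2_L}\emb W^{1,4}\tp{\T^2_L;\R^2}\times\z{H}^2\tp{\T^2_L}$, and one checks that $\bf{O}_\be^\ep$-bounded sets are carried into $\bf{O}_\be^{\ep/2}$-bounded sets under this embedding once $\eta$ is bounded (since the embedding into $C^0\tp{\T^2_L}$ is continuous). The continuity of $\mathcal{N}_\be$ on the weaker-regularity space, obtained from items (1) and (2), then transfers this precompactness across into the target $\z{H}^1\tp{\T^2_L}\times L^2\tp{\T^2_L;\R^2}$. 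The only genuinely delicate step is the algebraic manipulation of item (4), which requires careful bookkeeping of cancellations but is elementary; the remaining items are fairly routine applications of the analytic Nemytskii calculus of Appendix~\ref{appendix on analytic composition in unital Banach algebras} combined with standard 2D product estimates.
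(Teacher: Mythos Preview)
Your proposal is correct and follows essentially the same approach as the paper: the same algebraic identities for item (4), the same combination of Proposition~\ref{prop on admissability in the periodic setting} and Corollary~\ref{coro on analytic superposition real case} for the Nemytskii maps, the same product estimates via Theorem~\ref{thm on high low product estimates}, and the same Rellich--Kondrachov factoring for item (3). The only difference is that you present item (4) first whereas the paper treats items (1)--(2) first, but this is purely expository.
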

\begin{proof}
    We prove the first and second items in tandem. The nonlinear maps that comprise $\mathcal{N}_\be$ are simple products of the functions $u$, $\eta$, $\tp{1+\be}^{-1}$, $\tp{1+\eta+\be}^{-1}$, and $\log\tp{1+\tp{1+\be}^{-1}\eta}$. The expressions $\tp{1 + \eta + \be}^{-1}$ and $\log\tp{1 + \tp{1 + \be}^{-1}\eta}$ are real analytic functions of $\eta$ valued in $H^2\tp{\T^2_L}$, as can be deduced from Proposition~\ref{prop on admissability in the periodic setting} and Corollary~\ref{coro on analytic superposition real case}, which also map bounded subsets of $\bf{O}_\be^\ep\cap\z{H}^2\tp{\T^2_L}$ to bounded subsets of $H^2\tp{\T^2_L}$. So given $\zeta\in \z{H}^2\tp{\T^2_L}$ and $u\in W^{1,4}\tp{\T^2_L;\R^2}$ it remains only to observe the following product bounds (which follow from Theorem~\ref{thm on high low product estimates} and the Sobolev embedding)
    \begin{equation}
        \tnorm{P_0\tp{u\cdot\grad\zeta}}_{H^1}\lesssim\tnorm{u}_{L^\infty}\tnorm{\zeta}_{H^2}+\tnorm{u}_{W^{1,4}}\tnorm{\zeta}_{W^{1,4}}\lesssim\tnorm{u}_{W^{1,4}}\tnorm{\zeta}_{H^2},
    \end{equation}
    \begin{equation}
        \tnorm{u\cdot\grad u}_{L^2}\lesssim\tnorm{u}_{L^4}\tnorm{u}_{W^{1,4}}\le\tnorm{u}^2_{W^{1,4}},\quad\tnorm{\zeta u}_{L^2}\le\tnorm{\zeta}_{L^\infty}\tnorm{u}_{L^2}\lesssim\tnorm{\zeta}_{H^2}\tnorm{u}_{W^{1,4}},
    \end{equation}
    \begin{equation}
        \tnorm{\mathbb{S}u\grad\zeta}_{L^2}\lesssim\tnorm{u}_{W^{1,4}}\tnorm{\zeta}_{W^{1,4}}\lesssim\tnorm{u}_{W^{1,4}}\tnorm{\zeta}_{H^2}.
    \end{equation}
    To complete the proof of the first and second items we note that that the vanishing of $\mathcal{N}_\be$ and its derivative at the origin follow from elementary calculations.

    The third item is a consequence of the previous items and of the compactness of the embedding
    \begin{equation}
        H^2\tp{\T^2_L;\R^2}\times [\bf{O}_\be^\ep\cap \z{H}^3\tp{\T^2_L}] \emb W^{1,4}\tp{\T^2_L;\R^2} \times [\bf{O}_\be^{\ep/2}\cap \z{H}^2\tp{\T^2_L}].
    \end{equation}
    The fourth item is another elementary calculation which follows from the identities
    \begin{equation}
        \f{1}{1+\be}-\f{\eta}{\tp{1+\be}\tp{1+\be+\eta}}=\f{1}{1+\be+\eta},\quad\log\tp{1+\be}+\log\tp{1+\tp{1+\be}^{-1}\eta}=\log\tp{1+\be+\eta}.
    \end{equation}
\end{proof}

We next turn our attention to the solitary setting and derive analogs of Proposition~\ref{prop on operator formulation periodic case} and~\ref{prop on operator decomposition periodic case}. Recall that the solitary case's data map $\mathcal{F}_\be$ is given in Proposition~\ref{prop on data map - solitary case}. Note also that we are again using the notation for solenoidal vector fields introduced in~\eqref{the definition of the solenoidal space}.

\begin{prop}[Operator formulation - solitary case]\label{prop on operator formulation solitary case}
    Let $\del,\rho\in\tp{0,1}$, $\be\in{_{\m{e}}}\tilde{H}^\infty_\del\tp{\R^2}$ with $\inf\be=0$, $A,G>0$, and $\kappa\in[0,\infty)$. Consider the mapping $\mathcal{Q}_\be:{_{\m{sol}}}H^2_{\rho}\tp{\R^2;\R^2}\times [\bf{U}_\be\cap\tilde{H}^3_\rho\tp{\R^2}] \to L^2_\rho\tp{\R^2;\R^2}$ defined via
    \begin{multline}\label{the Quarterback}
        \mathcal{Q}_\be(u,\eta)=\grad\cdot\bp{\f{u\otimes u}{\sqrt{2\eta+\tp{1+\be}^2}}}+\f{Au}{\sqrt{2\eta+\tp{1+\be}^2}}-\grad\cdot\bp{\sqrt{2\eta+\tp{1+\be}^2}\mathbb{S}\bp{\f{u}{\sqrt{2\eta+\tp{1+\be}^2}}}}\\+\sqrt{2\eta+\tp{1+\be}^2}\tp{G-\Delta}\grad\tp{\sqrt{2\eta+\tp{1+\be}^2}-\tp{1+\be}}.
    \end{multline}
    The following hold.
    \begin{enumerate}
        \item $\mathcal{Q}_\be$ is well-defined and real analytic.
        \item For all $\tp{u,\eta}\in H^2_\rho\tp{\R^2;\R^2}\times [\bf{U}_\be\cap\tilde{H}^3_\rho\tp{\R^2}]$ the identity $\mathcal{Q}_\be\tp{u,\eta}=\kappa\mathcal{F}_\be\tp{\eta}$ holds if and only if system~\eqref{stationary nondimensionalized equations} is satisfied with velocity $\pmb{u}=\f{u}{\sqrt{2\eta+\tp{1+\be}^2}}$, free surface $\pmb{\eta}=\sqrt{2\eta+\tp{1+\be}^2}-\tp{1+\be}$, and forcing $\Phi=\mathcal{F}_\be\tp{\eta}$.
    \end{enumerate}
\end{prop}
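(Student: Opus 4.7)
The plan is to reduce everything to the nonlinear change of unknowns $h = \sqrt{2\eta+(1+\be)^2} = 1+\be+\mathcal{T}_\be^{-1}(\eta)$, which has been analytically prepared by Lemma~\ref{lem on free surface domsains, solitary case}. Setting $\mu = \min\{\del,\rho\}$, the fourth item of that lemma supplies real analyticity of $\mathcal{T}_\be^{-1}:\bf{U}_\be\cap\tilde{H}^3_\rho(\R^2) \to \bf{O}_\be\cap\tilde{H}^3_\rho(\R^2)$, so $h$ is a real analytic function of $\eta$ valued in ${_{\m{e}}}\tilde{H}^3_\mu(\R^2)$ with constant part $\bf{c}(h) = 1+\bf{c}(\be)>0$ and $\inf h > 0$ on $\bf{U}_\be$. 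The admissibility of this Banach algebra (Proposition~\ref{prop on admissability of the extended weighted gradient spaces}) combined with Corollary~\ref{coro on analytic superposition real case} then produces $h^{-1}$ as a likewise real analytic ${_{\m{e}}}\tilde{H}^3_\mu$-valued function of $\eta$.

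For item~1 I would inspect each summand in the definition of $\mathcal{Q}_\be(u,\eta)$ using Proposition~\ref{prop on product estimates in weighted gradient spaces}, Remark~\ref{remark on algebra property of weighted spaces}, and the embeddings of Proposition~\ref{prop on embeddings of weighted Sobolev spaces}: the advective flux $h^{-1}u\otimes u$ lives in $H^2_\rho$, so its divergence lies in $H^1_\rho\emb L^2_\rho$; $Ah^{-1}u$ lies in $H^2_\rho\emb L^2_\rho$; the viscous flux $h\mathbb{S}(h^{-1}u)$ lies in $H^1_\rho$, so its divergence is in $L^2_\rho$; and $(G-\Delta)\grad(h-(1+\be)) = (G-\Delta)\grad\mathcal{T}_\be^{-1}(\eta) \in L^2_\rho$ since $\mathcal{T}_\be^{-1}(\eta)\in\tilde{H}^3_\rho$, a property that multiplication by $h\in{_{\m{e}}}\tilde{H}^3_\mu$ preserves. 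Real analyticity then follows because each contribution is a finite product of real analytic maps composed with continuous multilinear operations.

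For item~2, I would carry out the identification $\pmb{u} = u/h$, $\pmb{\eta} = h-(1+\be)$, so that $1+\be+\pmb{\eta} = h$ and $u = h\pmb{u}$. The solenoidal constraint $\grad\cdot u = 0$ built into the domain is then exactly the continuity equation $\grad\cdot((1+\be+\pmb{\eta})\pmb{u}) = 0$; substitution converts the four summands of $\mathcal{Q}_\be(u,\eta)$ into the four terms on the left-hand side of the momentum equation in~\eqref{stationary nondimensionalized equations}; and comparison of the definition of $\mathcal{F}_\be$ in Proposition~\ref{prop on data map - solitary case} with~\eqref{percolate propane pulvarized paintbrush} yields $\mathcal{F}_\be(\eta) = \Phi$. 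The reverse implication follows by running the change of variables backwards through $\eta = \mathcal{T}_\be(\pmb{\eta})$ and $u = (1+\be+\pmb{\eta})\pmb{u}$. The chief obstacle I anticipate is purely bookkeeping, namely verifying uniformly on $\bf{U}_\be^\ep\cap\tilde{H}^3_\rho$ that multiplication by $h$ and $h^{-1}$ preserves the weighted Sobolev spaces at each order, but this is tractable via the algebra structure developed in Section~\ref{section on analysis in weighted spaces}.
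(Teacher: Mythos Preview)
Your proposal is correct and follows essentially the same route as the paper. The only cosmetic difference is that the paper packages your substitution $h=\sqrt{2\eta+(1+\be)^2}$ into an explicit intermediate operator $\mathcal{R}_\be(u,\pmb{\eta})$ on ${_{\m{sol}}}H^2_\rho\times[\bf{O}_\be\cap\tilde{H}^3_\rho]$ and then writes $\mathcal{Q}_\be(u,\eta)=\mathcal{R}_\be(u,\mathcal{T}_\be^{-1}(\eta))$, whereas you carry $h$ through the estimates directly; the product bounds, the appeal to Proposition~\ref{prop on admissability of the extended weighted gradient spaces} and Corollary~\ref{coro on analytic superposition real case} for the analyticity of $h^{-1}$, and the verification of the second item are otherwise identical.
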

\begin{proof}
    We observe that $\mathcal{Q}_\be(u,\eta)=\mathcal{R}_\be(u,\mathcal{T}_\be^{-1}\tp{\eta})$ where $\mathcal{T}_\be$ is the analytic change of unknowns mapping from Proposition~\ref{lem on free surface domsains, solitary case} and $\mathcal{R}_\be:{_{\m{sol}}}H^2_\rho\tp{\R^2;\R^2}\times [\bf{O}_\be\cap\tilde{H}^3_\rho\tp{\R^2}] \to L^2_\rho\tp{\R^2;\R^2}$ is the map
    \begin{equation}
        \mathcal{R}_\be\tp{u,\eta}=\grad\cdot\bp{\f{u\otimes u}{1+\be+\eta}}+\f{Au}{1+\be+\eta}-\grad\cdot\bp{\tp{1+\be+\eta}\mathbb{S}\bp{\f{u}{1+\be+\eta}}}+\tp{1+\be+\eta}\tp{G-\Delta}\grad\eta.
    \end{equation}
    Thus, to prove the first item it suffices to establish that $\mathcal{R}_\be$ is well-defined and analytic.  The map $\mathcal{R}_\be$ is comprised of linear terms and simple products interacting with the Nemytskii operator
    \begin{equation}
        \bf{O}_\be\cap{_{\m{e}}}\tilde{H}^3_\mu\tp{\R^2}\ni\eta\mapsto\tp{1+\be+\eta}^{-1}\in{_{\m{e}}}\tilde{H}^3_\mu\tp{\R^2},\quad\mu=\min\tcb{\del,\rho}\in\R^+,
    \end{equation}
    which is well-defined and analytic thanks to Proposition~\ref{prop on admissability of the extended weighted gradient spaces} and Corollary~\ref{coro on analytic superposition real case}. Therefore, it remains to study the boundedness of these simple products. We note that as a consequence of Propositions~\ref{prop on product estimates}, \ref{prop on equivalent norms and low mode embeddings}, and~\ref{prop on product estimates in weighted gradient spaces} we have the bounds for $\zeta,\theta\in{_{\m{e}}}\tilde{H}^3_\mu\tp{\R^2}$:
    \begin{equation}
        \tnorm{\grad\cdot\tp{\zeta u\otimes u}}_{L^2_\rho}\lesssim\tnorm{\zeta}_{{_{\m{e}}}\tilde{H}^3_\mu}\tnorm{u}_{H^2_\rho}^2,\quad\tnorm{\zeta u}_{L^2_\rho}\lesssim\tnorm{\zeta}_{{_{\m{e}}}\tilde{H}^3_\mu}\tnorm{u}_{H^2_\rho},
    \end{equation}
    \begin{equation}
        \tnorm{\grad\cdot\tp{\theta\mathbb{S}\tp{\zeta u}}}_{L_\rho^2}\lesssim\tnorm{\theta}_{W^{1,\infty}}\tnorm{\mathbb{S}\tp{\zeta u}}_{H^1_\rho}\lesssim\tnorm{\theta}_{{_{\m{e}}}\tilde{H}^3_\mu}\tnorm{\zeta}_{{_{\m{e}}}\tilde{H}^3_\mu}\tnorm{u}_{H^2_\rho},
    \end{equation}
    \begin{equation}
        \tnorm{\theta\tp{G-\Delta}\grad\eta}_{L^2_\rho}\lesssim\tnorm{\theta}_{L^\infty}\tnorm{\grad\eta}_{H^2}\lesssim\tnorm{\theta}_{{_{\m{e}}}\tilde{H}^3_\mu}\tnorm{\eta}_{\tilde{H}^3_\rho}.
    \end{equation}
    These bounds then conclude the proof of the first item.

    The second item follows directly from the simple calculation $\mathcal{Q}_\be\tp{u,\eta}=\mathcal{R}_\be\tp{\tp{1+\be+\pmb{\eta}}\pmb{u},\pmb{\eta}}$.
\end{proof}

\begin{rmk}[Differences between the solitary and periodic cases]\label{remark on codomain disparity}
    Upon comparison of the operator formulation in the periodic case (Proposition~\ref{prop on operator formulation periodic case}) with that of the solitary case (Proposition~\ref{prop on operator formulation solitary case}), one notices that there are the following important distinctions. First, in the solitary case the domain of $\mathcal{Q}_\be$ hard codes the velocity variable to be divergence free; this is not a feature of the periodic case. Second, the codomain of $\mathcal{Q}_\be$ in the periodic case includes the `extra' factor $\z{H}^1\tp{\T^2_L}$ in the Cartesian product of function spaces to tabulate the source term of the continuity equation; the solitary case does not require such an addition to the codomain (due to the built in restriction to solenoidal velocity fields).
\end{rmk}

We remind the reader that the linear operators $\mathcal{P}_\be$ and $\mathcal{L}_\be$ of the solitary case are studied in Proposition~\ref{prop on linear analysis with bathymetry solitary case}, where the quadratic forms $Q_m$ for $m\in\tcb{1,2}$ also make their first appearance.  Our next result relates these linear maps to $\mathcal{Q}_\be$ and a compact nonlinear remainder. The corresponding result in the periodic case is Proposition~\ref{prop on operator decomposition periodic case}.

\begin{prop}[Operator decomposition - solitary case]\label{prop on operator decomposition - solitary case}
    Let $\del$, $\rho$, $\be$, $A$, $G$, and $\mathcal{Q}_\be$ be as in Proposition~\ref{prop on operator formulation solitary case}. The following hold.
    \begin{enumerate}
        \item For $\rho/2<\mu<\rho$ the map $\mathcal{N}_\be:\tp{{_{\m{sol}}}H^1_\mu\cap L^\infty_\mu}\tp{\R^2;\R^2}\times [\bf{U}_\be\cap\tp{\tilde{H}_\mu^2\cap W^{1,\infty}_\mu}\tp{\R^2}] \to L^2_\rho\tp{\R^2;\R^2}$ given by
        \begin{multline}\label{nonlinearity in the solitary case}
            \mathcal{N}_\be\tp{u,\eta}=\grad\cdot\bp{\f{u\otimes u}{\sqrt{2\eta+\tp{1+\be}^2}}}-\f{2Au\eta}{\tp{1+\be}\sqrt{2\eta+\tp{1+\be}^2}\tp{\tp{1+\be}+\sqrt{2\eta+\tp{1+\be}^2}}}\\
            +\f12\grad\cdot Q_2\bp{u,\grad\log\bp{1+\f{2\eta}{\tp{1+\be}^2}}}+\f{\eta^2\tp{G-\Delta}\grad\tp{1+\be}}{\tp{1+\be}^2\sqrt{2\eta+\tp{1+\be}^2}+\tp{1+\be}\tp{\tp{1+\be}^2+\eta}}\\
            \f{1}{2}\grad\cdot Q_1\bp{\grad\bp{\f{2\eta}{\sqrt{2\eta+\tp{1+\be}^2}+\tp{1+\be}}}}\\-\grad\cdot Q_1\bp{\grad\tp{1+\be},\grad\bp{\f{\eta^2}{\tp{1+\be}^2\sqrt{2\eta+\tp{1+\be}^2}+\tp{1+\be}\tp{\tp{1+\be}^2+\eta}}}}.
        \end{multline}
        is well-defined and real analytic and satisfies $\mathcal{N}_\be(0,0)=0$ and $D\mathcal{N}_\be\tp{0,0}=0$.
        \item For every $\ep\in\tp{0,1}$ the restriction of $\mathcal{N}_\be$ to $\tp{{_{\m{sol}}}H^1_\mu\cap L^\infty_\mu}\tp{\R^2;\R^2}\times [\bf{U}^\ep_\be\cap\tp{\tilde{H}^2_\mu\cap W^{1,\infty}_\mu}\tp{\R^2}]$ maps bounded sets to bounded sets.
        \item For every $\ep\in\tp{0,1}$ the restriction of $\mathcal{N}_\be$ to ${_{\m{sol}}}H^2_\rho\tp{\R^2;\R^2}\times [\bf{U}_\be^\ep\cap\tilde{H}^3_\rho\tp{\R^2}]$ maps bounded sets to precompact sets.
        \item For all $(u,\eta)\in{_{\m{sol}}}H^{2}_\rho\tp{\R^2;\R^2}\times [\bf{U}_\be\cap\tilde{H}^3_\rho\tp{\R^2}]$ we have the operator decomposition identity
        \begin{equation}
            \mathcal{Q}_\be\tp{u,\eta}=\mathcal{P}_\be\tp{u,\eta}+\mathcal{L}_\be\tp{u,\eta}+\mathcal{N}_\be\tp{u,\eta},
        \end{equation}
        where $\mathcal{Q}_\be$, $\mathcal{P}_\be$, and $\mathcal{L}_\be$ are the operators given by~\eqref{the Quarterback}, \eqref{solitary principal part map}, and~\eqref{solitary remainder map}, respectively.
    \end{enumerate}
\end{prop}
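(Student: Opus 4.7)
The plan is to parallel the structure of the periodic analog (Proposition~\ref{prop on operator decomposition periodic case}), but with the additional technical overhead required to track the weighted spaces and the nonlinear change of unknowns hidden inside $\mathcal{Q}_\be$. We begin with the fourth (identity) item, since it will clarify where the pieces in~\eqref{nonlinearity in the solitary case} come from. Fix $(u,\eta)\in{_{\m{sol}}}H^2_\rho(\R^2;\R^2)\times[\bf{U}_\be\cap\tilde H^3_\rho(\R^2)]$ and set $\al=1+\be$ and $h=\sqrt{2\eta+\al^2}$, so that $h-\al=2\eta/(h+\al)$ and $h^2-\al^2=2\eta$. Expanding each term of $\mathcal{Q}_\be$ via the identities $A/h = A/\al - 2A\eta/(\al h(\al+h))$, $\grad\log h = \grad\log\al + \tfrac12\grad\log(1+2\eta/\al^2)$, and $h-\al = 2\eta/(h+\al)$, and grouping the top-order linear parts gives precisely $\mathcal{P}_\be+\mathcal{L}_\be$ as defined in~\eqref{solitary principal part map} and~\eqref{solitary remainder map}, with $\mathcal{N}_\be$ collecting the remaining genuinely-nonlinear residues. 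The $\grad\cdot\mathbb{S}u$ contribution must be unwrapped using the identity $\grad\cdot(h\,\mathbb{S}(u/h))=\mathbb{S}u\grad\log h-\grad\cdot\mathbb{S}u$ and then combined with the half-factor $Q_2$ term; similarly the capillary contribution $h(G-\Delta)\grad(h-\al)$ must be split via $h=\al+(h-\al)$ into a linear $\al(G-\Delta)\grad\al^{-1}\eta$ piece absorbed into $\mathcal{L}_\be$ and the $Q_1$ and $\eta^2$ residues listed in~\eqref{nonlinearity in the solitary case}. This is just bookkeeping and produces no estimate.

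For items one and two, I would argue that every nonlinearity appearing in~\eqref{nonlinearity in the solitary case} is built by composition with a real analytic function of $\eta$ landing in an admissible unital Banach algebra and then taking a bounded product against $u$ or $\grad u$. The denominators $1/\sqrt{2\eta+\al^2}$, $1/(\al+\sqrt{2\eta+\al^2})$, and $1/(\al^2\sqrt{2\eta+\al^2}+\al(\al^2+\eta))$ are all analytic functions of $\eta$ on $\bf{U}_\be$ (bounded away from zero by choice of $\ep$), so Corollary~\ref{coro on analytic superposition real case} combined with the admissibility of ${_{\m{e}}}\tilde H^s_\mu(\R^2)$ (Proposition~\ref{prop on admissability of the extended weighted gradient spaces}) yields real-analytic Nemytskii dependence in the appropriate extended weighted gradient algebra, with bounded sets going to bounded sets on each $\bf{U}_\be^\ep$. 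The remaining products I would handle with the weighted product estimates of Propositions~\ref{prop on product estimates} and~\ref{prop on product estimates in weighted gradient spaces}, choosing the localization exponents so that two factors with weights $\mu$ produce a factor with weight $2\mu>\rho$; this is the reason for the hypothesis $\rho/2<\mu<\rho$ and is what makes $\mathcal{N}_\be$ strictly gain localization relative to its arguments. Sample estimates are $\|\grad\cdot(u\otimes u/h)\|_{L^2_\rho}\lesssim \|1/h\|_{W^{1,\infty}_{\rho-\mu}}\|u\|_{L^\infty_\mu}\|u\|_{H^1_\mu}$ and $\|\grad\cdot Q_2(u,\grad\log(1+2\eta/\al^2))\|_{L^2_\rho}\lesssim \|u\|_{H^1_\mu}\|\eta\|_{\tilde H^2_\mu\cap W^{1,\infty}_\mu}(1+\|\eta\|)$; the pattern is uniform across the six terms. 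The vanishing $\mathcal{N}_\be(0,0)=0$ and $D\mathcal{N}_\be(0,0)=0$ follows because every summand is at least quadratic in $(u,\eta)$ by direct inspection.

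The third item is then a compactness argument by the now-familiar pattern. The target-to-source embedding
\[
{_{\m{sol}}}H^2_\rho(\R^2;\R^2)\times[\bf{U}_\be^\ep\cap\tilde H^3_\rho(\R^2)] \emb (_\m{sol}H^1_\mu\cap L^\infty_\mu)(\R^2;\R^2)\times[\bf{U}_\be^{\ep/2}\cap(\tilde H^2_\mu\cap W^{1,\infty}_\mu)(\R^2)]
\]
is compact, where compactness of $H^2_\rho\emb H^1_\mu\cap L^\infty_\mu$ follows from Proposition~\ref{prop on embeddings of weighted Sobolev spaces} (the strict inequalities $\rho>\mu$ and $2>1+$ absorb the integrability loss) and compactness of $\tilde H^3_\rho\emb\tilde H^2_\mu\cap W^{1,\infty}_\mu$ is exactly the content of Proposition~\ref{prop on inclusion relations}(2) together with~(3). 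Composing this compact inclusion with the continuous $\mathcal{N}_\be$ from items one and two gives the bounded-to-precompact property.

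The only genuinely tricky step is the bookkeeping in item four: ensuring that the chosen splittings of $1/h$, $\log h$, and $h(G-\Delta)\grad(h-\al)$ leave precisely $\mathcal{L}_\be$ as the linear residue and no spurious linear terms inside $\mathcal{N}_\be$. The cleanest way I know is to linearize each piece around $\eta=0$ first, verify the linear parts match~\eqref{solitary remainder map} term by term (using $h\vert_{\eta=0}=\al$ and $\pd_\eta h\vert_{\eta=0}=1/\al$), and then define $\mathcal{N}_\be$ as the remainder; item one already shows that this remainder has the required regularity, and the displayed formula in~\eqref{nonlinearity in the solitary case} is simply its explicit form obtained from the rearrangements above.
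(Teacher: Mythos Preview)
Your plan is essentially the paper's approach: Nemytskii analysis via the holomorphic functional calculus for the denominators, weighted product estimates exploiting $2\mu>\rho$, compact embedding for item three, and algebraic bookkeeping for item four. Two points deserve correction.

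First, the sample bound $\|1/h\|_{W^{1,\infty}_{\rho-\mu}}$ is infinite: $1/h$ tends to $1/(1+\bf{c}(\be))>0$ at infinity and carries no decay. The correct estimate is
\[
\|\grad\cdot(h^{-1}u\otimes u)\|_{L^2_\rho}\lesssim \|h^{-1}\|_{W^{1,\infty}}\,\|u\|_{L^\infty_\mu}\|u\|_{H^1_\mu},
\]
with the full weight gain $2\mu>\rho$ coming from the quadratic factor $u\otimes u$; the Nemytskii pieces $h^{-1}$, $(h+\al)^{-1}$, etc., live in ${_{\m{e}}}\tilde H^2_\chi\cap W^{1,\infty}$ (with $\chi=\min\{\mu,\del\}$) and contribute no weight. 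Your stated principle (``two $\mu$ factors give $2\mu>\rho$'') is right; only the displayed inequality is off.

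Second, and more substantively, your treatment of the capillary term in item four has a gap. Splitting $h=\al+(h-\al)$ yields
\[
h(G-\Delta)\grad(h-\al)=\al(G-\Delta)\grad\pmb{\eta}+\pmb{\eta}(G-\Delta)\grad\pmb{\eta},\qquad \pmb{\eta}=h-\al,
\]
and after subtracting $\al(G-\Delta)\grad(\al^{-1}\eta)$ you are left with $-\al(G-\Delta)\grad(\eta^2/D)+\pmb{\eta}(G-\Delta)\grad\pmb{\eta}$. These terms still carry \emph{three} derivatives on $\eta$, so they are not the displayed $Q_1$ residues and are not even defined on the weaker domain $\tilde H^2_\mu\cap W^{1,\infty}_\mu$ used in items one and two. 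The missing ingredient is the divergence identity
\[
\phi(G-\Delta)\grad\phi=\tfrac12(G-\Delta)\grad(\phi^2)+\tfrac12\grad\cdot Q_1(\grad\phi,\grad\phi),
\]
applied with $\phi=h$ (and its bilinear polarization). This converts the top-order nonlinear piece into the genuinely lower-order $\tfrac12\grad\cdot Q_1(\grad\pmb{\eta})$ and $\grad\cdot Q_1(\grad\al,\grad(\eta^2/D))$ terms that actually appear in~\eqref{nonlinearity in the solitary case}. Without it, your ``rearrangements above'' do not produce the stated formula, and your fallback of ``linearize first, define $\mathcal{N}_\be$ as the remainder'' does not explain why that remainder has the specific second-order form needed for items one through three.
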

\begin{proof}
    We again prove the first and second items in tandem. The nonlinearity $\mathcal{N}_\be$ contains simple products of the velocity, free surface, and analytic nonlinear expressions in the free surface. We shall first study these latter expressions. Set $\chi=\min\tcb{\mu,\del}\in\tp{0,1}$ and consider the auxiliary mappings
    \begin{equation}
        \bf{U}_\be\cap\tp{{_{\m{e}}}\tilde{H}^2_\chi\cap W^{1,\infty}_\chi}\tp{\R^2}\ni\eta\mapsto\begin{cases}
            f_0(\eta)=\sqrt{2\eta+\tp{1+\be}^2},\\
            f_1(\eta)=1/f_0(\eta),\\
            f_2(\eta)=\tp{1+\be+f_0\tp{\eta}}^{-1},\\
            f_3(\eta)=\tp{\tp{1+\be}f_0\tp{\eta}+f_0(\eta)^2+\tp{1+\be}^2/2}^{-1}
        \end{cases}
        \in\tp{{_{\m{e}}}\tilde{H}^2_\chi\cap W^{1,\infty}_\chi}\tp{\R^2}.
    \end{equation}
    The space $\tp{{_{\m{e}}}\tilde{H}^2_\chi\cap W^{1,\infty}}\tp{\R^2;\C}$ is easily seen to be an admissible unital Banach algebra in the sense of Definition~\ref{defn of admissable Banach algebras} by combining Propositions~\ref{prop on preliminary admissibility in the solitary case} and~\ref{prop on admissability of the extended weighted gradient spaces}; consequently, an application of Corollary~\ref{coro on analytic superposition real case} shows that $f_0$ and $f_1$ are real analytic and, when restricted to their domain intersected with $\bf{U}_\be^\ep$, map bounded sets to bounded sets.  Since $\inf\be=0$ and $f_0>0$ on its domain, we compute the lower bounds $1/f_2> 1$ and $1/f_3>1/2$,  which mean that the denominators in $f_2$ and $f_3$ stay well away from the singularity of the pointwise inversion map. Hence, we can use Corollary~\ref{coro on analytic superposition real case} again along with the preservation of real analyticity under composition to deduce that $f_2$ and $f_3$ are well-defined, real analytic, and when restricted to their domain intersected with $\bf{U}_\be^\ep$, map bounded sets to bounded sets.  This Nemytskii operator analysis is sufficient to handle all but one term in~\eqref{nonlinearity in the solitary case}, namely the third one on the right hand side. In this instance we consider the mapping
    \begin{equation}\label{the log nonlinearity}
        \bf{U}_\be\cap\tp{\tilde{H}^2_\mu\cap W^{1,\infty}_\mu}\tp{\R^2}\ni\eta\mapsto\log\bp{1+\f{2\eta}{\tp{1+\be}^2}}\in\tp{\tilde{H}^2_\mu\cap W^{1,\infty}_\mu}\tp{\R^2}.
    \end{equation}
    By quoting the same results as previously mentioned, the boundedness of the product map 
    \begin{equation}
        \tp{{_{\m{e}}}\tilde{H}^2_\del\cap W^{1,\infty}_\del}\tp{\R^2}\times\tp{\tilde{H}^2_\mu\cap W^{1,\infty}_\mu}\tp{\R^2}\to\tp{\tilde{H}^2_\mu\cap W^{1,\infty}_\mu}\tp{\R^2},
    \end{equation}
    which is a consequence of Proposition~\ref{prop on product estimates in weighted gradient spaces}, along with the fact that $\log(1)=0$ we deduce that the mapping~\eqref{the log nonlinearity} is well-defined, analytic, and maps bounded sets to bounded sets when restricted to the intersection of its domain and $\bf{U}^\ep_\be$.

    The previous analysis then allows us to reduce the proof of the first and second items to a series of simple product bounds. Suppose that $u\in\tp{{_{\m{sol}}}H^1_\mu\cap L^\infty_\mu}\tp{\R^2;\R^2}$, $\eta\in\tp{\tilde{H}^2_\mu\cap W^{1,\infty}_\mu}\tp{\R^2}$, and $\zeta\in\tp{{_{\m{e}}}\tilde{H}^2_\chi\cap W^{1,\infty}_\chi}$. Through repeated applications of the product bounds of Propositions~\ref{prop on product estimates} and~\ref{prop on product estimates in weighted gradient spaces} we deduce that
    \begin{equation}\label{saturn eating his son 1}
        \tnorm{\grad\cdot\tp{\zeta u\otimes u}}_{L^2_\rho}\lesssim\tnorm{\zeta}_{W^{1,\infty}}\tnorm{u}_{L^\infty_{\mu}}\tnorm{u}_{H^1_\mu},\quad\tnorm{\zeta u\eta}_{L^2_\rho}\lesssim\tnorm{\zeta}_{L^\infty}\tnorm{u}_{L^2_\mu}\tnorm{\eta}_{L^\infty_\mu},
    \end{equation}
    \begin{equation}\label{saturn eating his son 2}
        \tnorm{\grad\cdot Q_2\tp{u,\grad\eta}}_{L^2_\rho}\lesssim\tnorm{u}_{L^\infty_\mu}\tnorm{\grad\eta}_{H^1_\mu}+\tnorm{u}_{H^1_\mu}\tnorm{\grad\eta}_{L^\infty_\mu},
    \end{equation}
    \begin{equation}\label{saturn eating his son 3}
        \tnorm{\zeta\eta^2\tp{G-\Delta}\grad\tp{1+\be}}_{L^2_\rho}\lesssim\tnorm{\zeta}_{L^\infty}\tnorm{\eta}_{L^\infty_\mu}^2,
    \end{equation}
    \begin{equation}\label{saturn eating his son 4}
        \tnorm{\grad\cdot Q_1\tp{\grad\tp{\zeta\eta}}}_{L^2_\rho}\lesssim\tnorm{\grad\tp{\zeta\eta}}_{L^\infty_\mu}\tnorm{\grad\tp{\zeta\eta}}_{H^1_\mu}\lesssim\tnorm{\zeta}_{W^{1,\infty}}\tnorm{\eta}_{W^{1,\infty}_\mu}\tnorm{\zeta}_{{_{\m{e}}}\tilde{H}^2_\chi}\tnorm{\eta}_{\tilde{H}^2_\mu},
    \end{equation}
    \begin{equation}\label{saturn eating his son 5}
        \tnorm{\grad\cdot Q_1\tp{\grad\tp{1+\be},\grad\tp{\zeta\eta^2}}}_{L^2_\rho}\lesssim\tnorm{\grad\tp{\zeta\eta^2}}_{H^1_\rho}\lesssim\tnorm{\zeta}_{{_{\m{e}}}\tilde{H}^2_\chi}\tnorm{\eta}^2_{\tilde{H}^2_\mu}.
    \end{equation}
    We then read off by the embeddings of the weighted spaces (Propositions~\ref{prop on equivalent norms and low mode embeddings} and~\ref{prop on inclusion relations}) that each of the polynomial nonlinearities above (given within the norms on the left hand sides of~\eqref{saturn eating his son 1}, \eqref{saturn eating his son 2}, \eqref{saturn eating his son 3}, \eqref{saturn eating his son 4}, and~\eqref{saturn eating his son 5}) are a continuous function of $u$, $\eta$, $\zeta$ for the norms on their respective spaces. By synthesizing the previous work we complete the proof of the first and second items.

    The third item follows from the first two items and the compactness of the embedding
    \begin{equation}
        {_{\m{sol}}}H^2_\rho\tp{\R^2;\R^2}\times\bf{U}^\ep_\be\cap\tilde{H}^3_\rho\tp{\R^2}\emb\tp{{_{\m{sol}}}\tilde{H}^1_\mu\cap L^\infty_\mu}\tp{\R^2;\R^2}\times\bf{U}^{\ep/2}_\be\cap\tp{\tilde{H}^2_\mu\cap W^{1,\infty}_\mu}\tp{\R^2}.
    \end{equation}

    Finally, we prove the fourth item.      Recall that in the proof of Proposition~\ref{prop on linear analysis with bathymetry solitary case} we computed 
    \begin{equation}
        \tp{\mathcal{P}_\be+\mathcal{L}_\be}\tp{u,\eta}=\f{Au}{1+\be}-\grad\cdot\mathbb{S}u+\grad\cdot Q_2\tp{\grad\log\tp{1+\be},u}+\tp{1+\be}\tp{G-\Delta}\grad\bp{\f{\eta}{1+\be}}.
    \end{equation}
    To add $\mathcal{N}_\be$ to the above expression and result in $\mathcal{Q}_\be$, we observe the following identities:
    \begin{equation}
        \f{1}{1+\be}-\f{2\eta}{\tp{1+\be}\sqrt{2\eta+\tp{1+\be}^2}\tp{\tp{1+\be}+\sqrt{2\eta+\tp{1+\be}^2}}}=\f{1}{\sqrt{2\eta+\tp{1+\be}^2}},
    \end{equation}
    \begin{equation}
        \log\tp{1+\be}+\f12\log\bp{1+\f{2\eta}{\tp{1+\be}^2}}=\log\sqrt{2\eta+\tp{1+\be}^2},
    \end{equation}
    \begin{equation}
        -\grad\cdot\mathbb{S}u+\grad\cdot Q_2\tp{u,\grad\log\sqrt{2\eta+\tp{1+\be}^2}}=-\grad\cdot\bp{\sqrt{2\eta+\tp{1+\be}^2}\mathbb{S}\bp{\f{u}{\sqrt{2\eta+\tp{1+\be}^2}}}}.
    \end{equation}
    These are sufficient to match the drag term, the viscous stress tensor, and the advective derivative pieces of the expression for $\mathcal{Q}_\be$~\eqref{the Quarterback}.  It remains to study the gravity-capillary term (which is the final summand in~\eqref{the Quarterback}). For this we crucially use the following divergence identity which holds for scalars $\phi$:
    \begin{equation}\label{the divergence of the gravity capillary stress tensor}
        \phi\tp{G-\Delta}\grad\phi=\f12\tp{G-\Delta}\grad\tp{\phi^2}+\f12\grad\cdot Q_1\tp{\grad\phi,\grad\phi}.
    \end{equation}
    This allows us to write
    \begin{multline}\label{decomposition of the gravity capillary stress tensor}
        \sqrt{2\eta+\tp{1+\be}^2}\tp{G-\Delta}\grad\tp{\sqrt{2\eta+\tp{1+\be}^2}-\tp{1+\be}}=\tp{G-\Delta}\grad\eta\\
        +\grad\cdot Q_1\tp{\grad\tp{1+\be},\grad\tp{\sqrt{2\eta+\tp{1+\be}^2}-\tp{1+\be}}}+\f12\grad\cdot Q_1\tp{\grad\tp{\sqrt{2\eta+\tp{1+\be}^2}-\tp{1+\be}}}\\
        -\tp{\sqrt{2\eta+\tp{1+\be}^2}-\tp{1+\be}}\tp{G-\Delta}\grad\tp{1+\be}.
    \end{multline}
    Notice that the first term on the right hand side of~\eqref{decomposition of the gravity capillary stress tensor} is the principal linear part's gravity-capillary contribution. To remove the lower order bathymetric linear remainder we rewrite the second and fourth terms on the right side of~\eqref{decomposition of the gravity capillary stress tensor}, employing the identity 
    \begin{multline}
        \sqrt{2\eta+\tp{1+\be}^2}-\tp{1+\be}=\f{2\eta}{\sqrt{2\eta+\tp{1+\be}^2}+\tp{1+\be}}\\
        =\f{\eta}{1+\be}-\f{\eta^2}{\tp{1+\be}^2\sqrt{2\eta+\tp{1+\be}^2}+\tp{1+\be}\tp{\tp{1+\be}^2+\eta}}
    \end{multline}
    to leave behind only terms that are linear in $\eta$ or at least quadratic in $\eta$.  Once this is done, the identity $\mathcal{P}_\be+\mathcal{L}_\be+\mathcal{N}_\be=\mathcal{Q}_\be$ follows readily.   
\end{proof}

\subsection{Fixed point reformulation and consequences}\label{subsection on fixed point reformulation}

Our goal now is to use the analysis developed in the previous sections to recast the system \eqref{stationary nondimensionalized equations} with~\eqref{percolate propane pulvarized paintbrush} as a fixed point equation for a compact perturbation of the identity. Once this is done, we will be able to read off some important consequences. We begin with a definition that serves to unify the periodic and solitary cases at an abstract level.  In it we will use the invertibility of the linear maps $\mathcal{P}+\mathcal{L}_\be$ and $\mathcal{P}_\be+\mathcal{L}_\be$, which are consequences of Propositions~\ref{prop on linear analysis with bathymetry - periodic case} and~\ref{prop on linear analysis with bathymetry solitary case}, the data map notation $\mathcal{F}_\be$ form Propositions~\ref{prop on data map, periodic case} and~\ref{prop on data map - solitary case}, and the nonlinear map $\mathcal{N}_\be$ notation from Propositions~\ref{prop on operator decomposition periodic case} and~\ref{prop on operator decomposition - solitary case}.

\begin{defn}[Periodic and solitary unification]\label{defn of periodic and solitary unification}
    We make the following definitions which have an implicit dependence on either the periodic or solitary cases.
    \begin{enumerate}
        \item The set of admissible drag, gravity, and bathymetry parameters is given by
        \begin{equation}
            \pmb{\mathcal{A}}=\begin{cases}
                \tcb{(A,G,\be)\;:\;A>0,\;G\ge0,\;\be\in C^\infty(\T^2_L),\;\min\be=0}&\text{in the }\T^2_L\text{ case},\\
                \tcb{(A,G,\be)\;:\;A,G>0,\;\exists\;\del\in(0,1),\;\be\in{_{\m{e}}}\tilde{H}^\infty_\del\tp{\R^2},\;\inf\be=0}&\text{in the }\R^2\text{ case}.
            \end{cases}
        \end{equation}
        Henceforth we shall assume that the above inclusions are satisfied in their respective cases.
        \item We define the domain Banach spaces (recall~\eqref{the definition of the solenoidal space})
        \begin{equation}\label{the spaces for the solutions in a unified way}
            \pmb{X}=\begin{cases}
                H^2\tp{\T^2_L;\R^2}\times \z{H}^3\tp{\T^2_L}&\text{in the }\T^2_L\text{ case},\\
                {_{\m{sol}}}H^2_{\scriptscriptstyle 1/2}\tp{\R^2;\R^2}\times\tilde{H}^3_{\scriptscriptstyle 1/2}\tp{\R^2}&\text{in the }\R^2\text{ case}.
            \end{cases}
        \end{equation}
        \item We endow the domain Banach spaces with a (Lipschitz) continuous functional $\varrho:\pmb{X}\to\R$ given by  
        \begin{equation}
             \varrho(u,\eta) = \begin{cases}
                \min(1+\be+\eta)&\text{in the }\T^2_L\text{ case},\\
                \inf((1+\be)^2+2\eta)&\text{in the }\R^2\text{ case}.
            \end{cases}
        \end{equation}
        We then define the open sets $U=\varrho^{-1}\tp{0,\infty}$ and, for $\ep\in\tp{0,1}$, $U_\ep=\varrho^{-1}\tp{\ep,\infty}$.
        
        \item Finally, we define the perturbation operator $\mathcal{K}:U\times\R\to\pmb{X}$ via
        \begin{equation}\label{Nymphs}
            \mathcal{K}(u,\eta,\kappa) = \begin{cases}
                \tp{\mathcal{P}+\mathcal{L}_\be}^{-1}\tp{\mathcal{N}_\be\tp{u,\eta}-\kappa\tp{0,\mathcal{F}_\be\tp{\eta}}}&\text{in the }\T^2_L\text{ case},\\
                \tp{\mathcal{P}_\be+\mathcal{L}_\be}^{-1}\tp{\mathcal{N}_\be\tp{u,\eta}-\kappa\mathcal{F}_\be\tp{\eta}}&\text{in the }\R^2\text{ case}.
            \end{cases}
        \end{equation}
    \end{enumerate}
\end{defn}

\begin{rmk}
In equation~\eqref{Nymphs} (and later in equation~\eqref{Persephone}) we observe that in the periodic case the form of the forcing is $\kappa\tp{0,\mathcal{F}_\be\tp{\eta}}$ whereas in the solitary case we only have $\kappa\mathcal{F}_\be\tp{\eta}$. This discrepancy stems from our analysis of the periodic case requiring bookkeeping of source terms for the continuity equation. See also Remark~\ref{remark on codomain disparity}.
\end{rmk}

We may now summarize our findings of Sections~\ref{section on linear analysis} and~\ref{section on semilinearization} with the following result. Recall that the operators $\mathcal{Q}_\be$ are studied in the periodic and solitary cases in Propositions~\ref{prop on operator formulation periodic case} and~\ref{prop on operator formulation solitary case}, respectively.

\begin{thm}[Unified fixed point reformulation]\label{thm on unified fixed point reformulation}
    The following hold.
    \begin{enumerate}
        \item The operator $\mathcal{K}:U\times\R\to\pmb{X}$ from the fourth item of Definition~\ref{defn of periodic and solitary unification} is well-defined and real analytic, and satisfies $\mathcal{K}(0,0)=0$ and $D_1\mathcal{K}(0,0)=0$, where $D_1$ is the derivative with respect to the $\pmb{X}$-factor in the domain.
        \item For every $\ep\in(0,1)$ the restricted operator $\mathcal{K}:U_\ep\times\R\to\pmb{X}$ maps bounded sets to precompact sets; moreover, at every $(x,\kappa)\in U_\ep\times \R$ the linear map $D\mathcal{K}\tp{x,\kappa}:\pmb{X}\to\pmb{X}$ is compact.
        \item For all $\tp{x,\kappa}\in U\times\R$ we have $x+\mathcal{K}\tp{x,\kappa}=0$ if and only if $x=\tp{u,\eta}\in\pmb{X}$ with
        \begin{equation}\label{Persephone}
            \begin{cases}
                \mathcal{Q}_\be\tp{u,\eta}=\kappa\tp{0,\mathcal{F}_\be\tp{\eta}}&\text{in the }\T^2_L\text{ case},\\
                \mathcal{Q}_\be\tp{u,\eta}=\kappa\mathcal{F}_\be\tp{\eta}&\text{in the }\R^2\text{ case}.
            \end{cases}
        \end{equation}
        
        \item For all $(x,\kappa)\in  U\times\R$, the linear operator $I_{\pmb{X}}+D_1\mathcal{K}(x,\kappa):\pmb{X}\to\pmb{X}$ is Fredholm with index zero.

        \item  Denote the set of solutions by
    \begin{equation}\label{the set of solutions}
        S=\tcb{\tp{x,\kappa}\in U\times\R\;:\;x+\mathcal{K}\tp{x,\kappa}=0}.
    \end{equation}
    If $E \subseteq S$ satisfies
    \begin{equation}\label{the alternative control quantity}
       \sup\{\tnorm{x}_{\pmb{X}}+|\kappa|+1/\varrho\tp{x} \;:\: (x,\kappa) \in E \} <\infty,
    \end{equation}
        then $\Bar{E} \subset \pmb{X}\times\R$ is a compact subset of $U\times\R$.
    \end{enumerate}
\end{thm}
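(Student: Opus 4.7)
The plan is to harvest the heavy lifting already done in Sections~\ref{section on linear analysis} and~\ref{section on semilinearization} and to repackage it into the operator-theoretic form of the theorem. Item $(1)$ follows by composing the continuous linear inverse of the principal part (supplied by the third item of Proposition~\ref{prop on linear analysis with bathymetry - periodic case} in the periodic case and of Proposition~\ref{prop on linear analysis with bathymetry solitary case} in the solitary case) with the analytic nonlinearities $\mathcal{N}_\be$ and $\mathcal{F}_\be$ produced by Propositions~\ref{prop on data map, periodic case}, \ref{prop on operator decomposition periodic case}, \ref{prop on data map - solitary case}, and~\ref{prop on operator decomposition - solitary case}; analyticity is preserved under composition. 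The vanishing $\mathcal{K}(0,0)=0$ is immediate from $\mathcal{N}_\be(0,0)=0$, and $D_1\mathcal{K}(0,0)=0$ follows because the differentiation of the $\kappa\mathcal{F}_\be$ contribution with respect to the $\pmb{X}$ factor carries a prefactor of $\kappa$ that is zero. Item $(3)$ is purely algebraic: apply the principal-plus-bathymetric-remainder linear map to both sides of $x+\mathcal{K}(x,\kappa)=0$ and invoke the decomposition identity $\mathcal{Q}_\be=\mathcal{P}(_\be)+\mathcal{L}_\be+\mathcal{N}_\be$ from the fourth items of the operator decomposition propositions.

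For item $(2)$, the third items of Propositions~\ref{prop on data map, periodic case} and~\ref{prop on operator decomposition periodic case} (respectively~\ref{prop on data map - solitary case} and~\ref{prop on operator decomposition - solitary case}) directly assert that $\mathcal{N}_\be$ and $\mathcal{F}_\be$ send bounded subsets of $U_\ep\cap\pmb{X}$ into precompact subsets of their codomains; composition with a bounded linear map preserves precompactness, which settles the first assertion of item $(2)$. For the pointwise compactness of $D\mathcal{K}(x,\kappa)$, the main trick is to factor this derivative through the compact inclusion of $\pmb{X}$ into the weaker ambient space appearing as the domain of $\mathcal{N}_\be$ (for instance, $H^2\times\z{H}^3\emb W^{1,4}\times\z{H}^2$ in the periodic case, and the analogous weighted embedding from Proposition~\ref{prop on inclusion relations} in the solitary case): on the weaker space $\mathcal{N}_\be$ and $\mathcal{F}_\be$ are already continuously differentiable, so the continuous linear derivatives become compact operators after precomposition with the compact inclusion and postcomposition with the continuous linear inverse of the principal part.

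Item $(4)$ is then a one-line consequence of item $(2)$: $I_{\pmb{X}}+D_1\mathcal{K}(x,\kappa)$ is a compact perturbation of the identity on a Banach space, hence Fredholm of index zero. Item $(5)$ is the central qualitative payoff and is proved as follows. Under~\eqref{the alternative control quantity}, the uniform bound on $1/\varrho$ produces an $\ep\in(0,1)$ with $E\subseteq U_\ep\times\R$, and the uniform bound on $\tnorm{x}_{\pmb{X}}+|\kappa|$ makes $E$ bounded in $\pmb{X}\times\R$. Writing any $(x,\kappa)\in E\subseteq S$ as $x=-\mathcal{K}(x,\kappa)$, item $(2)$ applied to the bounded set $E\subseteq U_\ep\times\R$ produces a precompact subset of $\pmb{X}$ containing every $x$-coordinate of $E$; the boundedness of the $\kappa$-coordinates supplies the remaining precompactness, so $\bar{E}$ is compact in $\pmb{X}\times\R$. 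To verify the inclusion $\bar{E}\subseteq U\times\R$, observe that the Lipschitz continuous functional $\varrho$ satisfies $\varrho\ge\ep$ on $E$, and hence $\varrho\ge\ep$ on $\bar{E}$ by continuity, ensuring that limit points remain in $U$.

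The main obstacle in the above outline is the verification of pointwise compactness of $D\mathcal{K}(x,\kappa)$ throughout item $(2)$, since this requires identifying a concrete compact factorization that simultaneously accommodates the analytic Nemytskii terms (of the form $\Upomega_\phi$, $\Upomega_\psi$, $\Upomega_\tau$, and $(1+\be+\eta)^{-1}$ or $\sqrt{2\eta+(1+\be)^2}$) and respects the weighted-space functional framework; everything else is essentially bookkeeping.
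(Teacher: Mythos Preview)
Your proposal is essentially correct and follows the paper's approach for items~(1), (3), (4), and~(5). The only substantive difference is in item~(2), where you flag the pointwise compactness of $D\mathcal{K}(x,\kappa)$ as ``the main obstacle'' and propose to handle it by explicitly factoring the derivative through a compact inclusion into the weaker ambient space. The paper bypasses this entirely by invoking the general fact (Theorem~1.40 in Schwartz~\cite{MR433481}) that the Fr\'echet derivative of a compact nonlinear map is automatically a compact linear operator. Since you have already established that $\mathcal{K}$ maps bounded subsets of $U_\ep\times\R$ to precompact sets, this theorem immediately gives the compactness of $D\mathcal{K}(x,\kappa)$ with no further work, so your ``main obstacle'' evaporates. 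Your factorization argument would also succeed, but it is more laborious and requires verifying differentiability of the Nemytskii operators on the weaker space, which is precisely the headache the Schwartz result lets you avoid.

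One minor omission in your item~(4): the compactness of $D_1\mathcal{K}(x,\kappa)$ from item~(2) is only asserted for $(x,\kappa)\in U_\ep\times\R$, so for a general $(x,\kappa)\in U\times\R$ you must first observe (as the paper does) that there exists some $\ep\in(0,1)$ with $(x,\kappa)\in U_\ep\times\R$ before invoking item~(2).
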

\begin{proof}
    Theorem 1.40 in Schwartz~\cite{MR433481} establishes that the derivatives of compact maps are compact linear operators. Hence, in the periodic case the first and second items are a synthesis of Propositions~\ref{prop on linear analysis with bathymetry - periodic case}, \ref{prop on data map, periodic case}, and~\ref{prop on operator decomposition periodic case}. On the other hand, in the solitary case we combine Propositions~\ref{prop on linear analysis with bathymetry solitary case}, \ref{prop on data map - solitary case}, and~\ref{prop on operator decomposition - solitary case}.

    The third item is a consequence of the operator formulation Propositions~\ref{prop on operator formulation periodic case} and~\ref{prop on operator formulation solitary case} in addition to the final items of the operator decomposition Propositions~\ref{prop on operator decomposition periodic case} and~\ref{prop on operator decomposition - solitary case}.

    To prove the fourth item fix $(x,\kappa)\in U\times\R$ and note that there exists $\epsilon\in\tp{0,1}$ such that $(x,\kappa)\in U_\epsilon\times\R$. The second item then implies that that $D_1\mathcal{K}(x,\kappa)$ is a compact operator on $\pmb{X}$, and hence the sum $I_{\pmb{X}}+D_1\mathcal{K}\tp{x,\kappa}$ is a Fredholm operator with index zero.
    
     We now prove the fifth item.  The bound~\eqref{the alternative control quantity} ensures the existence of $\ep\in\tp{0,1/2}$ such that $\tcb{x : (x,\kappa) \in E} \subset U_{2\ep}$, and hence $\Bar{E} \subset U_\ep\times\R$.  The compactness of $\mathcal{K}:U_\ep\times\R\to\pmb{X}$ then guarantees that $\Bar{\mathcal{K}(E)}$  is a compact set.  By definition, $x=-\mathcal{K}\tp{x,\kappa}$ for every $(x,\kappa) \in E$, so $\Bar{\tcb{x \;:\; (x,\kappa) \in E}}\subset\pmb{X}$ is  compact as well.  Finally, the set $\Bar{\{\kappa \;:\; (x,\kappa) \in E\}}$ is compact by Heine-Borel.  The fifth item follows.
    
\end{proof}

\section{Large solutions and blowup scenarios}\label{section on large solutions and blow up criteria}

Our aim now is to combine the semilinearization analysis of Section~\ref{section on semilinearization} with an analytic global implicit function theorem to produce curves of solutions in~\eqref{the set of solutions}, which will ultimately lead to the existence of large solutions.  In addition, we prove a host of a priori estimates for solutions to~\eqref{stationary nondimensionalized equations} that provide a refined understanding of the large solutions.

\subsection{Analytic global implicit function theorem and solution curves}\label{subsection on analytic global implicit function theorem and solution curves}

One of our main tools to study the solution set~\eqref{the set of solutions} from Theorem~\ref{thm on unified fixed point reformulation} is the analytic global implicit function theorem of Chen, Walsh, and Wheeler, Theorem B.1 in~\cite{chen2023globalbifurcationmonotonefronts}. We state a minor retooling of this result here, which allows for more customization in the `blowup' scenario.

\begin{thm}[Analytic global implicit function theorem]\label{thm on analytic global implicit function theorem}
    Let $\mathscr{W}$ and $\mathscr{Z}$ be Banach spaces, $\rho:\mathscr{W}\times\R\to\R$ a continuous function, $\mathcal{W}=\rho^{-1}\tp{0,\infty}\subset\mathscr{W}\times\R$, $\tp{w_0,\lambda_0}\in\mathcal{W}$, and $\mathscr{G}:\mathcal{W}\to\mathscr{Z}$ a real analytic function satisfying
    \begin{equation}\label{AGIFT h1}
        \mathscr{G}\tp{w_0,\lambda_0}=0  \text{ with }   D_1\mathscr{G}(w_0,\lambda_0)\in \mathcal{L}(\mathscr{W};\mathscr{Z}) \text{ an isomorphism}.
    \end{equation}
    Then there exists a zero solution curve $\mathscr{K}\subset\mathcal{W}\cap\mathscr{G}^{-1}\tcb{0}$, admitting the global continuous parametrization $\R\ni s\mapsto\tp{w(s),\lambda(s)}\in\mathscr{K}$ with $\tp{w(0),\lambda(0)}=\tp{w_0,\lambda_0}$, for which the following hold.
    \begin{enumerate}
        \item At each $s\in\R$  the linear operator $D_1\mathscr{G}\tp{w(s),\lambda(s)}$ is Fredholm with index zero.
        \item  The curve $\mathscr{K}$ is locally real analytic: for each $(w,\lambda) \in \mathscr{K}$ there exists an open set $(w,\lambda) \in U \subseteq \mathcal{W}$ such that $\mathscr{K} \cap U$ admits a real analytic reparameterization.
        \item The curve $\mathscr{K}$ is maximal in the sense that if $\mathscr{J}\subset\mathcal{W}\cap\mathscr{G}^{-1}\tcb{0}$ is a locally real analytic curve containing $\tp{w_0,\lambda_0}$ and along which $D_1\mathscr{G}$ is a Fredholm operator with index equal to zero, then $\mathscr{J}\subseteq\mathscr{K}$.
        \item Either the curve $\mathscr{K}$ is a \emph{closed loop} in the sense that there exists a period $T\in\R^+$ such that for all $s\in\R$ we have $(w(s),\lambda(s))=(w(s+T),\lambda(s+T))$ or, for each of the limits as $s\to\pm\infty$, one of the following alternatives holds:
        \begin{enumerate}[(A)]
            \item \emph{Blowup} - The quantity 
            \begin{equation}\label{the blowup quantity}
                \pmb{N}(s)=\tnorm{w(s)}_\mathscr{W}+|\lambda(s)|+1/\rho\tp{w(s),\lambda(s)}
            \end{equation}
            tends to $\infty$ as $s\to\pm\infty$.
            \item \emph{Loss of compactness} - There exists a sequence $\tcb{s_n}_{n\in\N}\subset\R$ with $s_n\to\pm\infty$ as $n\to\infty$ and $\sup_{n\in\N}\pmb{N}(s_n)<\infty$ but $\tcb{\tp{w(s_n),\lambda\tp{s_n}}}_{n\in\N}$ has no convergent subsequence in $\mathcal{W}$.
            \item \emph{Loss of Fredholm index zero} - There exists $(w_\star,\lambda_\star)\in\mathcal{W}$ and a sequence $\tcb{s_n}_{n\in\N}\subset\R$ with $s_n\to\pm\infty$ as $n\to\infty$ satisfying $\sup_{n\in\N}\pmb{N}(s_n)<\infty$ and $\tp{w(s_n),\lambda\tp{s_n}}\to\tp{w_\star,\lambda_\star}$ as $n\to\infty$ but $D_1\mathscr{G}\tp{w_\star,\lambda_\star}$ is not a Fredholm operator with index equal to zero.
        \end{enumerate}
    \end{enumerate}
\end{thm}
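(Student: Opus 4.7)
The plan is to deduce this theorem as an essentially direct corollary of Theorem B.1 in Chen, Walsh, and Wheeler~\cite{chen2023globalbifurcationmonotonefronts}, the only substantive adjustment being how the open set $\mathcal{W}$ and the blowup alternative are parametrized. Their statement takes $\mathcal{W}$ to be a general open subset of $\mathscr{W}\times\R$ and formulates the terminal blowup scenario as the union of two events: the norm $\tnorm{w(s)}_\mathscr{W}+|\lambda(s)|$ tends to $\infty$, or the curve accumulates on $\partial\mathcal{W}$. Our formulation packages both events into the single quantity $\pmb{N}\tp{s}$ by using the continuous function $\rho$ whose positivity cuts out $\mathcal{W}$.

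First, I would invoke Theorem B.1 of~\cite{chen2023globalbifurcationmonotonefronts} verbatim with the choices $\mathscr{W},\mathscr{Z}$, and $\mathscr{G}:\mathcal{W}\to\mathscr{Z}$ as in our hypothesis~\eqref{AGIFT h1}. Since real analyticity, vanishing at the base point, and invertibility of the partial derivative $D_1\mathscr{G}(w_0,\lambda_0)$ are identical between the two statements, this yields a continuously parametrized curve $\mathscr{K}$ enjoying local analytic reparameterization, Fredholm index zero for $D_1\mathscr{G}$ at every point, and maximality in the sense described in our third item. These conclusions transfer directly. Internally, their construction uses the analytic implicit function theorem at $(w_0,\lambda_0)$ to generate a germ of a curve, performs analytic continuation through points where the linearization degenerates but remains Fredholm of index zero in the manner of Buffoni--Toland or Dancer's theory, and assembles a maximal connected locally analytic curve via a Zorn/Whyburn argument on the real-analytic variety $\mathscr{G}^{-1}\tcb{0}\cap\mathcal{W}$.

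Second, I would reconcile the terminal alternatives. The closed loop case and the loss of compactness/Fredholm alternatives (B) and (C) transfer unchanged once the blowup quantity is adjusted. For (A), observe that if along some sequence $s_n\to\pm\infty$ the pair $\tp{w\tp{s_n},\lambda\tp{s_n}}$ approaches $\partial\mathcal{W}$ while remaining bounded in $\mathscr{W}\times\R$, then by continuity of $\rho$ and the definition $\mathcal{W}=\rho^{-1}\tp{0,\infty}$ we must have $\rho\tp{w\tp{s_n},\lambda\tp{s_n}}\to 0$, and hence $1/\rho\tp{w\tp{s_n},\lambda\tp{s_n}}\to\infty$. Conversely, boundedness of $\pmb{N}\tp{s}$ forces both boundedness of $\tp{w\tp{s},\lambda\tp{s}}$ in $\mathscr{W}\times\R$ and a uniform positive lower bound on $\rho\tp{w\tp{s},\lambda\tp{s}}$, which places the sequence in a closed subset of $\mathcal{W}$ bounded away from $\partial\mathcal{W}$. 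Hence the CWW dichotomy between norm-blowup and boundary-approach is exactly repackaged as $\pmb{N}\tp{s}\to\infty$, and any failure of (A) along a subsequence puts us into scenario (B) or, after passing to a further convergent subsequence if possible, scenario (C).

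The main obstacle is not mathematical depth but careful verification that the CWW framework applies to our specific open-set geometry. In particular, one must confirm that the continuation-through-singular-points step and the maximality argument in~\cite{chen2023globalbifurcationmonotonefronts} are insensitive to the precise shape of $\mathcal{W}$ beyond openness, so that the repackaged blowup alternative genuinely encodes every obstruction to extending the curve further within $\mathcal{W}$. Given that the proof in~\cite{chen2023globalbifurcationmonotonefronts} treats $\mathcal{W}$ as an abstract open set and only uses its openness when applying the local implicit function theorem, this check is routine.
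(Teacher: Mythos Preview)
Your proposal is correct and follows essentially the same approach as the paper: both invoke Theorem B.1 of Chen--Walsh--Wheeler directly and identify the single point of departure as the replacement of $1/\m{dist}(\cdot,\partial\mathcal{W})$ by $1/\rho$ in the blowup quantity, then verify this substitution works by observing that $\sup_n\pmb{N}(s_n)<\infty$ forces the sequence into the closed set $\rho^{-1}[\ep,\infty)\subset\mathcal{W}$, so any subsequential limit remains in $\mathcal{W}$. The paper's argument is slightly more terse but the content is identical; your additional commentary on the internal structure of the CWW proof and the routine nature of the open-set check is accurate but not needed for the reduction.
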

\begin{proof}
    The proof follows from a very minor adaptation of that of Chen, Walsh, and Wheeler's Theorem B.1 in~\cite{chen2023globalbifurcationmonotonefronts}, which itself is based on Theorem 6.1 in Chen, Walsh, Wheeler~\cite{MR3765551} and Theorem 9.1.1 in Buffoni and Toland~\cite{MR1956130}.  As such, we only point out the singular place where there is a difference.

    The theorem assertions here are identical to those of B.1 in~\cite{chen2023globalbifurcationmonotonefronts} except for the definition of the blowup quantity $\pmb{N}$ in alternative $(A)$ of the fourth item.  In the last term of~\eqref{the blowup quantity} we have $1/\rho$, where $\rho^{-1}\tp{0,\infty}=\mathcal{W}$, whereas Chen, Walsh, and Wheeler use the pointwise inversion of the distance to the boundary of $\mathcal{W}$. In their proof this term serves to guarantee that if a sequence $\tcb{(w(s_n),\lambda(s_n))}_{n\in\N}\subset\mathcal{W}$ satisfies 
    \begin{equation}\label{CWW condition}
        \sup_{n\in\N}\tsb{\tnorm{w(s_n)}_{\mathscr{W}}+|\lambda(s_n)|+1/\m{dist}\tp{(w(s_n),\lambda(s_n)),\pd\mathcal{W}]}}<\infty,
    \end{equation}
    then every subsequential limit lies within the open set $\mathcal{W}$. The same conclusion holds if we replace~\eqref{CWW condition} with the condition $\sup_{n\in\N}\pmb{N}\tp{s_n}<\infty$;  indeed, if this holds, then there exists $\ep>0$ such that $\tcb{w(s_n),\lambda(s_n)}_{n\in\N}\subset \mathcal{W}_\ep$, where $\mathcal{W}_\ep=\rho^{-1}\tp{\ep,\infty}$, and $\Bar{\mathcal{W}_\ep}\subseteq\mathcal{W}_{\ep/2}\subset\mathcal{W}$. Hence, any subsequential limit belongs to $\mathcal{W}$, as claimed.
\end{proof}

Our next task is to apply Theorem~\ref{thm on analytic global implicit function theorem} within the framework established in Definition~\ref{defn of periodic and solitary unification} and Theorem~\ref{thm on unified fixed point reformulation}. Recall that the set of solutions in our fixed point reformulation is given by $S$ in~\eqref{the set of solutions}.   We will now obtain a solution curve within $S$ and  dispense with the possibility of the \emph{closed loop}, \emph{loss of compactness}, and \emph{loss of Fredholm index zero} alternatives, which means that the function $\pmb{N}$ of~\eqref{the blowup quantity} must blowup at the endpoints of this curve.

\begin{prop}[Maximal locally analytic curve of solutions]\label{prop on maximally locally analytic curve of solutions}
    There exists a maximal locally analytic curve $\mathscr{S}\subseteq S$ with $(0,0)\in\mathscr{S}$, parametrized by the continuous map $\R\ni s\mapsto\tp{x(s),\kappa(s)}\in\mathscr{S}$ sending $0\mapsto (0,0)$ and satisfying
    \begin{equation}\label{the blowup quantity____}
        \lim_{|s|\to\infty}\tsb{\tnorm{x(s)}_{\pmb{X}}+|\kappa\tp{s}|+1/\varrho\tp{x(s)}}=\infty.
    \end{equation}
\end{prop}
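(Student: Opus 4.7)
The proof will be a direct application of Theorem~\ref{thm on analytic global implicit function theorem} combined with three exclusion arguments. Take $\mathscr{W} = \mathscr{Z} = \pmb{X}$, $\rho(x,\kappa) = \varrho(x)$, $\mathcal{W} = U\times\R$, and $\mathscr{G}:\mathcal{W}\to\pmb{X}$ defined by $\mathscr{G}(x,\kappa) = x + \mathcal{K}(x,\kappa)$. Theorem~\ref{thm on unified fixed point reformulation} supplies the required structural hypotheses: $\mathscr{G}$ is real analytic on $U\times\R$, $\mathscr{G}(0,0) = 0$, and $D_1\mathscr{G}(0,0) = I_{\pmb{X}} + D_1\mathcal{K}(0,0) = I_{\pmb{X}}$ is a linear isomorphism (all three from the first item), while $D_1\mathscr{G}$ remains Fredholm of index zero throughout $U\times\R$ (fourth item). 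Theorem~\ref{thm on analytic global implicit function theorem} then produces a maximal locally analytic curve $\mathscr{S}\subseteq S$, continuously parametrized by $\R\ni s\mapsto (x(s),\kappa(s))\in\mathscr{S}$ with $(x(0),\kappa(0)) = (0,0)$. It remains only to exclude the closed-loop alternative and the loss-of-compactness and loss-of-Fredholm-index-zero alternatives at $s\to\pm\infty$.

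The central tool for these exclusions is the power-dissipation identity~\eqref{the power dissipation identity}, which I will first derive. Given any $(x,\kappa)\in S$ with $\kappa = 0$, the third item of Theorem~\ref{thm on unified fixed point reformulation} together with Propositions~\ref{prop on operator formulation periodic case} and~\ref{prop on operator formulation solitary case} produces an associated stationary solution $(\pmb{u},\pmb{\eta})$ of~\eqref{stationary nondimensionalized equations} with trivial forcing. Testing the momentum equation against $\pmb{u}$ and integrating, the advection and gravity-capillary contributions vanish after integration by parts via the continuity equation $\grad\cdot\tp{\tp{1+\be+\pmb{\eta}}\pmb{u}}=0$, leaving precisely~\eqref{the power dissipation identity}; in the solitary case, the decay at infinity required to justify these integrations by parts is supplied by the weighted Sobolev regularity of $\pmb{u}$ and $\grad\pmb{\eta}$. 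Since $A>0$ and $1+\be+\pmb{\eta}>0$, the identity forces $\pmb{u} = 0$; the momentum equation then reduces to $(G-\Delta)\grad\pmb{\eta}=0$, and the zero-mean constraint on $\pmb{\eta}$ (periodic case) or the embedding $\tilde{H}^{3}_{\scriptscriptstyle 1/2}\emb L^{2/\del}$ paired with $G>0$ (solitary case) yields $\pmb{\eta}=0$. Undoing the change of variables between $x$ and $(\pmb{u},\pmb{\eta})$ gives $x = 0$, so I conclude $\mathscr{S}\cap\tcb{\kappa = 0}\subseteq\tcb{(0,0)}$.

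Assume now, for contradiction, that $\mathscr{S}$ is a closed loop. Because $D_1\mathscr{G}(0,0) = I$ is invertible, the local analytic implicit function theorem produces a neighborhood $V\ni(0,0)$ in $U\times\R$ and an analytic map $\tilde{x}:(-\varepsilon,\varepsilon)\to\pmb{X}$ with $\tilde{x}(0) = 0$ such that $\mathscr{G}^{-1}\tcb{0}\cap V = \tcb{(\tilde{x}(\kappa),\kappa):|\kappa|<\varepsilon}$; by the maximality of $\mathscr{S}$, this local one-dimensional analytic graph is contained in $\mathscr{S}$. In particular, $\mathscr{S}$ contains points with $\kappa$ of both signs. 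As the closed loop is a compact connected subset of $U\times\R$ homeomorphic to $S^1$, removing the single point of $\mathscr{S}\cap\tcb{\kappa = 0}$ leaves a connected arc on which the continuous function $\kappa$ still takes both positive and negative values; the intermediate value theorem then forces $\kappa$ to vanish somewhere on this arc, contradicting $\mathscr{S}\cap\tcb{\kappa = 0}\subseteq\tcb{(0,0)}$. This rules out the closed-loop alternative.

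With the closed loop eliminated, one of the blowup-type alternatives in Theorem~\ref{thm on analytic global implicit function theorem} must occur as $s\to\pm\infty$. If loss of compactness held along some sequence $s_n\to\pm\infty$ with $\sup_n\pmb{N}(s_n)<\infty$, the fifth item of Theorem~\ref{thm on unified fixed point reformulation} would yield a convergent subsequence of $\tcb{(x(s_n),\kappa(s_n))}_n$ with limit in $U\times\R$, contradicting the alternative. If loss of Fredholm index zero held, producing a limit $(x_\star,\kappa_\star)\in U\times\R$, then the fourth item of the same theorem would ensure $D_1\mathscr{G}(x_\star,\kappa_\star)$ is Fredholm with index zero, another contradiction. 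Only the blowup alternative remains, which is precisely~\eqref{the blowup quantity____}. The main obstacle will be the closed-loop argument: the power-dissipation identity is delicate to derive in the solitary case, where decay at infinity must legitimize the integrations by parts, and this identity then has to be combined with the two-sided local branch from the implicit function theorem to force $\kappa$ to change sign on $\mathscr{S}$.
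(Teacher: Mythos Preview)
Your proof is correct and follows the same approach as the paper: apply the analytic GIFT with $\mathscr{G}=I_{\pmb{X}}+\mathcal{K}$, use the power-dissipation identity to show $\mathscr{S}\cap\{\kappa=0\}=\{(0,0)\}$ and thereby exclude the closed-loop alternative, then exclude alternatives (B) and (C) via the compactness and Fredholm items of Theorem~\ref{thm on unified fixed point reformulation}. The only cosmetic difference is that the paper phrases the closed-loop exclusion as showing $\mathscr{S}\setminus\{(0,0)\}\subset W_+\cup W_-$ with $W_\pm=\{\pm\kappa>0\}$ (hence disconnected), whereas you invoke the local IFT graph and IVT on $\kappa$; both routes hinge on the same uniqueness fact at $\kappa=0$.
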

\begin{proof}
    According to Theorem~\ref{thm on unified fixed point reformulation}, the hypotheses of Theorem~\ref{thm on analytic global implicit function theorem} are satisfied with  $\mathscr{W}=\mathscr{Z}=\pmb{X}$, $\rho=\varrho$, $\mathcal{W}=U\times\R$ and $\mathscr{G}=I_{\pmb{X}}+\mathcal{K}$. The theorem then grants the solution curve $\mathscr{S}\subseteq S$ with continuous parametrization $\R\ni s\mapsto\tp{x(s),\kappa(s)}\in \mathscr{S}$, and it guarantees that the curve $\mathscr{S}$ is either a closed loop or else one of the three alternatives of the fourth item of Theorem~\ref{thm on analytic global implicit function theorem} holds.  We claim that $\mathscr{S}$ is not a closed loop.  Once this is established, we may invoke Theorem~\ref{thm on unified fixed point reformulation} to exclude the possibility of the loss of compactness (alternative B) or loss of Fredholm index zero (alternative C) scenarios from Theorem~\ref{thm on analytic global implicit function theorem}.  This leaves blowup (alternative A) as the only possibility, which will then complete the proof.

    To prove the claim it suffices to show that $\mathscr{S}\setminus\tcb{\tp{0,0}}$ is disconnected.  Consider the open sets $W_{\pm}=\tcb{(x,\kappa)\in U\times\R\;:\;\pm\kappa>0}$, which by definition satisfy $W_+\cap W_-=\es$.  We will prove that $\mathscr{S}\setminus\tcb{\tp{0,0}}\subset W_+\cup W_-$, from which the claimed disconnectedness follows.  For this we may further reduce to proving that if $\tp{x,0}\in\mathscr{S}$, then $x=0$.  Fix such an $(x,0)\in\mathscr{S}$ and write $x=(u,\eta)\in\pmb{X}$. We then know from the third item of Theorem~\ref{thm on unified fixed point reformulation} that $\mathcal{Q}_\be\tp{u,\eta}=0$.  We now break to cases based on the periodic or solitary settings.

    Consider first the periodic setting.  Since $\mathcal{Q}_\be\tp{u,\eta}=0$,  the second item of Proposition~\ref{prop on operator formulation periodic case} implies that system~\eqref{stationary nondimensionalized equations} is satisfied by $(\pmb{u},\pmb{\eta})=(u,\eta)$ with $\Phi=0$. By testing the second equation of \eqref{stationary nondimensionalized equations} with $\pmb{u}$, integrating by parts, and using the first equation to annihilate the gravity-capillary contribution, we derive the identity
    \begin{equation}
        \int_{\T^2_L}A|\pmb{u}|^2+\tp{1+\be+\pmb{\eta}}\tp{2^{-1}|\grad\pmb{u}+\grad\pmb{u}^{\m{t}}|^2+2|\grad\cdot\pmb{u}|^2}=0,
    \end{equation}
    which implies that $u=0$.  In turn, we find that $(1+\be+\pmb{\eta})\tp{G-\Delta}\grad\pmb{\eta}=0$, but since $(1+\be+\pmb{\eta})>0$ and $\mathscr{F}[\pmb{\eta}]\tp{0}=0$ we must have $\pmb{\eta}=0$ as well.  This completes the proof of the claim in the periodic case.

    Next, we consider the solitary setting, in which case almost the same argument works.  We use the second item of Proposition~\ref{prop on operator formulation solitary case} to again find that~\eqref{stationary nondimensionalized equations} holds, but this time by the velocity $\pmb{u}=\tp{2\eta+\tp{1+\be}^2}^{-1/2}u$ and free surface $\pmb{\eta}=\sqrt{2\eta+\tp{1+\be}^2}-\tp{1+\be}$. Lemma~\ref{lem on free surface domsains, solitary case} and Proposition~\ref{prop on product estimates in weighted gradient spaces} imply that $(\pmb{u},\pmb{\eta})\in H^2_{\scriptscriptstyle1/2}\tp{\R^2;\R^2}\times\bf{O}_\be\cap\tilde{H}^3_{\scriptscriptstyle1/2}\tp{\R^2}$, so the same integration by parts argument as in the periodic case reveals that 
    \begin{equation}\label{__+_+_+_+_+_+_++_+_+_+++__)_}
        \int_{\R^2}A|\pmb{u}|^2+\tp{1+\be+\pmb{\eta}}\tp{2^{-1}|\grad\pmb{u}+\grad\pmb{u}^{\m{t}}|^2+2|\grad\cdot\pmb{u}|^2}=\babs{\int_{\R^2}\tp{1+\be+\pmb{\eta}}\tp{G-\Delta}\grad\pmb{\eta}\cdot\pmb{u}}.
    \end{equation}
    By using the density of $C^\infty_{\m{c}}\tp{\R^2}\subset\tilde{H}^3_{\scriptscriptstyle1/2}\tp{\R^2}$ (see Proposition~\ref{prop on density of functions of bounded support}) as in the proof of Proposition~\ref{prop on principal part analysis solitary case} we find that the right hand side of~\eqref{__+_+_+_+_+_+_++_+_+_+++__)_} vanishes, and so $\pmb{u}=0$.  Next we deduce from $1+\be+\pmb{\eta}>0$ and $(1+\be+\pmb{\eta})\tp{G-\Delta}\grad\pmb{\eta}$ that $\grad\pmb{\eta}=0$ and so $\tnorm{\pmb{\eta}}_{\tilde{H}^3_{1/2}}=0$. Finally, we deduce from the expressions $\pmb{u}$ and $\pmb{\eta}$ that necessarily $u=0$ and $\eta=0$ as well.  This proves the claim in the solitary case.
\end{proof}

\subsection{Blowup scenarios in the periodic case}\label{subsection on blow up criteria in the periodic case}

We now aim to more carefully analyze what happens along the solution curve $\mathscr{S}$ from Proposition~\ref{prop on maximally locally analytic curve of solutions} as the parametrization parameter $s$ tends to $\pm\infty$.  In this subsection we restrict our attention to the periodic case, which is much simpler, and delay the solitary case to the next subsection.  Our main strategy is to derive a priori estimates for system~\ref{stationary nondimensionalized equations}, starting with control of relatively weak quantities.  These quantities can then replace the strong norm in the blowup quantity~\eqref{the blowup quantity____}.

\begin{prop}[A priori estimates - periodic case]\label{prop on periodic a priori estimates}
    Suppose that for some $\ep\in\tp{0,1}$ we have $(u,\eta)\in H^2\tp{\T^2_L;\R^2}\times [\bf{O}_\be^\ep\cap \z{H}^3\tp{\T^2_L}]$ with $\tnorm{\eta}_{L^\infty}\le\ep^{-1}$ a solution to system~\eqref{stationary nondimensionalized equations} for data $\kappa=1$ and a generic forcing $\Phi\in L^2\tp{\T^2_L;\R^2}$. Then, for implicit constants depending only on $\ep>0$ and the physical parameters we have the a priori bounds
    \begin{equation}\label{a priori estimates in the periodic case}
        \tnorm{u,\eta}_{H^1\times H^2}\lesssim\tnorm{\Phi}_{H^{-1}}\tbr{\tnorm{\Phi}_{H^{-1}}}\quad\text{and}\quad\tnorm{u,\eta}_{H^2\times H^3}\lesssim\tnorm{\Phi}_{L^2}\tbr{\tnorm{\Phi}_{L^2}}^{16/3}.
    \end{equation}
\end{prop}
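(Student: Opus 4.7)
The plan is to establish the two estimates in sequence: first the $H^1\times H^2$ bound via a standard energy identity for $u$ together with an elliptic estimate for $\eta$, and then the $H^2\times H^3$ bound by bootstrap. Since by hypothesis $(u,\eta)\in H^2\times H^3$, all integrations by parts below will be legitimate. I will begin by testing the momentum equation with $u$ in $L^2$: the continuity equation $\grad\cdot\tp{\tp{1+\be+\eta}u}=0$ forces two cancellations, the advective term vanishing via $\int\tp{1+\be+\eta}u\cdot\grad u\cdot u=-\tfrac12\int|u|^2\grad\cdot\tp{\tp{1+\be+\eta}u}=0$ and the capillary term vanishing after writing $\tp{G-\Delta}\grad\eta=\grad\tp{\tp{G-\Delta}\eta}$ and integrating by parts against $\tp{1+\be+\eta}u$. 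The surviving power--dissipation identity
\begin{equation*}
    A\tnorm{u}_{L^2}^2+\int_{\T^2_L}\tp{1+\be+\eta}\bsb{\tfrac12|\grad u+\grad u^{\m{t}}|^2+2|\grad\cdot u|^2}=\int_{\T^2_L}\Phi\cdot u,
\end{equation*}
combined with the lower bound $1+\be+\eta>\ep$, Korn's inequality on $\T^2_L$, and $H^{-1}$--$H^1$ duality, will deliver $\tnorm{u}_{H^1}\lesssim\tnorm{\Phi}_{H^{-1}}$.

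To pick up $\tnorm{\eta}_{H^2}$ I will treat the momentum equation as an elliptic equation for $\grad\eta$. After isolating the capillary contribution,
\begin{equation*}
    \tp{1+\be+\eta}\tp{G-\Delta}\grad\eta=\Phi-Au-\tp{1+\be+\eta}u\cdot\grad u+\grad\cdot\tp{\tp{1+\be+\eta}\mathbb{S}u},
\end{equation*}
testing this identity against $\grad\eta/\tp{1+\be+\eta}$ will produce the clean left-hand side $G\tnorm{\grad\eta}_{L^2}^2+\tnorm{\grad^2\eta}_{L^2}^2$. On the right I will pair $\Phi$ against the test function in $H^{-1}$--$H^1$ duality, control the advective piece via the two-dimensional embedding $H^1\emb L^4$, and handle the products of $\grad\tp{1+\be+\eta}$ against $\grad\eta$ that arise when differentiating the test function by means of the Ladyzhenskaya-type inequality $\tnorm{\grad\eta}_{L^4}^2\lesssim\tnorm{\eta}_{L^\infty}\tnorm{\grad^2\eta}_{L^2}\lesssim\ep^{-1}\tnorm{\grad^2\eta}_{L^2}$, which uses the $L^\infty$ hypothesis on $\eta$. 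Poincar\'e's inequality recovers $\tnorm{\grad\eta}_{L^2}$ from $\tnorm{\grad^2\eta}_{L^2}$ since $\eta$ has vanishing mean, and a Young's inequality absorption then yields the first estimate of~\eqref{a priori estimates in the periodic case}.

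The higher-regularity bound will follow by bootstrap. With $H^1\times H^2$ control in hand, I will recast the system as $\mathcal{P}\tp{u,\eta}=\mathcal{R}\tp{u,\eta,\Phi}$, where $\mathcal{P}$ is the translation-invariant operator from Proposition~\ref{prop on principal part linear analysis periodic case} (an isomorphism $H^2\tp{\T^2_L;\R^2}\times\z{H}^3\tp{\T^2_L}\to\z{H}^1\tp{\T^2_L}\times L^2\tp{\T^2_L;\R^2}$) and $\mathcal{R}$ collects every variable-coefficient, nonlinear, and forcing contribution. Applying $\mathcal{P}^{-1}$ will reduce the task to bounding $\tnorm{\mathcal{R}\tp{u,\eta,\Phi}}_{\z{H}^1\times L^2}$ in terms of $\tnorm{\Phi}_{L^2}$ and the $H^1\times H^2$ output. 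The nonlinear pieces are products of derivatives of $1+\be+\eta$ against derivatives of $u$ in various $L^p$ spaces, which I will handle via 2D Gagliardo--Nirenberg interpolation; feeding the $H^1\times H^2$ bound into the resulting quadratic-type estimates and performing the cascade of Young absorptions that follows eventually accumulates the exponent $16/3$ on $\tbr{\tnorm{\Phi}_{L^2}}$.

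The main obstacle across all three steps will be the absence of $W^{1,\infty}$ control on $1+\be+\eta$: the $L^\infty$ hypothesis on $\eta$ only yields $1+\be+\eta$ and its reciprocal in $L^\infty$, and in two dimensions the Sobolev scale does not upgrade $H^2$ to $W^{1,\infty}$. Commutator terms thus resist the naive strategy of pulling $\grad\tp{1+\be+\eta}$ out in $L^\infty$, and the entire argument hinges on a delicate interplay of Ladyzhenskaya- and Gagliardo--Nirenberg-type interpolation with the $L^\infty$ hypothesis on $\eta$. It is these accumulated interpolation losses at the high-regularity stage that force the fractional exponent $16/3$.
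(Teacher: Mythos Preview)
Your proposal is correct and follows essentially the same route as the paper's proof: the energy identity for $u$, the test against $\tp{1+\be+\eta}^{-1}\grad\eta$ with the Ladyzhenskaya/Gagliardo--Nirenberg bound $\tnorm{\grad\eta}_{L^4}^2\lesssim\tnorm{\eta}_{L^\infty}\tnorm{\grad^2\eta}_{L^2}$ to control the test function in $H^1$, and the semilinear bootstrap through the isomorphism $\mathcal{P}$ with the interpolation $\tnorm{u}_{W^{1,4}}\lesssim\tnorm{u}_{H^1}^{3/8}\tnorm{u}_{H^2}^{5/8}$ producing the exponent $16/3$ after Young's inequality.
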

\begin{proof}
    We begin by testing the second equation in \eqref{stationary nondimensionalized equations} with $u$ and integrating by parts to see that
    \begin{equation}
        \tnorm{u}^2_{H^1}\lesssim\int_{\T^2_L}A|u|^2+\tp{1+\be+\eta}\tp{2^{-1}|\grad u+\grad u^{\m{t}}|^2+2|\grad\cdot u|^2}=\int_{\T^2_L}\Phi\cdot u\le\tnorm{\Phi}_{H^{-1}}\tnorm{u}_{H^1},
    \end{equation}
    from which it follows that $\tnorm{u}_{H^1}\lesssim\tnorm{\Phi}_{H^{-1}}$.

    Next, we test the same equation with the vector field $\tp{1+\be+\eta}^{-1}\grad\eta\in H^2\tp{\T^2_L;\R^2}$, isolate the gravity-capillary contribution, and recall that $\mathscr{F}[\eta](0)=0$:
    \begin{multline}\label{the long lost eta identity}
        \norm{\eta}_{H^2}^2 \lesssim 
        \int_{\T^2_L}G|\grad\eta|^2+|\grad^2\eta|^2 \\
        =\int_{\T^2_L}\tp{1+\be+\eta}\tp{u\otimes u-\mathbb{S}u}:\grad\tp{\tp{1+\be+\eta}^{-1}\grad\eta}+\tp{\Phi-Au}\cdot\tp{1+\be+\eta}^{-1}\grad\eta.
    \end{multline}
    To estimate the right hand side, we claim the following preliminary bound:
    \begin{equation}\label{the important bound on eta}
        \tnorm{\tp{1+\be+\eta}^{-1}\grad\eta}_{H^1}\lesssim\tnorm{\eta}_{H^2}
    \end{equation}
    with an implicit constant dependent on $\ep$.  To see why~\eqref{the important bound on eta} holds, we first note that $\tnorm{(1+\be+\eta)^{-1}\grad\eta}_{L^2}\lesssim\tnorm{\eta}_{H^1}$ and then use the product rule to compute
    \begin{equation}\label{the derivative identity for this thing}
        \grad\tp{\tp{1+\be+\eta}^{-1}\grad\eta}=(1+\be+\eta)^{-1}\grad^2\eta-\tp{1+\be+\eta}^{-2}\grad\eta\otimes\grad\eta+\tp{1+\be+\eta}^{-2}\grad\eta\otimes\grad\be.
    \end{equation}
    By taking the norm in $L^2$ of the above expression, we then derive the bound
    \begin{equation}\label{Sphynx}
        \tnorm{\grad\tp{\tp{1+\be+\eta}^{-1}\grad\eta}}_{L^2}\lesssim\tnorm{\eta}_{H^2}+\tnorm{\eta}_{W^{1,4}}^2.
    \end{equation}
    To handle the final term here we employ Gagliardo-Nirenberg interpolation (Theorem~\ref{thm on gagliardo nirenberg}):  
    \begin{equation}\label{not good enough for the solitary case}
        \tnorm{\eta}_{W^{1,4}}^2\lesssim\tnorm{\eta}_{L^\infty}\tnorm{\eta}_{H^2}\lesssim\tnorm{\eta}_{H^2}.
    \end{equation}
    Synthesizing these estimates proves the claimed estimate~\eqref{the important bound on eta}, which then allows us to  bound the terms on the right side of~\eqref{the long lost eta identity}:
    \begin{equation}\label{uwu_0}
        \babs{\int_{\T^2_L}\tp{1+\be+\eta}\tp{u\otimes u-\mathbb{S}u}:\grad\tp{\tp{1+\be+\eta}^{-1}\grad\eta}}\lesssim\tp{\tnorm{u}_{L^4}^2+\tnorm{u}_{H^1}}\tnorm{\eta}_{H^2}\lesssim\tnorm{\Phi}_{H^{-1}}\tbr{\tnorm{\Phi}_{H^{-1}}}\tnorm{\eta}_{H^2}
    \end{equation}
    and
    \begin{equation}\label{uwu_1}
        \babs{\int_{\T^2_L}\tp{\Phi-Au}\cdot\tp{1+\be+\eta}^{-1}\grad\eta}\lesssim\tp{\tnorm{\Phi}_{H^{-1}}+\tnorm{u}_{L^2}}\tnorm{\eta}_{H^2}\lesssim\tnorm{\Phi}_{H^{-1}}\tnorm{\eta}_{H^2}.
    \end{equation}
    We then combine~\eqref{uwu_0} and~\eqref{uwu_1} with~\eqref{the long lost eta identity} to deduce the remaining $\eta$ estimate on the left of~\eqref{a priori estimates in the periodic case}.

    We next prove the higher regularity estimate.  The system~\eqref{stationary nondimensionalized equations} is equivalent to the semilinear form
    \begin{equation}\label{a useful choice of semilinear equations in the periodic case}
        \begin{cases}
            \grad\cdot u=-P_0\tp{u\cdot\grad\log\tp{1+\be+\eta}},\\
            Au-\grad\cdot\mathbb{S}u+\tp{G-\Delta}\grad\eta=\tp{1+\be+\eta}^{-1}\tp{\Phi +A\tp{\be+\eta}u}-u\cdot\grad u+\mathbb{S}u\grad\log\tp{1+\be+\eta}.
        \end{cases}
    \end{equation}
    The linear operator on the left hand side of~\eqref{a useful choice of semilinear equations in the periodic case} is $\mathcal{P}$, which is an isomorphism $H^2\tp{\T^2_L;\R^2}\times \z{H}^3\tp{\T^2_L}\to\z{H}^1\tp{\T^2_L}\times L^2\tp{\T^2_L;\R^2}$ thanks to the first item of Proposition~\ref{prop on linear analysis with bathymetry - periodic case}. Therefore, we may deduce the right bound in \eqref{a priori estimates in the periodic case} by bounding the $H^1$ and $L^2$ norms of the right hand sides of the first and second equations in~\eqref{a useful choice of semilinear equations in the periodic case}, respectively.     We begin with the first term, using Theorem~\ref{thm on high low product estimates} (and~\eqref{the important bound on eta}) to estimate
    \begin{equation}\label{T__(0)}
        \tnorm{P_0\tp{u\cdot\grad\log\tp{1+\be+\eta}}}_{H^1}\lesssim\tnorm{u}_{W^{1,4}}\tbr{\tnorm{\eta}_{W^{1,4}}}+\tnorm{u}_{L^\infty}\tbr{\tnorm{\eta}_{H^2}}\lesssim\tnorm{u}_{W^{1,4}}\tbr{\tnorm{\eta}_{H^2}}\lesssim\tnorm{u}_{W^{1,4}}\tbr{\tnorm{\Phi}_{L^2}}^2.
    \end{equation}
    We then bound the second term via
    \begin{equation}\label{T__(1)}
        \tnorm{\tp{1+\be+\eta}^{-1}\tp{\Phi +A\tp{\be+\eta}u}}_{L^2}\lesssim\tnorm{\Phi}_{L^2}+\tnorm{u}_{L^2}\lesssim\tnorm{\Phi}_{L^2},
    \end{equation}
    \begin{equation}\label{T__(2)}
        \tnorm{u\cdot\grad u}_{L^2}\lesssim\tnorm{u}_{L^4}\tnorm{u}_{W^{1,4}}\lesssim\tnorm{u}_{W^{1,4}}\tnorm{\Phi}_{L^2},
    \end{equation}
    and
    \begin{equation}\label{T__(3)}
        \tnorm{\mathbb{S}u\grad\log\tp{1+\be+\eta}}_{L^2}\lesssim\tnorm{u}_{W^{1,4}}\tbr{\tnorm{\eta}_{W^{1,4}}}\lesssim\tnorm{u}_{W^{1,4}}\tbr{\tnorm{\Phi}_{L^2}}^2.
    \end{equation}
    Using the sum of~\eqref{T__(0)}, \eqref{T__(1)}, \eqref{T__(2)}, and~\eqref{T__(3)} with the $\mathcal{P}$ isomorphism then shows that 
    \begin{equation}
        \tnorm{u,\eta}_{H^2\times H^3}\lesssim\tnorm{\Phi}_{L^2}+\tnorm{u}_{W^{1,4}}\tbr{\tnorm{\Phi}_{L^2}}^2.
    \end{equation}
    Again using Theorem~\ref{thm on gagliardo nirenberg}, Sobolev embeddings, and interpolation, we find that 
    \begin{equation}
        \tnorm{u}_{W^{1,4}}\lesssim\tnorm{u}_{L^\infty}^{1/2}\tnorm{u}_{H^2}^{1/2},\quad\tnorm{u}_{L^\infty}\lesssim\tnorm{u}_{H^{5/4}}\lesssim\tnorm{u}_{H^1}^{3/4}\tnorm{u}_{H^2}^{1/4}\imp\tnorm{u}_{W^{1/4}}\lesssim\tnorm{u}_{H^1}^{3/8}\tnorm{u}_{H^2}^{5/8},
    \end{equation}
    so Young's inequality implies that for any $\mu\in\R^+$ there exists a $C_\mu<\infty$ such that
    \begin{equation}
        \tnorm{u,\eta}_{H^2\times H^3}\lesssim\tnorm{\Phi}_{L^2}+C_\mu\tnorm{\Phi}_{L^2}\tbr{\tnorm{\Phi}_{L^2}}^{16/3}+\mu\tnorm{u}_{H^2}.
    \end{equation}
    Taking $\mu$ sufficiently small (depending on $\ep$ and the physical parameters), we can absorb and obtain the right hand side of~\eqref{a priori estimates in the periodic case}. 
\end{proof}

The a priori estimates of Proposition~\ref{prop on periodic a priori estimates} lead us to the following blowup refinement.

\begin{coro}[Blowup refinement - periodic case]\label{coro on blowup refinement - periodic case}
    Let $\mathscr{S}\subset S$ be the maximal locally analytic curve of solutions granted by Proposition~\ref{prop on maximally locally analytic curve of solutions} in the periodic case, parametrized by the continuous map $\R\ni s\mapsto(u(s),\eta(s),\kappa(s))\in \mathscr{S}\subset H^2\tp{\T^2_L;\R^2}\times [\bf{O}_\be\cap \z{H}^3\tp{\T^2_L}]$. Then
    \begin{equation}\label{improved blowup quantity in the periodic case}
        \lim_{|s|\to\infty}\tsb{\tnorm{\eta(s)}_{L^\infty}+1/\inf\tp{1+\be+\eta(s)}+|\kappa(s)|}=\infty.
    \end{equation}
\end{coro}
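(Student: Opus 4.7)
The plan is to argue by contradiction via a two-pass bootstrap built on the a priori estimates of Proposition~\ref{prop on periodic a priori estimates}. Suppose~\eqref{improved blowup quantity in the periodic case} is violated. Then there exist $M\in\R^+$ and a sequence $\tcb{s_n}_{n\in\N}$ with $|s_n|\to\infty$ for which $\tnorm{\eta(s_n)}_{L^\infty}+1/\inf(1+\be+\eta(s_n))+|\kappa(s_n)|\le M$. These bounds supply $\ep\in(0,1)$ with $\eta(s_n)\in\bf{O}_\be^\ep$ and $\tnorm{\eta(s_n)}_{L^\infty}\le\ep^{-1}$ uniformly in $n$, and by Proposition~\ref{prop on operator formulation periodic case} each $(u(s_n),\eta(s_n))$ satisfies~\eqref{stationary nondimensionalized equations} with $\kappa=1$ and effective forcing $\Phi_n\coloneqq\kappa(s_n)(1+\be+\eta(s_n))\mathcal{F}_\be(\eta(s_n))$. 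The objective is to prove $\sup_n\tnorm{(u(s_n),\eta(s_n))}_{\pmb{X}}<\infty$; this, together with the uniform bounds on $|\kappa(s_n)|$ and $\varrho(x(s_n))^{-1}$, will contradict the blowup~\eqref{the blowup quantity____} provided by Proposition~\ref{prop on maximally locally analytic curve of solutions}.

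The first pass of the bootstrap aims to produce $\sup_n\tnorm{\Phi_n}_{H^{-1}}<\infty$ from $L^\infty$ information alone, exploiting that the pointwise ranges of the $\eta(s_n)$ all lie in the fixed compact subinterval $[-a_\be+\ep,\ep^{-1}]\subset(-a_\be,\infty)$. By the holomorphic functional calculus of Appendix~\ref{appendix on analytic composition in unital Banach algebras} applied to the unital subalgebra $(C^0\cap L^\infty)(\T^2_L)$, the analyticity of $\phi$ and $\psi$ yields uniform $L^2$ bounds on $\Upomega_\phi(\eta(s_n))$ and $(1+\be+\eta(s_n))\Upomega_\psi(\eta(s_n))$; the gradient on the $\psi$-term passes to the test function in the $H^{-1}$ pairing and therefore only costs an $L^2$ bound. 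The critical term $\Upomega_\tau(\eta)\grad\eta$ is recast by introducing the primitive $T(y)\coloneqq\int_0^y\tau(s)\,ds$ and invoking the chain-rule identity $\Upomega_\tau(\eta)\grad\eta=\grad\cdot\Upomega_T(\eta)-\Upomega_{\grad\cdot T}(\eta)$, where the divergence acts along the second matrix index; this trades the unfavorable $\grad\eta$ for a divergence of a Nemytskii matrix plus a spatial-derivative remainder, both controllable in $H^{-1}$ from $L^\infty$ data on $\eta$. Plugging $\sup_n\tnorm{\Phi_n}_{H^{-1}}<\infty$ into the first estimate in~\eqref{a priori estimates in the periodic case} yields $\sup_n\tnorm{(u(s_n),\eta(s_n))}_{H^1\times H^2}<\infty$.

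The second pass is immediate: since $\eta(s_n)$ is now uniformly bounded in $\z{H}^1\cap\bf{O}_\be^\ep$, Proposition~\ref{prop on data map, periodic case}(2) yields $\sup_n\tnorm{\mathcal{F}_\be(\eta(s_n))}_{L^2}<\infty$ and hence $\sup_n\tnorm{\Phi_n}_{L^2}<\infty$. Feeding this into the second estimate of~\eqref{a priori estimates in the periodic case} delivers the desired $\sup_n\tnorm{(u(s_n),\eta(s_n))}_{\pmb{X}}<\infty$, closing the contradiction. The principal technical obstacle is the first bootstrap pass --- extracting $H^{-1}$ control of $\Phi_n$ using only $L^\infty$ data on $\eta(s_n)$ --- which requires the integration-by-parts rewrite of $\Upomega_\tau(\eta)\grad\eta$ via the primitive $T$ and a careful deployment of the holomorphic functional calculus at the level of $(C^0\cap L^\infty)$; the remaining steps are routine applications of the already-developed machinery.
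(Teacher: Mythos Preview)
Your proposal is correct and follows essentially the same approach as the paper's own proof: a contradiction argument built on a two-pass bootstrap, with the first pass extracting an $H^{-1}$ bound on $\Phi_n$ from $L^\infty$ control alone via the primitive $\tilde\tau$ (your $T$) and the identity $\Upomega_\tau(\eta)\grad\eta=\grad\cdot\Upomega_{\tilde\tau}(\eta)-\Upomega_{\grad\cdot\tilde\tau}(\eta)$, and the second pass upgrading to $L^2$ via Proposition~\ref{prop on data map, periodic case}(2). The paper additionally spells out the verification of the chain-rule identity through the Cauchy integral representation~\eqref{Dunford integral}, but otherwise the arguments coincide.
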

\begin{proof}
    The argument for the limits $s\to\infty$ and $s\to-\infty$ are identical, so we will only address the former case here.  We argue  by contradiction, supposing that~\eqref{improved blowup quantity in the periodic case} is false as $s\to\infty$.  Then we can find a sequence $\tcb{s_n}_{n\in\N}\subset\R^+$ with $s_n\to\infty$ as $n\to\infty$ and a number $1<C<\infty$ such that upon setting $\eta_n=\eta(s_n)$ and $\kappa_n=\kappa(s_n)$ we have
    \begin{equation}\label{the contradition hypothesis}
        \sup_{n\in\N}\tsb{\tnorm{\eta_n}_{L^\infty}+1/\inf\tp{1+\be+\eta_n}+|\kappa_n|}\le C.
    \end{equation}
    Let us also set $u_n=u(s_n)$. According to Proposition~\ref{prop on maximally locally analytic curve of solutions}, the solution curve $\mathscr{S}$ is contained in the set $S$ from~\eqref{the set of solutions}; in turn, thanks to the third item of Theorem~\ref{thm on unified fixed point reformulation} and the definition of the forcing map $\mathcal{F}_\be$ in the periodic case (see Proposition~\ref{prop on data map, periodic case}), for every $n\in\N$ we have that
    \begin{equation}\label{the sequence of equations in the periodic case}
        \begin{cases}
            \grad\cdot\tp{\tp{1+\be+\eta_n}u_n}=0,\\
            \grad\cdot\tp{\tp{1+\be+\eta_n}u_n\otimes u_n}+Au_n-\grad\cdot\tp{\tp{1+\be+\eta_n}\mathbb{S}u_n}+\tp{1+\be+\eta_n}\tp{G-\Delta}\grad\eta_n=\Phi_n,\\
            \Phi_n=\kappa_n\tsb{\Upomega_\phi\tp{\eta_n}+\grad\tp{\tp{1+\be+\eta_n}\Upomega_\psi\tp{\eta_n}}+\Upomega_\tau\tp{\eta_n}\grad\eta_n}.
        \end{cases}
    \end{equation}
    
    Our first goal is to invoke the left a priori estimate of~\eqref{a priori estimates in the periodic case}, and for this we require uniform $H^{-1}$ bounds on the forcing sequence $\tcb{\Phi_n}_{n\in\N}$.  The hypothesis~\eqref{the contradition hypothesis} implies that for $\ep=C^{-1}\in\tp{0,1}$ we have $\tcb{\eta_n}_{n\in\N}\subset\bf{O}_\be^\ep\cap B_{C^0\cap L^\infty}(0,\ep^{-1})$.   Corollary~\ref{coro on compound analytic superposition real case} implies that the functions $\Upomega_\phi:\bf{O}_\be^\ep\to L^2\tp{\T^2_L;\R^2}$ and $\Upomega_\psi:\bf{O}_\be^\ep\to L^2\tp{\T^2_L}$ map bounded sets to bounded sets, so we may estimate
    \begin{equation}\label{PART+1}
        \tnorm{\Phi_n}_{H^{-1}}\lesssim\tnorm{\Upomega_\phi\tp{\eta_n}}_{L^2}+\tnorm{\tp{1+\be+\eta_n}\Upomega_\psi\tp{\eta_n}}_{L^2}+\tnorm{\Upomega_\tau\tp{\eta_n}\grad\eta_n}_{H^{-1}}\lesssim 1+\tnorm{\Upomega_\tau\tp{\eta_n}\grad\eta_n}_{H^{-1}}
    \end{equation}
    for implicit constants depending only on $C$.  To handle the final term using only the $L^\infty$ bounds on $\eta_n$, we aim to pull out a derivative and obtain a nice remainder.  To this end, we define the real analytic primitive
    \begin{equation}
        \tilde{\tau}:\tp{-a_\be,\infty}\to\tp{L^\infty\cap H^1}\tp{\T^2_L;\R^{2\times 2}_{\m{sym}}} \text{ via }\tilde{\tau}\tp{z}=\int_0^z\tau(\mathfrak{z})\;\m{d}\mathfrak{z}
    \end{equation}
    and claim the identity:
    \begin{equation}\label{the weirdo identity}
    \Upomega_\tau\tp{\eta_n}\grad\eta_n=\grad\cdot\tp{\Upomega_{\tilde{\tau}}\tp{\eta_n}}-\Upomega_{\grad\cdot\tilde{\tau}}\tp{\eta_n}.
    \end{equation}

    To establish~\eqref{the weirdo identity}, we begin by noting that Corollary~\ref{coro on compound analytic superposition real case} ensures that the mapping $\Upomega_{\tilde{\tau}}:\bf{O}^\ep_\be\cap C^1\tp{\T^2_L}\to H^1\tp{\T^2_L;\R^{2\times 2}_{\m{sym}}}$ maps bounded sets to bounded sets while the proof of Corollary~\ref{coro on compound analytic superposition real case} and Theorem~\ref{thm on holomorphic functional calculus} (specifically~\eqref{Dunford integral}) provides the Cauchy integral formula
    \begin{equation}\label{this is an equation for us to later cite, mmmmkkkay buh byee}
        \Upomega_{\tilde{\tau}}\tp{\eta} = \f{1}{2\pi\ii}\int_{\gam_\eta}\tp{z - \eta}^{-1}\tilde{\tau}(z)\;\m{d}z \text{ for } \eta\in\bf{O}^\ep_\be\cap C^1\tp{\T^2_L},
    \end{equation}
    where $\tilde{\tau}$ is analytically continued into an open and simply connected domain in $\C$ containing the compact interval $\Bar{\m{img}\tp{\eta}}$ and $\gam_\eta$ is a simple closed contour that loops around this image once within the domain of $\tilde{\tau}$.  Due to the integrand of~\eqref{this is an equation for us to later cite, mmmmkkkay buh byee} being a holomorphic function of $z$ belonging to the compact image of the contour $\gamma_\eta$ with values in $H^1\tp{\T^2_L;\C^{2\times 2}}$, we are free to take the divergence, commute with the integral, and use the standard calculus of weak derivatives on the integrand to  derive the initial identity
    \begin{equation}\label{Mozart Symphony No. 39}
        \grad\cdot\tp{\Upomega_{\tilde{\tau}}\tp{\eta}} = \f{1}{2\pi\ii}\int_{\gam_\eta}\tp{z - \eta}^{-1}\grad\cdot\tilde{\tau}\tp{z}\;\m{d}z + \bsb{\f{1}{2\pi\ii}\int_{\gam_\eta}\tp{z - \eta}^{-2}\tilde{\tau}\tp{z}\;\m{d}z}\grad\eta.
    \end{equation}
    Thanks to equation~\eqref{Dunford integral}, the first term on the left of~\eqref{Mozart Symphony No. 39} is none other than $\Upomega_{\grad\cdot\tilde{\tau}}\tp{\eta}$. To handle the second term we note that 
    \begin{equation}\label{Bellerophon}
        \f{\m{d}}{\m{d}z}\tp{\tp{z-\eta}^{-1}\tilde{\tau}\tp{z}} = \tp{z - \eta}^{-1}\tau\tp{z} + \tp{z - \eta}^{-2}\tilde{\tau}\tp{z},\quad z\in\gam_\eta
    \end{equation}
    and the contour integral over $\gamma_\eta$ of the left hand side of~\eqref{Bellerophon} vanishes. Therefore (after again using~\eqref{Dunford integral})
    \begin{equation}\label{Hecatoncheires}
        \f{1}{2\pi\ii}\int_{\gam_\eta}\tp{z-\eta}^{-2}\tilde{\tau}\tp{z}\;\m{d}z = \f{1}{2\pi\ii}\int_{\gam_\eta}\tp{z - \eta}^{-1}\tau(z)\;\m{d}z = \Upomega_{\tau}\tp{\eta}.
    \end{equation}
    By synthesizing~\eqref{Mozart Symphony No. 39} with~\eqref{Hecatoncheires} the claimed identity of~\eqref{the weirdo identity} is established.

    Now by using Corollary~\ref{coro on compound analytic superposition real case} again, we find that the functions $\Upomega_{\tilde{\tau}}:\bf{O}^\ep_\be\to  L^\infty\tp{\T^2_L;\R^{2\times2}_{\m{sym}}}$ and $\Upomega_{\grad\cdot\tilde{\tau}}:\bf{O}_\be^\ep\to L^2\tp{\T^2_L;\R^{2\times 2}_{\m{sym}}}$  map bounded sets to bounded sets, and hence from~\eqref{the weirdo identity}
    \begin{equation}\label{PART+2}
        \tnorm{\Upomega_\tau\tp{\eta_n}\grad{\eta_n}}_{H^{-1}}\lesssim\tnorm{\Upomega_{\tilde{\tau}}\tp{\eta_n}}_{L^2}+\tnorm{\Upomega_{\grad\cdot\tilde{\tau}}\tp{\eta_n}}_{L^2}\lesssim 1.
    \end{equation}
    By combining~\eqref{PART+1} and~\eqref{PART+2} we then find that $\sup_{n\in\N}\tnorm{\Phi_n}_{H^{-1}}\lesssim 1$.  Pairing this fact with the low regularity estimate of Proposition~\ref{prop on periodic a priori estimates} then yields the bound 
    \begin{equation}\label{is that a snore or a bore}
        \sup_{n\in\N}\tnorm{u_n,\eta_n}_{H^1\times H^2}\lesssim 1.
    \end{equation}

    The next step is to use the improved bounds~\eqref{is that a snore or a bore} on $\tcb{\eta_n}_{n\in\N}$ to deduce that the forcing sequence is uniformly bounded in $L^2$.  This is a direct consequence of the second item of Proposition~\ref{prop on data map, periodic case} and the identity $\Phi_n=\kappa_n(1+\be+\eta_n)\mathcal{F}_\be\tp{\eta_n}$. Therefore, $\sup_{n\in\N}\tnorm{\Phi_n}_{L^2}\lesssim 1$ and so the high regularity estimate of Proposition~\ref{prop on periodic a priori estimates} tells us that
    \begin{equation}\label{here lies the contradiction}
        \sup_{n\in\N}\tnorm{u_n,\eta_n}_{H^2\times H^3}\lesssim 1.
    \end{equation}
    Together, the bounds~\eqref{the contradition hypothesis} and~\eqref{here lies the contradiction} contradict the limit~\eqref{the blowup quantity____} from Proposition~\ref{prop on maximally locally analytic curve of solutions},  so it must be the case that the limit~\eqref{improved blowup quantity in the periodic case} holds as $s\to\infty$.
\end{proof}

\subsection{Blowup scenarios in the solitary case}\label{subsectrion on blow up criteria in the solitary case}

In this subsection we derive results analogous to those of Section~\ref{subsection on blow up criteria in the periodic case}, but in the solitary case. The situation here is significantly complicated by the fact that the blowup norm in~\eqref{the blowup quantity____} comes from weighted Sobolev spaces, but the natural `energy' structure of the equations themselves is framed in unweighted spaces.  As a result, our estimates in this section follow a multi-step procedure. We begin by deriving an unweighted $L^2$-based result analogous to Proposition~\ref{prop on periodic a priori estimates}. The following lemma is an important ingredient.

\begin{lem}[Unweighted estimates for the principal part]\label{lem on unweighted estimates for the principal part}
    Assume that $A,G>0$. The following hold.
    \begin{enumerate}
        \item If $u\in H^2\tp{\R^2;\R^2}$ and $\eta\in\mathscr{S}^\ast\tp{\R^2}$ with $\grad\eta\in H^2\tp{\R^2;\R^2}$ are a solution to the linear system~\eqref{principal part linear system} with data $\psi\in\tp{\dot{H}^{-1}\cap H^1}\tp{\R^2}$ and $\phi\in L^2\tp{\R^2;\R^2}$ then we have the bound
    \begin{equation}
        \tnorm{u,\grad\eta}_{H^2\times H^2}\lesssim\tnorm{\psi,\phi}_{\tp{\dot{H}^{-1}\cap H^1}\times L^2}
    \end{equation}
    for implicit constants depending only on the physical parameters.
        \item If $1<q<2$, $u\in W^{2,q}\tp{\R^2;\R^2}$, $\eta\in L^{2q/(2-q)}\tp{\R^2}$ with $\grad\eta\in W^{2,q}\tp{\R^2}$ and $\grad\cdot u=0$ are a solution to the linear system~\eqref{principal part linear system} with data $\psi=0$ and $\phi\in L^q\tp{\R^2;\R^2}$ then we have the estimate
        \begin{equation}
            \tnorm{u,\eta,\grad\eta}_{W^{2,q}\times L^{2q/(2-q)}\times W^{2,q}}\lesssim\tnorm{\phi}_{L^q},
        \end{equation}
        for implicit constants depending only on $q$ and the physical parameters.
    \end{enumerate}
\end{lem}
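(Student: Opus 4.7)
The system~\eqref{principal part linear system} has constant coefficients on $\R^2$, which invites a Fourier-analytic approach, as in the proofs of Propositions~\ref{prop on principal part linear analysis periodic case} and~\ref{prop on principal part analysis solitary case}. Using $\grad\cdot\mathbb{S}u=\Delta u+3\grad\grad\cdot u$ together with $\grad\cdot u=\psi$, the momentum equation rewrites as $\tp{A-\Delta}u+\tp{G-\Delta}\grad\eta=\phi+3\grad\psi$. Applying $\grad\cdot$ to this identity, substituting $\grad\cdot u=\psi$, and using that $\grad\eta$ is curl-free (so $\grad\eta=\grad\Delta^{-1}\grad\cdot\grad\eta$) yields the explicit representations
\begin{equation*}
\grad\eta=\tp{G-\Delta}^{-1}\Delta^{-1}\grad\grad\cdot\phi-A\tp{G-\Delta}^{-1}\Delta^{-1}\grad\psi+4\tp{G-\Delta}^{-1}\grad\psi
\end{equation*}
and
\begin{equation*}
u=\tp{A-\Delta}^{-1}\tsb{\phi+3\grad\psi-\tp{G-\Delta}\grad\eta}.
\end{equation*}
These identities are derived in $\mathscr{S}^\ast\tp{\R^2}$ by taking Fourier transforms and solving the resulting pointwise algebraic system, so they hold under the stated regularity hypotheses and also pin down $u$ and $\grad\eta$ uniquely.

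For item (1), the plan is to estimate each Fourier multiplier appearing above. All but one have symbols that are bounded near the origin with adequate decay at infinity: $\tp{A-\Delta}^{-1}$ and $\tp{G-\Delta}^{-1}\Delta^{-1}\grad\grad\cdot$ give bounded maps $L^2\to H^2$ (the latter using boundedness of the Calder\'on-Zygmund symbol $\xi\ot\xi/|\xi|^2$), $\tp{A-\Delta}^{-1}\grad$ and $\tp{G-\Delta}^{-1}\grad$ are bounded $H^1\to H^2$, and $\tp{A-\Delta}^{-1}\tp{G-\Delta}$ is bounded on $H^2$ since its symbol $\tp{G+4\pi^2|\xi|^2}/\tp{A+4\pi^2|\xi|^2}$ is bounded. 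The exception is $A\tp{G-\Delta}^{-1}\Delta^{-1}\grad\psi$, whose symbol behaves like $\xi/|\xi|^2\sim |\xi|^{-1}$ near the origin; after multiplication by the $H^2$ weight $\tbr{\xi}^2$ one is still left with an $|\xi|^{-1}$ factor, which is precisely the Fourier-side weight of the $\dot{H}^{-1}$ norm. Plancherel then closes the first estimate.

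For item (2), the hypothesis $\psi=0$ suppresses the $\dot{H}^{-1}$-singular term, leaving multipliers whose symbols are compositions of homogeneous degree-$0$ Calder\'on-Zygmund symbols (like $\xi\ot\xi/|\xi|^2$) with Bessel-type resolvents. Classical CZ theory grants $L^q$-boundedness for $1<q<\infty$, while $\tp{A-\Delta}^{-1}$ and $\tp{G-\Delta}^{-1}$ are bounded $L^q\to W^{2,q}$ in the same range; composing yields the claimed $W^{2,q}$ bounds on $u$ and $\grad\eta$. For the $L^{2q/(2-q)}$ bound on $\eta$ itself, I would use the factorization $\Delta^{-1}\grad\cdot=\mathcal{I}_1\mathcal{R}\cdot$ from Remark~\ref{remark on the I1 operator} to write $\eta=\tp{G-\Delta}^{-1}\mathcal{I}_1\mathcal{R}\cdot\phi$; the $L^q$-boundedness of $\mathcal{R}\cdot$, the Hardy-Littlewood-Sobolev estimate $\mathcal{I}_1:L^q\to L^{2q/(2-q)}$ (valid exactly for $1<q<2$), and the $L^{2q/(2-q)}$-boundedness of $\tp{G-\Delta}^{-1}$ complete the estimate. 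The additive-constant ambiguity left by knowing only $\grad\eta$ is pinned down by the prescribed $L^{2q/(2-q)}$ membership of $\eta$.

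The principal technical care will lie in the low-frequency analysis of the singular term in item (1), where one must verify that the $|\xi|^{-1}$ behavior surviving after accounting for the $H^2$ weight $\tbr{\xi}^2$ is controlled exactly (and no more) by the $\dot{H}^{-1}$ norm. Beyond this, the proof is a synthesis of Plancherel, classical Calder\'on-Zygmund theory, and Hardy-Littlewood-Sobolev applied directly to the explicit multiplier formulas above.
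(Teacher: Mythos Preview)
Your proposal is correct and follows essentially the same Fourier-multiplier strategy as the paper. The only packaging difference is in item (1): rather than isolating the singular term $A\tp{G-\Delta}^{-1}\Delta^{-1}\grad\psi$ directly, the paper introduces the auxiliary potential $w=\Delta^{-1}\grad\psi\in H^2$ (whose $H^2$ norm is controlled by $\tnorm{\psi}_{\dot{H}^{-1}\cap H^1}$ for exactly the reason you identify) and then reduces to the divergence-free case, while in item (2) the paper obtains the $L^{2q/(2-q)}$ bound on $\eta$ via the subcritical Sobolev embedding $\tnorm{\eta}_{L^{2q/(2-q)}}\lesssim\tnorm{\grad\eta}_{L^q}$ rather than your equivalent $\mathcal{I}_1$ route.
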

\begin{proof}
    We begin with the first item. The proof strategy here is in the same spirit as that of Proposition~\ref{prop on principal part analysis solitary case}. We begin by defining the auxiliary function $w=\Delta^{-1}\grad\psi\in H^2\tp{\R^2;\R^2}$. Notice that 
    \begin{equation}
        \tnorm{w}_{H^2}\lesssim\tnorm{\psi}_{\dot{H}^{-1}\cap H^1} \text{ and } \grad\cdot w=\psi,
    \end{equation}
    and hence the function $v=u-w$ is divergence free and satisfies the PDE
    \begin{equation}\label{amazing equations}
        \begin{cases}
            \grad\cdot v=0,\\
            \tp{A-\Delta}v+\tp{G-\Delta}\grad\eta=\phi-Aw+\grad\cdot\mathbb{S}w.
        \end{cases}
    \end{equation}
    To the second equation we apply the operator $(G-\Delta)^{-1}\Delta^{-1}\grad\grad\cdot$, which annihilates the $v$ contribution, and gives us
    \begin{equation}\label{formula for the gradient of free surface}
        \grad\eta=\tp{G-\Delta}^{-1}\Delta^{-1}\grad\grad\cdot\tp{\phi-Aw+\grad\cdot\mathbb{S}w}\quad\text{and so}\quad\tnorm{\grad\eta}_{H^2}\lesssim\tnorm{\phi}_{L^2}+\tnorm{w}_{H^2}\lesssim\tnorm{\psi,\phi}_{\tp{\dot{H}^{-1}\cap H^1}\times L^2}.
    \end{equation}
    On the other hand, we can use the second equation in~\eqref{amazing equations} to write
    \begin{equation}\label{formula for the velocity}
        v=\tp{A-\Delta}^{-1}\tp{\phi-Aw+\grad\cdot\mathbb{S}w-\tp{G-\Delta}\grad\eta}\quad\text{from which the bound}\quad\tnorm{v}_{H^2}\lesssim\tnorm{\psi,\phi}_{\tp{\dot{H}^{-1}\cap H^1}\times L^2}
    \end{equation}
    follows.

    The second item is also simple. We derive the $\psi=0$ analogs of formulas~\eqref{formula for the gradient of free surface} and~\eqref{formula for the velocity} which are
    \begin{equation}
        \grad\eta=\tp{G-\Delta}^{-1}\Delta^{-1}\grad\grad\cdot\phi,\quad u=\tp{A-\Delta}^{-1}\tp{\phi-\tp{G-\Delta}\grad\eta}.
    \end{equation}
    From the boundedness of $(G-\Delta)^{-1}:L^q\tp{\R^2}\to W^{2,q}\tp{\R^2}$ and $\Delta^{-1}\grad\grad\cdot:L^q\tp{\R^2}\to L^q\tp{\R^2}$ (which are consequences of the Mikhlin multiplier theorem) we deduce the bound
    \begin{equation}
        \tnorm{\grad\eta}_{W^{2,q}}\lesssim\tnorm{\phi}_{L^q}\quad\text{from which it follows}\quad\tnorm{u}_{W^{2,q}}\lesssim\tnorm{\phi}_{L^q}.
    \end{equation}
    Finally, the subcritical Sobolev embedding provides the estimate $\tnorm{\eta}_{L^{2q/(2-q)}}\lesssim\tnorm{\grad\eta}_{L^q}$. This completes the proof.
\end{proof}

We now prove the unweighted estimate.

\begin{prop}[Unweighted a priori estimates]\label{prop on solitary case unweighted a priori estimates}
    Suppose that for some $\ep\in\tp{0,1}$ we have $(u,\eta)\in H^2_{\scriptscriptstyle1/2}\tp{\R^2;\R^2}\times\bf{O}_\be^\ep\cap\tilde{H}^3_{\scriptscriptstyle1/2}\tp{\R^2}$ with $\tnorm{\eta}_{L^\infty}\le\ep^{-1}$ a solution to system~\eqref{stationary nondimensionalized equations} for data $\kappa=1$ and a generic forcing $\Phi\in L^2_{\scriptscriptstyle1/2}\tp{\R^2;\R^2}$. Then, for implicit constants depending only on $\ep>0$ and the physical parameters, we have the a priori bounds
    \begin{equation}\label{a priori bounds, solitary case}
        \tnorm{u,\grad\eta}_{H^1\times H^1}\lesssim\tnorm{\Phi}_{H^{-1}}\tbr{\tnorm{\Phi}_{H^{-1}}},\quad\tnorm{u,\grad\eta}_{H^2\times H^2}\lesssim\tnorm{\Phi}_{L^2}\tbr{\tnorm{\Phi}_{L^2}}^{16/3}.
    \end{equation}
\end{prop}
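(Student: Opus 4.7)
The strategy mirrors the three-step proof of Proposition~\ref{prop on periodic a priori estimates} in the periodic case, with modifications to accommodate the key features of the solitary setting: $\eta$ is only controlled in $L^\infty\cap\tilde{H}^3_{\scriptscriptstyle 1/2}$ (so $\eta\notin L^2\tp{\R^2}$ in general), Poincar\'e-type inequalities are unavailable, and the appropriate substitute for the linear isomorphism $\mathcal{P}$ used in the periodic case is the first item of Lemma~\ref{lem on unweighted estimates for the principal part}.

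For Step~1, I test the second equation in~\eqref{stationary nondimensionalized equations} against $u$ in $L^2$ to obtain, from coercivity of drag and viscosity, $\tnorm{u}_{H^1}^2\lesssim\babs{\int\Phi\cdot u}+\babs{\int\tp{1+\be+\eta}\tp{G-\Delta}\grad\eta\cdot u}$. The final integral cannot be integrated by parts directly since $\eta\notin L^2$, so I exploit the density $C^\infty_{\m{c}}\tp{\R^2}\subset\tilde{H}^3_{\scriptscriptstyle 1/2}\tp{\R^2}$ from Proposition~\ref{prop on density of functions of bounded support}. For $\tilde\eta\in C^\infty_{\m{c}}\tp{\R^2}$ the integration by parts $\int\tp{1+\be+\eta}u\cdot\tp{G-\Delta}\grad\tilde\eta=-\int\tp{G\tilde\eta-\Delta\tilde\eta}\grad\cdot\tp{\tp{1+\be+\eta}u}=0$ is rigorous, with the last equality from the continuity equation. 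The remainder is controlled by $\tnorm{u}_{L^2}\tnorm{\eta-\tilde\eta}_{\tilde{H}^3_{\scriptscriptstyle 1/2}}$, which can be made arbitrarily small, yielding $\tnorm{u}_{H^1}\lesssim\tnorm{\Phi}_{H^{-1}}$.

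For Step~2, I test against $\tp{1+\be+\eta}^{-1}\grad\eta\in H^1\tp{\R^2;\R^2}$, a valid test function in the solitary setting. The identity~\eqref{the long lost eta identity} carries over after integration by parts, and the preliminary bound $\tnorm{\tp{1+\be+\eta}^{-1}\grad\eta}_{H^1}\lesssim\tnorm{\grad\eta}_{H^1}$ is derived exactly as in~\eqref{the derivative identity for this thing}--\eqref{Sphynx}, except that the periodic step~\eqref{not good enough for the solitary case} is replaced by the two-dimensional Gagliardo-Nirenberg inequality $\tnorm{\grad\eta}_{L^4}^2\lesssim\tnorm{\eta}_{L^\infty}\tnorm{\grad^2\eta}_{L^2}\lesssim\ep^{-1}\tnorm{\grad\eta}_{H^1}$, which crucially uses the hypothesis $\tnorm{\eta}_{L^\infty}\le\ep^{-1}$. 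The remainder of the argument follows the periodic template verbatim, using $\tnorm{u}_{L^4}^2\lesssim\tnorm{u}_{L^2}\tnorm{u}_{H^1}\lesssim\tnorm{\Phi}_{H^{-1}}^2$ from Step~1, to produce the left-hand bound in~\eqref{a priori bounds, solitary case}.

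For Step~3, I rewrite the system~\eqref{stationary nondimensionalized equations} in the semilinear form $\grad\cdot u=\psi$, $Au-\grad\cdot\mathbb{S}u+\tp{G-\Delta}\grad\eta=\phi$, with $\psi=-u\cdot\grad\log\tp{1+\be+\eta}$ and $\phi$ the nonlinear remainder built analogously to the right hand side of~\eqref{a useful choice of semilinear equations in the periodic case}, and then invoke the first item of Lemma~\ref{lem on unweighted estimates for the principal part} to bound $\tnorm{u,\grad\eta}_{H^2\times H^2}\lesssim\tnorm{\psi}_{\dot{H}^{-1}\cap H^1}+\tnorm{\phi}_{L^2}$. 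The main obstacle is the $\dot{H}^{-1}$ control of $\psi$, which would fail for a generic $L^2$ function; I resolve it by exploiting the continuity equation: $\grad\cdot\tp{\tp{1+\be+\eta}u}=0$ is equivalent to $\psi=-\grad\cdot\tp{\tp{\be+\eta}u}$, which exhibits $\psi$ as the divergence of the $L^2$ vector field $-\tp{\be+\eta}u$ and gives $\tnorm{\psi}_{\dot{H}^{-1}}\lesssim\tnorm{u}_{L^2}$. The $H^1$ bound on $\psi$ and $L^2$ bound on $\phi$ then follow from Theorem~\ref{thm on high low product estimates} together with Corollary~\ref{coro on analytic superposition real case} applied to the analytic maps $z\mapsto\log\tp{1+\be+z},\tp{1+\be+z}^{-1}$ valued in an appropriate Banach algebra, combined with the Step~1-2 bounds. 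Finally, Gagliardo-Nirenberg interpolations such as $\tnorm{u}_{W^{1,4}}\lesssim\tnorm{u}_{H^1}^{3/8}\tnorm{u}_{H^2}^{5/8}$ with Young's inequality---exactly as in the periodic case---generate the final exponent $16/3$ and allow one to absorb top order factors onto the left.
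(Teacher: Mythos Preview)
Your proposal is correct and follows essentially the same three-step argument as the paper's proof: the density trick from Proposition~\ref{prop on density of functions of bounded support} in Step~1, the homogeneous Gagliardo--Nirenberg bound $\tnorm{\grad\eta}_{L^4}^2\lesssim\tnorm{\eta}_{L^\infty}\tnorm{\grad^2\eta}_{L^2}$ in Step~2, and the semilinear reformulation with Lemma~\ref{lem on unweighted estimates for the principal part} in Step~3. The only cosmetic difference is in the $\dot{H}^{-1}$ bound on $\psi$: the paper simply observes $\psi=\grad\cdot u$ and bounds $\tnorm{\grad\cdot u}_{\dot{H}^{-1}}\lesssim\tnorm{u}_{L^2}$ directly, whereas you rewrite $\psi=-\grad\cdot\tp{\tp{\be+\eta}u}$; both exploit the same divergence structure and yield the same estimate.
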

\begin{proof}
    By arguing via density of smooth and compactly supported functions in the weighted gradient spaces as in the proof of Proposition~\ref{prop on maximally locally analytic curve of solutions} (and elsewhere) to annihilate the gravity-capillary contribution, we derive the energy bound:
    \begin{equation}
        \tnorm{u}_{H^1}^2\lesssim\int_{\R^2}A|u|^2+\tp{1+\be+\eta}\tp{2^{-1}|\grad u+\grad u^{\m{t}}|^2+2|\grad\cdot u|^2}=\int_{\R^2}\Phi\cdot u\le\tnorm{\Phi}_{H^{-1}}\tnorm{u}_{H^1}.
    \end{equation}
Hence $\tnorm{u}_{H^1}\lesssim\tnorm{\Phi}_{H^{-1}}$. The next step is again to test the equation with $(1+\be+\eta)^{-1}\grad\eta\in H^2_{\scriptscriptstyle1/2}\tp{\R^2;\R^2}$ as in~\eqref{the long lost eta identity}. We require a solitary analog of the preliminary bound~\eqref{the important bound on eta}. We still have the estimate $\tnorm{(1+\be+\eta)^{-1}\grad\eta}_{L^2}\lesssim\tnorm{\grad\eta}_{L^2}$ and we still have the expression~\ref{the derivative identity for this thing} for the derivative. By taking the $L^2$ norm of this expression, we obtain the following bound (which happens to be slightly shaper than~\eqref{Sphynx})
    \begin{equation}
        \tnorm{\grad\tp{\tp{1+\be+\eta}^{-1}\grad\eta}}_{L^2}\lesssim\tnorm{\grad\eta}_{H^1}+\tnorm{\grad\eta}_{L^4}^2.
    \end{equation}
    Now instead of using~\eqref{not good enough for the solitary case}, we instead use the homogeneous form of the Gagliardo-Nirenberg interpolation estimates (see~\eqref{strong form in Rd GNI} of Theorem~\ref{thm on gagliardo nirenberg}) to see that
    \begin{equation}
        \tnorm{\grad\eta}^2_{L^4}\lesssim\tnorm{\eta}_{L^\infty}\tnorm{\grad^2\eta}_{L^2}\lesssim\tnorm{\grad\eta}_{H^1}.
    \end{equation}
    Therefore, we deduce the estimate
    \begin{equation}\label{good enough for the solitary case}
        \tnorm{\tp{1+\be+\eta}^{-1}\grad\eta}_{H^1}\lesssim\tnorm{\grad\eta}_{H^1}.
    \end{equation}
    We combine~\eqref{good enough for the solitary case} with the integral identity
    \begin{equation}
        \int_{\R^2}G|\grad\eta|^2+|\grad^2\eta|^2=\int_{\R^2}\tp{1+\be+\eta}\tp{u\otimes u-\mathbb{S}u}:\grad\tp{\tp{1+\be+\eta}^{-1}\grad\eta}+\tp{\Phi-Au}\cdot\tp{1+\be+\eta}^{-1}\grad\eta,
    \end{equation}
    and the bounds
    \begin{equation}
        \tnorm{\tp{1+\be+\eta}\tp{u\otimes u-\mathbb{S}u}}_{L^2}\lesssim\tnorm{u}_{L^4}^2+\tnorm{u}_{H^1}\lesssim\tnorm{\Phi}_{H^{-1}}\tbr{\tnorm{\Phi}_{H^{-1}}},
    \end{equation}
    \begin{equation}
        \tnorm{\Phi-Au}_{H^{-1}}\lesssim\tnorm{\Phi}_{H^{-1}},\quad\tnorm{\grad\eta}_{H^1}^2\lesssim\int_{\R^2}G|\grad\eta|^2+|\grad^2\eta|^2
    \end{equation}
    to deduce that $\tnorm{\grad\eta}_{H^1}\lesssim\tnorm{\Phi}_{H^{-1}}\tbr{\tnorm{\Phi}_{H^{-1}}}$. This proves the left hand inequality in~\eqref{a priori bounds, solitary case}.

    To prove the higher regularity estimate, we will again move to a semilinear form of the equations. Fortunately, we will not be using the semilinear system suggested by the decomposition $\mathcal{P}_\be+\mathcal{L}_\be+\mathcal{N}_\be=\mathcal{Q}_\be$ from Proposition~\ref{prop on operator decomposition - solitary case}; rather, we rewrite system~\eqref{stationary nondimensionalized equations} in the following more pleasant form (which is analogous to~\eqref{a useful choice of semilinear equations in the periodic case}):
    \begin{equation}
        \begin{cases}
            \grad\cdot u=-u\cdot\grad\log\tp{1+\be+\eta},\\
            Au-\grad\cdot\mathbb{S}u+\tp{G-\Delta}\grad\eta=\tp{1+\be+\eta}^{-1}\tp{\Phi+A\tp{\be+\eta}u}-u\cdot\grad u+\mathbb{S}u\grad\log\tp{1+\be+\eta}.
        \end{cases}
    \end{equation}
    The goal now is to use the linear estimate from Lemma~\ref{lem on unweighted estimates for the principal part}, and for this we are tasked with bounding the first equation's right hand side in $\tp{\dot{H}^{-1}\cap H^1}\tp{\R^2}$ and the second equation's in $L^2\tp{\R^2;\R^2}$. Estimating the negative Sobolev norm is trivial:
    \begin{equation}\label{F_0}
        \tnorm{u\cdot\grad\log\tp{1+\be+\eta}}_{\dot{H}^{-1}}=\tnorm{\grad\cdot u}_{\dot{H}^{-1}}\lesssim\tnorm{u}_{L^2}\lesssim\tnorm{\Phi}_{L^2}.
    \end{equation}
    To estimate the $H^1$ norm we use Theorem~\ref{thm on high low product estimates} (and~\eqref{good enough for the solitary case}):
    \begin{equation}\label{F_1}
        \tnorm{u\cdot\grad\log\tp{1+\be+\eta}}_{H^1}\lesssim\tnorm{u}_{W^{1,4}}\tbr{\tnorm{\grad\eta}_{L^4}}+\tnorm{u}_{L^\infty}\tbr{\tnorm{\grad\eta}_{H^1}}\lesssim\tnorm{u}_{W^{1,4}}\tbr{\tnorm{\Phi}_{L^2}}^2.
    \end{equation}
    We then make bounds analogous to~\eqref{T__(1)}, \eqref{T__(2)}, and~\eqref{T__(3)}:
    \begin{equation}\label{F_2}
        \tnorm{\tp{1+\be+\eta}^{-1}\tp{\Phi+A\tp{\be+\eta}u}}_{L^2}\lesssim\tnorm{\Phi}_{L^2},\quad\tnorm{u\cdot\grad u}_{L^2}\lesssim\tnorm{u}_{W^{1,4}}\tnorm{\Phi}_{L^2},
    \end{equation}
    and
    \begin{equation}\label{F_3}
        \tnorm{\mathbb{S}u\grad\log\tp{1+\be+\eta}}_{L^2}\lesssim\tnorm{u}_{W^{1,4}}\tbr{\tnorm{\Phi}_{L^2}}^2.
    \end{equation}
    By combining~\eqref{F_0}, \eqref{F_1}, \eqref{F_2}, and~\eqref{F_3} with the bounds of Lemma~\ref{lem on unweighted estimates for the principal part} we find that
    \begin{equation}
        \tnorm{u,\grad\eta}_{H^2\times H^2}\lesssim\tnorm{\Phi}_{L^2}+\tnorm{u}_{W^{1,4}}\tbr{\tnorm{\Phi}_{L^2}}^2.
    \end{equation}
    The very same interpolation and absorption argument at the end of the proof of Proposition~\ref{prop on periodic a priori estimates} applies here as well and we can close the right hand estimate in~\eqref{a priori bounds, solitary case}.
\end{proof}

Our next result shows that if we know some improved integrability on either the velocity or the free surface then we may promote all the up to full weighted control. Recall the notation for solenoidal vector fields introduced in~\eqref{the definition of the solenoidal space}.

\begin{prop}[Weighted a priori estimates]\label{prop on weighted a priori estimates version 1}
    Suppose that for some $\ep\in(0,1)$, $4/3<q<2$, and $4\le r<\infty$ we have $(u,\eta)\in{_{\m{sol}}} H^{2}_{1/2}\tp{\R^2;\R^2}\times\bf{U}_\be^\ep\cap\tilde{H}^3_{1/2}\tp{\R^2}$ with $\tnorm{\eta}_{L^\infty}\le\ep^{-1}$ and $\min\tcb{\tnorm{u}_{L^q},\tnorm{\eta}_{L^r}}\le\ep^{-1}$ a solution to identity $\mathcal{Q}_\be\tp{u,\eta}=\Phi$ for a generic $\Phi\in L^2_{\scriptscriptstyle1/2}\tp{\R^2;\R^2}$. Then, for implicit constants depending only on $\tnorm{\Phi}_{L^2_{\scriptscriptstyle{1/2}}}$, $\ep$, $q$, $r$, and the physical parameters, we have the a priori bounds
    \begin{equation}\label{a priori weighted estimate in the solitary case}
        \tnorm{u,\eta}_{H^2_{1/2}\times\tilde{H}^3_{1/2}}\lesssim 1.
    \end{equation}
\end{prop}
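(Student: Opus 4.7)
The plan is to invoke the operator decomposition $\mathcal{Q}_\be = \mathcal{P}_\be + \mathcal{L}_\be + \mathcal{N}_\be$ from Proposition~\ref{prop on operator decomposition - solitary case} in tandem with the weighted isomorphism $\mathcal{P}_\be + \mathcal{L}_\be : {_{\m{sol}}} H^2_{1/2}\tp{\R^2;\R^2} \times \tilde{H}^3_{1/2}\tp{\R^2} \to L^2_{1/2}\tp{\R^2;\R^2}$ furnished by the third item of Proposition~\ref{prop on linear analysis with bathymetry solitary case}. Rearranging $\mathcal{Q}_\be(u,\eta) = \Phi$ into the form $(\mathcal{P}_\be + \mathcal{L}_\be)(u,\eta) = \Phi - \mathcal{N}_\be(u,\eta)$ and applying the inverse of the isomorphism immediately produces
\begin{equation*}
    \tnorm{u,\eta}_{H^2_{1/2} \times \tilde{H}^3_{1/2}} \lesssim \tnorm{\Phi}_{L^2_{1/2}} + \tnorm{\mathcal{N}_\be(u,\eta)}_{L^2_{1/2}}.
\end{equation*}
Since $\tnorm{\Phi}_{L^2_{1/2}}$ is absorbed into the implicit constant by hypothesis, the problem reduces to producing a uniform bound on $\tnorm{\mathcal{N}_\be(u,\eta)}_{L^2_{1/2}}$.

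As a preliminary step I would collect unweighted estimates. Applying Proposition~\ref{prop on solitary case unweighted a priori estimates} to the pair $\pmb{u} = u / \sqrt{\tp{1+\be}^2 + 2\eta}$ and $\pmb{\eta} = \sqrt{\tp{1+\be}^2 + 2\eta} - \tp{1+\be}$, which satisfies the original shallow water system by the second item of Proposition~\ref{prop on operator formulation solitary case}, and then transferring the resulting $H^2 \times H^2$ bounds back onto $\tp{u,\eta}$ using the bi-analytic change of unknowns from Lemma~\ref{lem on free surface domsains, solitary case} yields $\tnorm{u, \grad\eta}_{H^2 \times H^2} \lesssim 1$. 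The ellipticity $\inf\tp{\tp{1+\be}^2 + 2\eta} > \ep^2$ and the given $L^\infty$-bound on $\eta$ make this change of variables bi-Lipschitz in the relevant norms. In particular, $u \in L^p \cap L^\infty$ and $\grad\eta \in L^p \cap L^\infty$ for every $p \in [2,\infty)$.

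With the unweighted bounds in hand, I would extract the extra scalar decay information from the hypothesis $\min\tcb{\tnorm{u}_{L^q}, \tnorm{\eta}_{L^r}} \le \ep^{-1}$. In the branch where $\tnorm{u}_{L^q} \le \ep^{-1}$ with $q \in (4/3,2)$, rewriting the system in a semilinear form analogous to~\eqref{a useful choice of semilinear equations in the periodic case} and applying the $L^q$-based linear theory of the second item of Lemma~\ref{lem on unweighted estimates for the principal part} bootstraps to the control $\tnorm{u, \grad\eta}_{W^{2,q} \times W^{2,q}} \lesssim 1$, after bounding the resulting semilinear source in $L^q$ via the unweighted $H^2$-bounds already established. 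The alternate branch $\tnorm{\eta}_{L^r} \le \ep^{-1}$ is handled by an analogous bootstrap, this time using the free-surface-based Sobolev embedding $L^{2q/\tp{2-q}} \emb L^r$ for an appropriate $q$ to transfer the $L^r$-integrability into a working $L^q$-estimate for $u$.

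Finally, I would bound $\tnorm{\mathcal{N}_\be(u,\eta)}_{L^2_{1/2}}$ by treating each term of~\eqref{nonlinearity in the solitary case} separately using the weighted product estimates of Propositions~\ref{prop on product estimates} and~\ref{prop on product estimates in weighted gradient spaces}. The two terms containing $\grad\be$ carry the needed weight $1/2$ automatically via $\grad\be \in H^\infty_\del$. For the remaining advective, drag-remainder, viscous-correction, and capillary contributions the weight must come from $u$ or $\eta$, and this is exactly where the scalar decay hypothesis plays the decisive role: a direct integral splitting $\int_{\R^2} \tbr{X} |F|^2 \le C_R \tnorm{F}_{L^2\tp{B_R}}^2 + \int_{|x|>R} \tbr{X} |F|^2$, combined with a H\"older argument exploiting the $L^q$ (respectively $L^r$) integrability at infinity against the unweighted $L^\infty$ control, converts each quadratic nonlinear term into a uniform $L^2_{1/2}$-bound. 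The principal technical obstacle will be the meticulous bookkeeping of which factor in each term of $\mathcal{N}_\be$ should carry the weight and the clean interpolation converting the scalar $L^q$ or $L^r$ hypothesis (combined with the unweighted $H^2$-bounds) into weighted $L^p_{\scriptscriptstyle1/2}$-type estimates strong enough to close the argument; the at-least-quadratic vanishing of $\mathcal{N}_\be$ at the origin is essential, since it provides the additional factor that can absorb the polynomial weight against the decay supplied by the hypothesis.
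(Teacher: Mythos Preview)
Your reduction to bounding $\tnorm{\mathcal{N}_\be(u,\eta)}_{L^2_{1/2}}$ via the isomorphism $\mathcal{P}_\be+\mathcal{L}_\be$ is correct, and the unweighted step (invoking Proposition~\ref{prop on solitary case unweighted a priori estimates} and transferring back to $(u,\grad\eta)$) matches the paper. But the heart of your argument---bounding $\mathcal{N}_\be(u,\eta)$ in $L^2_{1/2}$ \emph{directly} from unweighted $H^2$ control plus the scalar $L^q$/$L^r$ hypothesis---does not work. Take for instance the drag-remainder term, which behaves like $u\eta$: controlling it in $L^2_{1/2}$ means bounding $\int_{\R^2}\tbr{x}\,|u\eta|^2\,\m{d}x$. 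In your splitting the tail $\int_{|x|>R}\tbr{x}|u\eta|^2$ still carries the \emph{growing} weight $\tbr{x}$, and no H\"older pairing of unweighted $L^q$-integrability against $L^\infty$ can absorb it; $L^q$ membership gives no pointwise decay and the weight is not in any $L^s$. The same obstruction hits the advective and capillary terms. Your treatment of the two branches is also inverted: the paper uses the $L^q$-linear theory of Lemma~\ref{lem on unweighted estimates for the principal part} to show $\tnorm{u}_{L^q}\lesssim1\Rightarrow\tnorm{\eta}_{L^{2q/(2-q)}}\lesssim1$, thereby \emph{reducing} the $u\in L^q$ branch to the $\eta\in L^r$ branch, not the other way around.

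What the paper does instead, and what your outline is missing, are two genuinely new steps. First, a \emph{weighted energy estimate} at a small weight $\rho=1/r\in(0,1/4]$: one tests the original system (in the $\pmb{u},\pmb{\eta}$ variables) with $\tbr{X}^{2\rho}\pmb{u}$ and with $\tbr{X}^{2\rho}(1+\be+\pmb{\eta})^{-1}\grad\pmb{\eta}$. The commutator terms produced by the weight are of size $\tbr{X}^{2\rho-2}X$, and the crucial point is that the gravity contribution pairs $\tnorm{\pmb{\eta}}_{L^r}$ against $\tnorm{\tbr{X}^{\rho-1}}_{L^{2r/(r-2)}}$, which is finite precisely because $\rho=1/r$ makes $\tfrac{2r}{r-2}(1-\rho)>2$. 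This yields $\tnorm{u,\eta}_{H^1_\rho\times\tilde{H}^2_\rho}\lesssim1$, and after one application of $(\mathcal{P}_\be+\mathcal{L}_\be)^{-1}$ at weight $\rho$, also $\tnorm{u,\eta}_{H^2_\rho\times\tilde{H}^3_\rho}\lesssim1$. Second, an \emph{iteration}: the at-least-quadratic structure of $\mathcal{N}_\be$ (first item of Proposition~\ref{prop on operator decomposition - solitary case}) shows that control at weight $\mu$ gives $\mathcal{N}_\be(u,\eta)\in L^2_{\tilde\rho}$ for any $\tilde\rho<2\mu$, so finitely many applications of $(\mathcal{P}_\be+\mathcal{L}_\be)^{-1}$ along the geometric ladder $\rho\to\tfrac{3}{2}\rho\to\cdots\to\tfrac{1}{2}$ reach the target weight. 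Both ingredients are essential; the direct estimate you propose cannot close.
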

\begin{proof}
    The first step is to unpack the unweighted a priori bounds on the solution that we get from Proposition~\ref{prop on solitary case unweighted a priori estimates}. To do this we require Propositions~\ref{prop on operator formulation solitary case} and~\ref{prop on operator decomposition - solitary case} to deduce that the functions $\pmb{u}\in H^2_{\scriptscriptstyle1/2}\tp{\R^2;\R^2}$ and $\pmb{\eta}\in\bf{O}_\be^{\ep}\cap\tilde{H}^3_{\scriptscriptstyle1/2}\tp{\R^2}$ defined via
    \begin{equation}\label{these are the variables to which we shall later change back}
        \pmb{u}=\tp{2\eta+\tp{1+\be}^2}^{-1/2}u \text{ and } \pmb{\eta}=\sqrt{2\eta+\tp{1+\be}^2}-\tp{1+\be}
    \end{equation}
    are a solution to system~\eqref{stationary nondimensionalized equations} for $\kappa=1$ and forcing $\Phi\in L^2_{\scriptscriptstyle1/2}\tp{\R^2;\R^2}$. It is clear that $\tnorm{\pmb{\eta}}_{L^\infty}\lesssim 1$; therefore, we can invoke Proposition~\ref{prop on solitary case unweighted a priori estimates} to deduce the bounds $\tnorm{\pmb{u},\grad\pmb{\eta}}_{H^2\times H^2}\lesssim 1$. We claim that the same estimate holds for $u$ and $\grad\eta$. We compute that
    \begin{equation}
        \grad\eta=\tp{1+\be+\pmb{\eta}}\grad\pmb{\eta}+\pmb{\eta}\grad\be,\quad\grad^2\eta=\tp{1+\be+\pmb{\eta}}\grad^2\pmb{\eta}+\grad\pmb{\eta}\otimes\grad\pmb{\eta}+\grad\be\otimes\grad\pmb{\eta}+\grad\pmb{\eta}\otimes\grad\be+\pmb{\eta}\grad^2\be.
    \end{equation}
    Now we use Theorem~\ref{thm on high low product estimates} and the bounds $\tnorm{\eta,\grad\eta}_{L^\infty\times H^2}\lesssim 1$ to estimate
    \begin{equation}
        \tnorm{\grad\eta}_{L^2}\lesssim\tnorm{\grad\pmb{\eta},\pmb{\eta}}_{L^2\times L^\infty}\lesssim1,\quad\tnorm{\grad^2\eta}_{H^1}\lesssim\tbr{\tnorm{\pmb{\eta}}_{W^{1,\infty}}}\tnorm{\grad^2\pmb{\eta}}_{H^1}+\tnorm{\grad\pmb{\eta}}^2_{W^{1,4}}+\tnorm{\grad\pmb{\eta}}_{H^1}+\tnorm{\pmb{\eta}}_{W^{1,\infty}}\lesssim1.
    \end{equation}
    It now follows that $\tnorm{\grad\eta}_{H^2}\lesssim1$. To obtain the sought after estimates on $u$ we use the identity $u=(1+\be+\pmb{\eta})\pmb{u}$: 
    \begin{equation}
        \tnorm{u}_{H^2}\lesssim\tnorm{u}_{L^2}+\tnorm{\grad\tp{\be+\pmb{\eta}}\pmb{u}}_{H^1}+\tnorm{\tp{1+\be+\pmb{\eta}}\grad\pmb{u}}_{H^1}\lesssim 1+\tbr{\tnorm{\grad\pmb{\eta}}_{W^{1,4}}}\tnorm{\pmb{u}}_{W^{1,4}}+\tbr{\tnorm{\pmb{\eta}}_{W^{1,\infty}}}\tnorm{\grad\pmb{u}}_{H^1}\lesssim 1.
    \end{equation}
    Thus, we can indeed transfer the unweighted bounds $\tnorm{\pmb{u},\grad\pmb{\eta}}_{H^2\times H^2}\lesssim 1$ to get $\tnorm{u,\grad\eta}_{H^2\times H^2}\lesssim 1$. Notice that we have thus far not used the assumption that $\min\tcb{\tnorm{u}_{L^q},\tnorm{\eta}_{L^r}}\le\ep^{-1}$.

    The next step of the proof is to show that if $\tnorm{u}_{L^q}\le\ep^{-1}$, then $\tnorm{\eta}_{L^{2q/(2-q)}}\lesssim 1$. In other words, knowledge of improved velocity integrability implies finite integrability for the free surface. So let us now assume that $\tnorm{u}_{L^q}\le\ep^{-1}$.  We look to the semilinearized version of the equations granted by Proposition~\ref{prop on operator decomposition - solitary case}, i.e
    \begin{equation}
        \mathcal{P}_\be\tp{u,\eta}=\Phi-\mathcal{L}_\be\tp{u,\eta}-\mathcal{N}_\be\tp{u,\eta},
    \end{equation}
    where we recall that $\mathcal{P}_\be$ and $\mathcal{L}_\be$ are defined in Proposition~\ref{prop on linear analysis with bathymetry solitary case}, while $\mathcal{N}_\be$ is defined in~\eqref{nonlinearity in the solitary case}.  We  take the norm in $L^q\tp{\R^2;\R^2}$ on both sides of this equation. The $\Phi$ contribution is estimated through the embedding $L^2_{\scriptscriptstyle1/2}\tp{\R^2;\R^2}\emb L^q\tp{\R^2;\R^2}$ (which holds since $4/3<q<2$). For the left hand side it is the second item of Lemma~\ref{lem on unweighted estimates for the principal part} that gives us a coercive bound. This leaves us with
    \begin{equation}\label{this too is going to be cited later}
        \tnorm{u,\eta,\grad\eta}_{W^{2,q}\times L^{2q/(2-q)}\times W^{2,q}}\lesssim 1+\tnorm{\mathcal{L}_\be\tp{u,\eta}}_{L^q}+\tnorm{\mathcal{N}_\be\tp{u,\eta}}_{L^q}.
    \end{equation}
    The idea is now that each of the expressions on the right consists of products, either with $\be$ or the solution itself, and so by using the assumed $u\in L^q\tp{\R^2;\R^2}$ control paired with the unweighted $L^2$ control on $u$ and $\grad\eta$ each term is then promoted to the improved $L^q$ integrability.

    The expression for $\mathcal{L}_\be$ consists of four terms. The bound of the first of these is one of two places where the $L^q$-integrability of $u$ is used (the other is within the expression for $\mathcal{N}_\be$). We estimate this term simply as
    \begin{equation}
        \bnorm{\f{A\tp{\be-\bf{c}\tp{\be}}}{\tp{1+\be}\tp{1+\bf{c}\tp{\be}}}u}_{L^q}\lesssim\tnorm{u}_{L^q}\lesssim 1
    \end{equation}
    while the remaining terms in $\mathcal{L}_\be$ are treated to either H\"older's inequality or Theorem~\ref{thm on high low product estimates}. We have
    \begin{equation}
        \tnorm{\grad\cdot Q_2\tp{\grad\log\tp{1+\be},u}}_{L^q}\lesssim\tnorm{\grad\log\tp{1+\be}}_{W^{1,2q/(2-q)}}\tnorm{u}_{H^1}\lesssim 1.
    \end{equation}
    We also use $\be\in{_{\m{e}}}H^{\infty}_\del\tp{\R^2}$ ($\del>0$) with the embedding $L^2_\updelta\tp{\R^2}\emb L^{4/(2+\updelta)}\tp{\R^2}$ and interpolation to bound (for $\updelta<\del$ sufficiently small as to satisfy $0<\updelta<2/q-1$)
    \begin{equation}\label{unfortunately i need to cite this equation}
        \tnorm{\tp{\tp{1+\be}^{-1}\tp{G-\Delta}\grad\tp{1+\be}}\eta}_{L^q}\lesssim\tnorm{\grad\be}_{W^{2,\f{4}{2+\updelta}}}\tnorm{\eta}_{L^{\f{4q}{4-(2+\updelta)q}}}\lesssim\tnorm{\eta}_{L^{2q/(2-q)}}^{1-\f{\updelta q}{4-2q}}\tnorm{\eta}_{L^\infty}^{\f{\updelta q}{4-2q}}\lesssim\tnorm{\eta}_{L^{2q/(2-q)}}^\theta,
    \end{equation}
    where $\theta=\updelta q\tp{4-2q}^{-1}\in(0,1)$. We bound the final term in $\mathcal{L}_\be$ according to
    \begin{equation}
        \tnorm{\grad\cdot Q_1\tp{\grad\tp{1+\be},\grad\tp{\tp{1+\be}^{-1}\eta}}}_{L^q}\lesssim\tnorm{\grad\be}_{W^{1,2q/(2-q)}}\tnorm{\grad\tp{(1+\be)^{-1}\eta}}_{L^2}\lesssim\tnorm{\eta,\grad\eta}_{L^\infty\times L^2}\lesssim 1.
    \end{equation}

    Next, we bound the six expressions within $\mathcal{N}_\be$ by implementing similar tricks. We have
    \begin{equation}
        \bnorm{\grad\cdot\bp{\f{u\otimes u}{\sqrt{2\eta+\tp{1+\be}^2}}}}_{L^q}\lesssim\tnorm{\tp{2\eta+\tp{1+\be}^2}^{-1/2}}_{W^{1,\infty}}\tnorm{u}_{W^{1,2q}}^2\lesssim 1.
    \end{equation}
    The second term in $\mathcal{N}_\be$ is where we shall again use $\tnorm{u}_{L^q}\lesssim 1$:
    \begin{equation}
        \norm{\f{2Au\eta}{\tp{1+\be}\sqrt{2\eta+\tp{1+\be}^2\tp{\tp{1+\be}+\sqrt{2\eta+\tp{1+\be}^2}}}}}_{L^q}\lesssim\tnorm{u}_{L^q}\tnorm{\eta}_{L^\infty}\lesssim 1.
    \end{equation}
    For the third, fifth, and sixth terms, we just use Theorem~\ref{thm on high low product estimates}:
    \begin{equation}
        \bnorm{\grad\cdot Q_2\bp{u,\grad\log\bp{1+\f{2\eta}{\tp{1+\be}^2}}}}_{L^q}\lesssim\tnorm{u}_{H^1}\bnorm{\grad\log\bp{1+\f{2\eta}{\tp{1+\be}^2}}}_{W^{1,2q/(2-q)}}\lesssim 1,
    \end{equation}
    \begin{multline}
        \bnorm{\grad\cdot Q_1\bp{\grad\bp{\f{2\eta}{\sqrt{2\eta+\tp{1+\be}^2}+(1+\be)}}}}_{L^q}\\\lesssim\bnorm{\grad\bp{\f{2\eta}{\sqrt{2\eta+\tp{1+\be}^2}+\tp{1+\be}}}}_{L^{2q/(2-q)}}\bnorm{\grad\bp{\f{2\eta}{\sqrt{2\eta+\tp{1+\be}^2}+\tp{1+\be}}}}_{H^1}\lesssim1,
    \end{multline}
    and
    \begin{multline}
        \norm{\grad\cdot Q_1\bp{\grad\tp{1+\be},\grad\bp{\f{\eta^2}{\tp{1+\be}^2\sqrt{2\eta+\tp{1+\be}^2}+\tp{1+\be}\tp{\tp{1+\be}^2+\eta}}}}}_{L^q}\\\lesssim\tnorm{\grad\beta}_{W^{1,2q/(2-q)}}\norm{\grad\bp{\f{\eta^2}{\tp{1+\be}^2\sqrt{2\eta+\tp{1+\be}^2}+\tp{1+\be}\tp{\tp{1+\be}^2+\eta}}}}_{H^1}\lesssim 1.
    \end{multline}
    For the fourth term we instead argue as in~\eqref{unfortunately i need to cite this equation}:
    \begin{equation}
        \norm{\f{\eta^2\tp{G-\Delta}\grad\tp{1+\be}}{\tp{1+\be}^2\sqrt{2\eta+\tp{1+\be}^2}+\tp{1+\be}\tp{\tp{1+\be}^2+\eta}}}_{L^q}\lesssim\tnorm{\eta}_{L^\infty}\tnorm{\eta}_{L^{2q/(2-q)}}^\theta\lesssim\tnorm{\eta}^\theta_{L^{2q/(2-q)}}.
    \end{equation}
    
    By synthesizing the above estimates on $\mathcal{L}_\be$ and $\mathcal{N}_\be$ and returning to~\eqref{this too is going to be cited later} we obtain 
    \begin{equation}
        \tnorm{\eta}_{L^{2q/(2-q)}}\lesssim 1+\tnorm{\eta}^\theta_{L^{2q/(2-q)}}\quad\text{and so we can absorb in the standard way to get}\quad\tnorm{\eta}_{L^{2q/(2-q)}}\lesssim 1.
    \end{equation}
    This completes the part of the proof in which we show that improved integrability on $u$ leads to finite integrability on $\eta$.

    We now come to the final step of the proof. In light of the previous step, it now suffices to show that if $\tnorm{\eta}_{L^\infty}+\tnorm{\eta}_{L^r}\lesssim 1$ for some $4\le r<\infty$ then we can obtain the weighted estimate~\eqref{a priori weighted estimate in the solitary case}. Initially, we need to transform back into the variables $\pmb{u}$ and $\pmb{\eta}$ from~\eqref{these are the variables to which we shall later change back} as these solve system~\eqref{stationary nondimensionalized equations} which has a more exploitable energy structure. Note that by writing
    \begin{equation}
        \pmb{\eta}=2\tp{\sqrt{2\eta+\tp{1+\be}^2}+\tp{1+\be}}^{-1}\eta
    \end{equation}
    we can transfer the $L^r$ bound on $\eta$ to $\pmb{\eta}$: $\tnorm{\pmb{\eta}}_{L^r}\lesssim 1$. Let us now define the weight $\rho=1/r\in(0,1/4]$. By utilizing the energy structure of~\eqref{stationary nondimensionalized equations} we show initially that $\pmb{u}\in H^1_\rho\tp{\R^2;\R^2}$ and $\pmb{\eta}\in\tilde{H}^2_{\rho}\tp{\R^2}$ are controlled inclusions. We test the second equation in~\eqref{stationary nondimensionalized equations} with $\tbr{X}^{2\rho}\pmb{u}\in H^2\tp{\R^2;\R^2}$ and integrate by parts to deduce the following identity.
    \begin{multline}\label{you can keep em}
        \int_{\R^2}\tbr{X}^{\rho}\Phi\cdot\tbr{X}^\rho \pmb{u}=\int_{\R^2}\tbr{X}^{2\rho}\ssb{A|\pmb{u}|^2+\tp{1+\be+\pmb{\eta}}\mathbb{S}\pmb{u}:\grad\pmb{u}}\\+\rho\int_{\R^2}-\tp{1+\be+\pmb{\eta}}\tbr{X}^{2\rho-2}X\cdot\pmb{u}|\pmb{u}|^2+2\tp{1+\be+\pmb{\eta}}\mathbb{S}\pmb{u}:\pmb{u}\otimes\tbr{X}^{2\rho-2}X
        -2\tp{1+\be+\pmb{\eta}}\tp{G-\Delta}\pmb{\eta}\pmb{u}\cdot\tbr{X}^{2\rho-2}X.
    \end{multline}
    The first line's right hand side gives us our weighted control on $\pmb{u}$. We shall need to estimate the terms appearing in the second line. Since $\rho<1/2$ we have $\tbr{X}^{2\rho-2}X\in L^\infty$ and so thanks to H\"older we have
    \begin{equation}\label{my mized pinabbple}
        \babs{\int_{\R^2}-\tp{1+\be+\pmb{\eta}}\tbr{X}^{2\rho-2}X\cdot\pmb{u}|\pmb{u}|^2+2\tp{1+\be+\pmb{\eta}}\mathbb{S}\pmb{u}:\pmb{u}\otimes\tbr{X}^{2\rho-2}X}\lesssim\tnorm{\pmb{u}}_{L^2}\tp{\tnorm{\pmb{u}}_{L^4}^2+\tnorm{\pmb{u}}_{H^1}}\lesssim 1.
    \end{equation}
    We may also estimate the capillary contribution with ease:
    \begin{equation}\label{fo shixzle}
        \babs{\int_{\R^2}2(1+\be+\pmb{\eta})\Delta\pmb{\eta}\pmb{u}\cdot\tbr{X}^{2\rho-2}X}\lesssim\tnorm{\grad\pmb{\eta}}_{H^1}\tnorm{\pmb{u}}_{L^2}\lesssim1.
    \end{equation}
    The gravity contribution of~\eqref{you can keep em} is where the finite integrability on $\pmb{\eta}$ and the exponent relation $\f{2r}{r-2}\tp{1-\rho}>2$ are used:
    \begin{equation}\label{try tpo express}
        \babs{\int_{\R^2}-2G\tp{1+\be+\pmb{\eta}}\pmb{\eta}\pmb{u}\cdot\tbr{X}^{2\rho-2}X}\lesssim\tnorm{\pmb{u}}_{L^2_\rho}\tnorm{\pmb{\eta}}_{L^r}\tnorm{\tbr{X}^{\rho-1}}_{L^{2r/(r-2)}}\lesssim\tnorm{\pmb{u}}_{L^2_\rho}.
    \end{equation}
    We combine~\eqref{you can keep em}, \eqref{my mized pinabbple}, \eqref{fo shixzle}, and~\eqref{try tpo express} to gain
    \begin{equation}
        \tnorm{\pmb{u}}^2_{H^1_\rho}\lesssim1+\tbr{\tnorm{\Phi}_{L^2_\rho}}\tnorm{\pmb{u}}_{L^2_\rho}\quad\text{which leads us to}\quad\tnorm{\pmb{u}}_{H^1_\rho}\lesssim 1.
    \end{equation}
    The next thing to do is use this new weighted bound on $\pmb{u}$ to prove a $\rho$-weighted bound on $\grad\pmb{\eta}$; to achieve this we test the equation~\eqref{stationary nondimensionalized equations} with $\tbr{X}^{2\rho}\tp{1+\be+\pmb{\eta}}^{-1}\grad\pmb{\eta}\in H^2\tp{\R^2;\R^2}$ and integrate by parts as appropriate to derive the following identity.
    \begin{multline}\label{_the_first_weight_}
        \int_{\R^2}\tbr{X}^{2\rho}\tp{G\tabs{\grad\pmb{\eta}}^2+\tabs{\grad^2\pmb{\eta}}^2}=\int_{\R^2}\tp{1+\be+\pmb{\eta}}^{-1}\tbr{X}^\rho\tp{\Phi-A\pmb{u}}\cdot\tbr{X}^\rho\grad\pmb{\eta}-2\rho\grad^2\pmb{\eta}:\grad\pmb{\eta}\otimes\tbr{X}^{2\rho-2}X\\+\int_{\R^2}\tbr{X}^{\rho}\tp{1+\be+\pmb{\eta}}\tp{\pmb{u}\otimes\pmb{u}-\mathbb{S}\pmb{u}}:\tbr{X}^{\rho}\grad\tp{\tp{1+\be+\pmb{\eta}}^{-1}\grad\pmb{\eta}}+2\rho\tp{\pmb{u}\otimes\pmb{u}-\mathbb{S}\pmb{u}}:\grad\pmb{\eta}\otimes\tbr{X}^{2\rho-2}X.
    \end{multline}
    By implementing H\"older's inequality with the weighted bounds on $\pmb{u}$ and $\Phi$, the unweighted bounds on $\pmb{u}$ and $\pmb{\eta}$, and
    \begin{equation}
        \tnorm{\grad\tp{\tp{1+\be+\pmb{\eta}}^{-1}\grad\pmb{\eta}}}_{L^2_\rho}\lesssim\tnorm{\grad^2\pmb{\eta}}_{L^2_\rho}+\tbr{\tnorm{\grad\pmb{\eta}}_{L^\infty}}\tnorm{\grad\pmb{\eta}}_{L^2_\rho}\lesssim\tnorm{\grad\pmb{\eta}}_{H^1_\rho}
    \end{equation}
    we derive from~\eqref{_the_first_weight_} the estimate
    \begin{equation}
        \tnorm{\grad\pmb{\eta}}_{H^1_\rho}^2\lesssim1+\tnorm{\grad\pmb{\eta}}_{H^1_\rho}\quad\text{and hence}\quad\tnorm{\grad\pmb{\eta}}_{H^1_\rho}\lesssim 1.
    \end{equation}

    We now have established the control $\tnorm{\pmb{u},\pmb{\eta}}_{H^1_\rho\times\tilde{H}^2_\rho}\lesssim1$ and it is time to again swap back to the variables $u$ and $\eta$ determined from the inverse transformation in~\eqref{these are the variables to which we shall later change back}. From the identity
    \begin{equation}
        \eta=2^{-1}\tp{\tp{1+\be+\pmb{\eta}}^2-\tp{1+\be}^2}=2^{-1}\pmb{\eta}^2+\tp{1+\be}\pmb{\eta}
    \end{equation}
    and the product estimates of Proposition~\ref{prop on product estimates in weighted gradient spaces} we deduce immediately that $\tnorm{\eta}_{\tilde{H}^2_\rho}\lesssim 1$. Also, by using $u=\tp{1+\be+\pmb{\eta}}\pmb{u}$ and the boundedness of the product map $W^{1,\infty}\times H^1_\rho\to H^1_\rho$ we find $\tnorm{u}_{H^1_\rho}\lesssim 1$.

    Next, we write the equation $\mathcal{Q}_\be\tp{u,\eta}=\Phi$ satisfied by $u$ and $\eta$ into the form (which is permissible by Propositions~\ref{prop on linear analysis with bathymetry solitary case} and~\ref{prop on operator decomposition - solitary case})
    \begin{equation}
        (u,\eta)=\tp{\mathcal{P}_\be+\mathcal{L}_\be}^{-1}\tp{\Phi-\mathcal{N}_\be\tp{u,\eta}}
    \end{equation}
    and so for any weight $\rho\le\tilde{\rho}\le1/2$ we have the estimate
    \begin{equation}\label{setting up notation for a later induction argument}
        \tnorm{u,\eta}_{H^2_{\tilde{\rho}}\times\tilde{H}^3_{\tilde{\rho}}}\lesssim 1+\tnorm{\mathcal{N}_\be\tp{u,\eta}}_{L^2_{\tilde{\rho}}}.
    \end{equation}
    We first shall examine~\eqref{setting up notation for a later induction argument} with $\tilde{\rho}=\rho$ so that we may promote our initial weighted estimate up a derivative. Thanks to repeated applications of the known bounds with the product estimates of Proposition~\ref{prop on product estimates} we may derive an estimate in the spirit of the proof of Proposition~\ref{prop on operator decomposition - solitary case} on the nonlinearity of the form
    \begin{equation}\label{im so sorry the details are missing this is getting really long of a proof at this point}
        \tnorm{\mathcal{N}_\be\tp{u,\eta}}_{L^2_\rho}\lesssim1+\tnorm{u}_{W^{1,4}_\rho}+\tnorm{\grad\eta}_{W^{1,4}_\rho}\lesssim 1+\tnorm{\tbr{X}^\rho u}_{H^2}^{1/2}+\tnorm{\tbr{X}^\rho\grad\eta}^{1/2}_{H^2}.
    \end{equation}
    Then upon combining~\eqref{setting up notation for a later induction argument} and~\eqref{im so sorry the details are missing this is getting really long of a proof at this point} and implementing the standard absorption argument we arrive at $\tnorm{u,\eta}_{H^2_\rho\times\tilde{H}^3_\rho}\lesssim 1$.

    We are finally in a position where we can iterate the conclusion from the section item of Proposition~\ref{prop on operator decomposition - solitary case}. By the $\rho$-weighted estimate we have just derived we find that for every $\tilde{\rho}<2\rho$ we now learn that $\tnorm{\mathcal{N}_\be\tp{u,\eta}}_{L^2_{\tilde{\rho}}}\lesssim 1$ and so through~\eqref{setting up notation for a later induction argument} we gain the control $\tnorm{u,\eta}_{H^2_{\tilde{\rho}}\times\tilde{H}^2_{\tilde{\rho}}}\lesssim 1$. In fact, if we let $n\in\N^+$ be the smallest positive integer such that $(3/2)^{-n}\tp{1/2}<2\rho$ and set for $j\in\tcb{1,\dots,n}$ the weight strength $\m{w}_j=(3/2)^{-j}(1/2)$. By what we have just argued we know that $\tnorm{u,\eta}_{H^2_{\m{w}_n}\times\tilde{H}^3_{\m{w}_n}}\lesssim 1$. Furthermore, the combination of~\eqref{setting up notation for a later induction argument} with the aforementioned bounds on $\mathcal{N}_\be$ show that if for some $j\in\tcb{2,\dots,n}$ we know that
    \begin{equation}
        \tnorm{u,\eta}_{H^2_{\m{w}_j}\times\tilde{H}^3_{\m{w}_j}}\lesssim 1\quad\text{then we actually find that}\quad\tnorm{u,\eta}_{H^2_{\m{w}_{j-1}}\times\tilde{H}^3_{\m{w}_{j-1}}}\lesssim 1.
    \end{equation}
    Therefore the obvious induction argument leads us at last to the bound~\eqref{a priori weighted estimate in the solitary case}.    
\end{proof}

We now come to the solitary analog of Corollary~\ref{coro on blowup refinement - periodic case}.

\begin{coro}[Blowup refinement - solitary case]\label{coro on blowup refinement solitary case}
    Let $\mathscr{S}\subset S$ be the maximal locally analytic curve of solutions granted by Proposition~\ref{prop on maximally locally analytic curve of solutions} in the solitary case, parametrized by the continuous map $\R\ni s\mapsto\tp{u(s),\eta\tp{\eta},\kappa\tp{s}}\in\mathscr{S}\subset H^2_{\scriptscriptstyle1/2}\tp{\R^2;\R^2}\times\bf{U}_\be\cap\tilde{H}^3_{\scriptscriptstyle1/2}\tp{\R^2}$. Then for every choice of $q\in(4/3,2)$ and $r\in[4,\infty)$ we have
    \begin{equation}\label{the blowup quantity in the solitary case}
        \lim_{|s|\to\infty}\tsb{\tnorm{\eta\tp{s}}_{L^\infty}+\min\tcb{\tnorm{u(s)}_{L^q},\tnorm{\eta(s)}_{L^r}}+1/\inf\tp{\tp{1+\be}^2+2\eta\tp{s}}+|\kappa(s)|}=\infty.
    \end{equation}
\end{coro}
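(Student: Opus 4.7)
The plan is to argue by contradiction in direct analogy to the periodic blowup refinement of Corollary~\ref{coro on blowup refinement - periodic case}, where the additional wrinkle will be the weighted functional framework of the solitary case. It suffices to treat the limit $s\to+\infty$. Assuming~\eqref{the blowup quantity in the solitary case} fails for some $q\in\tp{4/3,2}$ and $r\in[4,\infty)$, one extracts a sequence $s_n\to\infty$ along which, upon setting $u_n=u(s_n)$, $\eta_n=\eta(s_n)$, $\kappa_n=\kappa(s_n)$, the quantities $\tnorm{\eta_n}_{L^\infty}$, $\min\tcb{\tnorm{u_n}_{L^q},\tnorm{\eta_n}_{L^r}}$, $1/\inf\tp{\tp{1+\be}^2+2\eta_n}$, and $|\kappa_n|$ are all bounded by some $C\in\R^+$. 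Set $\ep=1/C\in\tp{0,1}$; the goal is to produce a uniform bound on $\tnorm{u_n,\eta_n}_{H^2_{\scriptscriptstyle1/2}\times\tilde{H}^3_{\scriptscriptstyle1/2}}$, which combined with the bound on $|\kappa_n|$ contradicts~\eqref{the blowup quantity____} from Proposition~\ref{prop on maximally locally analytic curve of solutions}.

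The main driver is the weighted a priori estimate of Proposition~\ref{prop on weighted a priori estimates version 1}, whose hypotheses coincide with the standing bounds save for requiring that the forcing $\Phi_n=\kappa_n\mathcal{F}_\be\tp{\eta_n}$ be uniformly bounded in $L^2_{\scriptscriptstyle1/2}\tp{\R^2;\R^2}$. To obtain this, I would bootstrap through the unweighted a priori estimates. First, pass to the physical variables $\pmb{u}_n=u_n/\sqrt{2\eta_n+\tp{1+\be}^2}$ and $\pmb{\eta}_n=\mathcal{T}_\be^{-1}\tp{\eta_n}$, which by the second item of Proposition~\ref{prop on operator formulation solitary case} satisfy~\eqref{stationary nondimensionalized equations} with unit forcing strength and forcing profile $\Phi_n$; these variables inherit uniform $L^\infty$ bounds and a uniform positive-depth lower bound. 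I would then adapt the Cauchy-integral calculation of Corollary~\ref{coro on blowup refinement - periodic case} --- in particular the primitive trick using $\tilde\tau\tp{z}=\int_0^z\tau\tp{w}\;\m{d}w$ together with the analog of identity~\eqref{the weirdo identity} --- to show $\sup_n\tnorm{\Phi_n}_{H^{-1}\tp{\R^2}}<\infty$ purely from the $L^\infty$-level information on $\pmb{\eta}_n$. This relies on the embeddings $L^2_{\scriptscriptstyle1/2},H^1_{\scriptscriptstyle1/2},\tp{L^\infty_{\scriptscriptstyle1/2}\cap H^1_{\scriptscriptstyle1/2}}\emb L^2\tp{\R^2}$ (allowing the data $\phi$, $\psi$, $\tilde\tau$, and $\grad\cdot\tilde\tau$ to be interpreted unweightedly), together with the elementary resolvent bound $\tnorm{\tp{z-\pmb{\eta}_n}^{-1}}_{\mathcal{L}\tp{L^2}}\le1/\m{dist}\tp{z,\Bar{\m{img}\tp{\pmb{\eta}_n}}}$, which depends only on $\tnorm{\pmb{\eta}_n}_{L^\infty}$ and $\inf\tp{1+\be+\pmb{\eta}_n}$.

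The unweighted low-regularity estimate of Proposition~\ref{prop on solitary case unweighted a priori estimates} then yields $\sup_n\tnorm{\pmb{u}_n,\grad\pmb{\eta}_n}_{H^1\times H^1}<\infty$, at which point $\tcb{\pmb{\eta}_n}_{n\in\N}$ is uniformly bounded in $\bf{O}_\be^\ep\cap\tp{C^0\cap L^\infty\cap\dot{W}^{1,2}}\tp{\R^2}$; the second item of Proposition~\ref{prop on data map - solitary case} then promotes the forcing bound to $\sup_n\tnorm{\Phi_n}_{L^2_{\scriptscriptstyle1/2}}<\infty$. Feeding this into Proposition~\ref{prop on weighted a priori estimates version 1} and using the hypothesized bounds on $\tnorm{\eta_n}_{L^\infty}$, $\min\tcb{\tnorm{u_n}_{L^q},\tnorm{\eta_n}_{L^r}}$, and $1/\inf\tp{\tp{1+\be}^2+2\eta_n}$ delivers the desired uniform $H^2_{\scriptscriptstyle1/2}\times\tilde{H}^3_{\scriptscriptstyle1/2}$ bound and thus the contradiction. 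The main obstacle, just as in the periodic case, is the initial $H^{-1}$ control on $\Phi_n$ in terms of only $L^\infty$-level information on $\pmb{\eta}_n$; the solitary setting makes this genuinely more delicate since the data outputs live in weighted spaces on $\R^2$ rather than on the torus, so one must thread multiple weighted-to-unweighted embeddings through the Cauchy-integral/primitive-trick bookkeeping.
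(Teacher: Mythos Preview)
Your proposal is correct and follows essentially the same route as the paper: a contradiction argument that passes to the physical variables $\pmb{u}_n,\pmb{\eta}_n$, uses the primitive trick with $\tilde{\tau}$ and the Cauchy-integral identity~\eqref{the weirdo identity} to obtain uniform $H^{-1}$ control on $\Phi_n$ from $L^\infty$-level data, bootstraps through Proposition~\ref{prop on solitary case unweighted a priori estimates} to $H^1\times H^1$ control, then invokes the second item of Proposition~\ref{prop on data map - solitary case} to promote to $L^2_{\scriptscriptstyle1/2}$ forcing bounds before closing with Proposition~\ref{prop on weighted a priori estimates version 1}. The only cosmetic difference is that the paper takes $\ep=C^{-2}$ rather than $\ep=C^{-1}$, which is immaterial.
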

\begin{proof}
    The arguments for the limits $s\to\infty$ and $s\to-\infty$ are identical, so we only handle the former via the following contradiction argument. If the limit~\ref{the blowup quantity in the solitary case} were not true, then we could find a sequence $\tcb{s_n}_{n\in\N}\subset\R^+$ with $s_n\to\infty$ as $n\to\infty$ and a number $1<C<\infty$ such that upon setting $u_n=u(s_n)$, $\eta_n=\eta(s_n)$, $\kappa_n=\kappa\tp{s_n}$, and $\ep=C^{-2}\in\tp{0,1}$ we have the bound
    \begin{equation}\label{solitary contradition bounds}
        \sup_{n\in\N}\tsb{\tnorm{\eta_n}_{L^\infty}+\min\tcb{\tnorm{u_n}_{L^q},\tnorm{\eta_n}_{L^r}}+1/\inf\tp{\tp{1+\be}^2+2\eta_n}+|\kappa_n|}\le C.
    \end{equation}

    Due to Proposition~\ref{prop on maximally locally analytic curve of solutions} the solution curve $\mathscr{S}$ is a subset of $S$ from~\eqref{the set of solutions}; then, by the third item of Theorem~\ref{thm on unified fixed point reformulation} we find that the identity $\mathcal{Q}_\be\tp{u_n,\eta_n}=\kappa_n\mathcal{F}_\be\tp{\eta_n}$ is satisfied for every $n\in\N$. We are then led to study in conjunction the auxiliary variables $\tcb{\tp{\pmb{u}_n,\pmb{\eta}_n}}_{n\in\N}\subset H^2_{\scriptscriptstyle1/2}\tp{\R^2;\R^2}\times\bf{O}_\be^\ep\cap\tilde{H}^3_{\scriptscriptstyle1/2}\tp{\R^2}$
    \begin{equation}
        \pmb{u}_n=\tp{2\eta_n+\tp{1+\be}^2}^{-1/2}u_n,\quad\pmb{\eta}_n=\sqrt{2\eta_n+\tp{1+\be}^2}-\tp{1+\be},\quad\Phi_n=\mathcal{F}_\be\tp{\eta_n}
    \end{equation}
    which, thanks to Propositions~\ref{prop on data map - solitary case} and~\ref{prop on operator formulation solitary case} solve the system of equations
    \begin{equation}
        \begin{cases}
            \grad\cdot\tp{\tp{1+\be+\pmb{\eta}_n}\pmb{u}_n}=0,\\
            \grad\cdot\tp{\tp{1+\be+\pmb{\eta}_n}\pmb{u}_n\otimes\pmb{u}_n}+A\pmb{u}_n-\grad\cdot\tp{\tp{1+\be+\pmb{\eta}_n}\mathbb{S}\pmb{u}_n}+\tp{1+\be+\pmb{\eta}_n}\tp{G-\Delta}\grad\pmb{\eta}_n=\Phi_n,\\
            \Phi_n=\kappa_n\tsb{\Upomega_\phi\tp{\pmb{\eta}_n}+\grad\tp{\tp{1+\be+\pmb{\eta}_n}\Upomega_\psi\tp{\pmb{\eta}_n}}+\Upomega_\tau\tp{\pmb{\eta}_n}\grad\pmb{\eta}_n}.
        \end{cases}
    \end{equation}
    
    Our first task is to consider the consequences of the unweighted a priori estimates of Proposition~\ref{prop on solitary case unweighted a priori estimates} on the auxiliary variables. We then introduce the real analytic primitive 
    \begin{equation}
        \tilde{\tau}:(-a_\be,\infty)\to\tp{L^\infty_{\scriptscriptstyle1/2}\cap H^1_{\scriptscriptstyle1/2}}\tp{\R^2;\R^{2\times 2}_{\m{sym}}},\quad\tilde{\tau}(z)=\int_0^z\tau(\mathfrak{z})\;\m{d}\mathfrak{z}
    \end{equation}
    and argue as in Corollary~\ref{coro on blowup refinement - periodic case} (see~\eqref{the weirdo identity} and what follows) to see that the maps 
    \begin{equation}
        \Upomega_{\phi}:\bf{O}_\be^\ep\to L^2_{\scriptscriptstyle1/2}\tp{\R^2;\R^2},\quad\Upomega_\psi:\bf{O}^\ep_\be\to L^2_{\scriptscriptstyle1/2}\tp{\R^2},\quad\Upomega_{\tilde{\tau}}:\bf{O}_\be^\ep\to\tp{L^2_{\scriptscriptstyle1/2}\cap L^\infty_{\scriptscriptstyle1/2}}\tp{\R^2;\R^{2\times 2}_{\m{sym}}}
    \end{equation}
    send bounded sets to bounded sets and obey the identity
    \begin{equation}
        \Phi_n=\kappa_n\tsb{\Upomega_\phi(\pmb{\eta}_n)+\grad\tp{\tp{1+\be+\pmb{\eta}_n}\Upomega_\psi\tp{\pmb{\eta}_n}}+\grad\cdot\tp{\Upomega_{\tilde{\tau}}\tp{\pmb{\eta}_n}}-\Upomega_{\grad\cdot\tilde{\tau}}\tp{\pmb{\eta}_n}}.
    \end{equation}
    It then follows that $\sup_{n\in\N}\tnorm{\Phi_n}_{H^{-1}}\lesssim 1$ for an implicit constant depending only on $C$. We are then permitted to invoke the left a priori estimate of equation~\eqref{a priori bounds, solitary case} to deduce that $\sup_{n\in\N}\tnorm{\pmb{u}_n,\grad\pmb{\eta}_n}_{H^1\times H^1}\lesssim 1$. Now we combine the third item of Lemma~\ref{lem on free surface domsains, solitary case} with the second item of Proposition~\ref{prop on data map - solitary case} and these new bounds to deduce further that $\sup_{n\in\N}\tnorm{\Phi_n}_{L^2_{1/2}}\lesssim 1$.

    With these uniform forcing bounds in the weighted space, we have reached a position where Proposition~\ref{prop on weighted a priori estimates version 1} is in play. Since we also know that $\tcb{(u_n,\eta_n)}_{n\in\N}\subset{_{\m{sol}}}H^2_{\scriptscriptstyle1/2}\tp{\R^2;\R^2}\times\bf{U}_\be^\ep\cap\tilde{H}^{3}_{\scriptscriptstyle1/2}\tp{\R^2}$ and~\eqref{solitary contradition bounds} are true we can indeed invoke this result to learn that $\sup_{n\in\N}\tnorm{u_n,\eta_n}_{H^2_{1/2}\times\tilde{H}^3_{1/2}}\lesssim 1$. But now this is a contradiction of the limit~\eqref{the blowup quantity____} from Proposition~\ref{prop on maximally locally analytic curve of solutions}. Thus, our assumption that~\eqref{the blowup quantity in the solitary case} does not hold as $s\to\infty$ must be false, and the proof is complete.
\end{proof}

\section{Conclusions}\label{section on conclusions}

The purpose of this section is to complete the proofs of the main results, Theorems~\ref{z_MAIN_THEOREM_1}, \ref{z_MAIN_THEOREM_2}, \ref{z_MAIN_THEOREM_3}, and~\ref{z_MAIN_THEOREM_4}, from Section~\ref{section on main results and discussion}.

\subsection{Small solutions}\label{section on small solutions}

Our analysis thus far has been geared toward understanding the behavior of large solutions to system~\eqref{stationary nondimensionalized equations} with fixed forcing profile constituents $\phi$, $\psi$, and $\tau$ determining $\Phi$ as in~\eqref{percolate propane pulvarized paintbrush} while increasing the variable $\kappa$. It is also natural to study small solutions with small data and, as it turns out, our previous work is also capable of quickly felling the problem of developing such a theory. In this subsection we will consider $\kappa=1$ and study the data to solution map near zero.

We need to introduce some notation for the spaces of forcing data.  Let us fix an open and bounded rectangle $R\subset\C$ with sides parallel to the real and imaginary axes and center $0\in R$. Given a real-valued function space $\mathcal{Y}$, e.g. $\mathcal{Y}=L^2(\T^2_L;\R^2)$ or $\mathcal{Y}=L^2_{\scriptscriptstyle1/2}\tp{\R^2;\R^2}$, with complexification $\mathcal{Y}_{\C}$, we define the function space and norm
\begin{equation}\label{welcome to the}
    \pmb{A}\tp{\mathcal{Y}}=\tcb{\phi\in C^0\tp{\Bar{R};\mathcal{Y}_{\C}}\;:\;\phi\text{ holomorphic in }R \text{ and }\phi(\R\cap\Bar{R})\subset\mathcal{Y}},\quad\tnorm{\phi}_{\pmb{A}(\mathcal{Y})}=\sup_{z\in\Bar{R}}\tnorm{\phi(z)}_{\mathcal{Y}_{\C}}.
\end{equation}
A standard argument involving the Cauchy integral formula shows that space $\pmb{A}\tp{\mathcal{Y}}$ is Banach. We now make the following expansion of Definition~\ref{defn of periodic and solitary unification}. Let us denote the spaces $\pmb{Y}$, $\pmb{W}$, and $\pmb{Z}$ (in both the periodic and solitary cases) by
\begin{equation}\label{new kind of house cat}
    \pmb{Y}=\begin{cases}
        \pmb{A}\tp{L^2\tp{\T^2_L;\R^2}}\times\pmb{A}\tp{H^1\tp{\T^2_L}}\times\pmb{A}\tp{L^2\tp{\T^2_L;\R^{2\times 2}_{\m{sym}}}}&\text{in the }\T^2_L\text{ case},\\
        \pmb{A}\tp{L^2_{\scriptscriptstyle1/2}\tp{\R^2;\R^2}}\times\pmb{A}\tp{H^1_{\scriptscriptstyle1/2}\tp{\R^2}}\times\pmb{A}\tp{L^2_{\scriptscriptstyle1/2}\tp{\R^2;\R^{2\times 2}_{\m{sym}}}}&\text{in the }\R^2\text{ case},
        
    \end{cases}
\end{equation}
and
\begin{equation}
    \pmb{W}=\begin{cases}
        H^2\tp{\T^2_L;\R^2}\times \z{H}^3\tp{\T^2_L}&\text{in the }\T^2_L\text{ case},\\
        H^2_{\scriptscriptstyle1/2}\tp{\R^2;\R^2}\times\tilde{H}^3_{\scriptscriptstyle1/2}\tp{\R^2}&\text{in the }\R^2\text{ case},
    \end{cases}\quad\pmb{Z}=\begin{cases}
        L^2\tp{\T^2_L;\R^2}&\text{in the }\T^2_L\text{ case},\\
        L^2_{\scriptscriptstyle1/2}\tp{\R^2;\R^2}&\text{in the }\R^2\text{ case}.
    \end{cases}
\end{equation}
Note the similarity between the spaces $\pmb{W}$ and $\pmb{X}$ (where the latter are defined in~\eqref{the spaces for the solutions in a unified way}). Let us also fix $\iota\in\R^+$ sufficiently small so that the open set $B_{\pmb{W}}\tp{0,\iota}\subset\pmb{W}$ has the property that for all $\tp{u,\eta}\in B_{\pmb{W}}\tp{0,\iota}$ we have $1+\be+\eta>1/2$ and $\m{diam}\tp{\Bar{\m{img}\tp{\eta}}}<\m{diam}\tp{\Bar{R}\cap\R}/2$. Our last piece of notation we introduce is the following variable data map, which is a generalization of fixed data maps of Propositions~\ref{prop on data map, periodic case} and~\ref{prop on data map - solitary case}.

\begin{lem}[Variable data map]\label{lem on variable data map}
    The map $\mathcal{D}: B_{\pmb{W}}\tp{0,\iota} \times\pmb{Y}\to\pmb{Z}$ given via
    \begin{equation}\label{the formula for the data map function}
        \mathcal{D}\tp{\eta,\tp{\phi,\psi,\tau}}=\Upomega_\phi\tp{\eta}+\grad\tp{\tp{1+\be+\eta}\Upomega_\psi\tp{\eta}}+\Upomega_\tau\tp{\eta}\grad\eta
    \end{equation}
    is well-defined and real analytic.
\end{lem}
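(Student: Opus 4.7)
The strategy is to upgrade the fixed-data real analyticity of $\eta\mapsto\Upomega_\phi\tp{\eta}$ (from Propositions~\ref{prop on data map, periodic case} and~\ref{prop on data map - solitary case}) to joint real analyticity in both arguments by appealing directly to the Cauchy integral representation $\Upomega_\phi\tp{\eta}=\f{1}{2\pi\ii}\int_\gam\tp{z-\eta}^{-1}\phi\tp{z}\;\m{d}z$ (cf.~\eqref{this is an equation for us to later cite, mmmmkkkay buh byee}), in which $\tp{z-\eta}^{-1}$ lives in the admissible unital Banach algebra $\mathcal{B}$ associated to the case at hand: $\mathcal{B}=\tp{C^0\cap H^1}\tp{\T^2_L;\C}$ in the periodic setting (admissible by Proposition~\ref{prop on admissability in the periodic setting}) and $\mathcal{B}={_{\m{e}}}\tilde{H}^3_{\scriptscriptstyle 1/2}\tp{\R^2;\C}$ in the solitary setting (admissible by Proposition~\ref{prop on admissability of the extended weighted gradient spaces}). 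The first step is to construct a single rectifiable contour $\gam\subset R$ usable simultaneously for every $\eta\in B_{\pmb{W}}\tp{0,\iota}$: the smallness built into $\iota$ confines every $\Bar{\m{img}\tp{\eta}}$ into a common compact interval $K\subset R\cap\R$, so one takes $\gam$ to be the positively oriented boundary of some rectangle, symmetric about $\R$, whose open interior contains $K$ and whose closure is compactly included in $R$; Cauchy's theorem ensures the integral representation is unchanged by using this $\eta$-independent contour.

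Next I would factor $\tp{\eta,\phi}\mapsto\Upomega_\phi\tp{\eta}\in\mathcal{Y}_\C$ through four pieces, each either bounded linear or real analytic: the bounded linear evaluation $\pmb{A}\tp{\mathcal{Y}}\ni\phi\mapsto\phi\vert_\gam\in C^0\tp{\gam;\mathcal{Y}_\C}$, continuous by definition of the $\pmb{A}$-norm; the resolvent map $B_{\pmb{W}}\tp{0,\iota}\ni\eta\mapsto\sb{z\mapsto\tp{z-\eta}^{-1}}\in C^0\tp{\gam;\mathcal{B}}$, whose real analyticity follows from the uniform positive distance between $\gam$ and $K$ combined with the Neumann expansion $\tp{z-\eta}^{-1}=\tp{z-\eta_0}^{-1}\sum_{n\ge0}\tp{\tp{\eta-\eta_0}\tp{z-\eta_0}^{-1}}^n$ around any base point $\eta_0$ (the required resolvent bounds being supplied by admissibility); the bounded bilinear pairing $C^0\tp{\gam;\mathcal{B}}\times C^0\tp{\gam;\mathcal{Y}_\C}\to C^0\tp{\gam;\mathcal{Y}_\C}$ coming from the action of $\mathcal{B}$ on $\mathcal{Y}_\C$ by pointwise multiplication (Theorem~\ref{thm on high low product estimates} in the periodic case, and Propositions~\ref{prop on product estimates} and~\ref{prop on product estimates in weighted gradient spaces} in the solitary case); and the bounded linear contour integration $C^0\tp{\gam;\mathcal{Y}_\C}\to\mathcal{Y}_\C$. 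The symmetry of $\gam$ about $\R$, together with the realness condition $\phi\tp{R\cap\R}\subset\mathcal{Y}$, forces via Schwarz reflection the restriction of $\Upomega_\phi\tp{\eta}$ to real $\eta$ to take values in $\mathcal{Y}$.

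Once this is done for each of the three compositions $\Upomega_\phi\tp{\eta}$, $\Upomega_\psi\tp{\eta}$, and $\Upomega_\tau\tp{\eta}$, formula~\eqref{the formula for the data map function} assembles $\mathcal{D}$ using only operations that manifestly preserve real analyticity: vector addition, the affine map $\eta\mapsto 1+\be+\eta$, the bounded linear gradient, and continuous bounded bilinear pointwise products. The main obstacle I anticipate is the bookkeeping needed to certify that each product lands in the declared codomain $\pmb{Z}$. The periodic case is straightforward since $\z{H}^3\tp{\T^2_L}\emb L^\infty$ and Theorem~\ref{thm on high low product estimates} handles everything directly, but in the solitary setting the summand $\Upomega_\tau\tp{\eta}\grad\eta$ pairs $\Upomega_\tau\tp{\eta}\in L^2_{\scriptscriptstyle 1/2}\tp{\R^2;\R^{2\times2}_{\m{sym}}}$ with $\grad\eta\in H^2_{\scriptscriptstyle 1/2}\tp{\R^2;\R^2}$, requiring the embedding $H^2_{\scriptscriptstyle 1/2}\emb L^\infty_{\tilde\del}$ for a small $\tilde\del\in\tp{0,1/2}$ (Proposition~\ref{prop on equivalent norms and low mode embeddings}) and the weighted product bound $L^2_{\scriptscriptstyle 1/2}\cdot L^\infty_{\tilde\del}\emb L^2_{\scriptscriptstyle 1/2+\tilde\del}\emb L^2_{\scriptscriptstyle 1/2}$ (Propositions~\ref{prop on product estimates} and~\ref{prop on embeddings of weighted Sobolev spaces}); the middle summand $\grad\tp{\tp{1+\be+\eta}\Upomega_\psi\tp{\eta}}$ needs analogous care with the weighted $H^1_{\scriptscriptstyle 1/2}$ algebra structure before taking the gradient into $\pmb{Z}$.
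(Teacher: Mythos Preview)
Your approach is correct and is essentially the same as the paper's. The paper's proof is a one-liner invoking Corollary~\ref{corollary on meta analyticity for HFC} (joint analyticity of $(\Phi,x)\mapsto\Upomega_\Phi(x)$) together with simple product estimates; your first two paragraphs reconstruct exactly that corollary inline via the Cauchy integral representation and Neumann series, and your third paragraph is the ``simple product estimates'' part.
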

\begin{proof}
    This is a straightforward consequence of simple product estimates and Corollary~\ref{corollary on meta analyticity for HFC}.
\end{proof}

At last we arrive at our main result regarding small data well-posedness of system~\eqref{stationary nondimensionalized equations} with~\eqref{percolate propane pulvarized paintbrush} in both the periodic and solitary cases.

\begin{thm}[Theory of small solutions]\label{thm on theory of small solutions}
    There exists radii $r_{\pmb{W}},r_{\pmb{Y}}\in\R^+$ such that $r_{\pmb{W}}<\iota$ and $B_{\pmb{Y}}\tp{0,r_{\pmb{Y}}}\subset\pmb{Y}$ with the property that there exists a unique function $\Sampi:B_{\pmb{Y}}\tp{0,r_{\pmb{Y}}}\to B_{\pmb{W}}\tp{0,r_{\pmb{W}}}$ that satisfies $\Sampi\tp{0}=0$ and for all $(\phi,\psi,\tau)\in B_{\pmb{Y}}\tp{0,r_{\pmb{Y}}}$ the pair $(\pmb{u},\pmb{\eta})=\Sampi\tp{\phi,\psi,\tau}$ are a solution to the system of equations~\eqref{stationary nondimensionalized equations} with $\kappa=1$ and $\Phi=\mathcal{D}\tp{\eta,\tp{\phi,\psi,\tau}}$. Moreover, the mapping $\Sampi$ is real analytic.
\end{thm}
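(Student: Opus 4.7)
The plan is to apply the analytic implicit function theorem for Banach space maps at the origin, handling the periodic and solitary cases in parallel but with slightly different setups dictated by the operator formulations of Section~\ref{section on semilinearization}. For the periodic case I would define
\begin{equation*}
\mathscr{G}(u,\eta,\upsigma) = \mathcal{Q}_\be(u,\eta) - \bigl(0,\;(1+\be+\eta)^{-1}\mathcal{D}(\eta,\upsigma)\bigr),\qquad\upsigma=(\phi,\psi,\tau)\in\pmb{Y},
\end{equation*}
as a real analytic map from $B_{\pmb{W}}(0,\iota)\times\pmb{Y}$ into $\z{H}^1(\T^2_L)\times L^2(\T^2_L;\R^2)$; analyticity follows by combining Proposition~\ref{prop on operator formulation periodic case} with Lemma~\ref{lem on variable data map} and the analyticity of $\eta\mapsto(1+\be+\eta)^{-1}$ on $\bf{O}_\be$ (via Corollary~\ref{coro on analytic superposition real case}). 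By item (2) of Proposition~\ref{prop on operator formulation periodic case} (multiplying the momentum component by $1+\be+\eta$), a zero of $\mathscr{G}$ in $B_{\pmb{W}}(0,\iota)$ corresponds precisely to a solution $(\pmb{u},\pmb{\eta})=(u,\eta)$ of~\eqref{stationary nondimensionalized equations} with $\kappa=1$ and $\Phi=\mathcal{D}(\eta,\upsigma)$. For the solitary case I would instead pass through the discharge variable and the change of unknowns $\mathcal{T}_\be$ of Lemma~\ref{lem on free surface domsains, solitary case} and set
\begin{equation*}
\mathscr{G}(u,\eta,\upsigma) = \mathcal{Q}_\be(u,\eta) - \mathcal{D}\bigl(\mathcal{T}_\be^{-1}(\eta),\upsigma\bigr)
\end{equation*}
as a real analytic map from a neighborhood of $(0,0,0)$ in $\pmb{X}\times\pmb{Y}$ into $L^2_{\scriptscriptstyle1/2}(\R^2;\R^2)$; item (2) of Proposition~\ref{prop on operator formulation solitary case} then converts zeros back into solutions via $(\pmb{u},\pmb{\eta})=(u/\sqrt{2\eta+(1+\be)^2},\mathcal{T}_\be^{-1}(\eta))$.

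The decisive computation is the invertibility of $D_1\mathscr{G}(0,0)$. The data-dependent summand drops out at first order because $\Upomega_0\equiv 0$ forces $\mathcal{D}(\cdot,0)\equiv 0$, hence its $\eta$-derivative at the origin vanishes identically. The operator decompositions $\mathcal{Q}_\be=\mathcal{P}+\mathcal{L}_\be+\mathcal{N}_\be$ and $\mathcal{Q}_\be=\mathcal{P}_\be+\mathcal{L}_\be+\mathcal{N}_\be$ from Propositions~\ref{prop on operator decomposition periodic case} and~\ref{prop on operator decomposition - solitary case}, together with $\mathcal{N}_\be(0,0)=0$ and $D\mathcal{N}_\be(0,0)=0$, then reduce $D_1\mathscr{G}(0,0)$ to $\mathcal{P}+\mathcal{L}_\be$ in the periodic case and to $\mathcal{P}_\be+\mathcal{L}_\be$ in the solitary case. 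Each of these is a linear isomorphism onto the specified codomain of $\mathscr{G}$ by item (3) of Proposition~\ref{prop on linear analysis with bathymetry - periodic case} and item (3) of Proposition~\ref{prop on linear analysis with bathymetry solitary case}, respectively (in the solitary case one takes $\rho=1/2$ and any $\mu\in(\max\tcb{0,1/2-\del},1/2)$ using the weight $\del$ of~\eqref{solitary assumptions on the bathymetry}). The analytic implicit function theorem then yields, in each case, radii $r_{\pmb{W}},r_{\pmb{Y}}\in\R^+$ with $r_{\pmb{W}}<\iota$ and a unique real analytic solution map on $B_{\pmb{Y}}(0,r_{\pmb{Y}})$. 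In the solitary case I would finally postcompose with the map $(u,\eta)\mapsto(u/\sqrt{2\eta+(1+\be)^2},\mathcal{T}_\be^{-1}(\eta))$, real analytic into $\pmb{W}$ by Lemma~\ref{lem on free surface domsains, solitary case} and the product estimates of Proposition~\ref{prop on product estimates in weighted gradient spaces}, to produce the desired $\Sampi$.

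The main obstacle is not the IFT itself but the coordination in the solitary case between the space $\pmb{X}$ on which the semilinearization and linear analysis live (with the solenoidal constraint built in and the transformed free surface) and the target space $\pmb{W}$ appearing in the theorem statement. For uniqueness of $\Sampi$ in $\pmb{W}$ I would observe that any $(\pmb{u},\pmb{\eta})\in B_{\pmb{W}}(0,r_{\pmb{W}})$ solving the original system produces, via $u=(1+\be+\pmb{\eta})\pmb{u}$ and $\eta=\mathcal{T}_\be(\pmb{\eta})$, a small element of $\pmb{X}$ satisfying $\mathscr{G}=0$, which (after shrinking $r_{\pmb{W}}$ if necessary) lies inside the uniqueness ball furnished by the implicit function theorem, so that uniqueness transfers back. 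All remaining verifications---analyticity into the correct codomain, bounded mapping properties, and the vanishing identities at the origin---are routine applications of the product estimates and Nemytskii composition machinery developed in Sections~\ref{section on analysis in weighted spaces} and~\ref{subsection on functional framework and forcing}.
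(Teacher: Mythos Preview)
Your proposal is correct and follows essentially the same route as the paper. The only cosmetic difference is in the solitary case: the paper sets $\digamma((u,\eta),\upsigma)=\mathcal{Q}_\be(u,\mathcal{T}_\be(\eta))-\mathcal{D}(\eta,\upsigma)$ with $\eta$ playing the role of the original free surface (so that $D_1\digamma(0,0)(u,\eta)=(\mathcal{P}_\be+\mathcal{L}_\be)(u,(1+\be)\eta)$ and one must note that $\eta\mapsto(1+\be)\eta$ is an isomorphism), whereas you keep $\eta$ as the transformed variable and push $\mathcal{T}_\be^{-1}$ into the data term, obtaining $D_1\mathscr{G}(0,0)=\mathcal{P}_\be+\mathcal{L}_\be$ directly; the two are related by the bi-analytic change $\eta\leftrightarrow\mathcal{T}_\be(\eta)$ and your handling of the uniqueness transfer between $\pmb{X}$ and $\pmb{W}$ mirrors the paper's.
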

\begin{proof}
     We will verify these assertions via the implicit function theorem; however, the arguments in the periodic and solitary cases are slightly different, so we must break to cases.  For the periodic case we consider the mapping $\digamma:B_{\pmb{W}}\tp{0,\iota}\times\pmb{Y}\to \z{H}^1\tp{\T^2_L}\times\pmb{Z}$ given via
     \begin{equation}
         \digamma\tp{\tp{u,\eta},\tp{\phi,\psi,\tau}}=\mathcal{Q}_\be\tp{u,\eta}-\tp{0,\tp{1+\be+\eta}^{-1}\mathcal{D}\tp{\eta,\tp{\phi,\psi,\tau}}},
     \end{equation}
     where $\mathcal{Q}_\be$ is the operator from Proposition~\ref{prop on operator formulation periodic case}. That $\digamma$ is well-defined and real analytic is a consequence of the aforementioned result and Lemma~\ref{lem on variable data map}. Due to the decomposition of $\mathcal{Q}_\be$ from Proposition~\ref{prop on operator decomposition periodic case} we compute that $D_1\digamma(0,0)=\mathcal{P}+\mathcal{L}_\be$, where the latter two linear maps are studied in Proposition~\ref{prop on linear analysis with bathymetry - periodic case}. In particular, the final item of that result shows that $\mathcal{P}+\mathcal{L}_\be:\pmb{W}\to \z{H}^1\tp{\T^2_L}\times\pmb{Z}$ is an isomorphism. Therefore, the implicit function theorem applies for the map $\digamma$ and we obtain the claimed radii and the unique and analytic map $\Sampi$ satisfying $\digamma(\Sampi\tp{\phi,\psi,\tau},\tp{\phi,\psi,\tau})=0$ for all $\tp{\phi,\psi,\tau}\in B_{\pmb{Y}}\tp{0,r_{\pmb{Y}}}$. Thanks to $\mathcal{Q}_\be$ being an operator encoding of the left hand side of the PDE~\eqref{stationary nondimensionalized equations} we easily deduce that $\Sampi$ has the claimed solution operator properties.

     Next, we prove the result in the solitary case.  First recall that the space $\pmb{X}$ defined in~\eqref{the spaces for the solutions in a unified way} has the same norm as $\pmb{W}$ but enforces a diverge free condition on its first component; in particular, $B_{\pmb{W}}\tp{0,r}\cap\pmb X = B_{\pmb{X}}\tp{0,r}$ for $r >0$.  In the solitary case we use $\pmb{X}$ to initially define the operator  $\digamma: B_{\pmb{X}}\tp{0,\iota} \times\pmb{Y}\to\pmb{Z}$ given via
     \begin{equation}
         \digamma\tp{\tp{u,\eta},\tp{\phi,\psi,\tau}}=\mathcal{Q}_\be\tp{u,\mathcal{T}_\be\tp{\eta}}-\mathcal{D}\tp{\eta,\tp{\phi,\psi,\tau}}
     \end{equation}
     where $\mathcal{Q}_\be$ is the solitary encoding operator from Proposition~\ref{prop on operator formulation solitary case} and $\mathcal{T}_\be$ is the analytic change of unknowns from Lemma~\ref{lem on free surface domsains, solitary case}. These two results combine with Lemma~\ref{lem on variable data map} to show that $\digamma$ is well-defined and real analytic. We then compute (using the final item of Proposition~\ref{prop on operator decomposition - solitary case}) that $D_1\digamma\tp{0,0}(u,\eta)=\tp{\mathcal{P}_\be+\mathcal{L}_\be}\tp{u,(1+\be)\eta}$, so by Proposition~\ref{prop on linear analysis with bathymetry solitary case} we see that $D_1\digamma\tp{0,0}$ is the composition of invertible maps.  We have now verified the hypotheses of the implicit function theorem, so it grants us  $\tilde{r}_{\pmb{X}},\tilde{r}_{\pmb{Y}}\in\R^+$ with $\tilde{r}_{\pmb{X}} \le \iota$  and a unique and analytic mapping $\sampi:B_{\pmb{Y}}\tp{0,\tilde{r}_{\pmb{Y}}}\to B_{\pmb{X}}\tp{0,\tilde{r}_{\pmb{X}}}$ with the property that for all $(\phi,\psi,\tau)\in B_{\pmb{Y}}\tp{0,\tilde{r}_{\pmb{Y}}}$ we have $\digamma\tp{\sampi\tp{\phi,\psi,\tau},\tp{\phi,\psi,\tau}}=0$. We now define the map $\Sampi:B_{\pmb{Y}}\tp{0,\tilde{r}_{\pmb{Y}}}\to B_{\pmb{W}}\tp{0,\iota}$ via $\Sampi\tp{\phi,\psi,\tau}=\tp{\pmb{u},\pmb{\eta}}$ for $\pmb{u}=\tp{1+\be+\eta}^{-1}u$ and $\pmb{\eta}=\eta$ where $(u,\eta)=\sampi\tp{\phi,\psi,\tau}$. By the properties of $\mathcal{Q}_\be$ and $\mathcal{T}_\be$ we deduce that $\Sampi$ is indeed a solution operator to the PDE~\eqref{stationary nondimensionalized equations}; moreover, by restricting its domain and codomain as appropriate balls $B_{\pmb{Y}}\tp{0,r_{\pmb{Y}}}$ and $B_{\pmb{W}}\tp{0,r_{\pmb{W}}}\subset B_{\pmb{W}}\tp{0,\iota}$ it is seen to inherit the uniqueness assertion from $\sampi$. Finally, $\Sampi$ is real analytic as it is the composition of real analytic maps.
\end{proof}

\subsection{Large solutions}\label{section on large solutions}

Our goal here is to summarize and recontextualize the results of Section~\ref{section on large solutions and blow up criteria}. Previously, we have handled the solitary case of system~\eqref{stationary nondimensionalized equations} through the change of unknowns described in, e.g., equation~\eqref{these are the variables to which we shall later change back}. It is important to restate our results in terms of the natural variables of the parent system. We do this in a unified fashion for both the periodic and solitary cases.

Given any choice of real analytic data map constituents (as in Propositions~\ref{prop on data map, periodic case} and~\ref{prop on data map - solitary case})
\begin{equation}
   \kkappa=(\phi,\psi,\tau):(-a_\be,\infty)\to\begin{cases}
        L^2\tp{\T^2_L;\R^2}\times H^1\tp{\T^2_L}\times\tp{L^\infty\cap H^1}\tp{\T^2_L;\R^{2\times 2}_{\m{sym}}},\\
        L^2_{\scriptscriptstyle1/2}\tp{\R^2;\R^2}\times H^1_{\scriptscriptstyle1/2}\tp{\R^2}\times\tp{L^\infty_{\scriptscriptstyle1/2}\cap H^1_{\scriptscriptstyle1/2}}\tp{\R^2;\R^{2\times 2}_{\m{sym}}}
    \end{cases}
\end{equation}
we form the corresponding solution space (analogous to~\eqref{the set of solutions})
\begin{equation}\label{another set of solutions}
    S_{\kkappa}=\tcb{(\pmb{u},\pmb{\eta},\kappa)\in\pmb{W}\times\R\;:\;\inf\tp{1+\be+\pmb{\eta}}>0,\;\text{system~\eqref{stationary nondimensionalized equations} is satisfied by the triple with }\Phi=\mathcal{D}\tp{\pmb{\eta},\kkappa}},
\end{equation}
where we have extended the definition of $\mathcal{D}$ as in~\eqref{the formula for the data map function} to act on such $\pmb{\eta}$ and $\kkappa$. Notice that there exists some rectangle $Q$ as in the definitions~\eqref{welcome to the} and~\eqref{new kind of house cat} such that the restriction of the unique maximal analytic extension satisfies $\kkappa\in\pmb{Y}$. It is in this sense in which the following result discusses the nature of the solution operator from Theorem~\ref{thm on theory of small solutions} past its local definition. We shall show that the set~\eqref{another set of solutions} either contains solutions with arbitrarily large forcing strength, free surfaces arbitrarily close to touching the bottom, or arbitrarily `large' solutions. In the periodic case we can take `large' to mean the maximum height of the free surface while in the solitary case `large' refers to the sum of this aforementioned quantity with a quantification of the solution's locality.  

\begin{thm}[Theory of large solutions]\label{thm on theory of large solutions}
    There exists a locally analytic curve $\mathscr{S}_{\kkappa}\subseteq S_{\kkappa}$ with $0\in\mathscr{S}_\kkappa$ that is parametrized by a continuous mapping $\R\ni s\mapsto\tp{\pmb{u}(s),\pmb{\eta}(s),\kappa(s)}\in\mathscr{S}_\kkappa$ and satisfies $0\mapsto 0$ and for every choice of $q\in(4/3,2)$ and $r\in[4,\infty)$ 
    \begin{equation}\label{limit options periodic and solitary cases}
        \begin{cases}
            \lim_{|s|\to\infty}\tsb{\tnorm{\pmb{\eta}(s)}_{L^\infty}+\tp{\inf\tp{1+\be+\pmb{\eta}(s)}}^{-1}+|\kappa(s)|}=\infty&\text{in the }\T^2_L\text{ case},\\
            \lim_{|s|\to\infty}\tsb{\tnorm{\pmb{\eta}(s)}_{L^\infty}+\min\tcb{\tnorm{\pmb{u}(s)}_{L^q},\tnorm{\pmb{\eta}\tp{s}}_{L^r}}+\tp{\inf\tp{1+\be+\pmb{\eta}(s)}}^{-1}+|\kappa(s)|}=\infty&\text{in the }\R^2\text{ case}.
        \end{cases}
    \end{equation}
\end{thm}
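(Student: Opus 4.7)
The plan is to assemble the curve guaranteed by Proposition~\ref{prop on maximally locally analytic curve of solutions} in the abstract fixed point formulation, translate it into the natural velocity and free surface variables of~\eqref{stationary nondimensionalized equations}, and then convert the abstract blowup refinements of Corollaries~\ref{coro on blowup refinement - periodic case} and~\ref{coro on blowup refinement solitary case} into the statements on the right side of~\eqref{limit options periodic and solitary cases}. Concretely, Proposition~\ref{prop on maximally locally analytic curve of solutions} produces a locally analytic curve $\mathscr{S}\subseteq S$ with continuous parametrization $\R\ni s\mapsto (x(s),\kappa(s))=\tp{\tp{u(s),\eta(s)},\kappa(s)}\in\pmb{X}\times\R$ passing through the origin. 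The third item of Theorem~\ref{thm on unified fixed point reformulation} identifies these zeros with solutions to the operator equations $\mathcal{Q}_\be(u,\eta)=\kappa\tp{0,\mathcal{F}_\be(\eta)}$ (periodic) or $\mathcal{Q}_\be(u,\eta)=\kappa\mathcal{F}_\be(\eta)$ (solitary), and Propositions~\ref{prop on operator formulation periodic case} and~\ref{prop on operator formulation solitary case} in turn equate these with genuine solutions to~\eqref{stationary nondimensionalized equations} with $\Phi$ given as in~\eqref{percolate propane pulvarized paintbrush}. Together with Propositions~\ref{prop on data map, periodic case} and~\ref{prop on data map - solitary case}, which identify $\kappa\tp{1+\be+\eta}\mathcal{F}_\be(\eta)$ and $\kappa\mathcal{F}_\be(\eta)$ respectively with $\mathcal{D}(\pmb{\eta},\kkappa)$ (with $\kkappa$ the chosen analytic triple), this places the translated triples $(\pmb{u}(s),\pmb{\eta}(s),\kappa(s))$ inside $S_\kkappa$.

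In the periodic case the translation is the identity: $(\pmb{u}(s),\pmb{\eta}(s))=(u(s),\eta(s))$, so $\mathscr{S}_\kkappa$ is just the image of $\mathscr{S}$ under the embedding into $\pmb{W}\times\R$, and the limit~\eqref{limit options periodic and solitary cases} in the torus case is literally the conclusion of Corollary~\ref{coro on blowup refinement - periodic case}. Local real analyticity transfers for free, since the change of variables is linear.

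In the solitary case the translation uses the nonlinear bi-analytic map $\mathcal{T}_\be^{-1}$ from Lemma~\ref{lem on free surface domsains, solitary case}: set $\pmb{\eta}(s)=\mathcal{T}_\be^{-1}(\eta(s))=\sqrt{\tp{1+\be}^2+2\eta(s)}-(1+\be)$ and $\pmb{u}(s)=\tp{\tp{1+\be}^2+2\eta(s)}^{-1/2}u(s)$. Continuity and local real analyticity of $s\mapsto(\pmb{u}(s),\pmb{\eta}(s),\kappa(s))$ into $\pmb{W}\times\R$ follow by composing the local real analytic parametrization of $\mathscr{S}$ with the analyticity properties established for $\mathcal{T}_\be^{-1}$ and the pointwise quotient $u\mapsto u/\sqrt{\cdot}$ in Lemma~\ref{lem on free surface domsains, solitary case} and Proposition~\ref{prop on product estimates in weighted gradient spaces}. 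The key algebraic identities are
\begin{equation}
    1+\be+\pmb{\eta}(s)=\sqrt{\tp{1+\be}^2+2\eta(s)},\quad \pmb{\eta}(s)=\frac{2\eta(s)}{\sqrt{\tp{1+\be}^2+2\eta(s)}+(1+\be)},\quad \pmb{u}(s)=\frac{u(s)}{1+\be+\pmb{\eta}(s)},
\end{equation}
which together imply that $\inf(1+\be+\pmb{\eta}(s))^{-1}$ is finite iff $\inf\tp{\tp{1+\be}^2+2\eta(s)}^{-1}$ is, and that a uniform two-sided bound on $1+\be+\pmb{\eta}(s)$ gives pointwise equivalence of $|\pmb{u}(s)|$ with $|u(s)|$ and of $|\pmb{\eta}(s)|$ with $|\eta(s)|$.

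The only genuine point of work, and hence the main (still routine) obstacle, is to verify that if the left side of~\eqref{limit options periodic and solitary cases} stays bounded along some sequence $s_n\to\pm\infty$, then so does the blowup quantity in~\eqref{the blowup quantity in the solitary case} from Corollary~\ref{coro on blowup refinement solitary case}, producing a contradiction. I would carry this out as follows: a uniform upper bound on $\tnorm{\pmb{\eta}(s_n)}_{L^\infty}$ combined with a uniform lower bound on $\inf(1+\be+\pmb{\eta}(s_n))$ yields simultaneous two-sided bounds on the depth function $\sqrt{\tp{1+\be}^2+2\eta(s_n)}$, hence a uniform upper bound on $\tnorm{\eta(s_n)}_{L^\infty}$ and a uniform lower bound on $\inf\tp{\tp{1+\be}^2+2\eta(s_n)}$; the pointwise equivalences then transfer the uniform bound on $\min\tcb{\tnorm{\pmb{u}(s_n)}_{L^q},\tnorm{\pmb{\eta}(s_n)}_{L^r}}$ to $\min\tcb{\tnorm{u(s_n)}_{L^q},\tnorm{\eta(s_n)}_{L^r}}$, while the control on $|\kappa(s_n)|$ is unchanged. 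This contradicts~\eqref{the blowup quantity in the solitary case}, completing the argument.
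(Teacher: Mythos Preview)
Your proposal is correct and follows essentially the same route as the paper: the periodic case is an immediate concatenation of Proposition~\ref{prop on maximally locally analytic curve of solutions} with Corollary~\ref{coro on blowup refinement - periodic case}, while in the solitary case you define the curve in the physical variables via $\pmb{\eta}(s)=\mathcal{T}_\be^{-1}(\eta(s))$ and $\pmb{u}(s)=(1+\be+\pmb{\eta}(s))^{-1}u(s)$, and then argue by contradiction that a bounded subsequence in~\eqref{limit options periodic and solitary cases} would, via the inverse transformation and the pointwise equivalences you record, yield a bounded subsequence violating Corollary~\ref{coro on blowup refinement solitary case}. This matches the paper's argument essentially step for step.
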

\begin{proof}
    In the periodic case the result follows immediately from combining Proposition~\ref{prop on maximally locally analytic curve of solutions} and Corollary~\ref{coro on blowup refinement - periodic case}.  It remains to handle the solitary case. 
    
    Proposition~\ref{prop on maximally locally analytic curve of solutions} grants us a continuous and locally analytic solution curve $\R\ni s\mapsto(u(s),\eta(s))\in{_{\m{sol}}}H^2_{\scriptscriptstyle1/2}\tp{\R^2;\R^2}\times\bf{U}_\be\cap\tilde{H}^3_{\scriptscriptstyle1/2}\tp{\R^2}$ which (according to Theorem~\ref{thm on unified fixed point reformulation}) satisfies $\mathcal{Q}_\be\tp{u(s),\eta(s)}=\kappa(s)\mathcal{F}_\be\tp{\eta(s)}$ (where $\mathcal{F}_\be$ and $\mathcal{Q}_\be$ are defined in Propositions~\ref{prop on data map - solitary case} and~\ref{prop on operator formulation solitary case}) and (thanks to Corollary~\ref{coro on blowup refinement solitary case}) obeys the limits~\eqref{the blowup quantity in the solitary case}. We define the new curve $\R\ni s\mapsto\tp{\pmb{u}(s),\pmb{\eta}(s)}\in H^2_{\scriptscriptstyle1/2}\tp{\R^2;\R^2}\times\bf{O}_\be\cap\tilde{H}^3_{\scriptscriptstyle1/2}\tp{\R^2}$ via
    \begin{equation}\label{we've seen this before}
        \pmb{u}(s)=\tp{2\eta(s)+\tp{1+\be}^2}^{-1/2}u(s),\quad\pmb{\eta}(s)=\sqrt{2\eta(s)+\tp{1+\be}^2}-\tp{1+\be}
    \end{equation}
    and note that this is also manifestly continuous and locally analytic. Due to the third item of Proposition~\ref{prop on operator formulation solitary case} and the definition of the map $\mathcal{F}_\be$ from~\eqref{what even is helicity} it is evident that for each $s\in\R$ we have $(\pmb{u}(s),\pmb{\eta}(s))\in S_{\kkappa}$ where we recall that the latter set is defined in~\eqref{another set of solutions}.

    To complete the proof we only need to verify that the second limit in~\eqref{limit options periodic and solitary cases} holds along this curve.  We examine the inverse identities of~\eqref{we've seen this before}, namely:
    \begin{equation}\label{inverse transformation}
        u(s)=\tp{1+\be+\pmb{\eta}(s)}\pmb{u}(s),\quad\eta(s)=\tp{\tp{1+\be+\pmb{\eta}(s)}^2-\tp{1+\be}^2}/2.
    \end{equation}
    Now, if the claimed limit were false then we could find $\tcb{s_n}_{n\in\N}\subset\R$  with $s_n\to\pm\infty$ as $n\to\infty$ with
    \begin{equation}
        \sup_{n\in\N}\tsb{\tnorm{\pmb{\eta}(s_n)}_{L^\infty}+\min\tcb{\tnorm{\pmb{u}(s_n)}_{L^q},\tnorm{\pmb{\eta}(s_n)}_{L^r}}+\tp{\inf\tp{1+\be+\pmb{\eta}\tp{s_n}}}^{-1}+|\kappa(s_n)|}<\infty.
    \end{equation}
    We may then insert this information into~\eqref{inverse transformation} to contradict the limit from Corollary~\ref{coro on blowup refinement solitary case}.  Thus, our assumption that the claimed limit does not hold is false.  
\end{proof}

\appendix
\section{Tools from analysis}\label{appendix on tools from analysis}
We record here important results from analysis that are used throughout the paper.
\subsection{Analytic composition in unital Banach algebras}\label{appendix on analytic composition in unital Banach algebras}
We begin by setting some notation. Recall that if $\mathcal{X}$ is a unital Banach algebra over $\C$ (with unit $1$) then the spectrum of an element $x\in\mathcal{X}$ is the set
\begin{equation}\label{Definition of the spectrum}
    \m{spec}(x)=\tcb{\lambda\in\C\;:\;\lambda-x\text{ is not invertible}}.
\end{equation}
The set $\m{spec}\tp{x}$ is nonempty and compact, and on the complement the resolvent map
\begin{equation}\label{The resolvent map}
   \C\setminus\m{spec}\tp{x}\ni\lambda\mapsto\tp{\lambda - x}^{-1}\in\mathcal{X}
\end{equation}
is holomorphic (see, for example, Theorem 1.4, Lemma 1.5, and Proposition 1.6 in Folland~\cite{MR3444405}). Given an open set $U\subseteq\C$ we let
\begin{equation}\label{The domain in the BA}
    \mathcal{X}\tp{U} = \tcb{x\in\mathcal{X}\;:\;\m{spec}\tp{x}\subset U}.
\end{equation}
The set $\mathcal{X}\tp{U}\subseteq\mathcal{X}$ is open, as shown, for instance, in Theorem 10.20 in Rudin~\cite{MR1157815}.

The main abstract tool of this appendix is the holomorphic functional calculus, which is otherwise known as the Dunford operational calculus. We record the form of these ideas that are most useful to us in the following result. We write $C^\omega\tp{\mathfrak{A};\mathfrak{B}}$ to denote the space of complex analytic mappings defined on $\mathfrak{A}$ with values in $\mathfrak{B}$.

\begin{thm}[Holomorphic functional calculus]\label{thm on holomorphic functional calculus}
    Suppose that $\mathcal{X}$ is a unital Banach algebra and $\mathcal{Y}$ is a Banach space, both over $\C$. Suppose further that we have an embedding of unital Banach algebras $\mathcal{X}\emb\mathcal{L}\tp{\mathcal{Y}}$. Let $\es\neq U\subset\C$ be open and simply connected. Then there exists a unique linear map $\Upomega:C^\omega\tp{U;\mathcal{Y}}\to C^\omega\tp{\mathcal{X}\tp{U};\mathcal{Y}}$, written $\Phi\mapsto\Upomega_\Phi$, obeying the following properties.
    \begin{enumerate}
        \item If $P(z) = \sum_{n=0}^kY_n z^n$ for $z\in U$ and $\tcb{Y_n}_{n=0}^k\subset\mathcal{Y}$, then
        \begin{equation}\label{The agreement with polynomials property of the HFC}
            \Upomega_P(x) = \sum_{n=0}^k x^n Y_n \text{ for } x\in\mathcal{X}\tp{U}.
        \end{equation}

        \item For all $x\in\mathcal{X}\tp{U}$ there exists a constant $C_x<\infty$ and a compact set $\es\neq K_x\subset U$ such that for all $\Phi\in C^\omega\tp{U;\mathcal{Y}}$ we have the estimate
        \begin{equation}\label{The continuity property of the HFC}
            \tnorm{\Upomega_\Phi\tp{x}}_{\mathcal{Y}}\le C_x\max_{z\in K_x}\tnorm{\Phi(z)}_{\mathcal{Y}}.
        \end{equation}
    \end{enumerate}
\end{thm}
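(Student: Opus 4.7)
The plan is to construct $\Upomega$ explicitly via a Dunford-type contour integral, verify the two stated properties, and prove uniqueness through a density argument based on the vector-valued Runge theorem.

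For $x \in \mathcal{X}\tp{U}$ and $\Phi \in C^\omega\tp{U;\mathcal{Y}}$, I shall set
\begin{equation}\label{Dunford integral}
    \Upomega_\Phi\tp{x} = \f{1}{2\pi\ii}\int_{\gamma_x}\tp{z-x}^{-1}\Phi\tp{z}\;\m{d}z,
\end{equation}
where the integrand is $\mathcal{Y}$-valued, obtained by letting $\tp{z-x}^{-1}\in\mathcal{X}\emb\mathcal{L}\tp{\mathcal{Y}}$ act on $\Phi\tp{z}\in\mathcal{Y}$, and $\gamma_x$ is a finite cycle in $U\setminus\m{spec}\tp{x}$ whose winding number about each point of $\m{spec}\tp{x}$ equals one. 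Such cycles exist because $\m{spec}\tp{x}$ is compact in the open set $U$ (e.g., take the boundary of a finite union of small closed disks about $\m{spec}\tp{x}$ contained in $U$), and their precise choice is immaterial: the integrand is $\mathcal{Y}$-valued holomorphic on $U\setminus\m{spec}\tp{x}$, and any two admissible cycles differ by one nullhomologous in this region thanks to simple connectedness of $U$, so contour independence follows from the $\mathcal{Y}$-valued Cauchy theorem.

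Given the definition, property (2) is immediate from the ML-inequality with $K_x=\m{img}\tp{\gamma_x}$ and $C_x=\tp{2\pi}^{-1}\m{length}\tp{\gamma_x}\sup_{z\in\gamma_x}\tnorm{\tp{z-x}^{-1}}_{\mathcal{L}\tp{\mathcal{Y}}}$. For (1), linearity reduces matters to a monomial $P\tp{z}=Yz^n$; since $\tp{z-x}^{-1}Yz^n$ is holomorphic on all of $\C\setminus\m{spec}\tp{x}$ rather than only on $U\setminus\m{spec}\tp{x}$, the contour may be deformed to a large circle $|z|=R>\tnorm{x}_\mathcal{X}$, on which $\tp{z-x}^{-1}$ admits the convergent Neumann series $\sum_{k\ge0}z^{-k-1}x^k$ in $\mathcal{X}$; termwise integration leaves only the $k=n$ term and yields $\Upomega_P\tp{x}=x^nY$. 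Analyticity of $y\mapsto\Upomega_\Phi\tp{y}$ on $\mathcal{X}\tp{U}$ follows by fixing $\gamma_x$ in a neighborhood of $x$ (admissible for nearby $y$ by upper semicontinuity of the spectrum) and differentiating under the integral, using analyticity of the resolvent $y\mapsto\tp{z-y}^{-1}$ into $\mathcal{X}$.

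For uniqueness, let $\tilde\Upomega$ be another map satisfying (1) and (2), fix $x\in\mathcal{X}\tp{U}$ and $\Phi\in C^\omega\tp{U;\mathcal{Y}}$, and let $L\subset U$ be a compact set containing both compacta appearing in (2) at $x$. The difference $\Upomega_\Phi\tp{x}-\tilde\Upomega_\Phi\tp{x}$ annihilates all $\mathcal{Y}$-valued polynomials in $\Phi$ and is bounded continuously by $\max_L\tnorm{\Phi}_{\mathcal{Y}}$, so the desired equality reduces to the vector-valued Runge theorem: $\mathcal{Y}$-valued polynomials are dense in $C^\omega\tp{U;\mathcal{Y}}$ in the compact-open topology when $U$ is simply connected. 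I would prove this by choosing a contour $\Gamma\subset U$ surrounding $L$, representing $\Phi\tp{z}=\tp{2\pi\ii}^{-1}\int_\Gamma\Phi\tp{w}\tp{w-z}^{-1}\;\m{d}w$ for $z\in L$, and approximating each scalar kernel $\tp{w-z}^{-1}$ by polynomials in $z$ uniformly on $L$ via the classical scalar Runge theorem; a compactness-and-equicontinuity argument makes the approximation uniform in $w\in\Gamma$, and integrating yields $\mathcal{Y}$-valued polynomial approximations of $\Phi$ on $L$. I anticipate this vector-valued Runge step to be the main technical obstacle, as the remainder is essentially bookkeeping in the Banach-algebraic Cauchy calculus.
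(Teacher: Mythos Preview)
Your proposal is correct and follows essentially the same approach as the paper: the Dunford contour integral for existence, the ML-estimate for property (2), deformation to a large circle plus Neumann series for property (1), and the vector-valued Runge theorem for uniqueness. The only minor technical difference is in the contour construction---the paper uses the Riemann mapping $\Psi:B(0,1)\to U$ and takes $\Gamma_\mu=\Psi(\partial B(0,\mu))$, which yields a canonical one-parameter family of simple closed contours (a convenience exploited later in the paper), whereas you use ad hoc cycles; both are perfectly adequate for the present theorem.
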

\begin{proof}
    Let us begin by proving uniqueness. Suppose that $\Upomega^i$, for $i\in\tcb{1,2}$, are linear maps $C^\omega\tp{U;\mathcal{Y}}\to C^\omega\tp{\mathcal{X}\tp{U};\mathcal{Y}}$ satisfying the first and second items above. We shall establish that for all $\Phi\in C^\omega\tp{U;\mathcal{Y}}$ and all $x\in\mathcal{X}\tp{U}$ we have the equality $\Upomega^1_\Phi\tp{x} = \Upomega^2_\Phi\tp{x}$. To this end, fix such $x$ and $\Phi$. Due to the second item's estimate~\eqref{The continuity property of the HFC} we may select a constant $C_x<\infty$ and a compact set $K_x\subset U$ which work for both of the $\Upomega^i$.

    The domain $U$ is simply connected, so by the $\mathcal{Y}$-valued variant of Runge's theorem (the proof of the standard $\C$-valued version in Chapter 12 of Remmert~\cite{MR1483074} also works for $\mathcal{Y}$-valued maps once one establishes the Cauchy integral formula for these) there exists a sequence of polynomials $\tcb{P_\ell}_{\ell=0}^\infty$ mapping $\C\to\mathcal{Y}$ that satisfy $\lim_{\ell\to\infty}\max_{z\in K_x}\tnorm{P_\ell(z) - \Phi\tp{z}}_{\mathcal{Y}}=0$. Due to the first item's equation~\eqref{The agreement with polynomials property of the HFC} we must have that $\Upomega_{P_\ell}^1\tp{x} = \Upomega_{P_\ell}^2\tp{x}$ for every $\ell\in\N$. In turn, we may use the second item and linearity of $\Upomega$ to estimate
    \begin{equation}
        \tnorm{\Upomega^1_\Phi\tp{x} - \Upomega^2_\Phi\tp{x}}_{\mathcal{Y}}\le\tnorm{\Upomega^1_\Phi\tp{x} - \Upomega_{P_\ell}^1\tp{x}}_{\mathcal{Y}} + \tnorm{\Upomega^2_{P_\ell}\tp{x} - \Upomega^2_{\Phi}\tp{x}}_{\mathcal{Y}}\le 2 C_x\max_{z\in K_x}\tnorm{P_\ell\tp{z} - \Phi\tp{z}}_{\mathcal{Y}}.
    \end{equation}
    Sending $\ell\to\infty$ completes the proof of uniqueness.

    Next, we turn our attention to the existence of the map $\Upomega$. The idea is based on the Cauchy integral formula, so we first need a family of compact contours. By the Riemann mapping theorem there exists a biholomorphic function $\Psi:B(0,1)\to U$. Given any $\mu\in\tp{0,1}$ we then let $\Gamma_\mu\subset U$ be the compact, counterclockwise oriented contour whose image agrees with $\Psi\tp{\pd B(0,\mu)}$. If $x\in\mathcal{X}\tp{U}$ then there exists $\mu_x\in(0,1)$ such that $\m{spec}\tp{x}\subset \Bar{\Psi\tp{B(0,\mu_x)}}$; we then define $\Upomega$ via the Bochner contour integral
    \begin{equation}\label{Dunford integral}
        \Upomega_\Phi\tp{x} = \f{1}{2\pi\ii} \int_{\Gamma_{\mu_x}}\tp{z - x}^{-1}\Phi\tp{z}\;\m{d}z,\quad\Phi\in C^\omega\tp{U;\mathcal{Y}}.
    \end{equation}
    By~\eqref{The resolvent map} the integrand of~\eqref{Dunford integral} is a $\mathcal{Y}$-valued holomorphic function on the domain $U\setminus\m{spec}\tp{x}$. Therefore the choice of the contour $\Gamma_{\mu_x}$ is not so important in~\eqref{Dunford integral}, as the contour integral has the same value when taken over any simple closed curve winding once around $\m{spec}\tp{x}$ within $U$ (see, e.g., Chapter VII of Dunford and Schwartz~\cite{MR117523} for arguments in a similar context that may establish this fact).

    Armed with the definition~\eqref{Dunford integral}, the estimate of the second item holds by simply taking the norm of the integral with
    \begin{equation}\label{the stuffy mcDuffy}
        K_x = \Gamma_{\mu_x} \text{ and } C_x = \tilde{C}\cdot \m{length}\tp{\Gamma_{\mu_x}}\cdot\max_{z\in\Gamma_{\mu_x}}\tnorm{\tp{z - x}^{-1}}_{\mathcal{X}},
    \end{equation}
    where $\tilde{C}$ is the embedding constant for $\mathcal{X}\emb\mathcal{L}\tp{Y}$.

    To prove that $\Upomega$ takes values in $C^\omega\tp{\mathcal{X}\tp{U};\mathcal{Y}}$ we may argue in a standard way by expanding into power series (see, e.g., Proposition 10 in Section 4 of Chapter 1 in Bourbaki~\cite{MR4301385}) to prove that for each $\Phi$, the map $x\mapsto \Upomega_\Phi\tp{x}$ is analytic.

    To complete the proof, we need establish~\eqref{The agreement with polynomials property of the HFC} for any polynomial $P:\C\to\mathcal{Y}$. This fact also follows from classical arguments; we refer the reader to, for example, the proof of Theorem 10.25 in Rudin~\cite{MR1157815}.
\end{proof}

It will occasionally be useful to examine the holomorphic functional calculus $\Upomega$ when drawing the holomorphic maps $\Phi$ from a Banach space. Given an open and simply connected set $\es\neq U\subset\C$ and a Banach space $\mathcal{Y}$ we note that as a consequence of the Cauchy integral formula the normed function space
\begin{equation}\label{normed space of analytic}
    \pmb{A}\tp{\Bar{U};\mathcal{Y}} = C^0\tp{\Bar{U};\mathcal{Y}}\cap C^\omega\tp{U;\mathcal{Y}} \text{ with norm } \tnorm{\Phi}_{\pmb{A}} =  \sup_{z\in\Bar{U}}\tnorm{\Phi(z)}_{\mathcal{Y}}
\end{equation}
is complete.

\begin{coro}[Joint analyticity]\label{corollary on meta analyticity for HFC}
    Under the hypotheses of Theorem~\ref{thm on holomorphic functional calculus} the induced map $\pmb{A}\tp{\Bar{U};\mathcal{Y}}\times\mathcal{X}\tp{U} \ni \tp{\Phi,x}\mapsto\Upomega_{\Phi}\tp{x} \in \mathcal{Y}$ is analytic.
\end{coro}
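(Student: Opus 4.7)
The plan is to establish joint analyticity at an arbitrary base point $(\Phi_0, x_0) \in \pmb{A}\tp{\Bar{U};\mathcal{Y}} \times \mathcal{X}\tp{U}$ by constructing a convergent power series expansion via the Dunford integral~\eqref{Dunford integral}. The key observation is that, with a single contour $\Gamma$ held fixed throughout a neighborhood of $x_0$, linearity in $\Phi$ together with a Neumann series expansion of $(z - x)^{-1}$ around $x_0$ directly supplies such a power series.

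First I would select a contour that works uniformly near $x_0$: choose $\mu \in (\mu_{x_0}, 1)$ and set $\Gamma = \Gamma_\mu$, a compact contour within $U$ encircling $\m{spec}(x_0)$. By continuity of the resolvent on $\Gamma$, the quantity $M = \max_{z \in \Gamma}\tnorm{(z - x_0)^{-1}}_{\mathcal{X}}$ is finite. Whenever $\tnorm{x - x_0}_{\mathcal{X}} < 1/(2M)$, the factorization $z - x = (z - x_0)\tp{I - (x - x_0)(z - x_0)^{-1}}$ together with the geometric series yields both the invertibility of $z - x$ for every $z \in \Gamma$ and the expansion
\[
(z - x)^{-1} = \sum_{n=0}^\infty (z - x_0)^{-1}\tp{(x - x_0)(z - x_0)^{-1}}^n,
\]
which converges absolutely and uniformly on $\Gamma$. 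Upper semicontinuity of the spectrum then guarantees $\m{spec}(x) \subset \Psi\tp{B(0, \mu)}$ for $x$ in a possibly smaller neighborhood of $x_0$, so by the contour-independence noted in the proof of Theorem~\ref{thm on holomorphic functional calculus} we may compute $\Upomega_\Phi(x)$ using the fixed contour $\Gamma$ throughout this neighborhood.

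Substituting the Neumann expansion into~\eqref{Dunford integral} and interchanging sum with integral (justified by uniform convergence on $\Gamma$ together with the bound $\sup_{z \in \Gamma}\tnorm{\Phi(z)}_{\mathcal{Y}} \le \tnorm{\Phi}_{\pmb{A}}$) would yield
\[
\Upomega_\Phi(x) = \sum_{n=0}^\infty T_n(x - x_0, \dotsc, x - x_0 \,;\, \Phi),
\]
where
\[
T_n(y_1, \dotsc, y_n \,;\, \Phi) = \f{1}{2\pi\ii}\int_\Gamma (z - x_0)^{-1} y_1 (z - x_0)^{-1} \cdots y_n (z - x_0)^{-1} \Phi(z) \,\m{d}z
\]
is a bounded $(n+1)$-linear map $\mathcal{X}^n \times \pmb{A}\tp{\Bar{U};\mathcal{Y}} \to \mathcal{Y}$ of operator norm dominated by a constant multiple of $M^{n+1}$, via the embedding $\mathcal{X} \emb \mathcal{L}(\mathcal{Y})$ and the algebra structure. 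Writing $\Phi = \Phi_0 + (\Phi - \Phi_0)$ and using linearity of $T_n$ in the final slot, each summand splits into a piece depending only on $x - x_0$ and a piece linear in $\Phi - \Phi_0$; collecting by total degree produces a convergent power series in $(\Phi - \Phi_0, x - x_0)$ on the polydisk $\pmb{A}\tp{\Bar{U};\mathcal{Y}} \times \tcb{x \in \mathcal{X} : \tnorm{x - x_0}_{\mathcal{X}} < 1/(2M)}$, establishing joint analyticity at $(\Phi_0, x_0)$. The main delicate point is the upper semicontinuity of the spectrum that legitimizes using a single fixed contour on a full neighborhood of $x_0$, but this follows directly from the Neumann series bound above since $(z - x)$ is then invertible throughout an open annulus containing $\Gamma$.
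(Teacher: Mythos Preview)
Your proposal is correct and follows essentially the same approach as the paper: fix a single contour working uniformly near $x_0$, expand the resolvent $(z-x)^{-1}$ in a Neumann series about $x_0$, substitute into the Dunford integral~\eqref{Dunford integral}, and exploit linearity in $\Phi$ to obtain a convergent power series in $(\Phi-\Phi_0,\,x-x_0)$. The only cosmetic difference is that you write the noncommutative form $T_n(y_1,\dots,y_n;\Phi)$ whereas the paper uses the commutative shorthand $y^n\int_\Gamma (z-x)^{-(n+1)}\Phi(z)\,\m{d}z$; both are fine here and the underlying argument is the same.
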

\begin{proof}
    Analyticity will be verified by showing the agreement of the stated map with a convergent power series. Let us fix $x\in\mathcal{X}\tp{U}$ and let $\Gamma_{\mu_x}$ be the contour used in the Cauchy integral formula~\eqref{Dunford integral}. There exists a $\del>0$ such that for all $y\in\Bar{B_{\mathcal{X}}\tp{0,\del_0}}$ we have that  $x + y\in \mathcal{X}(U)$, the contour $\Gamma_{\mu_x}$ winds once around $\m{spec}\tp{x + y}$,
    \begin{equation}\label{esimtaes dfot SW}
        \m{dist}\tp{\Gamma_{\mu_x},\m{spec}\tp{x + y}}\ge\f12\m{dist}\tp{\Gamma_{\mu_x},\m{spec}\tp{x}}, \text{ and }\tnorm{y}_{\mathcal{L}\tp{\mathcal{Y}}}\le\f{1}{2}\max_{z\in\Gamma_{\mu_x}}\tnorm{\tp{z - x}^{-1}}_{\mathcal{L}\tp{\mathcal{Y}}}.
    \end{equation}
    Now let $\Phi,\phi\in\pmb{A}\tp{\Bar{U};\mathcal{Y}}$.  When paired with the above estimates, the contour invariance of~\eqref{Dunford integral} yields the power series representation 
    \begin{multline}
        \Upomega_{\Phi + \phi}\tp{x + y} = \f{1}{2\pi\ii} \sum_{n=0}^\infty y^n\int_{\Gamma_{\mu_x}}\tp{z - x }^{-(n + 1)}\tp{\Phi + \phi}\tp{z}\;\m{d}z \\
        = \f{1}{2\pi\ii}\int_{\Gamma_{\mu_x}}\tp{z - x }^{-1}\Phi\tp{z}\;\m{d}z
        + \f{1}{2\pi\ii}\sum_{n=1}^\infty \int_{\Gamma_{\mu_x}}\tp{z - x}^{-n}\tp{y^n\tp{z - x}^{-1}\Phi(z) + y^{n-1}\phi(z)}\;\m{d}z,
    \end{multline}
    where in the last equality we have re-indexed the sum to make evident the multilinear maps acting on $(\phi,y)$. In fact, by taking the norm in $\mathcal{Y}$ and using estimate~\eqref{esimtaes dfot SW} and the definition of the norm~\eqref{normed space of analytic}, we readily deduce that above series is uniformly absolutely convergent for all $y\in\Bar{B_{\mathcal{X}}\tp{0,\del_0}}$ and for all $\phi\in\Bar{B_{\pmb{A}}\tp{0,1}}$. 
\end{proof}

We now aim to specialize to the case in which the Banach algebras $\mathcal{X}$ consists of continuous functions.  For this we will need to impose the auxiliary conditions of the following definition.

\begin{defn}[Admissibility]\label{defn of admissable Banach algebras}
    We say that a unital Banach algebra $\mathcal{X}$ over $\C$ is admissible if
    \begin{enumerate}
        \item $\mathcal{X}\emb\tp{L^\infty\cap C^0}\tp{\R^d;\C}$
        \item For all $x\in\mathcal{X}$ it holds that $\m{spec}(x)=\Bar{\m{img}\tp{x}}$ (where the former set is the spectrum as in~\eqref{Definition of the spectrum} and the set $\m{img}\tp{x}$ denotes the forward image of $x$), and for $z\in\C\setminus\Bar{\m{img}\tp{x}}$ the resolvent $(z-x)^{-1}\in\mathcal{X}$ agrees with the function $\R^d\ni\xi\mapsto\tp{z-x\tp{\xi}}^{-1}\in\C$.
        \item There exists a locally bounded function $\bf{C}:[0,\infty)^2\to\R^+$ increasing in both arguments such that for all $x\in\mathcal{X}$ and all $z\in\C\setminus\m{spec}\tp{x}$ we have the resolvent estimate
        \begin{equation}\label{resolvent estimates}
            \tnorm{\tp{z-x}^{-1}}_{\mathcal{X}}\le \bf{C}\tp{\m{dist}_{\C}\tp{\m{img}(x),z}^{-1},\tnorm{x}_{\mathcal{X}}}.
        \end{equation}
    \end{enumerate}
\end{defn}

To get a sense of the utility of this definition, suppose that $\mathcal{X}$ is admissible and let $\mathcal{Y} = \mathcal{X}$, which allows us to apply Theorem \ref{thm on holomorphic functional calculus} with the natural embedding $\mathcal{X} \hookrightarrow \mathcal{L}(\mathcal{X})$.  Then for a polynomial map $P(z) = \sum_{n=0}^k Y_n z^n$ as in the first item of the theorem, we have that $\Upomega_P : \mathcal{X}(U) \to \mathcal{X}$ is analytic, and is in fact also a polynomial map: $\Upomega_P(x) = \sum_{n=0}^k x^n Y_n   \in \mathcal{X}$.  In particular, $\tsb{\Upomega_P(x)}(\xi) = \sum_{n=0}^k (x(\xi))^n Y_n(\xi)$ for $\xi \in \R^d$.  On the other hand, the composition $P\circ x : \R^d \to \mathcal{X}$ satisfies $\tsb{P\circ x}(\xi) = \sum_{n=0}^k (x(\xi))^n Y_n$.  We thus arrive at the identity 
\begin{equation}
    \tsb{\Upomega_P(x)}(\xi) = P(x(\xi))(\xi),
\end{equation}
which shows that $x\mapsto \Upomega_P(x)$ is a Nemytskii operator in this special case.  Our goal now is to show that this realization of the abstract holomorphic functional calculus of Theorem~\ref{thm on holomorphic functional calculus} as Nemytskii operators carries over to a more general context.
 
In what follows we say that $\tcb{S_\epsilon}_{0<\epsilon<1}$  are standard regularizing operators if there exists a continuous and compactly supported function $\Qoppa:\R^d\to\R$ with $\int_{\R^d}\Qoppa=1$ such that for all $\epsilon\in\tp{0,1}$ and $f\in L^1_\loc\tp{\R^d;V}$, for a finite dimensional complex vector space $V$, we have the formula
\begin{equation}\label{standard smoothers}
    \tp{S_\epsilon f}\tp{\xi} = \int_{\R^d}\epsilon^{-d}\Qoppa\tp{\epsilon^{-1}\tp{\xi-\eta}}f(\xi)\;\m{d}\xi.
\end{equation}
We note that there exists positive constants $C$ and $c$ such that we have pointwise bound
\begin{equation}\label{maxmial nonsense}
    \sup_{0<\epsilon<1}\tabs{S_\epsilon f\tp{\xi}}\le C\mathcal{M}_{\le c}\tsb{f}\tp{\xi}
\end{equation}
for all $\xi\in\R^d$ where $\mathcal{M}_{\le c}$ denotes the maximal function taken over balls of radius at most $c$.

\begin{prop}[Complex analytic Nemytskii operators]\label{prop on compound analytic superposition complex case}
    Let $V$ be a finite dimensional complex vector space, let $\mathcal{Y}$ be a complex Banach space for which we have the continuous embedding $\mathcal{Y} \hookrightarrow L^1_{\loc}\tp{\R^d;V}$, and let $\mathcal{X}$ be an admissible unital Banach algebra in the sense of Definition~\ref{defn of admissable Banach algebras}.  Assume that the pointwise product mapping $\mathcal{Y}\times\mathcal{X}\to\mathcal{Y}$ is continuous. Let $U\subset\C$ be nonempty, open, and simply connected with $\Phi:U\to\mathcal{Y}$ holomorphic. For $\ep\in\tp{0,1}$ we denote $U_\ep = \tcb{z\in U\;:\;\m{dist}\tp{z,\pd U}>\ep}$. The following hold.
    \begin{enumerate}
        \item For all $x\in\mathcal{X}\tp{U}$ there exists $\Upomega_\Phi(x)\in \mathcal{Y}$ such that for any choice of standard regularizing operators $\tcb{S_\epsilon}_{0<\epsilon<1}$ and any sequence $\tcb{\epsilon_n}_{n=0}^\infty\subset(0,1)$ with $\epsilon_n\to0$ as $n\to\infty$ we have that for almost every $\xi\in\R^d$
        \begin{equation}\label{the limit computation of the weirdo composition}
            \lim_{n\to\infty}\tsb{S_{\epsilon_n}\Phi}\tp{x\tp{\xi}}\tp{\xi} = \tsb{\Upomega_\Phi\tp{x}}\tp{\xi}.
        \end{equation}
        \item The induced Nemytskii map $\Upomega_\Phi:\mathcal{X}\tp{U}\to\mathcal{Y}$ is complex analytic.  
        \item For any $\ep\in\tp{0,1}$ the restricted mapping $\Upomega_\Phi:\mathcal{X}\tp{U_\ep}\to\mathcal{Y}$ maps bounded sets to bounded sets.
        \item For each $x\in\mathcal{X}\tp{U}$ and $\xi_0\in\R^d$ there exits $\del>0$ such that in the set $B(\xi_0,\del) \subset \R^d$ we have the a.e. equality
        \begin{equation}
            \Upomega_\Phi\tp{x} = \sum_{n\in\N}\f{1}{n!}\tp{x - x(\xi_0)}^n\Phi^{\tp{n}}\tp{x\tp{\xi_0}},
        \end{equation}
        where $\Phi^{(n)}(z)\in \mathcal{Y}$ denotes the $n^{\m{th}}$ derivative of $\Phi$ at $z \in U$, and the series on the right is absolutely convergent in $L^1\tp{B(\xi_0,\del);V}$ and almost everywhere pointwise absolutely convergent.
        \item Suppose we have the continuous embedding $\mathcal{Y} \emb C^0_{\loc}(\R^d;V)$, where the latter space denotes $C^0(\R^d;V)$ endowed with the Fr\'{e}chet structure of local uniform convergence.  Then 
        \begin{equation}
            \tsb{\Upomega_\Phi\tp{x}}\tp{\xi} = \Phi\tp{x\tp{\xi}}\tp{\xi} \text{ for all } \xi \in \R^d\text{ and }x\in\mathcal{X}\tp{U}.
        \end{equation}
    \end{enumerate}
\end{prop}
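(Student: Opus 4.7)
The natural route is to invoke the abstract holomorphic functional calculus of Theorem~\ref{thm on holomorphic functional calculus} with the embedding $\mathcal{X}\emb\mathcal{L}\tp{\mathcal{Y}}$ given by the continuous pointwise product, defining $\Upomega_\Phi\tp{x}\in\mathcal{Y}$ as the Bochner integral~\eqref{Dunford integral} on a compact contour $\Gamma\subset U$ winding once around $\m{spec}\tp{x}=\Bar{\m{img}\tp{x}}$. Analyticity (item 2) is then immediate from Theorem~\ref{thm on holomorphic functional calculus} or Corollary~\ref{corollary on meta analyticity for HFC}. For item 3, any bounded $x\in\mathcal{X}\tp{U_\epsilon}$ with $\tnorm{x}_\mathcal{X}\le R$ has $\m{spec}\tp{x}\subseteq\Bar{B\tp{0,R}}\cap U_\epsilon$ by admissibility, so one selects a uniform compact contour $\Gamma\subset U_{\epsilon/2}$ enclosing this set with $\m{dist}\tp{\Gamma,\m{img}\tp{x}}\ge\epsilon/2$ and invokes the resolvent bound~\eqref{resolvent estimates} together with the operator bound for the $\mathcal{X}$-action on $\mathcal{Y}$ to uniformly control the integrand. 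Item 5 is the quickest: under $\mathcal{Y}\emb C^0_\loc$, the evaluation functional $\m{ev}_\xi:\mathcal{Y}\to V$ is continuous linear, so it commutes with the Bochner integral, and combining this with the admissibility identity $\tsb{\tp{z-x}^{-1}\Phi\tp{z}}\tp{\xi}=\tp{z-x\tp{\xi}}^{-1}\tsb{\Phi\tp{z}}\tp{\xi}$ (valid pointwise since both sides are continuous functions agreeing a.e.) reduces the formula to a scalar Cauchy integral whose value is $\tsb{\Phi\tp{x\tp{\xi}}}\tp{\xi}$.

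The crux is item 1, in which pointwise evaluation is no longer continuous on $\mathcal{Y}$. The plan is to substitute for $\m{ev}_\xi$ the bounded linear functional $\ell^\epsilon_\xi\tp{g}=\tsb{S_\epsilon g}\tp{\xi}$, which is continuous on $L^1_\loc\supset\mathcal{Y}$. Commuting $\ell^\epsilon_\xi$ with the Bochner integral yields
\[
\tsb{S_\epsilon\Upomega_\Phi\tp{x}}\tp{\xi}=\f{1}{2\pi\ii}\int_\Gamma\tsb{S_\epsilon\tp{\tp{z-x}^{-1}\Phi\tp{z}}}\tp{\xi}\,\m{d}z,
\]
while Cauchy's formula applied to the scalar holomorphic function $z\mapsto\tsb{S_\epsilon\Phi\tp{z}}\tp{\xi}$ (holomorphic since $\ell^\epsilon_\xi\circ\Phi$ is) yields
\[
\tsb{S_\epsilon\Phi}\tp{x\tp{\xi}}\tp{\xi}=\f{1}{2\pi\ii}\int_\Gamma\f{\tsb{S_\epsilon\Phi\tp{z}}\tp{\xi}}{z-x\tp{\xi}}\,\m{d}z.
\]
Subtracting and unfurling the convolutions exposes the difference of integrands as
\[
\int_{\R^d}\epsilon^{-d}\Qoppa\tp{\epsilon^{-1}\tp{\xi-\eta}}\f{x\tp{\xi}-x\tp{\eta}}{\tp{z-x\tp{\xi}}\tp{z-x\tp{\eta}}}\tsb{\Phi\tp{z}}\tp{\eta}\,\m{d}\eta;
\]
since $x\in C^0$ and the mollifier lives at scale $\epsilon$ near $\xi$, the factor $x\tp{\xi}-x\tp{\eta}$ vanishes uniformly in $z\in\Gamma$ as $\epsilon\to0$. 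Combined with Fubini and the observation that $\eta\mapsto\int_\Gamma\tabs{\tsb{\Phi\tp{z}}\tp{\eta}}_V\,\tabs{\m{d}z}$ lies in $L^1_\loc$ (hence has a.e. finite maximal function via~\eqref{maxmial nonsense}), the two regularized expressions share the same a.e. pointwise limit, which by Lebesgue differentiation equals $\tsb{\Upomega_\Phi\tp{x}}\tp{\xi}$.

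Item 4 is reached by combining item 1 with the Taylor expansion of $\Phi$ at $x\tp{\xi_0}$. Choose $\delta>0$ so that $\sup_{\xi\in B\tp{\xi_0,\delta}}\tabs{x\tp{\xi}-x\tp{\xi_0}}\le\rho<r:=\m{dist}\tp{x\tp{\xi_0},\pd U}$. The $\mathcal{Y}$-valued polynomial partial sums $P_N\tp{z}=\sum_{n=0}^N\f{1}{n!}\tp{z-x\tp{\xi_0}}^n\Phi^{\tp{n}}\tp{x\tp{\xi_0}}$ satisfy $\Upomega_{P_N}\tp{x}=\sum_{n=0}^N\f{1}{n!}\tp{x-x\tp{\xi_0}}^n\Phi^{\tp{n}}\tp{x\tp{\xi_0}}$ by the polynomial-compatibility item of Theorem~\ref{thm on holomorphic functional calculus}; the Cauchy estimate $\tnorm{\Phi^{\tp{n}}\tp{x\tp{\xi_0}}}_\mathcal{Y}\lesssim n!/r'^n$ for $r'<r$ combined with $\tabs{x\tp{\xi}-x\tp{\xi_0}}^n\le\rho^n$ then establishes absolute convergence in $L^1\tp{B\tp{\xi_0,\delta};V}$. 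To identify the sum with $\Upomega_\Phi\tp{x}$ on $B\tp{\xi_0,\delta}$, expand the scalar $z\mapsto\tsb{S_\epsilon\Phi\tp{z}}\tp{\xi}$ in Taylor series around $x\tp{\xi_0}$, substitute $z=x\tp{\xi}$, pass to the limit $\epsilon\to0$ termwise (dominating the tail uniformly by maximal functions of the Taylor coefficients), and invoke item 1. The principal obstacle throughout is exactly this commutation step in item 1: the identity $\tsb{\tp{z-x}^{-1}\Phi\tp{z}}\tp{\eta}=\tp{z-x\tp{\eta}}^{-1}\tsb{\Phi\tp{z}}\tp{\eta}$ holds only a.e., so the argument must be run after mollification, and the admissibility of $\mathcal{X}$ (enforcing pointwise coincidence of the resolvent) is precisely what makes the resulting $\epsilon\to0$ analysis possible.
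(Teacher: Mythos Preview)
Your proposal is correct, and for items 2, 3, and 5 it coincides with the paper's argument. The real differences are in items 1 and 4.

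For item 1 the paper takes a two-stage route. It first establishes the $C^0(Q;V)$-valued identity
\[
\m{diag}\bigl[(S_\epsilon\Phi)\circ x\big\vert_Q\bigr]=\f{1}{2\pi\ii}\int_{\Gamma}S_\epsilon\Phi(z)\,(z-x)^{-1}\,\m{d}z,
\]
passes to the limit $\epsilon\to0$ on the right by dominated convergence in $L^1(Q;V)$ to obtain $\Upomega_\Phi(x)\vert_Q$, and then separately upgrades $L^1_{\loc}$ convergence to a.e.\ convergence by a Cauchy-sequence argument with maximal-function bounds applied to the tails $(1-S_{\epsilon_N})\Phi$. Your route is different and more economical: you insert a second mollified object, $[S_\epsilon\Upomega_\Phi(x)](\xi)$, whose a.e.\ limit is already known by Lebesgue differentiation, and reduce the problem to showing that the \emph{difference} with $[S_\epsilon\Phi](x(\xi))(\xi)$ vanishes a.e. That difference localizes to the factor $x(\eta)-x(\xi)$ times a quantity dominated by $\mathcal{M}_{\le c}\bigl[\int_\Gamma|\Phi(z)(\cdot)|\,|\m{d}z|\bigr](\xi)$, and continuity of $x$ finishes. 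This avoids the separate a.e.\ upgrade entirely; the paper's approach, in exchange, yields the intermediate $L^1_{\loc}$ convergence as a byproduct.

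For item 4 the paper does not go through item 1 at all: it integrates $\Upomega_\Phi(x)$ over an arbitrary measurable $E\subseteq B(\xi_0,\delta)$, applies Fubini to the Dunford integral, shrinks the contour to a small circle around $x(\xi_0)$, inserts the Taylor expansion of $\Phi$ there, and evaluates each term via the scalar Cauchy formula $\tfrac{1}{2\pi\ii}\int_{\partial B}(z-x(\xi_0))^n(z-x(\xi))^{-1}\m{d}z=(x(\xi)-x(\xi_0))^n$. Your approach instead feeds the Taylor expansion through item 1 and passes to the limit termwise. That works, but the phrase ``dominating the tail uniformly by maximal functions of the Taylor coefficients'' hides a step: one should bound $|[S_\epsilon\Phi^{(n)}(x(\xi_0))](\xi)|$ via the Cauchy integral on a circle $|w-x(\xi_0)|=\rho'$ with $\rho<\rho'<r$, getting $\le \tfrac{n!}{\rho'^{n}}C\,\mathcal{M}_{\le c}[G](\xi)$ for $G(\eta)=\int_{|w-x(\xi_0)|=\rho'}|\Phi(w)(\eta)|\,|\m{d}w|\in L^1_{\loc}$, so that the tail is $\lesssim(\rho/\rho')^N\mathcal{M}_{\le c}[G](\xi)$ uniformly in $\epsilon$. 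With that made explicit your argument goes through; the paper's contour-shrinking route sidesteps this interchange entirely.
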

\begin{proof}
The boundedness of the product map $\mathcal{Y}\times\mathcal{X}\to\mathcal{Y}$ yields the embedding $\mathcal{X}\emb\mathcal{L}\tp{\mathcal{Y}}$. Theorem~\ref{thm on holomorphic functional calculus} then  grants us a unique linear Dunford calculus map $\Upomega:C^\omega\tp{U;\mathcal{Y}}\to C^\omega\tp{\mathcal{X}\tp{U};\mathcal{Y}}$ which satisfies the first and second items of that aforementioned result; from this map we define $\Upomega_\Phi\in C^\omega\tp{\mathcal{X}\tp{U};\mathcal{Y}}$, which automatically satisfies the analyticity assertion of the second item.  

We now prove the third item.   The contour invariance of~\eqref{Dunford integral} and equation~\eqref{the stuffy mcDuffy}, when combined with the resolvent estimates~\eqref{resolvent estimates} from Definition~\ref{defn of admissable Banach algebras}, yield the bound for 
\begin{equation}\label{the bounds for the bounds one day I'll show you a picture of what it is}
    \tnorm{\Upomega_\Phi\tp{x}}_{\mathcal{Y}}\le\tilde{C}\cdot\m{length}\tp{\Gamma_{\mu(\ep)}}\cdot\bf{C}\tp{2/\ep,\tnorm{x}_{\mathcal{X}}} \text{ for } \ep\in(0,1) \text{ and } x\in\mathcal{X}\tp{U_\ep},
\end{equation}
where $\Gamma_{\mu\tp{\ep}}$ is the contour constructed in the proof of Theorem~\ref{thm on holomorphic functional calculus} corresponding to $\mu = \mu\tp{\ep}$ with $\mu\tp{\ep}\in(0,1)$ chosen sufficiently close to $1$ so that $\Gamma_{\mu\tp{\ep}}\cap U_{\ep/2}=\es$. Armed with~\eqref{the bounds for the bounds one day I'll show you a picture of what it is}, the third item now follows.

We next prove the first item. Fix a sequence $\tcb{S_\ep}_{0<\ep<1}$ of standard regularizing operators and also fix a closed cube $Q\subset\R^d$. Due to the embedding $\mathcal{Y}\emb L^1_\loc\tp{\R^d; V}$ we find that the maps $S_\ep\Phi\vert_Q:U\to C^0\tp{Q; V}$ are analytic, where for $\xi\in Q$ and $z\in U$ we have denoted
\begin{equation}
    \tsb{S_\ep\Phi\vert_Q\tp{z}}\tp{\xi} = \tsb{S_\ep\tp{\Phi\tp{z}}}\tp{\xi}.
\end{equation}
As the evaluation maps are bounded linear operators $C^0\tp{Q; V}\to V$ we find that by composition for each $\xi\in Q$ the map
\begin{equation}
     U\ni z\mapsto\tsb{S_\ep\Phi\vert_Q\tp{z}}\tp{\xi}\in V
\end{equation}
is also analytic. Therefore, the Cauchy integral formula applies and we learn that for all $\mu\in\tp{0,1}$ and all $w\in U$ that are surrounded by the contour $\Gamma_\mu$, the identity
\begin{equation}\label{cucumberbycycle}
    \tsb{S_\ep\Phi\vert_{Q}\tp{w}}\tp{\xi} = \f{1}{2\pi\ii}\int_{\Gamma_\mu}\tsb{S_\ep\Phi\tp{z}}\tp{\xi}\tp{z - w}^{-1}\;\m{d}z,\quad \xi\in Q
\end{equation}
holds. We let $x\in\mathcal{X}\tp{U}$, suppose that $\mu = \mu_x$ (as defined in the proof of Theorem~\ref{thm on holomorphic functional calculus}), take $w = x(\xi)$, and reinterpret the result~\eqref{cucumberbycycle} as a $C^0\tp{Q; V}$-valued Bochner integral; this procedure yields the identity
\begin{equation}\label{the local integral identity}
    \m{diag}\tsb{\tp{S_\ep\Phi}\circ x\vert_Q} = \f{1}{2\pi\ii}\int_{\Gamma_\mu}S_\ep\Phi\tp{z}\tp{z - x}^{-1}\;\m{d}z\in C^0\tp{Q; V}
\end{equation}
where $\m{diag}\tsb{\tp{S_\ep\Phi}\circ x\vert_{Q}}\tp{\xi} = \tp{S_\ep\Phi}\tp{x\tp{\xi}}\tp{\xi}$ for $\xi\in Q$.

We now consider the limit of~\eqref{the local integral identity} as $\ep\to 0$. Due to the embedding $\mathcal{Y}\emb L^1_\loc\tp{\R^d; V}$ we find that the right hand side integrand converges pointwise to $z\mapsto \Phi\tp{z}\tp{z - x}^{-1}$ in the space $L^1\tp{Q; V}$ while also remaining bounded in norm by a constant function. The dominated convergence theorem now applies and we thus acquire that both sides of~\eqref{the local integral identity} have a limit in the space $L^1\tp{Q;V}$, namely:
\begin{equation}\label{the limit that is here}
    \lim_{\ep\to 0}\m{diag}\tsb{\tp{S_\ep\Phi}\circ x\vert_Q} = \f{1}{2\pi\ii}\int_{\Gamma_\mu}\Phi\tp{z}\tp{z-x}^{-1}\;\m{d}z = \Upomega_{\Phi}\tp{x}\vert_Q\in L^1\tp{Q; V}.
\end{equation}
The final equality above follows from~\eqref{Dunford integral}. This establishes that the limit in~\eqref{the limit computation of the weirdo composition} holds in the space $L^1_\loc\tp{\R^d; V}$. We now wish to establish pointwise almost everywhere convergence as well.

Let us fix again a closed cube $Q$ and sequence $\tcb{\ep_n}_{n\in\N}\subset{0,1}$ with $\ep_n\to 0$ as $n\to\infty$. We consider the measurable sets indexed by $m\in\N$
\begin{equation}\label{this is a place of business}
    F_m = \tcb{\xi\in Q\;:\;\limsup_{n,k\to\infty}\tabs{S_{\ep_n}\Phi\tp{x\tp{\xi}}\tp{\xi} - S_{\ep_k}\Phi\tp{x\tp{\xi}}\tp{\xi}}>2^{-m}}.
\end{equation}
Thanks to identity~\eqref{the local integral identity} we are free to estimate for any $\xi\in Q$ and $n,k\in\N$:
\begin{equation}\label{C!1}
    \tabs{S_{\ep_n}\Phi\tp{x\tp{\xi}}\tp{\xi} - S_{\ep_k}\Phi\tp{x\tp{\xi}}\tp{\xi}}\le\f{\m{length}\tp{\Gamma_\mu}}{2\pi\m{dist}\tp{\Gamma_\mu,\m{img}\tp{x}}}\int_0^1\tabs{\tp{S_{\ep_n} - S_{\ep_k}}\Phi\tp{\gam\tp{\tau}}\tp{\xi}}\;\m{d}\tau
\end{equation}
where $\gam$ is a $\m{length}\tp{\Gamma_\mu}$-speed parametrization of $\Gamma_\mu$. Now for any $N\in\N$ we make the further estimates:
\begin{multline}\label{C!2}
    \int_0^1\tabs{\tp{S_{\ep_n} - S_{\ep_k}}\Phi\tp{\gam\tp{\tau}}\tp{\xi}}\;\m{d}\tau\le\int_{0}^1\tabs{\tp{S_{\ep_n} - S_{\ep_k}}S_{\ep_N}\Phi\tp{\gam\tp{\tau}}\tp{\xi}}\;\m{d}\tau\\
    +\int_0^1\tabs{S_{\ep_n}\tp{1 - S_{\ep_N}}\Phi\tp{\gam\tp{\tau}}\tp{\xi}}\;\m{d}\tau+\int_0^1\tabs{S_{\ep_k}\tp{1 - S_{\ep_N}}\Phi\tp{\gam\tp{\tau}}\tp{\xi}}\;\m{d}\tau.
\end{multline}
Now we combine~\eqref{C!1} and~\eqref{C!2}, take the limit superior as $n$ and $k$ tend to infinity, and use continuity of $S_{\ep_N}\Phi$ to deduce
\begin{multline}
    \limsup_{n,k\to\infty}\tabs{S_{\ep_n}\Phi\tp{x\tp{\xi}}\tp{\xi} - S_{\ep_k}\Phi\tp{x\tp{\xi}}\tp{\xi}}\le \f{\m{length}\tp{\Gamma_\mu}}{\pi\m{dist}\tp{\Gamma_\mu,\m{img}\tp{x}}}\sup_{n\in\N}\int_{0}^1\tabs{S_{\ep_n}\tp{1 - S_{\ep_N}}\Phi\tp{\gam\tp{\tau}}\tp{\xi}}\;\m{d}\tau\\
    \le C\mathcal{M}_{\le c}\bsb{\int_0^1\tabs{\tp{1-S_{\ep_N}}\Phi\tp{\gam\tp{\tau}}\tp{\cdot}}\;\m{d}\tau}\tp{\xi},
\end{multline}
where $\mathcal{M}_{\le c}$ is the maximal function over balls of radius at most $c$ - see~\eqref{maxmial nonsense} - and $C$ is a constant depending on $V$, $Q$, $x$, $\Gamma_\mu$, and $\tcb{S_\ep}_{0<\ep<1}$. From the above we may return to $F_m$ from~\eqref{this is a place of business} and invoke the Hardy–Littlewood maximal inequality and gain the estimate
\begin{equation}
    |F_m|\le C\cdot 2^m\int_{Q + B(0,c)}\int_0^1\tabs{\tp{1 - S_{\ep_N}}\Phi\tp{\gam\tp{\tau}}\tp{\xi}}\;\m{d}\tau\;\m{d}\xi.
\end{equation}
This constant above is independent of $N$ and so we may send $N\to\infty$ and deduce that $|F_m|=0$. As this holds for every $m\in\N$ we then deduce that for almost every $\xi\in Q$ the sequence $\tcb{S_{\ep_n}\Phi\tp{x\tp{\xi}}\tp{\xi}}_{n\in\N}\subset V$ is Cauchy and hence convergent. The $L^1\tp{Q;V}$-convergence in equation~\eqref{the limit that is here} forces the a.e. limit of the sequence $\tcb{\xi\mapsto S_{\ep_n}\tp{x\tp{\xi}}\tp{\xi}}_{n\in\N}$ of $L^1\tp{Q;V}$-functions to agree with $\Upomega_\Phi\tp{x}$. This completes the proof of the first item.

We now prove the fourth item. Let us fix an $x\in\mathcal{O}$ and $\xi_0\in\R^d$. The function $\Phi:U\to\mathcal{Y}$ is analytic near $x\tp{\xi_0}\in U$ while $\mathcal{Y}\emb L^1_{\loc}\tp{\R^d; V}$ and so there exists $\del_1>0$ such that $\Bar{B(x\tp{\xi_0},2\del_1)}\Subset U$ and for all $z\in\Bar{B(x(\xi_0),2\del_1)}$ we have the identity
\begin{equation}\label{the local series expansion}
    \Phi\tp{z} = \sum_{n=0}^\infty\tp{z - x\tp{\xi_0}}^n\f{\Phi^{(n)}\tp{x\tp{\xi_0}}}{n!}\in L^1\tp{B(x\tp{\xi_0},2\del_1); V}
\end{equation}
where the series is uniformly absolutely convergent in the space $L^1\tp{B(x\tp{\xi_0},2\del_1); V}$. Thanks to the embedding $\mathcal{X}\emb\tp{C^0\cap L^\infty}\tp{\R^d;\C}$ we have that $x$ is continuous at $\xi_0$ and so there exists $0<\del<\del_1$ with the property that if $\xi\in B(\xi_0,\del)$, then $|x\tp{\xi} - x\tp{\xi_0}|<\del_1/2$.

Now we consider the formula~\eqref{Dunford integral} for $\Upomega_\Phi\tp{x}$ and let $E\subseteq B(\xi_0,\del)$ be any measurable subset; upon integrating over $E$, invoking Fubini's theorem, and using invariance of choice of contour we find
\begin{equation}\label{STEP_1}
    \int_{E}\Upomega_\Phi\tp{x} = \f{1}{2\pi\ii}\int_{\Gamma_{\mu_x}}\int_{E}\Phi\tp{z}\tp{\xi}\tp{z - x\tp{\xi}}^{-1}\;\m{d}\xi\;\m{d}z = \f{1}{2\pi\ii}\int_{\pd B(x(\xi_0),\del_1)}\int_{E}\Phi\tp{z}\tp{\xi}\tp{z - x\tp{\xi}}^{-1}\;\m{d}\xi\;\m{d}z.
\end{equation}
Implicitly we have used that the function
\begin{equation}
    U\setminus\Bar{B(x(\xi_0),\del_1/2)}\ni z\mapsto\int_{E}\Phi\tp{z}\tp{\xi}\tp{z - x\tp{\xi}}^{-1}\;\m{d}\xi\in V
\end{equation}
is holomorphic thanks to $E\subseteq B(\xi_0,\del)$ and so the change of contour in~\eqref{STEP_1} is valid. Now in the final integral of~\eqref{STEP_1} due to $z\in\pd B(x\tp{\xi_0},\del_1)\Subset B(x\tp{\xi_0},2\del_1)$ we can replace $\Phi(z)$ by its local expansion~\eqref{the local series expansion}. Once this is done, we get
\begin{equation}\label{STEP_2}
    \int_{E}\Upomega_\Phi\tp{x} = \int_{E}\sum_{n=0}^\infty\f{1}{n!}\Phi^{(n)}\tp{x\tp{\xi_0}}\tp{\xi}\f{1}{2\pi\ii}\int_{\pd B(x(\xi_0),\del_1)}\tp{z - x\tp{\xi_0}}^n\tp{z - x\tp{\xi}}^{-1}\;\m{d}z\;\m{d}\xi.
\end{equation}
The Cauchy integral formula allows us to equate for every $\xi\in E$:
\begin{equation}\label{STEP_3}
    \f{1}{2\pi\ii}\int_{\pd B(x(\xi_0),\del_1)}\tp{z - x\tp{\xi_0}}^n\tp{z - x\tp{\xi}}^{-1}\;\m{d}z = \tp{x(\xi) - x\tp{\xi_0}}^n
\end{equation}
and hence the combination of~\eqref{STEP_2} and~\eqref{STEP_3} gives
\begin{equation}
    \int_{E}\Upomega_\Phi\tp{x} = \int_{E}\sum_{n\in\N}\f{1}{n!}\tp{x - x\tp{\xi_0}}^n\Phi^{(n)}\tp{x(\xi_0)}.
\end{equation}
Since this holds for every such $E\subseteq B(\xi_0,\del)$, the claims of the fourth item follow.

At last, we justify the fifth item. Due to the embedding $\mathcal{Y}\emb C^0_\loc\tp{\R^d; V}$ for any $\xi\in\R^d$ the linear map given by evaluation at $\xi$ is continuous. In turn, we may compose formula~\eqref{Dunford integral} (for any $x\in\mathcal{X}\tp{U}$) with any such bounded linear evaluation map and then invoke the Cauchy integral formula to equate
\begin{equation}
    \tsb{\Upomega_\Phi\tp{x}}\tp{\xi} = \f{1}{2\pi\ii}\int_{\Gamma_{\mu_x}}\Phi\tp{z}\tp{\xi}\tp{z - x\tp{\xi}}^{-1}\;\m{d}z = \Phi\tp{x\tp{\xi}}\tp{\xi}.
\end{equation}
This completes the proof.

\end{proof}

We next specialize the previous result to the case of real analytic functions. If $\mathcal{X}$ is an admissible unital Banach algebra in the sense of Definition~\ref{defn of admissable Banach algebras} we denote the $\R$-subspace of $\R$-valued functions according to $\mathcal{X}_{\R} = \tcb{x\in\mathcal{X}\;:\;\m{img}\tp{x}\subset\R}$; further, if $I\subseteq\R$ is an open interval then we define the $\mathcal{X}_{\R}-$open set $\mathcal{X}_\R\tp{I} = \tcb{x\in\mathcal{X}_{\R}\;:\;\Bar{\m{img}\tp{x}}\subset I}$.

\begin{coro}[Real analytic Nemytskii operators]\label{coro on compound analytic superposition real case}
    Let $V_\R$ denote a real finite dimensional vector space with complexification denoted by $V$. Let $\mathcal{Y}$ be a complex Banach space for which we have the continuous embedding $\mathcal{Y}\emb L^1_\loc\tp{\R^d; V}$ and let $\mathcal{Y}_\R = \mathcal{Y}\cap L^1_\loc\tp{\R^d;V_{\R}}$ denote the real subspace of real-valued functions. Also let $\mathcal{X}$ be an admissible unital Banach algebra in the sense of Definition~\ref{defn of admissable Banach algebras} in which the pointwise product $\mathcal{Y}\times\mathcal{X}\to\mathcal{Y}$ is continuous. Let $a,b\in[-\infty,\infty]$ with $a<b$,  $I = (a,b)$, $I_\ep = (a+\ep,b - \ep)$ for $\ep\in(0,1)$, and $\phi:I\to\mathcal{Y}_{\R}$ be real analytic. The following hold.
    \begin{enumerate}
        \item For all $x\in\mathcal{X}_{\R}\tp{I}$, there exists $\Upomega_\phi\tp{x}\in\mathcal{Y}_{\R}$ such that for any choice of standard regularizing operators $\tcb{S_\epsilon}_{0<\epsilon<1}$ and any sequence $\tcb{\epsilon_n}_{n=0}^\infty\subset\tp{0,1}$ satisfying $\epsilon_n\to0$ as $n\to\infty$ we have that for almost every $\xi\in\R^d$ 
        \begin{equation}
            \lim_{n\to\infty}\tsb{S_{\epsilon_n}\phi}\tp{x\tp{\xi}}\tp{\xi} = \tsb{\Upomega_\phi\tp{x}}\tp{\xi}.
        \end{equation}
        \item The induced Nemytskii map $\Upomega_\phi:\mathcal{X}_{\R}\tp{I}\to\mathcal{Y}_{\R}$ is real analytic.
        \item For any $\ep\in\tp{0,1}$ the restricted mapping $\Upomega_\phi:\mathcal{X}_{\R}\tp{I_\ep}\to\mathcal{Y}_\R$ maps bounded sets to bounded sets.
        \item For each $x\in\mathcal{X}_{\R}\tp{I}$ and $\xi_0\in\R^d$ there exists $\del>0$ such that in the set $B(\xi_0,\del)$ we have the a.e. equality
        \begin{equation}
            \Upomega_\phi\tp{x} = \sum_{n\in\N}\f{1}{n!}\tp{x - x\tp{\xi_0}}^n\phi^{(n)}\tp{x\tp{\xi_0}}
        \end{equation}
        where the series on the right is absolutely convergent in $L^1\tp{B\tp{\xi_0,\del},V_{\R}}$ and almost everywhere pointwise absolutely convergent; $\phi^{(n)}\tp{t}\in\mathcal{Y}_\R$ denotes the $n^{\m{th}}$ derivative of $\phi$ at $t\in\tp{a,b}$.
        \item Suppose we have the continuous embedding $\mathcal{Y}\emb C^0_\loc\tp{\R^d;\C}$ as in the fifth item of Proposition~\ref{prop on compound analytic superposition complex case}. Then
        \begin{equation}
            \tsb{\Upomega_{\phi}\tp{x}}\tp{\xi} = \phi\tp{x\tp{\xi}}\tp{\xi}\text{ for all }\xi\in\R^d\text{ and }x\in\mathcal{X}_{\R}\tp{I}.
        \end{equation}
    \end{enumerate}
\end{coro}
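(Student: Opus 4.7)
The plan is to deduce the corollary from Proposition~\ref{prop on compound analytic superposition complex case} by complexifying $\phi$ to a holomorphic $\mathcal{Y}$-valued map on a simply connected complex neighborhood of $I$. My first step is to construct such a neighborhood $U\subseteq\C$ together with a holomorphic extension $\Phi:U\to\mathcal{Y}$ of $\phi$. At each $t\in I$ the real analytic map $\phi$ admits a $\mathcal{Y}_\R$-valued Taylor series $\sum_n(\zeta-t)^n\phi^{(n)}(t)/n!$ with some radius of convergence $r(t)>0$. Picking a continuous $\delta:I\to\R^+$ with $\delta(t)\le r(t)/2$, the open tube
\begin{equation}
U=\{t+\ii s\;:\;t\in I,\;|s|<\delta(t)\}\subseteq\C
\end{equation}
is homeomorphic to $I\times(-1,1)$, hence simply connected. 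Since $U\subseteq\bigcup_{t\in I}B(t,r(t))$, defining $\Phi$ locally via the Taylor series at $t=\Re z$ produces a consistent global holomorphic extension: on overlaps of disks centered at $t_0,t_1\in I$, the two local expansions agree on the real segment shared by the disks (both equal $\phi$ there), hence on the whole overlap by the identity theorem for $\mathcal{Y}$-valued analytic maps. This furnishes $\Phi:U\to\mathcal{Y}$ holomorphic with $\Phi|_I=\phi$.

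With the extension in hand, admissibility of $\mathcal{X}$ (second item of Definition~\ref{defn of admissable Banach algebras}) supplies $\m{spec}(x)=\Bar{\m{img}(x)}\subset I\subset U$ for any $x\in\mathcal{X}_\R(I)$, so $\mathcal{X}_\R(I)\subseteq\mathcal{X}(U)$. I then define $\Upomega_\phi$ to be the restriction of the complex operator $\Upomega_\Phi$ from Proposition~\ref{prop on compound analytic superposition complex case} to $\mathcal{X}_\R(I)$. The single point that does not transfer trivially from the complex case is the $\mathcal{Y}_\R$-valuedness of this restriction. To verify it, I fix standard regularizing operators associated to a real-valued kernel $\Qoppa$: for $x\in\mathcal{X}_\R(I)$ the range of $x$ lies in $I$, so $\Phi(x(\xi))=\phi(x(\xi))\in\mathcal{Y}_\R$, and since $\Qoppa$ is real, $[S_{\epsilon_n}\Phi](x(\xi))(\xi)\in V_\R$ pointwise. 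The almost-everywhere limit formula from item 1 of Proposition~\ref{prop on compound analytic superposition complex case} then places $\Upomega_\Phi(x)$ in $\mathcal{Y}_\R$, and this simultaneously establishes item 1 of the corollary.

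The four remaining conclusions are direct consequences of their complex analogs. Item 2 uses that a complex analytic map between complex Banach spaces which carries a real subspace of the domain into a real subspace of the target automatically restricts to a real analytic map between those subspaces. Items 4 and 5 transfer verbatim upon noting that $\Phi^{(n)}(t)=\phi^{(n)}(t)\in\mathcal{Y}_\R$ for $t\in I$. Item 3 is the most delicate point: for any bounded subset $B\subseteq\mathcal{X}_\R(I_\ep)$ with $\sup_{x\in B}\tnorm{x}_{\mathcal{X}}\le M$, the embedding $\mathcal{X}\hookrightarrow L^\infty$ forces $\Bar{\m{img}(x)}$ to lie in a single compact subset $K\subset I$ depending only on $\ep$ and $M$; the radius of convergence $r$ is bounded below on $K$, so $\Phi$ is holomorphic and bounded on a fixed open complex neighborhood of $K$, and applying item 3 of the complex proposition on this neighborhood yields the desired uniform bound on $\Upomega_\phi(B)$. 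The principal technical obstacle in the argument is thus the geometric construction of the simply connected neighborhood $U$ together with the uniform-in-$\ep$ control needed for item 3; once these are in hand, the corollary is a mechanical transcription of the complex case.
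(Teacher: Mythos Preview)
Your proof is correct and follows essentially the same approach as the paper: construct a simply connected complex neighborhood $U\supset I$ on which $\phi$ extends holomorphically, apply Proposition~\ref{prop on compound analytic superposition complex case}, and verify $\mathcal{Y}_\R$-valuedness via the almost-everywhere limiting formula with a real kernel. Your tube construction of $U$ differs only cosmetically from the paper's union of disks $\bigcup_{q\in I}B_{\C}(q,\rho_q)$, and you are more explicit about item~3 than the paper (which simply says the remaining claims are ``directly inherited''): you correctly note that boundedness in $\mathcal{X}$ together with the $L^\infty$ embedding is needed to trap all spectra in a single compact $K\subset I$, on which the distance to $\partial U$ is uniformly positive, so that the bounded set lands in some $\mathcal{X}(U_{\ep'})$.
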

\begin{proof}
    The stated claims follow from the complex version, Proposition~\ref{prop on compound analytic superposition complex case} with minor additional arguments.

    Indeed, we may find a holomorphic extension of $\phi$, $\Phi:U\to\mathcal{Y}$ with $(a,b)\subset U\subset\C$ open and simply connected and $\Phi = \phi$ on $(a,b)$. The function $\phi$ is $\mathcal{Y}_{\R}$-valued real analytic and thus at each point $q\in(a,b)$, there exists $\rho_q\in\R^+$ and a sequence $\tcb{c_{q,n}}_{n\in\N}\subset\mathcal{Y}_{\R}$ such that for all $t \in(q-\rho_q,q+\rho_q)\cap(a,b)$ we have
    \begin{equation}
        \phi(t)=\sum_{n=0}^\infty c_{q,n}\tp{t-q}^n
    \end{equation}
    where the series is, in fact, absolutely and uniformly convergent for (complex-valued) $t \in B_{\C}(q,\rho_q)$. In this way we obtain a holomorphic extension of $\phi$
    \begin{equation}
        \Phi:U\to\C,\quad U=\bigcup_{q\in(a,b)}B_{\C}\tp{q,\rho_q}\quad\text{with}\quad\Phi=\phi\text{ on }(a,b)\subset U.
    \end{equation}
    The domain $U$ is manifestly simply connected as it retracts to an interval.
    
    Proposition~\ref{prop on compound analytic superposition complex case} applies for $\Phi$ and we obtain a complex analytic operator $\Upomega_\Phi$ on the set $\mathcal{X}\tp{U}$ with values in $\mathcal{Y}$. $\Upomega_\phi$ is then defined via restriction to $\mathcal{X}_{\R}(I)$. The limiting formula~\eqref{the limit computation of the weirdo composition} shows actually that if $x\in\mathcal{X}_{\R}\tp{I}$, then $\Upomega_\phi(x)\in\mathcal{Y}_\R$. Hence $\Upomega_{\phi}$ is real analytic; the remaining claims are directly inherited from the corresponding properties of $\Upomega_\Phi$ and simple $\R$-valued considerations.
\end{proof}

The previous result may be further specialized to study the outer composition of $\R$-valued real analytic functions with $\R$-valued members of admissible Banach algebras.

\begin{coro}[Real analytic Nemytskii operators - simple case]\label{coro on analytic superposition real case}
    Let $\mathcal{X}$ be an admissible unital Banach algebra in the sense of Definition~\ref{defn of admissable Banach algebras}, $a,b\in[-\infty,\infty]$ with $a<b$, $I = (a,b)$, $I_\ep = (a+\ep,b-\ep)$ for $\ep\in(0,1)$, and $\phi:I\to\R$ be real analytic. The following hold.
    \begin{enumerate}
        \item For all $x\in\mathcal{X}_{\R}\tp{I}$ the pointwise composition $\phi(x) = \phi\circ x:\R^d\to\R$ belongs to $\mathcal{X}_{\R}$.
        \item The induced Nemytskii map $\phi:\mathcal{X}_{\R}\tp{I}\to\mathcal{X}_{\R}$ given by $x\mapsto\phi\tp{x}$ is real analytic.
        \item For any $\ep\in(0,1)$ the restricted mapping $\phi:\mathcal{X}_{\R}\tp{I_\ep}\to\mathcal{X}_{\R}$ maps bounded sets to bounded sets to bounded sets.
    \end{enumerate}
\end{coro}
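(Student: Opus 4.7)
The plan is to deduce this corollary as a direct specialization of Corollary~\ref{coro on compound analytic superposition real case}, taking the target space $\mathcal{Y}$ to be $\mathcal{X}$ itself (so $V = \C$ and $V_\R = \R$). To do this, I first need to verify the hypotheses of that prior result for this particular choice of $\mathcal{Y}$. Admissibility of $\mathcal{X}$ in the sense of Definition~\ref{defn of admissable Banach algebras} gives the embedding $\mathcal{X}\emb \tp{C^0\cap L^\infty}\tp{\R^d;\C} \emb L^1_\loc\tp{\R^d;\C}$, which covers the local integrability hypothesis and also the continuous embedding $\mathcal{Y}\emb C^0_\loc\tp{\R^d;\C}$ required by the fifth item of Corollary~\ref{coro on compound analytic superposition real case}. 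Furthermore, the pointwise product map $\mathcal{X}\times\mathcal{X}\to\mathcal{X}$ is continuous precisely because $\mathcal{X}$ is a unital Banach algebra, and $\mathcal{X}_\R = \mathcal{X}\cap L^1_\loc\tp{\R^d;\R}$ is the real-valued subspace.

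Next, I would construct a natural lift of the scalar real analytic function $\phi:I\to\R$ to an $\mathcal{X}_\R$-valued real analytic function. Specifically, define $\tilde\phi:I\to\mathcal{X}_\R$ by $\tilde\phi\tp{t}=\phi\tp{t}\cdot 1_\mathcal{X}$, where $1_\mathcal{X}$ is the unit of the Banach algebra. Since the bounded $\R$-linear map $\R\ni\lambda\mapsto\lambda\cdot 1_\mathcal{X}\in\mathcal{X}_\R$ composes with the real analytic $\phi$ to produce a real analytic function, $\tilde\phi$ is real analytic $I\to\mathcal{X}_\R$. Moreover, $\tilde\phi$ inherits from $\phi$ the property that its restriction to each compactly supported subinterval $I_\ep$ takes values in a bounded subset of $\mathcal{X}_\R$.

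With these preparations, apply Corollary~\ref{coro on compound analytic superposition real case} to $\tilde\phi$. This furnishes a real analytic map $\Upomega_{\tilde\phi}:\mathcal{X}_\R\tp{I}\to\mathcal{X}_\R$ whose restriction to $\mathcal{X}_\R\tp{I_\ep}$ sends bounded sets to bounded sets. The key identification is then provided by the fifth item of that corollary, which is applicable because $\mathcal{X}\emb C^0_\loc\tp{\R^d;\C}$: for every $x\in\mathcal{X}_\R\tp{I}$ and every $\xi\in\R^d$ we obtain
\begin{equation}
    \tsb{\Upomega_{\tilde\phi}\tp{x}}\tp{\xi} = \tilde\phi\tp{x\tp{\xi}}\tp{\xi} = \phi\tp{x\tp{\xi}}\cdot 1_\mathcal{X}\tp{\xi} = \phi\tp{x\tp{\xi}},
\end{equation}
so $\Upomega_{\tilde\phi}\tp{x}$ coincides pointwise with the ordinary composition $\phi\circ x$. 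This simultaneously produces the inclusion $\phi\circ x\in\mathcal{X}_\R$ asserted in the first item, gives the real analyticity of the induced Nemytskii operator as required in the second item, and transfers the bounded-to-bounded property from $\Upomega_{\tilde\phi}$ to yield the third item.

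There is essentially no serious obstacle here: the work is all in correctly matching the abstract framework of Corollary~\ref{coro on compound analytic superposition real case} to the present specialization. The only mildly subtle point is the recognition that the scalar real analytic map $\phi$ must be ``promoted'' to an $\mathcal{X}_\R$-valued map via the unit $1_\mathcal{X}$ in order to fit the hypotheses of the prior corollary, after which the fifth item of that corollary collapses the abstract construction back down to the honest pointwise composition $\phi\circ x$.
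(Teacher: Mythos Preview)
Your proposal is correct and follows essentially the same approach as the paper: promote $\phi$ to an $\mathcal{X}_\R$-valued real analytic map via the unit $1_\mathcal{X}$, invoke Corollary~\ref{coro on compound analytic superposition real case} with $\mathcal{Y}=\mathcal{X}$, and then use its fifth item (available since $\mathcal{X}\emb C^0_\loc$) to identify $\Upomega_{\tilde\phi}(x)$ with the pointwise composition $\phi\circ x$. The only difference is cosmetic---you spell out the hypothesis verification in slightly more detail.
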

\begin{proof}
    There is the natural embedding of $\R\emb\mathcal{X}_{\R}$ given by $t\mapsto t\cdot 1$ where $1\in\mathcal{X}_{\R}$ is the unit (the constant function identically unity). In this way we may view the given function $\phi$ as a real analytic mapping $I\to\mathcal{X}_{\R}$. This allows us to invoke Corollary~\ref{coro on compound analytic superposition real case} with the target space $\mathcal{Y} = \mathcal{X}\emb C^0_{\loc}\tp{\R^d;\C}$ and obtain a real analytic Nemytskii map $\Upomega_\phi:\mathcal{X}_{\R}\tp{I}\to\mathcal{X}_{\R}$ which maps bounded subsets of $\mathcal{X}_{\R}\tp{I_\ep}$ into bounded subsets of $\mathcal{X}_{\R}$ for all $\ep\in(0,1)$.

    To complete the proof we need only establish that $\Upomega_\phi\tp{x} = \phi\circ x$ for all $x\in\mathcal{X}_{\R}\tp{I}$. But this is an immediate consequence of the fifth item of Corollary~\ref{coro on compound analytic superposition real case}.
\end{proof}

\subsection{Basic nonlinear analysis in Sobolev spaces}\label{basic nonlinear analysis in weighted sobolev spaces}

We begin with the Gagliardo-Nirenberg interpolation estimates.
\begin{thm}[Gagliardo-Nirenberg]\label{thm on gagliardo nirenberg}

    Let $\Gamma=\R^d$ or $\Gamma=\T^d$. Let $1\le p,q,t\le\infty$ and $s,r\in\N^+$ with $1\le r<s$ and suppose
    \begin{equation}
        \f{r}{s}\f{1}{p}+\bp{1-\f{r}{s}}\f{1}{q}=\f{1}{t}.
    \end{equation}
    For all $f\in\tp{L^q\cap\dot{W}^{s,p}}\tp{\Gamma}$ we have that $f\in\dot{W}^{r,t}\tp{\Gamma}$ with the estimate
       \begin{equation}\label{strong form in Rd GNI}
        \tnorm{\grad^rf}_{L^t}\lesssim\tnorm{f}_{L^q}^{1-r/s}\tnorm{\grad^s f}^{r/s}_{L^p}
    \end{equation}
    where the implicit constant depends on $d$, $p$, $q$, $t$, $s$, and $r$.
\end{thm}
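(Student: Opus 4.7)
My plan is to prove the estimate first on $\R^d$ via Littlewood-Paley analysis, and then to transfer the argument to $\T^d$ using its periodic analog. I would begin by decomposing $f = \sum_{j \in \Z} P_j f$ in the homogeneous Littlewood-Paley decomposition, where $P_j$ is the Fourier multiplier supported in the annulus $\{|\xi| \sim 2^j\}$. The analytic inputs are the Bernstein inequalities
$$
\|\nabla^k P_j f\|_{L^a} \lesssim 2^{jk}\|P_j f\|_{L^a}
\quad\text{and}\quad
\|P_j f\|_{L^b} \lesssim 2^{jd(1/a - 1/b)}\|P_j f\|_{L^a} \; (a \le b),
$$
which respectively encode differentiation and Sobolev embedding at a single dyadic scale.

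The heart of the argument is a frequency-splitting optimization: for a dyadic threshold $N \in \Z$ to be chosen, write $\nabla^r f = \sum_{j \le N} \nabla^r P_j f + \sum_{j > N} \nabla^r P_j f$. I would estimate the low-frequency sum in $L^t$ using Bernstein in the upward direction from $L^q$, producing a geometric series of ratio $2^{\alpha}$ for some $\alpha > 0$ that sums to a multiple of $2^{N\alpha}\|f\|_{L^q}$. The high-frequency sum would be estimated from $L^p$ control of $\nabla^s f$ using the factor $2^{-j(s-r)}$ from the mismatch in derivative counts, producing a convergent series bounded by $2^{-N\beta}\|\nabla^s f\|_{L^p}$ for some $\beta > 0$. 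The hypothesized relation $\tfrac{r}{s}\tfrac{1}{p} + (1-\tfrac{r}{s})\tfrac{1}{q} = \tfrac{1}{t}$ is precisely what forces $\alpha/(\alpha+\beta) = r/s$, so optimizing $N$ to balance the two terms yields the interpolation exponents exactly.

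The main technical obstacle will be handling the various cases based on the ordering of $p$, $q$, and $t$, which dictate the direction in which Bernstein embedding should be applied, together with the verification that in each case the arising geometric series have the correct signs in their exponents so that they sum and can be balanced. A related subtlety is treating the endpoint exponents $p,q,t \in \{1,\infty\}$, where Littlewood-Paley square-function identities are unavailable; however, since all of our bounds come through the triangle inequality applied to geometrically decaying sums on each of the two half-ranges of dyadic bands, no square-function machinery is actually required and the endpoints cause no essential difficulty. For $\Gamma = \T^d$ the argument adapts using the periodic Littlewood-Paley decomposition indexed by $j \ge j_0$ for some $j_0 \in \Z$ determined by the period; the Bernstein inequalities carry over and the low modes below $j_0$ are trivially controlled by $\|f\|_{L^q}$, so no new ingredients are needed.
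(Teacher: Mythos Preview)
Your Littlewood--Paley strategy has a genuine gap. The Bernstein embedding $\|P_j g\|_{L^b} \lesssim 2^{jd(1/a - 1/b)} \|P_j g\|_{L^a}$ is only available for $a \le b$, and since the scaling relation forces $1/t$ to lie between $1/p$ and $1/q$, exactly one of your two half-sums must pass to a \emph{smaller} Lebesgue exponent, for which no Bernstein inequality exists. Concretely: if $p \le q$ then $t \le q$ and your low-frequency bound ``upward from $L^q$'' is unavailable; if $p \ge q$ then $t \le p$ and your high-frequency bound from $L^p$ is unavailable. Your acknowledgment of a case analysis on the ordering does not resolve this, because in every ordering one half is blocked. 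A correct Littlewood--Paley argument does exist, but it uses an $x$-dependent threshold $N(x)$ balancing $\sup_j |P_j f(x)|$ against $\sup_j 2^{js}|P_j f(x)|$, followed by H\"older in $x$ together with the square-function characterization of $\|\nabla^r f\|_{L^t}$; this contradicts your claim that square functions are unnecessary and still leaves the endpoints $p,q,t \in \{1,\infty\}$ (which the statement includes) to be treated separately.

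The paper takes a much shorter route. For $\Gamma = \R^d$ it does not prove anything but simply cites Theorem~12.85 in Leoni's \emph{A First Course in Sobolev Spaces}. For $\Gamma = \T^d$ it reduces to the $\R^d$ case via the extension $(\mathfrak{E}f)(x) = \varphi(x)\, f(x \bmod \Z^d)$ with $\varphi \in C^\infty_c(\R^d)$ equal to $1$ on $[0,1]^d$; this satisfies $\|\mathfrak{E}g\|_{W^{k,a}(\R^d)} \asymp \|g\|_{W^{k,a}(\T^d)}$ for all $k \in \N$ and $a \in [1,\infty]$, so applying the $\R^d$ inequality to $\mathfrak{E}f$ yields an inhomogeneous estimate on $\T^d$, from which the homogeneous form follows by replacing $f$ with $f - \int_{\T^d} f$ and invoking the mean-zero Poincar\'e inequality.
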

\begin{proof}
For the case $\Gamma=\R^d$ see, for instance, Theorem 12.85 in Leoni~\cite{MR3726909}. The case $\Gamma=\T^d$ can be derived from the $\R^d$ case by considering an `extension' mapping of the form
\begin{equation}
    \tp{\mathfrak{E}f}(x)=\varphi(x)f(x\;\m{mod}\;\Z^d),\quad x\in\R^d
\end{equation}
with $\varphi\in C^\infty_{\m{c}}(\R^d)$ satisfying $\varphi=1$ on $[0,1]^d\subset\R^d$. This mapping has the property that for any $\mathfrak{k}\in\N$, $\mathfrak{p}\in[1,\infty]$, and $g\in W^{\mathfrak{k},\mathfrak{p}}\tp{\T^d}$ it not only holds that $\mathfrak{E}g\in W^{\mathfrak{k},\mathfrak{p}}\tp{\R^d}$, but also
\begin{equation}
    \tnorm{g}_{W^{\mathfrak{k},\mathfrak{p}}\tp{\T^d}}\lesssim\tnorm{\mathfrak{E}g}_{W^{\mathfrak{k},\mathfrak{p}}\tp{\R^d}}\lesssim\tnorm{g}_{W^{\mathfrak{k},\mathfrak{p}}\tp{\T^d}}
\end{equation}
for implicit constants depending only on $d$, $\mathfrak{k}$, $\mathfrak{p}$, and the choice of $\varphi$. So applying the $\R^d$-Gagliardo-Nirenberg interpolation inequality to $\mathfrak{E}f$ gives an initial `inhomogeneous' $\T^d$ estimate of the form
\begin{equation}\label{inhomogeneous Td}
    \tnorm{f}_{W^{r,t}}\lesssim\tnorm{f}_{L^q}^{1 - r/s}\tnorm{f}_{W^{s,p}}^{r/s}.
\end{equation}
The homogeneous form~\eqref{strong form in Rd GNI} then follows by applying~\eqref{inhomogeneous Td} with $f$ replaced by $f - \int_{\T^d}f$ and using mean-zero Poincar\'e inequality.
\end{proof}

Armed with Theorem~\ref{thm on gagliardo nirenberg}, we can now establish some important product estimates.
\begin{thm}[High-low product estimates]\label{thm on high low product estimates}
    Let $\Gamma=\R^d$ or $\Gamma=\T^d$. Suppose that $s\in\N^+$, $1\le p_0,p_1,q_0,q_1,t\le\infty$ satisfy 
    \begin{equation}\label{exponent relationships}
        \f{1}{t}=\f{1}{p_0}+\f{1}{q_1}=\f{1}{p_1}+\f{1}{q_0}.
    \end{equation}
    Let $f_0\in\tp{L^{q_0}\cap W^{s,p_0}}\tp{\Gamma}$ and $f_1\in\tp{L^{q_1}\cap W^{s,p_1}}\tp{\Gamma}$. The pointwise product $f_0f_1$ belongs to $W^{s,t}\tp{\Gamma}$ and the following estimate holds
    \begin{equation}
        \tnorm{f_0f_1}_{W^{s,t}}\lesssim\tnorm{f_0}_{L^{q_0}}\tnorm{f_1}_{W^{s,p_1}}+\tnorm{f_0}_{W^{s,p_0}}\tnorm{f_1}_{L^{q_1}}.
    \end{equation}
    with implicit constants depending only on $d$, $s$, $t$, $p_0$, $p_1$, $q_0$, and $q_1$.
\end{thm}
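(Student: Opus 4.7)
The plan is to use the norm equivalence $\tnorm{F}_{W^{s,t}}\asymp\tnorm{F}_{L^t}+\tnorm{\grad^s F}_{L^t}$ to reduce the estimate to separate bounds on the product in $L^t$ and on its top-order weak derivatives, then bound each by pairing H\"older's inequality with the Gagliardo-Nirenberg interpolation from Theorem~\ref{thm on gagliardo nirenberg}. The norm equivalence is itself a direct consequence of that theorem applied with $p=q=t$: for $1\le r<s$ one has $\tnorm{\grad^r F}_{L^t}\lesssim\tnorm{F}_{L^t}^{1-r/s}\tnorm{\grad^s F}_{L^t}^{r/s}\le\tnorm{F}_{L^t}+\tnorm{\grad^s F}_{L^t}$. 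The $L^t$ bound on the product itself is the easy part: from the exponent relation $1/t=1/q_0+1/p_1$ in~\eqref{exponent relationships}, H\"older yields $\tnorm{f_0 f_1}_{L^t}\le\tnorm{f_0}_{L^{q_0}}\tnorm{f_1}_{L^{p_1}}\le\tnorm{f_0}_{L^{q_0}}\tnorm{f_1}_{W^{s,p_1}}$.

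To bound $\tnorm{\grad^s(f_0 f_1)}_{L^t}$, I expand via the Leibniz rule and estimate each term $\pd^{\al-\be}f_0\cdot\pd^\be f_1$ for $|\al|=s$ and $|\be|=j\in\tcb{0,\dots,s}$ using H\"older with exponents $a_j,b_j$ adapted to the Gagliardo-Nirenberg scales,
\begin{equation*}
    \f{1}{a_j}=\f{s-j}{s p_0}+\f{j}{s q_0},\qquad\f{1}{b_j}=\f{j}{s p_1}+\f{s-j}{s q_1}.
\end{equation*}
The core algebraic check is the identity $\f{1}{a_j}+\f{1}{b_j}=\f{s-j}{s}\bp{\f{1}{p_0}+\f{1}{q_1}}+\f{j}{s}\bp{\f{1}{p_1}+\f{1}{q_0}}=\f{1}{t}$, which uses the full symmetry of~\eqref{exponent relationships}. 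Theorem~\ref{thm on gagliardo nirenberg} then furnishes $\tnorm{\grad^{s-j}f_0}_{L^{a_j}}\lesssim\tnorm{f_0}_{L^{q_0}}^{j/s}\tnorm{\grad^s f_0}_{L^{p_0}}^{(s-j)/s}$ and its symmetric companion for $f_1$, and multiplying the two bounds produces the quantity
\begin{equation*}
    \tnorm{f_0}_{L^{q_0}}^{j/s}\tnorm{\grad^s f_0}_{L^{p_0}}^{(s-j)/s}\tnorm{f_1}_{L^{q_1}}^{(s-j)/s}\tnorm{\grad^s f_1}_{L^{p_1}}^{j/s}.
\end{equation*}

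Setting $x=\tnorm{f_0}_{L^{q_0}}\tnorm{\grad^s f_1}_{L^{p_1}}$ and $y=\tnorm{\grad^s f_0}_{L^{p_0}}\tnorm{f_1}_{L^{q_1}}$, the displayed expression is exactly $x^{j/s}y^{(s-j)/s}$, which the weighted arithmetic-geometric mean inequality bounds by $x+y$: precisely the right-hand side of the target estimate. Summing over $\be$ closes the top-order bound and, combined with the $L^t$-bound and the norm equivalence, the entire claim. The one technical wrinkle is that the Leibniz expansion is applied to the \emph{weak} derivatives of $f_0 f_1$, which I would justify by a routine mollification argument on $\R^d$ (or Fourier truncation on $\T^d$): one proves the classical Leibniz formula and the stated a priori bound for smooth approximants, and then passes to the limit using stability of the estimate under strong convergence in the respective norms, with the endpoint cases $p_i,q_i\in\tcb{1,\infty}$ handled via weak-$\ast$ compactness and the lower semicontinuity of the Lebesgue and Sobolev norms. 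I do not expect any serious obstacle in this limit procedure.
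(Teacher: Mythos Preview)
Your proof is correct and follows essentially the same approach as the paper's: both reduce to the $L^t$-plus-top-order equivalent norm, apply the Leibniz rule, use H\"older with the interpolated exponents (your $a_j,b_j$ are exactly the paper's $t_0,t_1$), invoke Gagliardo--Nirenberg on each factor, and finish with Young's inequality. Your treatment is if anything slightly more careful, since you explicitly address the justification of the weak Leibniz rule via approximation, which the paper leaves implicit.
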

\begin{proof}
    We note first that thanks to an equivalent norm and the Leibniz rule we have the bound
    \begin{equation}
        \tnorm{f_0f_1}_{W^{s,t}}\lesssim\tnorm{f_0f_1}_{L^t}+\sum_{|\al|=s}\sum_{\be\le\al}\tnorm{\pd^{\al-\be}f_0\pd^\be f_1}_{L^t}.
    \end{equation}
    By the H\"older inequality we have $\tnorm{f_0f_1}_{L^t}\le\tnorm{f_0}_{L^{p_0}}\tnorm{f_1}_{L^{q_1}}$ and we have
    \begin{equation}
        \tnorm{\pd^{\al-\be}f_0\pd^\be f_1}_{L^t}\le\tnorm{f_0}_{W^{|\al|-|\be|,t_0}}\tnorm{f_1}_{W^{|\be|,t_1}}
    \end{equation}
    with $t_0,t_1\in[1,\infty]$ are defined according to
    \begin{equation}
        \f{1}{t_0}=\f{|\al|-|\be|}{s}\f{1}{p_0}+\bp{1-\f{|\al|-|\be|}{s}}\f{1}{q_0},\quad\f{1}{t_1}=\f{|\be|}{s}\f{1}{p_1}+\bp{1-\f{|\be|}{s}}\f{1}{q_1}\quad\text{and satisfy}\quad\f{1}{t}=\f{1}{t_0}+\f{1}{t_1}
    \end{equation}
    thanks to hypothesis~\eqref{exponent relationships}. Then, by Theorem~\ref{thm on gagliardo nirenberg} we have
    \begin{equation}
        \tnorm{f_0}_{W^{|\al|-|\be|,t_0}}\lesssim\tnorm{f_0}_{L^{q_0}}^{|\be|/s}\tnorm{f_0}_{W^{s,p_0}}^{1-|\be|/s},\quad\tnorm{f_1}_{W^{|\be|,t_1}}\lesssim\tnorm{f_1}_{L^{q_1}}^{1-|\be|/s}\tnorm{f_1}_{W^{s,p_1}}^{|\be|/s}
    \end{equation}
    and so by Young's inequality for products (in the case that $|\be|\not\in\tcb{0,s}$)
    \begin{equation}
        \tnorm{f_0}_{W^{|\al|-|\be|,t_0}}\tnorm{f_1}_{W^{|\be|,t_1}}\lesssim\tnorm{f_0}_{L^{q_0}}\tnorm{f_1}_{W^{s,p_1}}+\tnorm{f_0}_{W^{s,p_0}}\tnorm{f_1}_{L^{q_1}}.
    \end{equation}
    Synthesizing the above yields the claimed estimate.
\end{proof}

We now establish important results which relates to the analytic Nemytskii operator material in Appendix~\ref{appendix on analytic composition in unital Banach algebras}.

\begin{prop}[Admissibility in the periodic case]\label{prop on admissability in the periodic setting}
    Given any $s\in\N$ the $p\in[1,\infty]$ the function space $\tp{C^0\cap L^\infty\cap W^{s,p}}\tp{\T^d;\C}$ is an admissible unital Banach algebra in the sense of Definition~\ref{defn of admissable Banach algebras}.
\end{prop}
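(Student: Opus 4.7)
Proof proposal. The plan is to verify each of the three conditions of Definition~\ref{defn of admissable Banach algebras} for the space $\mathcal{X}=(C^0\cap L^\infty\cap W^{s,p})(\mathbb{T}^d;\mathbb{C})$, after first confirming that $\mathcal{X}$ is a unital Banach algebra. Completeness is inherited from each factor space. The unit is the constant function $1$, which lies in $\mathcal{X}$ because $\mathbb{T}^d$ has finite measure. For the algebra property, the bound $\tnorm{fg}_{L^\infty}\le\tnorm{f}_{L^\infty}\tnorm{g}_{L^\infty}$ is standard, and Theorem~\ref{thm on high low product estimates} applied with $(p_0,p_1,q_0,q_1,t)=(p,p,\infty,\infty,p)$ yields $\tnorm{fg}_{W^{s,p}}\lesssim\tnorm{f}_{L^\infty}\tnorm{g}_{W^{s,p}}+\tnorm{f}_{W^{s,p}}\tnorm{g}_{L^\infty}$, so $\mathcal{X}$ is a Banach algebra.

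The first item of Definition~\ref{defn of admissable Banach algebras} is trivial: identifying periodic functions on $\mathbb{T}^d$ with $\mathbb{Z}^d$-periodic functions on $\mathbb{R}^d$ gives $\mathcal{X}\emb(C^0\cap L^\infty)(\mathbb{R}^d;\mathbb{C})$. For the second item, I would argue as follows. Since $\mathbb{T}^d$ is compact and $f\in\mathcal{X}$ is continuous, $\m{img}(f)$ is already closed, so $\Bar{\m{img}(f)}=\m{img}(f)$. If $z=f(\xi_0)\in\m{img}(f)$, then any candidate inverse $h\in\mathcal{X}$ for $z-f$ would need to satisfy $h(\xi_0)\cdot 0=1$, impossible, so $\m{spec}(f)\supseteq\m{img}(f)$. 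Conversely, for $z\in\mathbb{C}\setminus\m{img}(f)$, the pointwise function $g(\xi)=(z-f(\xi))^{-1}$ is continuous with $\tnorm{g}_{L^\infty}\le\ep^{-1}$ where $\ep=\m{dist}_{\mathbb{C}}(\m{img}(f),z)>0$; the remaining task is to establish $g\in W^{s,p}(\mathbb{T}^d)$ together with a structured norm bound on $g$, which I package together with item three below.

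For the third item, I propose a finite induction on the regularity parameter $s$ to simultaneously establish that $g=(z-f)^{-1}\in\mathcal{X}$ and a resolvent estimate of the required form. Specifically, I will prove the existence of a locally bounded function $\bf{C}_s:[0,\infty)^2\to\R^+$, increasing in both arguments, such that
\begin{equation}
    \tnorm{(z-f)^{-1}}_{\mathcal{X}}\le\bf{C}_s(\ep^{-1},\tnorm{f}_{\mathcal{X}}).
\end{equation}
The base case $s=0$ reduces to $\tnorm{g}_{L^\infty}\le\ep^{-1}$, together with the embedding $L^\infty(\mathbb{T}^d)\emb L^p(\mathbb{T}^d)$ (which costs only a constant depending on the volume of $\mathbb{T}^d$). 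For the inductive step from $s$ to $s+1$, I differentiate the identity $(z-f)g=1$ once to get the nonlinear relation $\grad g=g^2\grad f$, which belongs to $W^{s,p}(\mathbb{T}^d;\mathbb{C}^d)$ by the high-low product estimate Theorem~\ref{thm on high low product estimates} combined with the inductive bound on $g^2=g\cdot g\in\mathcal{X}$ (inductive step applied to $g$, then algebra property of $\mathcal{X}$ at level $s$) and the hypothesis $f\in W^{s+1,p}$. Tracking the constants explicitly gives a bound of the form $\bf{C}_{s+1}(\theta,\phi)=\theta+c_s\bf{C}_s(\theta,\phi)^2\phi$ for a suitable positive constant $c_s$, which is manifestly locally bounded and increasing in both arguments.

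The main obstacle will be bookkeeping in the inductive step: one must make sure that differentiating $g$ does not require more regularity of $f$ than is available and that the pointwise identity $\grad g=g^2\grad f$ is interpreted in the correct weak sense so that applying Theorem~\ref{thm on high low product estimates} to the right-hand side indeed yields $\grad g\in W^{s,p}$. This is entirely analogous to the corresponding step in the proof of Proposition~\ref{prop on admissability of the extended weighted gradient spaces}, and the argument adapts with only cosmetic modifications: the weighted gradient structure is replaced here by the simpler $W^{s,p}$-norm, and the embedding $\mathcal{X}\emb L^\infty$ obviates the separate bound on the constant term that $\bf{c}$ produced in that proof.
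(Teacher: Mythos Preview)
Your induction scheme has a genuine gap at the inductive step. You want to pass from $g\in L^\infty\cap W^{s,p}$ to $\nabla g=g^2\nabla f\in W^{s,p}$ by applying Theorem~\ref{thm on high low product estimates} to the pair $(g^2,\nabla f)$. But with target exponent $t=p$ and $g^2\in L^\infty\cap W^{s,p}$, the exponent relation~\eqref{exponent relationships} forces $q_1=\infty$ for the second factor, i.e.\ you would need $\nabla f\in L^\infty$. That is not available: the proposition is stated for arbitrary $s\in\N$ and $p\in[1,\infty]$, so you cannot assume $W^{s,p}(\T^d)\emb W^{1,\infty}(\T^d)$. The analogy you draw to Proposition~\ref{prop on admissability of the extended weighted gradient spaces} is misleading here: in that result the standing hypothesis $s\ge 2$ with $d=2$ makes $H^s$ supercritical, and the product estimate $\tilde{H}^s_\del\times H^s_\rho\to H^s_{\rho+\tilde\del}$ (third item of Proposition~\ref{prop on product estimates in weighted gradient spaces}) exploits precisely this algebra structure on the high-mode part. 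No such structure is present for general $W^{s,p}(\T^d)$.

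The paper instead estimates $\tnorm{(z-f)^{-1}}_{\dot W^{s,p}}$ directly in one shot via Fa\`a di Bruno: each top-order derivative of $g$ is a finite sum of terms of the form $g^{j+1}\cdot\bigotimes_{i=1}^{j}\nabla^{k_i}f$ with $k_1+\cdots+k_j=s$. The point is that the prefactor $g^{j+1}$ is controlled in $L^\infty$ by $\ep^{-(j+1)}$, while the tensor of derivatives of $f$ is bounded in $L^p$ by H\"older with exponents $t_{k_i}=(s/k_i)p$ followed by Gagliardo--Nirenberg (Theorem~\ref{thm on gagliardo nirenberg}) using only $f\in L^\infty\cap W^{s,p}$. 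This yields
\[
\tnorm{(z-f)^{-1}}_{W^{s,p}}\lesssim \ep^{-1}+\max\{1,\ep^{-s}\}\,\tbr{\tnorm{f}_{W^{s,p}}}^{s},
\]
which is the required structured resolvent bound. Your induction can be repaired, but only by unfolding the inductive hypothesis on $g^2$ back into this Fa\`a di Bruno expression---at which point you have reproduced the paper's argument rather than shortcut it.
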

\begin{proof}
    Thanks to Theorem~\ref{thm on high low product estimates} we are assured that $\tp{C^0\cap L^\infty\cap W^{s,p}}\tp{\T^d;\C}$ is indeed a Banach algebra - moreover it is unital where the constant function $1$ is the unit. Now given $f\in\tp{C^0\cap L^\infty\cap W^{s,p}}\tp{\T^d;\C}$ it is easy to see that $\C\setminus\m{spec}\tp{f}\subseteq\C\setminus\Bar{\m{img}\tp{f}}$ and so we only need to focus on establishing the opposite inclusion with the required structured estimates. Let $z\in\C\setminus\Bar{\m{img}\tp{f}}$ and set $\ep=\m{dist}_{\C}\tp{\Bar{\m{img}\tp{f}},z}>0$. Evidently the function $(z-f)^{-1}$ belongs to $\tp{C^0\cap L^\infty}\tp{\T^d;\C}$ with $\tnorm{(z-f)^{-1}}_{L^\infty}\le\ep^{-1}$. Since $\tp{L^\infty\cap\dot{W}^{s,p}}\tp{\T^d;\C}\emb W^{s,p}\tp{\T^d;\C}$, it therefore remains to estimate the $\dot{W}^{s,p}\tp{\T^d;\C}$ seminorm of $(z-f)^{-1}$. We do this by first invoking Fa\`a di Bruno's formula to obtain the estimate
    \begin{equation}\label{(((___)))}
        \tnorm{\tp{z-f}^{-1}}_{\dot{W}^{s,p}}\lesssim\sum_{j=1}^s\sum_{k_1+\cdots+k_j=s}\ep^{-j}\tnorm{\grad^{k_1}f\otimes\cdots\otimes\grad^{k_j}f}_{L^p}.
    \end{equation}
    Next, we write $1/p=\sum_{i=1}^j1/t_{k_i}$ with $t_{k_i}=(s/k_i)p\in[1,\infty]$ and use H\"older's inequality followed by Theorem~\ref{thm on gagliardo nirenberg} to deduce
    \begin{equation}\label{)))---(((}
        \tnorm{\grad^{k_1}f\otimes\cdots\otimes\grad^{k_j}f}_{L^p}\lesssim\prod_{i=1}^j\tnorm{f}_{L^\infty}^{1-k_i/s}\tnorm{f}_{W^{s,p}}^{k_i/s}=\tnorm{f}_{L^\infty}^{j-1}\tnorm{f}_{W^{s,p}}.
    \end{equation}
    Combining~\eqref{(((___)))} with~\eqref{)))---(((} and the aforementioned reduction gives the resolvent estimate
    \begin{equation}
        \tnorm{\tp{z-f}^{-1}}_{W^{s,p}}\lesssim\ep^{-1}+\max\tcb{1,\ep^{-s}}\tbr{\tnorm{f}_{W^{s,p}}}^s
    \end{equation}
    with an implicit constant depending only on $s$, $p$, and $d$. This is a bound of the form~\eqref{resolvent estimates} and so the proof is complete.
\end{proof}

\begin{prop}[Admissibility in the solitary case]\label{prop on preliminary admissibility in the solitary case}
    Given any $r\in[1,\infty]$ the function space $\tp{C^0\cap L^\infty\cap\dot{W}^{1,r}}\tp{\R^d;\C}$ is an admissible unital Banach algebra in the sense of Definition~\ref{defn of admissable Banach algebras}.
\end{prop}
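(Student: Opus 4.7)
The plan is to verify each of the three items of Definition~\ref{defn of admissable Banach algebras} in turn, making repeated use of the pointwise formulas for the product and for the resolvent. Throughout, I will equip $\mathcal{X}=(C^0\cap L^\infty\cap\dot{W}^{1,r})\tp{\R^d;\C}$ with the norm $\tnorm{f}_{\mathcal{X}}=\tnorm{f}_{L^\infty}+\tnorm{\grad f}_{L^r}$, which is genuinely a norm on $\mathcal{X}$ (the $L^\infty$ piece kills the constants that would be invisible to the $\dot W^{1,r}$ seminorm). Completeness follows because a Cauchy sequence converges uniformly by the $L^\infty$ part, while its gradients converge in $L^r$; identification of the two limits is by distributional gradient. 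Once complete, $\mathcal{X}$ is a unital Banach algebra: the constant function $1$ belongs to $\mathcal{X}$, and from $\grad\tp{fg}=f\grad g+g\grad f$ we obtain
\begin{equation}
    \tnorm{fg}_{\mathcal{X}}\le\tnorm{f}_{L^\infty}\tnorm{g}_{L^\infty}+\tnorm{f}_{L^\infty}\tnorm{\grad g}_{L^r}+\tnorm{g}_{L^\infty}\tnorm{\grad f}_{L^r}\le\tnorm{f}_{\mathcal{X}}\tnorm{g}_{\mathcal{X}}.
\end{equation}
The first admissibility item is immediate from the definition of the norm.

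For the second and third items, fix $f\in\mathcal{X}$. The inclusion $\C\setminus\m{spec}\tp{f}\subseteq\C\setminus\Bar{\m{img}\tp{f}}$ is trivial, as in the proof of Proposition~\ref{prop on admissability in the periodic setting}: if $z\in\Bar{\m{img}\tp{f}}$ then the pointwise function $(z-f)^{-1}$ is either undefined at some $\xi_0$ or unbounded near a sequence $\xi_n$ with $f\tp{\xi_n}\to z$, in either case ruling out membership in $\mathcal{X}\emb L^\infty$. Conversely, fix $z\in\C\setminus\Bar{\m{img}\tp{f}}$ and set $\ep=\m{dist}_{\C}\tp{\Bar{\m{img}\tp{f}},z}\in\R^+$. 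Then $\xi\mapsto \tp{z-f\tp{\xi}}^{-1}$ is well defined and continuous on $\R^d$ (as a composition), bounded by $\ep^{-1}$, and differentiable in the distributional sense with
\begin{equation}\label{sketched resolvent gradient}
    \grad\bp{\tp{z-f}^{-1}}=\tp{z-f}^{-2}\grad f.
\end{equation}

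From~\eqref{sketched resolvent gradient} we read off $\tnorm{\grad\tp{(z-f)^{-1}}}_{L^r}\le\ep^{-2}\tnorm{\grad f}_{L^r}\le\ep^{-2}\tnorm{f}_{\mathcal{X}}$, so that $(z-f)^{-1}\in\mathcal{X}$ and is manifestly the algebra inverse of $z-f$. This both completes the spectrum identification and supplies the resolvent estimate
\begin{equation}
    \tnorm{\tp{z-f}^{-1}}_{\mathcal{X}}\le\ep^{-1}+\ep^{-2}\tnorm{f}_{\mathcal{X}}=\bf{C}\tp{\ep^{-1},\tnorm{f}_{\mathcal{X}}},\quad\bf{C}\tp{\theta,\phi}=\theta+\theta^2\phi,
\end{equation}
which has the required form (locally bounded and increasing in both arguments). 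Since the entire argument reduces to the pointwise chain rule and an $L^\infty$ bound on $(z-f)^{-1}$, there is essentially no technical obstacle; the only subtlety worth a moment's care is the completeness of $\mathcal{X}$ noted above, which validates calling $\mathcal{X}$ a Banach algebra in the first place.
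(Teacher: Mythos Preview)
Your argument is correct and follows exactly the same route as the paper: the key observation is the pointwise formula $\grad\tp{(z-f)^{-1}}=(z-f)^{-2}\grad f$, from which membership in $\mathcal{X}$ and the structured resolvent estimate are immediate. The paper's proof is a one-liner invoking this formula; you have simply filled in the surrounding details (completeness, the algebra inequality, and the explicit $\bf{C}$) that the paper leaves implicit.
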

\begin{proof}
    This is a simple calculation as the derivative of the resolvent obeys the formula $\grad\tp{\tp{z-f}^{-1}}=\tp{z-f}^{-2}\grad f$ for any $f\in\tp{C^0\cap L^\infty\cap\dot{W}^{1,r}}\tp{\R^2}$ and $z\not\in\Bar{\m{img}\tp{f}}$ and so it is obvious that $(z-f)^{-1}$ belongs to the same space with the correct structured estimates.
\end{proof}

\subsection{Harmonic analysis in weighted square summable Sobolev spaces}\label{appendix on harmonic analysis in weighted square summable Sobolev spaces}

In this appendix we record a selection of important tools from Harmonic analysis adapted to the weighted Sobolev spaces of Definition~\ref{defn of weighted Sobolev spaces} in the Hilbert case of $p=2$. We begin with an elementary multiplier theorem which lends us some rather important estimates on band limited functions.

\begin{prop}[Multiplier bounds and weighted Bernstein inequalities]\label{prop on a multiplier bound an weighted Bernstein inequalities}
    Let $s\in\N$, $\del\in[0,\infty)$. The following hold.
    \begin{enumerate}
        \item Let $k=\lceil\del\rceil\in\N$ and $m\in W^{k,\infty}\tp{\R^d;\C}$. For all $f\in H^s_\del\tp{\R^d;\C}$ we have $m(D)f\in H^{s}_\del\tp{\R^d;\C}$ with the estimate
        \begin{equation}\label{type 1 multiplier bounds}
            \tnorm{m(D)f}_{H^s_\del}\lesssim\tnorm{m}_{W^{k,\infty}}\tnorm{f}_{H^s_\del}.
        \end{equation}
        \item Now suppose that $f\in L^2_\del\tp{\R^d;\C}$ and $\lambda\in[1,\infty)$.
        \begin{enumerate}
            \item If $\m{supp}\mathscr{F}[f]\subset B(0,\lambda)$ then $f\in H^s_\del\tp{\R^d;\C}$ and $\tnorm{f}_{H^s_\del}\lesssim\lambda^s\tnorm{f}_{L^2_\del}$.
            \item If $\m{supp}\mathscr{F}[f]\subset B(0,\lambda)\setminus\Bar{B(0,\lambda/2)}$, then $f\in H^s_\del\tp{\R^d;\C}$ and $\lambda^s\tnorm{f}_{L^2_\del}\lesssim\tnorm{f}_{H^s_\del}\lesssim\lambda^s\tnorm{f}_{L^2_\del}$.
        \end{enumerate}
    \end{enumerate}
    All implicit constants in the inequalities above depend only on $d$, $s$, and $\del$.
\end{prop}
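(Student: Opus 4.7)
The plan is to reduce everything to classical Fourier analytic facts via the equivalent norm characterization of $H^s_\del(\R^d;\C)$ provided by Proposition~\ref{prop on equivalent norm on weighted square summable Sobolev spaces}, which asserts that
\begin{equation*}
    \tnorm{f}_{H^s_\del}^2 \asymp \sum_{|\al|\le s} \tnorm{\mathscr{F}[\pd^\al f]}_{H^\del}^2.
\end{equation*}
This identity moves all of the weighted decay into standard fractional Sobolev regularity on the Fourier side, where the action of a Fourier multiplier $m(D)$ turns into pointwise multiplication by $m$ on $\mathscr{F}[f]$.

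For the first item, note that $\pd^\al \tp{m(D) f} = m(D)\pd^\al f$ and hence $\mathscr{F}\tsb{\pd^\al\tp{m(D) f}} = m\cdot \mathscr{F}[\pd^\al f]$. Thus the estimate~\eqref{type 1 multiplier bounds} reduces to showing that pointwise multiplication by $m$ is a bounded operator on $H^\del(\R^d;\C)$ with operator norm controlled by $\tnorm{m}_{W^{k,\infty}}$, where $k=\lceil\del\rceil$. Writing $\del = m_0 + \sig$ with $m_0\in\N$ and $\sig\in[0,1)$ as in Definition~\ref{defn of a choice of inner product}, the contribution of the integer derivatives up to order $m_0$ is controlled by iterating the Leibniz rule, which uses only $\tnorm{m}_{W^{m_0,\infty}}$. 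The fractional difference term is handled by a standard direct estimate on the double integral, splitting $m(x+h)g(x+h) - m(x)g(x) = m(x+h)(g(x+h)-g(x)) + (m(x+h)-m(x))g(x)$ and using $|m(x+h)-m(x)| \le |h|\tnorm{\grad m}_{L^\infty}$ together with $|h|^{2(1-\sig)}/|h|^{d+2\sig}$ being integrable on $B(0,1)$; this accounts for the added smoothness requirement $k=\lceil\del\rceil$.

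For item 2(a), fix a cutoff $\chi\in C^\infty_{\m{c}}\tp{B(0,2);\R}$ with $\chi=1$ on $\Bar{B(0,1)}$ and set $\chi_\lm(\xi)=\chi(\xi/\lm)$. Since $\m{supp}\mathscr{F}[f]\subset B(0,\lm)$, we have $f=\chi_\lm(D)f$ and, for any $|\al|\le s$,
\begin{equation*}
    \pd^\al f = m_\al(D) f,\quad m_\al(\xi) = (2\pi\ii\xi)^\al\chi_\lm(\xi).
\end{equation*}
A direct Leibniz computation on the support $|\xi|\le 2\lm$ yields $\tnorm{m_\al}_{W^{k,\infty}}\lesssim\lm^{|\al|}$ for $\lm\ge 1$. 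Applying item 1 with $s=0$ gives $\tnorm{\pd^\al f}_{L^2_\del}\lesssim\lm^{|\al|}\tnorm{f}_{L^2_\del}$, and summing over $|\al|\le s$ using the equivalent norm from Proposition~\ref{prop on equivalent norms in the weighted Sobolev spaces} produces the claim.

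For item 2(b), the upper bound is precisely 2(a). For the lower bound, using the annular support one can choose an annular cutoff $\tilde{\chi}\in C^\infty_{\m{c}}\tp{B(0,2)\setminus\Bar{B(0,1/4)}}$ equal to $1$ near the annulus $[1/2,1]$, set $\tilde{\chi}_\lm(\xi)=\tilde{\chi}(\xi/\lm)$, and observe $f=\tilde{\chi}_\lm(D)f$. The bounded-away-from-zero support lets the symbols $n_{\al,j}(\xi) = -\ii\xi_j\tilde{\chi}_\lm(\xi)/(2\pi|\xi|^2)$ satisfy $\tnorm{n_{\al,j}}_{W^{k,\infty}}\lesssim\lm^{-1}$, giving the Riesz-type inversion $f = \sum_j n_{\al,j}(D)\pd_j f$ and hence $\lm\tnorm{f}_{L^2_\del}\lesssim\sum_j\tnorm{\pd_jf}_{L^2_\del}$. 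Since every $\pd^\be f$ with $|\be|<s$ still has annular Fourier support in the same annulus, iterating this one-derivative gain $s$ times yields $\lm^s\tnorm{f}_{L^2_\del}\lesssim\sum_{|\al|=s}\tnorm{\pd^\al f}_{L^2_\del}\lesssim\tnorm{f}_{H^s_\del}$.

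The main technical wrinkle is the multiplier boundedness on $H^\del$ in the fractional range, since the usual proof for $W^{k,\infty}$ multipliers on integer Sobolev spaces must be extended to the Slobodeckij-type seminorm; however, this is a routine variation and the rest of the proof is mechanical bookkeeping of symbol bounds.
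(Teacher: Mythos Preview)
Your proposal is correct and follows essentially the same route as the paper: both reduce item~1 to the product bound $W^{k,\infty}\times H^\del\to H^\del$ via the Fourier-side norm equivalence of Proposition~\ref{prop on equivalent norm on weighted square summable Sobolev spaces}, and both handle item~2 by manufacturing multipliers from smooth cutoffs and invoking item~1. The only cosmetic differences are that the paper simply cites the product map $W^{k,\infty}\times H^\del\to H^\del$ as known while you sketch its proof, and for the lower bound in 2(b) the paper writes a single degree-$s$ inverse symbol $n_{\al,\lm}(\xi)=(1-\varphi(\xi/4\lm))\varphi(\xi/\lm)\xi^\al/((2\pi\ii)^s|\xi|^{2s})$ rather than iterating your degree-one step $s$ times; neither choice is materially simpler.
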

\begin{proof}
    Let us begin by proving the first item. By using the norm equivalence of the first item of Proposition~\ref{prop on equivalent norm on weighted square summable Sobolev spaces} followed by the boundedness of the product map $W^{k,\infty}\times H^\del\to H^\del$ we obtain
    \begin{equation}
        \tnorm{m(D)f}_{H^s_\del}\asymp\bp{\sum_{|\al|\le s}\tnorm{m\mathscr{F}[\pd^\al f]}_{H^\del}^2}^{1/2}\lesssim\tnorm{m}_{W^{k,\infty}}\tnorm{f}_{H^s_\del}.
    \end{equation}
    This proves the first item.

    Let us now prove the second item. We fix $\varphi\in C^\infty_{\m{c}}\tp{B(0,2)}$ with $\varphi=1$ on $\Bar{B(0,1)}$. If $f\in L^2_\del\tp{\R^2;\C}$ satisfies  $\m{supp}\mathscr{F}[f]\subset B(0,\lambda)$, then $f=\varphi(D/\lambda)f$. By using the equivalent norm of Proposition~\ref{prop on equivalent norms in the weighted Sobolev spaces} we then find that
    \begin{equation}\label{[[]]}
        \tnorm{f}_{H^s_\del}\asymp\bp{\sum_{|\al|\le s}\tnorm{\varphi(D/\lambda)\pd^\al f}^2_{L^2_\del}}^{1/2}=\bp{\sum_{|\al|\le s}\tnorm{m_{\al,\lambda}(D)f}^2_{L^2_\del}}^{1/2}
    \end{equation}
    for the multipliers $m_{\al,\lambda}\tp{\xi}=\varphi(\xi/\lambda)\tp{2\pi\ii\xi}^\al$. It is elementary to estimate $\tnorm{m_{\al,\lambda}}_{W^{k,\infty}}\lesssim\lambda^s$ and so by invoking the bounds~\eqref{type 1 multiplier bounds} on each term in the right hand side's sum in~\eqref{[[]]} we obtain $\tnorm{f}_{H^s_\del}\lesssim\lambda^s\tnorm{f}_{L^2_\del}$. Thus part (a) of the second item is justified.

    To prove part (b) it only remains to show the complementary lower bound. If $f\in L^2_\del\tp{\R^d;\C}$ satisfies $\m{supp}\mathscr{F}[f]\subset B(0,\lambda)\setminus\Bar{B(0,\lambda/2)}$, then we have the identity $f=\tp{1-\varphi(D/4\lambda)}\varphi(D/\lambda)f$. We can decompose this multiplier via
    \begin{equation}
        n_{\al,\lambda}\tp{\xi}=\tp{1-\varphi\tp{\xi/4\lambda}}\varphi\tp{\xi/\lambda}\f{\xi^\al}{\tp{2\pi\ii}^s|\xi|^{2s}}\quad\text{which satisfy}\quad\sum_{|\al|=s}n_{\al,\lambda}\tp{D}\pd^\al=\tp{1-\varphi(D/4\lambda)}\varphi\tp{D/\lambda}.
    \end{equation}
    Therefore, through this decomposition, the elementary bounds $\tnorm{n_{\al,\lambda}}_{W^{k,\infty}}\lesssim\lambda^{-s}$, and estimate~\eqref{type 1 multiplier bounds} we find
    \begin{equation}\label{((()))}
        \tnorm{f}_{L^2_\del}\le\sum_{|\al|=s}\tnorm{n_{\alpha,\lambda}(D)\pd^\al f}_{L^2_\del}\lesssim\lambda^{-s}\sum_{|\al|=s}\tnorm{\pd^\al f}_{L^2_\del}.
    \end{equation}
    Now, thanks to Proposition~\ref{prop on equivalent norms in the weighted Sobolev spaces} again, the right hand side of~\eqref{((()))} is bounded above by $\lambda^{-s}\tnorm{f}_{H^{s}_\del}$. 
\end{proof}

The weighted inequalities for band-limited functions in the second item of Proposition~\ref{prop on a multiplier bound an weighted Bernstein inequalities} are the basis of the following Littlewood-Paley results.

\begin{prop}[Littlewood-Paley]\label{prop on littlewood paley}
    Let $V$ be a finite dimensional complex inner product space, $s\in\N$, $\del\in[0,\infty)$, and set $k=\lceil\del\rceil$. Assume that $\tcb{\varphi_j}_{j\in\N}\subset W^{k,\infty}\tp{\R^d;\C}$ satisfy $\m{supp}\tp{\varphi_0}\subset B(0,4)$, for $j>0$ $\m{supp}\tp{\varphi_j}\subset B(0,2^{j+2})\setminus\Bar{B\tp{0,2^{j-2}}}$, and $\sup_{j\in\N}\tnorm{\varphi_j}_{W^{k,\infty}}<\infty$. The following hold.
    \begin{enumerate}
        \item For all $f\in H^s_\del\tp{\R^d;V}$ we have the estimate
        \begin{equation}\label{first littlewood paley}
            \bp{\sum_{j\in\N}4^{js}\tnorm{\varphi_j(D)f}^2_{L^2_\del}}^{1/2}\lesssim\tp{\sup_{j\in\N}\tnorm{\varphi_j}_{W^{k,\infty}}}\tnorm{f}_{H^s_\del}
        \end{equation}
        for an implicit constant depending only on the dimension, $s$, and $\del$.
        \item Assume additionally that $\sum_{j\in\N}\varphi_j=1$; then we may bound for all $f\in H^s_\del\tp{\R^d;V}$
        \begin{equation}\label{second littlewood paley}
            \tnorm{f}_{H^s_\del}\lesssim\bp{\sum_{j\in\N}4^{js}\tnorm{\varphi_j(D)f}^2_{L^2_\del}}^{1/2}
        \end{equation}
        again with implicit constants depending only on the dimension, $s$, and $\del$.
        \item Again suppose that $\sum_{j\in\N}\varphi_j=1$. For all $f\in\mathscr{S}^\ast\tp{\R^d;V}$ it holds that $f\in H^s_\del\tp{\R^d;V}$ if and only if the quantity on the right hand side of~\eqref{second littlewood paley} is finite; moreover, we have the equivalence of norms
        \begin{equation}
            \tnorm{f}_{H^s_\del}\asymp\bp{\sum_{j\in\N}4^{js}\tnorm{\varphi_j(D)f}^2_{L^2_\del}}^{1/2}
        \end{equation}
        with implicit constants depending only on the dimension, $s$, $\del$, and $\tcb{\varphi_j}_{j\in\N}$.
    \end{enumerate}
\end{prop}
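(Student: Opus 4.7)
Item (3) is just a synthesis of items (1) and (2), so the work is in the first two assertions, and both hinge on comparing $\sum_j 4^{js}\tnorm{\varphi_j(D)f}_{L^2_\del}^2$ with the intermediate quantity $\sum_j \tnorm{\varphi_j(D)f}_{H^s_\del}^2$.  The first step, common to both, is to invoke Proposition~\ref{prop on a multiplier bound an weighted Bernstein inequalities}: the inclusions $\supp\mathscr{F}[\varphi_0(D)f]\subset B(0,4)$ and, for $j\ge 1$, $\supp\mathscr{F}[\varphi_j(D)f]\subset B(0,2^{j+2})\setminus\Bar{B(0,2^{j-2})}$ yield the two-sided bound $4^{js}\tnorm{\varphi_j(D)f}_{L^2_\del}^2\asymp\tnorm{\varphi_j(D)f}_{H^s_\del}^2$ uniformly in $j\in\N$, with constants depending only on $d$, $s$, $\del$.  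It therefore suffices to prove $\sum_j\tnorm{\varphi_j(D)f}_{H^s_\del}^2\lesssim\tp{\sup_j\tnorm{\varphi_j}_{W^{k,\infty}}}^2\tnorm{f}_{H^s_\del}^2$ for item (1), and the reverse inequality $\tnorm{f}_{H^s_\del}^2\lesssim\sum_j\tnorm{\varphi_j(D)f}_{H^s_\del}^2$ for item (2) under the extra hypothesis $\sum_j\varphi_j=1$.

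For item (1), I next transfer the estimate to Fourier side via the equivalent inner product $\tbr{\cdot,\cdot}^{\bigstar}_{H^s_\del}$ of Proposition~\ref{prop on equivalent norm on weighted square summable Sobolev spaces}, writing $\tnorm{\varphi_j(D)f}_{H^s_\del}^2\asymp\sum_{|\al|\le s}\tnorm{\varphi_j\mathscr{F}[\pd^\al f]}_{H^\del}^{\star,2}$.  The task then becomes the multiplier inequality $\sum_j\tnorm{\varphi_j g}_{H^\del}^{\star,2}\lesssim\tp{\sup_j\tnorm{\varphi_j}_{W^{k,\infty}}}^2\tnorm{g}_{H^\del}^{\star,2}$ applied to $g=\mathscr{F}[\pd^\al f]$.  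Writing $\del=m+\sig$ with $m\in\N$ and $\sig\in[0,1)$, I expand $\pd^\be\tp{\varphi_j g}$ for $|\be|\le m$ by the Leibniz rule and split the $|h|<1$ difference quotient $\varphi_j(\xi+h)g(\xi+h)-\varphi_j(\xi)g(\xi)$ into $\varphi_j(\xi+h)\tp{g(\xi+h)-g(\xi)}+\tp{\varphi_j(\xi+h)-\varphi_j(\xi)}g(\xi)$.  In each resulting term I interchange the sum over $j$ with the spatial integral and use two key structural facts: at every $\xi\in\R^d$ at most a universally bounded number of the $\varphi_j$ are nonzero (by the support hypothesis), and each derivative up to order $k=\lceil\del\rceil$ is controlled by $\sup_j\tnorm{\varphi_j}_{W^{k,\infty}}$.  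These allow me to replace $\sum_j|\pd^\gam\varphi_j|^2$ by a harmless constant factor and $\sum_j|\varphi_j(\xi+h)-\varphi_j(\xi)|^2$ by $C\min\tcb{1,|h|^2}$ times such a factor, after which the integration over $|h|<1$ converges thanks to $\sig<1$.  Summing over $\al$ and applying Proposition~\ref{prop on equivalent norm on weighted square summable Sobolev spaces} collapses the right side to $\tnorm{f}_{H^s_\del}^2$.

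For item (2), the hypothesis $\sum_j\varphi_j=1$ lets me decompose $\mathscr{F}[\pd^\al f]=\sum_j\varphi_j\mathscr{F}[\pd^\al f]$ and expand
\begin{equation}
    \tnorm{\mathscr{F}[\pd^\al f]}_{H^\del}^{\star,2}=\sum_{j,j'\in\N}\tbr{\varphi_j\mathscr{F}[\pd^\al f],\varphi_{j'}\mathscr{F}[\pd^\al f]}^{\star}_{H^\del}.
\end{equation}
The pseudolocality of $\tbr{\cdot,\cdot}^{\star}_{H^\del}$ recorded in Remark~\ref{rmk on pseudolocality} kills any term for which $\m{dist}\tp{\supp\varphi_j,\supp\varphi_{j'}}>2$; checking the support data one sees that $|j-j'|\ge J_0$ (for some universal $J_0$ such as $5$) forces this distance condition.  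Hence the double sum collapses to an $\ell^2$ band of width $J_0$, which by Cauchy–Schwarz and AM–GM is dominated by $C\sum_j\tnorm{\varphi_j\mathscr{F}[\pd^\al f]}_{H^\del}^{\star,2}\asymp\sum_j\tnorm{\varphi_j(D)f}_{H^s_\del}^2$ after summing over $|\al|\le s$ and invoking Proposition~\ref{prop on equivalent norm on weighted square summable Sobolev spaces} again.  Combined with the Bernstein equivalence of the first paragraph, this completes the reverse inequality.

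The main technical obstacle is the fractional piece of the pointwise multiplier bound in item (1): the difference quotient sum $\sum_j\tp{\varphi_j(\xi+h)-\varphi_j(\xi)}g(\xi)$ needs to be controlled uniformly in the $H^\del$ seminorm, which is where the full $W^{k,\infty}$ hypothesis (rather than mere $L^\infty$ boundedness) is used in concert with the finite overlap of the $\{\supp\varphi_j\}_j$ to convert the apparent loss into an integrable factor over $B(0,1)$.
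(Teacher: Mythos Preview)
Your proposal is correct and follows essentially the same route as the paper's proof: both reduce to the intermediate quantity $\sum_j\tnorm{\varphi_j(D)f}_{H^s_\del}^2$ via the weighted Bernstein inequalities of Proposition~\ref{prop on a multiplier bound an weighted Bernstein inequalities}, pass to the Fourier side via the $\bigstar$ inner product of Proposition~\ref{prop on equivalent norm on weighted square summable Sobolev spaces}, handle item (1) by splitting the difference quotient and exploiting finite overlap of the $\supp\varphi_j$, and handle item (2) via pseudolocality (Remark~\ref{rmk on pseudolocality}) to collapse the double sum to a finite-width band. The paper writes out only the case $\del<1$ explicitly (leaving $\del\ge 1$ to the reader), whereas you sketch the general $\del=m+\sig$ case via the Leibniz rule; and the paper takes the band width to be $6$ rather than your ``such as $5$'', but these are cosmetic differences.
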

\begin{proof}
    Let us begin by proving the first item. For brevity, we shall restrict our attention to the case $\del<1$, as the cases $\del\ge 1$ follow from a nearly identical, but more cumbersome argument. Thanks to the second item of Proposition~\ref{prop on a multiplier bound an weighted Bernstein inequalities} for each $j\in\N$ we have the estimate
    \begin{equation}\label{R0}
        2^{js}\tnorm{\varphi_j(D)f}_{L^2_\del}\lesssim\tnorm{\varphi_j(D)f}_{H^s_\del}.
    \end{equation}
    Now to handle $\tnorm{\varphi_j(D)f}_{H^s_\del}$, we shall use the specific choice of inner product from the second item of Proposition~\ref{prop on equivalent norm on weighted square summable Sobolev spaces}, which leads us to
    \begin{multline}\label{long and ugly}
        \sum_{j\in\N}\tnorm{\varphi_j(D)f}^2_{H^s_\del}\asymp\sum_{j\in\N}\sum_{|\al|\le s}\tbr{\varphi_j\mathscr{F}\tp{\pd^\al f},\varphi_j\mathscr{F}[\pd^\al f]}^\star_{H^\del}\\=\sum_{j\in\N}\sum_{|\al|\le s}\int_{\R^d}\bp{|\varphi_j(\xi)\mathscr{F}[\pd^\al f](\xi)|^2+\bp{\int_{B(0,1)}\f{|\varphi_j(\xi+h)\mathscr{F}[\pd^\al f](\xi+h)-\varphi_j(\xi)\mathscr{F}[\pd^\al f](\xi)|^2}{|h|^{d+2\del}}\;\m{d}h}}\;\m{d}\xi.
    \end{multline}
    The integral over $\R^d$ above has zeroth order terms and averaged difference quotient terms. For the former type, we gain by the support hypotheses on $\tcb{\varphi_j}_{j\in\N}$ the bound
    \begin{equation}
        \bnorm{\sum_{j\in\N}\tabs{\varphi_j}^2}_{L^\infty}\lesssim\tp{\sup_{j\in\N}\tnorm{\varphi_j}_{L^{\infty}}}^2
    \end{equation}
    and hence may conclude that
    \begin{equation}\label{R1}
        \sum_{j\in\N}\sum_{|\al|\le s}\int_{\R^d}\tabs{\varphi_j\tp{\xi}\mathscr{F}[\pd^\al f]\tp{\xi}}^2\;\m{d}\xi\lesssim\tp{\sup_{j\in\N}\tnorm{\varphi_j}_{L^\infty}}^2\tnorm{f}^2_{H^s_\del}.
    \end{equation}
    The averaged difference quotient terms in~\eqref{long and ugly} are somewhat more involved - we initially split according to
    \begin{multline}
        \sum_{j\in\N}|\varphi_j(\xi+h)\mathscr{F}[\pd^\al f](\xi+h)-\varphi_j(\xi)\mathscr{F}[\pd^\al f](\xi)|^2\lesssim\bp{\sum_{j\in\N}|\varphi_j(\xi+h)|^2}|\mathscr{F}[\pd^\al f](\xi+h)-\mathscr{F}[\pd^\al f](\xi)|^2\\+\bp{\sum_{j\in\N}|\varphi_j(\xi+h)-\varphi_j(\xi)|^2}|\mathscr{F}[\pd^\al f](\xi)|^2.
    \end{multline}
    Then, by using $|h|\le 1$ and the support conditions on $\tcb{\varphi_j}_{j\in\N}$ one readily verifies the bounds
    \begin{equation}
        \sup_{\substack{\xi\in\R^d\\|h|\le 1}}\sum_{j\in\N}|\varphi_j(\xi+h)|^2\lesssim\tp{\sup_{j\in\N}\tnorm{\varphi_j}_{L^\infty}}^2,\quad\sup_{\substack{\xi\in\R^d\\|h|\le 1}}\f{1}{|h|^2}\sum_{j\in\N}|\varphi_j(\xi+h)-\varphi_j(\xi)|^2\lesssim\tp{\sup_{j\in\N}\tnorm{\varphi_j}_{W^{1,\infty}}}^2
    \end{equation}
    which lead us to the estimate
    \begin{multline}\label{R2}
         \sum_{j\in\N}|\varphi_j(\xi+h)\mathscr{F}[\pd^\al f](\xi+h)-\varphi_j(\xi)\mathscr{F}[\pd^\al f](\xi)|^2\\\lesssim\tp{\sup_{j\in\N}\tnorm{\varphi_j}_{W^{k,\infty}}}^2\ssb{|\mathscr{F}[\pd^\al f](\xi+h)-\mathscr{F}[\pd^\al f](\xi)|^2+|h|^2|\mathscr{F}[\pd^\al f](\xi)|^2}.
    \end{multline}
    The $|h|^2$ term above introduces the integrable singularity $|h|^{-d+2-2\del}$. Therefore, it holds
    \begin{equation}\label{R3}
        \sum_{j\in\N}\sum_{|\al|\le s}\int_{\R^d}\int_{B(0,1)}\f{|\varphi_j(\xi+h)\mathscr{F}[\pd^\al f](\xi+h)-\varphi_j(\xi)\mathscr{F}[\pd^\al f](\xi)|^2}{|h|^{d+2\del}}\;\m{d}h\;\m{d}\xi\lesssim\tp{\sup_{j\in\N}\tnorm{\varphi_j}_{W^{1,\infty}}}^2\tnorm{f}^2_{H^s_\del}.
    \end{equation}
    Upon synthesizing~\eqref{long and ugly} with the bounds, \eqref{R0}, \eqref{R1}, \eqref{R2}, and~\eqref{R3} we arrive at estimate~\eqref{first littlewood paley} and so the proof of the first item is complete.

    In proving the second item the inner product of Proposition~\ref{prop on equivalent norm on weighted square summable Sobolev spaces} will again be quite useful. By the hypotheses on $\tcb{\varphi_j}_{j\in\N}$ we have that $f=\sum_{j\in\N}\varphi_j(D)f$ and hence
    \begin{equation}
        \tnorm{f}^2_{H^s_\del}\asymp\tbr{f,f}_{H^s_\del}^\bigstar=\sum_{i,j\in\N}\tbr{\varphi_i(D)f,\varphi_j(D)f}_{H^s_\del}^\bigstar=\sum_{j\in\N}\sum_{|\iota|\le 6}\tbr{\varphi_j(D)f,\varphi_{j+\iota}(D)}^\bigstar_{H^s_\del}
    \end{equation}
    where the final equality follows from the third item of Proposition~\ref{prop on equivalent norm on weighted square summable Sobolev spaces} (and we are using the convention that if $j+\iota<0$ then $\varphi_{j+\iota}=0$). Therefore, by Cauchy-Schwarz and the second item of Proposition~\ref{prop on a multiplier bound an weighted Bernstein inequalities} we have
    \begin{equation}
        \tnorm{f}^2_{H^s_\del}\lesssim\sum_{j\in\N}\tnorm{\varphi_j(D)f}^2_{H^s_\del}\lesssim\sum_{j\in\N}4^{js}\tnorm{\varphi_j(D)f}^2_{L^2_\del}.
    \end{equation}
    This is exactly~\eqref{second littlewood paley}.

    The third item follows by a simple synthesis of the first two items - we omit the details for brevity.
\end{proof}

Proposition~\ref{prop on littlewood paley} leads us to the following generalization of the spaces from Definition~\ref{defn of weighted Sobolev spaces} in the case $p=2$ to non integer regularity indices $s\in\R$.

\begin{defn}[Generalized weighted square summable Sobolev spaces]\label{defn of generalized weighted Sobolev spaces}
    Let $V$ be a finite dimensional complex inner product space, $s\in\R$, $\del\in[0,\infty)$, and $k=\lceil\del\rceil$. We define
    \begin{equation}\label{norm on the generalized weighted spaces}
        {^\star}H^s_\del\tp{\R^d;V}=\tcb{f\in\mathscr{S}^\ast\tp{\R^d;V}\;:\;\tnorm{f}_{{^\star}H^s_\del}<\infty},\quad\tnorm{f}_{{^\star}H^s_\del}=\bp{\sum_{j\in\N}4^{js}\tnorm{\upvarphi_j(D)f}^2_{L^2_\del}}^{1/2},
    \end{equation}
    where $\tcb{\upvarphi_j}_{j\in\N}\subset W^{k,\infty}\tp{\R^d;\C}$ satisfy the dyadic annular support conditions as in Proposition~\ref{prop on littlewood paley}, $\sup_{j\in\N}\tnorm{\upvarphi_j}_{W^{k,\infty}}<\infty$, and $1=\sum_{j\in\N}\upvarphi_j$.
\end{defn}
\begin{rmk}\label{rmk on gen = reg on natty}
    The third item of Proposition~\ref{prop on littlewood paley} shows that whenever $s\in\N$ we have the coincidence ${^\star}H^s_\del\tp{\R^d;V}=H^s_\del\tp{\R^d;V}$.
\end{rmk}

The final part of this subsection is dedicated to proving a strengthening of the multiplier theorem from the first item of Proposition~\ref{prop on a multiplier bound an weighted Bernstein inequalities}. For this we require the following lemma.

\begin{lem}[Hardy's inequality]\label{lem on hardys inequality}
    Let $\del\in[0,d/2)$. There exists a $C\in\R^+$ such that for any function $f\in H^\del\tp{\R^d;\C}$ we have the estimate
    \begin{equation}\label{Hardy's inequality for fractional sobolev spaces}
        \bp{\int_{\R^d}\f{|f(x)|^2}{|x|^{2\del}}\;\m{d}x}^{1/2}\le C\tnorm{f}_{H^\del}.
    \end{equation}
\end{lem}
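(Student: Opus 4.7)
The plan is to reduce the inequality to the homogeneous fractional Hardy inequality
\begin{equation*}
    \tnorm{|X|^{-\del}f}_{L^2}\lesssim\tnorm{|D|^\del f}_{L^2},
\end{equation*}
and then conclude via the elementary bound $\tnorm{|D|^\del f}_{L^2}\le\tnorm{f}_{H^\del}$ (which comes from $|\xi|^{2\del}\le\tbr{\xi}^{2\del}$). The case $\del=0$ is immediate since then~\eqref{Hardy's inequality for fractional sobolev spaces} reduces to $\tnorm{f}_{L^2}\le C\tnorm{f}_{H^0}$ with $C=1$, so from here on I would focus on $0<\del<d/2$.

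To establish the homogeneous inequality, I would first argue by density, reducing to $f\in\mathscr{S}\tp{\R^d;\C}$. Writing $g=|D|^\del f\in L^2\tp{\R^d;\C}$, the Fourier identity $\mathscr{F}[I_\del g]\tp{\xi}=(2\pi|\xi|)^{-\del}\hat{g}(\xi)$ (up to a normalization constant) gives $f=c_\del\, I_\del g$, where $I_\del$ is the Riesz potential of order $\del$ (as in Remark~\ref{remark on the I1 operator}, with the analogous identification for general order). The homogeneous Hardy inequality thus becomes the weighted $L^2$-to-$L^2$ boundedness of the Riesz potential:
\begin{equation*}
    \tnorm{|X|^{-\del}I_\del g}_{L^2}\lesssim\tnorm{g}_{L^2}.
\end{equation*}
This is a special case of the Stein-Weiss weighted Hardy-Littlewood-Sobolev inequality; alternatively, it falls within the scope of the weighted Riesz potential estimates of Duarte and Silva~\cite{MR4593125} already cited in the proof of Lemma~\ref{lem weighted estimates for riesz potential operator}. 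With parameters $p=q=2$, $\al=\del$, $\be=\del$, $\gam=0$, the scaling condition $\tf1q=\tf1p-\tf{\al-\be-\gam}{d}$ is automatic, the sign condition $\be+\gam=\del\ge0$ holds trivially, and the two key bounds $\be<d/q=d/2$ and $\gam<d/p'=d/2$ reduce respectively to the hypothesis $\del<d/2$ and a triviality. Thus the weighted Riesz bound holds with a constant depending only on $d$ and $\del$.

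Finally, I would return to $f\in H^\del\tp{\R^d;\C}$ via density: pick $\tcb{f_n}_{n\in\N}\subset\mathscr{S}\tp{\R^d;\C}$ with $f_n\to f$ in $H^\del\tp{\R^d;\C}$, pass to a pointwise a.e. convergent subsequence, and invoke Fatou's lemma on the left side of~\eqref{Hardy's inequality for fractional sobolev spaces}. The main (and only nontrivial) obstacle is the endpoint behavior as $\del\uparrow d/2$; this is precisely the critical weight threshold for $L^2$-to-weighted-$L^2$ boundedness of $I_\del$, and it is the source of the strict inequality $\del<d/2$ in the hypothesis, reflecting a genuine breakdown of the estimate at $\del=d/2$.
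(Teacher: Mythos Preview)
Your argument is correct and takes a genuinely different route from the paper. The paper quotes Theorem~2.57 of Bahouri--Chemin--Danchin~\cite{MR2768550}, which gives the Hardy bound only for $f\in H^\del$ with $0\notin\m{supp}\,\mathscr{F}[f]$; it then handles a general $f$ by splitting $f=\varphi(D)f+(1-\varphi(D))f$, applying the cited result to the high-frequency piece, and treating the low-frequency piece by hand via the Bernstein $L^\infty$ bound and the local integrability of $|x|^{-2\del}$ on $B(0,1)$ (which uses $2\del<d$). Your approach instead invokes the full homogeneous Hardy inequality $\tnorm{|X|^{-\del}f}_{L^2}\lesssim\tnorm{|D|^\del f}_{L^2}$ via Stein--Weiss, after which the inhomogeneous statement is immediate from $|\xi|^{2\del}\le\tbr{\xi}^{2\del}$, with no frequency decomposition needed. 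This is cleaner and more direct; the paper's version trades the appeal to Stein--Weiss for a slightly more self-contained low-frequency computation. One small caveat: your ``alternative'' route through Duarte--Silva~\cite{MR4593125} would require their $A^\al_{p,q}$ condition for the \emph{homogeneous} weight $|x|^{-2\del}$, whereas the paper's invocation of their Lemma~3.3 concerns the inhomogeneous weights $\tbr{x}^a$---so that alternative would need a separate check. The Stein--Weiss citation, however, is exactly on point.
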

\begin{proof}
    Theorem 2.57 in Bahouri, Chemin, and Danchin~\cite{MR2768550} proves inequality~\eqref{Hardy's inequality for fractional sobolev spaces} for the subspace of functions $f\in H^\del\tp{\R^d}$ with $0\not\in\m{supp}\mathscr{F}[f]$. In the general case, we can take $f\in H^\del\tp{\R^d;\C}$ and split $f=\varphi(D)f+(1-\varphi(D))f$ with $\varphi\in C^\infty_{\m{c}}\tp{B(0,2)}$ a radial function satisfying $\varphi=1$ on $\Bar{B(0,1)}$. Then by using the triangle inequality and the fact that $|x|^{-2\del}\le 1$ for $|x|\ge 1$:
    \begin{equation}
        \bp{\int_{\R^d}\f{|f(x)|^2}{|x|^{2\del}}\;\m{d}x}^{1/2}\le\bp{\int_{B(0,1)}\f{|\varphi(D)f(x)|^2}{|x|^{2\del}}\;\m{d}x}^{1/2}+\bp{\int_{\R^d\setminus B(0,1)}|\varphi(D)f|^2}^{1/2}+C\tnorm{(1-\varphi(D))f}_{H^\del}.
    \end{equation}
    Hence
    \begin{equation}
        \bp{\int_{\R^d}\f{|f(x)|^2}{|x|^{2\del}}\;\m{d}x}^{1/2}\lesssim\tnorm{\varphi(D)f}_{L^\infty}+\tnorm{f}_{H^\del}\lesssim\tnorm{f}_{H^\del}.
    \end{equation}
    Note in the above we are tacitly using Bernstein's inequalities.
\end{proof}

\begin{prop}[Improved multiplier bounds]\label{prop on improved multiplier bounds}
    Let $V$ be a finite dimensional complex inner product space, $s,r\in\R$, $\del\in[0,d/2)$, $k=\lceil\del\rceil$, and $m\in W^{k,\infty}_{\m{loc}}\tp{\R^d\setminus\tcb{0};\C}$ a symbol obeying the estimates
    \begin{equation}\label{the hypotheses on the multiplier}
        \underset{\xi\in\R^d}{\m{esssup}}\bsb{\tbr{\xi}^{-r}\sum_{j=0}^k\min\tcb{1,|\xi|^j}|\grad^jm(\xi)|}=[m]_{k,r}<\infty.
    \end{equation}
    Then for any $f\in{^{\star}}H^{s+r}_\del\tp{\R^d;V}$ we have that $m(D)f\in{^\star}H^{s}\tp{\R^d;V}$ along with the estimate
    \begin{equation}
        \tnorm{m(D)f}_{{^\star}H^s_\del}\lesssim[m]_{k,r}\tnorm{f}_{{^\star}H^{s+r}_\del},
    \end{equation}
    with implicit constants depending only on the dimension, $\del$, $s$, and $r$.
\end{prop}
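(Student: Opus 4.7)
The plan is to prove the bound via a Littlewood--Paley decomposition, splitting the frequency range into a high-frequency regime $j \ge 1$ (where $m$ is smooth and the standard multiplier bound of Proposition~\ref{prop on a multiplier bound an weighted Bernstein inequalities} applies on dyadic annuli) and a low-frequency regime $j = 0$ (where $m$ may be genuinely singular at the origin and a different argument based on Hardy's inequality is required). Let $\{\upvarphi_j\}_{j \in \N}$ be the partition from Definition~\ref{defn of generalized weighted Sobolev spaces} used to define the $^\star H^s_\del$ norm and set $\tilde{\upvarphi}_j = \sum_{|i-j| \le 4} \upvarphi_i$, which equals $1$ on $\supp\upvarphi_j$. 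Writing $M_j = \upvarphi_j \cdot m$, we have the identity $\upvarphi_j(D) m(D) f = M_j(D) \tilde{\upvarphi}_j(D) f$, so that $\tnorm{m(D)f}^2_{{^\star}H^s_\del} = \sum_{j} 4^{js} \tnorm{M_j(D) \tilde{\upvarphi}_j(D) f}^2_{L^2_\del}$.

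For the high-frequency estimate ($j \ge 1$), on $\supp\upvarphi_j$ we have $|\xi| \ge 2^{j-2} \ge 1/2$, so $\min\{1, |\xi|^\ell\} \ge 2^{-k}$ for $\ell \le k$, and the hypothesis~\eqref{the hypotheses on the multiplier} yields $|\grad^\ell m(\xi)| \lesssim [m]_{k,r} 2^{jr}$ pointwise. Combined with $\tnorm{\grad^a \upvarphi_j}_{L^\infty} \lesssim 2^{-ja}$ and the Leibniz rule, this shows $\tnorm{M_j}_{W^{k,\infty}} \lesssim [m]_{k,r} 2^{jr}$. Applying the first item of Proposition~\ref{prop on a multiplier bound an weighted Bernstein inequalities} yields $\tnorm{M_j(D) \tilde{\upvarphi}_j(D) f}_{L^2_\del} \lesssim [m]_{k,r} 2^{jr} \tnorm{\tilde{\upvarphi}_j(D) f}_{L^2_\del}$, and summing over $j \ge 1$ with the triangle inequality and the finite overlap of the $\tilde{\upvarphi}_j$ gives the desired control by $[m]_{k,r}^2 \tnorm{f}_{{^\star}H^{s+r}_\del}^2$.

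For the low-frequency estimate ($j = 0$), the multiplier $M_0 = \upvarphi_0 m$ is supported in $B(0,4)$ but exhibits an $|\xi|^{-\ell}$ singularity in $\grad^\ell m$ at the origin, so $M_0 \notin W^{k,\infty}$ in general. The plan here is to pass to the Fourier side via the isometric isomorphism $\mathscr{F}: L^2_\del \to H^\del$ from Proposition~\ref{prop on equivalent norm on weighted square summable Sobolev spaces}, reducing the problem to estimating $\tnorm{M_0 g}_{H^\del}$ where $g = \mathscr{F}[\tilde{\upvarphi}_0(D) f] \in H^\del$. Expanding this norm via the inner product of Definition~\ref{defn of a choice of inner product} and applying the Leibniz rule, each term contains a factor of $\grad^a M_0$ with $0 \le a \le k$. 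For $a \ge 1$ we use $|\grad^a M_0(\xi)| \lesssim [m]_{k,r} |\xi|^{-a} \mathbbm{1}_{B(0,4)}(\xi)$ and absorb the singularity through Hardy's inequality (Lemma~\ref{lem on hardys inequality}):
\begin{equation}
    \tnorm{|\xi|^{-a} \grad^b g}_{L^2} \lesssim \tnorm{\grad^b g}_{H^a} \lesssim \tnorm{g}_{H^{a+b}} \lesssim \tnorm{g}_{H^\del},
\end{equation}
which is permissible because $a \le \del < d/2$. The non-integer Gagliardo contribution is handled analogously by splitting difference quotients into pieces away from and near the origin and using a fractional-order variant of the same Hardy bound. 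Translating back yields $\tnorm{\upvarphi_0(D) m(D) f}_{L^2_\del} \lesssim [m]_{k,r} \tnorm{\tilde{\upvarphi}_0(D) f}_{L^2_\del}$, which contributes at most $[m]_{k,r}^2 \tnorm{f}_{{^\star}H^{s+r}_\del}^2$ to the squared norm.

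The main obstacle is the low-frequency case: away from the origin $m$ behaves like a bounded-order symbol and the standard multiplier theorem suffices, but at $\xi = 0$ the allowed $|\xi|^{-\ell}$ singularity is exactly critical with respect to the decay of the weight. The hypothesis $\del < d/2$ is precisely what Hardy's inequality requires to convert $|\xi|^{-a}$ singularities into $H^a$ regularity on the Fourier side, and this explains why the hypothesis~\eqref{the hypotheses on the multiplier} features the sharp factor $\min\{1, |\xi|^j\}$, which interpolates between a homogeneous Mikhlin--H\"ormander condition near the origin and an inhomogeneous $\tbr{\xi}^r$ growth condition at infinity.
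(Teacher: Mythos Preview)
Your approach is essentially the same as the paper's: both split via the Littlewood--Paley norm~\eqref{norm on the generalized weighted spaces}, handle high frequencies with the elementary multiplier bound of Proposition~\ref{prop on a multiplier bound an weighted Bernstein inequalities}, and treat the low-frequency piece on the Fourier side via Hardy's inequality. The paper makes the low-frequency step more concrete by proving the pointwise bound $\int_{B(0,1)} |n(\xi+h)-n(\xi)|^2\,|h|^{-d-2\del}\,\m{d}h \lesssim ([n]_{1,0}|\xi|^{-\del})^2$ (via the near/far split $|h|\lessgtr|\xi|/2$ that you allude to) and then invokes Hardy with exponent exactly $\del$, rather than integer exponents $a$; your ``fractional-order variant'' sentence is pointing at precisely this estimate, so the argument is correct once that step is filled in.
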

\begin{proof}
    Let $\tcb{\upvarphi_j}_{j\in\N}\subset W^{k,\infty}\tp{\R^d;\C}$ denote the dyadic partition of unity as in Definition~\ref{defn of generalized weighted Sobolev spaces} and denote the `fattened projectors' by
    \begin{equation}
        \tilde{\upvarphi}_j=\sum_{\substack{i\in\N\\|i-j|\le 6}}\upvarphi_i\quad\text{and observe that}\quad\upvarphi_j=\tilde{\upvarphi}_j\upvarphi_j.
    \end{equation}
    This leads us to defining the localized multipliers:
    \begin{equation}\label{the localized multipliers}
        m_j=\tilde{\upvarphi}_jm\quad\text{and we compute using~\eqref{the hypotheses on the multiplier} that}\quad
        \begin{cases}
            \sum_{i=0}^k\tnorm{|X|^i\grad^im_j}_{L^\infty}\lesssim[m]_{k,r}&\text{if }j<10,\\
            \tnorm{m_j}_{W^{k,\infty}}\lesssim 2^{jr}[m]_{k,r}&\text{if }j\ge10.
        \end{cases}
    \end{equation}
    So we learn from the above and the definition of the norm~\eqref{norm on the generalized weighted spaces} that
    \begin{multline}\label{the splitting}
        \tnorm{m(D)f}_{{^\star}H^s_\del}=\bp{\sum_{j\in\N}4^{sj}\tnorm{m_j(D)\upvarphi_j(D)f}^2_{L^2_\del}}^{1/2}\lesssim\bp{\sum_{j=0}^9\tnorm{m_j(D)\upvarphi_j(D)f}^2_{L^2_\del}}^{1/2}\\+\bp{\sum_{j=10}^\infty4^{sj}\tnorm{m_j(D)\upvarphi_j(D)f}^2_{L^2_\del}}^{1/2}=\bf{I}+\bf{II}.
    \end{multline}
    We shall estimate $\bf{II}$ first, as it is simpler. Due to the estimates of~\eqref{the localized multipliers} on $m_j$ for $j\ge 10$ we are free to invoke the first item of Proposition~\ref{prop on a multiplier bound an weighted Bernstein inequalities} to obtain 
    \begin{equation}
\bf{II}=\bp{\sum_{j=10}^\infty4^{sj}\tnorm{m_j(D)\upvarphi_j(D)f}^2_{L^2_\del}}^{1/2}\lesssim\bp{\sum_{j=10}^\infty4^{\tp{s+r}j}\tnorm{\upvarphi_j(D)f}^2_{L^2_\del}}^{1/2}\lesssim\tnorm{f}_{{^\star}H^{s+r}_\del}.
    \end{equation}

    Now we turn our attention to estimating $\bf{I}$ in~\eqref{the splitting}. We shall prove that for each $j\in\tcb{0,\dots,9}$ we have the bound
    \begin{equation}
        \tnorm{m_j(D)\upvarphi_j(D)f}_{L^2_\del}\lesssim\tnorm{\upvarphi_j(D)f}_{L^2_\del}\quad\text{from which it follows that}\quad\bf{I}\lesssim\tnorm{f}_{{^\star}H^{s+r}_\del}.
    \end{equation}
    Evidently, it is sufficient to show that for any $n\in L^\infty\tp{\R^d;\C}$ satisfying $[n]_{k,0}<\infty$ (recall~\eqref{the hypotheses on the multiplier}) then we have for all $g\in L^2_\del\tp{\R^d;V}$ the estimate $\tnorm{n(D)g}_{L^2_\del}\lesssim[n]_{k,0}\tnorm{g}_{L^2_\del}$. We shall only explicitly write out the case $\del<1$, as the cases $\del\ge 1$ can be felled with similar arguments. We use the equivalent norm from the first item of Proposition~\ref{prop on equivalent norm on weighted square summable Sobolev spaces} to estimate
    \begin{equation}
        \tnorm{n(D)g}_{L^2_\del}\lesssim\tnorm{n\mathscr{F}[g]}_{L^2}+\bp{\int_{\R^d}\int_{B(0,1)}\f{|n(\xi+h)\mathscr{F}[g](\xi+h)-n(\xi)\mathscr{F}[g](\xi)|^2}{|h|^{d+2\del}}\;\m{d}h\;\m{d}\xi}.
    \end{equation}
    Clearly we have $\tnorm{n\mathscr{F}[g]}_{L^2}\lesssim[n]_{1,0}\tnorm{g}_{L^2_\del}$ and so we only need to study further the averaged difference quotients term above. We make the usual splitting
    \begin{equation}
        n(\xi+h)\mathscr{F}[g](\xi+h)-n(\xi)\mathscr{F}[g](\xi)=n(\xi+h)\tp{\mathscr{F}[g](\xi+h)-\mathscr{F}[g](\xi)}+\tp{n(\xi+h)-n(\xi)}\mathscr{F}[g](\xi)
    \end{equation}
    and so
    \begin{multline}\label{LUAD}
        \bp{\int_{\R^d}\int_{B(0,1)}\f{|n(\xi+h)\mathscr{F}[g](\xi+h)-n(\xi)\mathscr{F}[g](\xi)|}{|h|^{d+2\del}}\;\m{d}h\;\m{d}\xi}\lesssim[n]_{1,0}\tnorm{\mathscr{F}[g]}_{H^\del}\\+\bp{\int_{\R^d}\tabs{\mathscr{F}[g](\xi)}^2\bp{\int_{B(0,1)}\f{|n(\xi+h)-n(\xi)|^2}{|h|^{d+2\del}}\;\m{d}h}\;\m{d}\xi}^{1/2}.
    \end{multline}
    To continue, we shall prove the pointwise estimate
    \begin{equation}\label{FUAD}
        \int_{B(0,1)}\f{|n(\xi+h)-n(\xi)|^2}{|h|^{d+2\del}}\;\m{d}h\lesssim\tp{[n]_{1,0}|\xi|^{-\del}}^2.
    \end{equation}
    Decompose $B(0,1)=A_\xi\cup B_\xi$ with $A_\xi=\tcb{h\in B(0,1)\;:\;|h|\le|\xi|/2}$ and $B_\xi=\tcb{h\in B(0,1)\;:\;|h|>|\xi|/2}$. In the region $A_\xi$ we exploit the difference with the fundamental theorem of calculus
    \begin{equation}
        |n(\xi+h)-n(\xi)|\lesssim[n]_{1,0}|h|\int_0^1|\xi+\tau h|^{-1}\;\m{d}\tau\lesssim[n]_{1,0}|h||\xi|^{-1}
    \end{equation}
    and so by $\del<1$ we have
    \begin{equation}\label{QUAD}
        \int_{A_\xi}\f{|n(\xi+h)-n(\xi)|^2}{|h|^{d+2\del}}\;\m{d}h\lesssim[n]^2_{1,0}|\xi|^{-2}\int_{B(0,|\xi|)}|h|^{-d-2\del+2}\;\m{d}h\lesssim\tp{[n]_{1,0}|\xi|^{-\del}}^2.
    \end{equation}
    On the other hand, in the region $B_\xi$ we simply bound $|n(\xi+h)-n(\xi)|\lesssim[n]_{1,0}$ without exploiting the difference:
    \begin{equation}\label{WUAD}
        \int_{B_{\xi}}\f{|n(\xi+h)-n(\xi)|^2}{|h|^{d+2\del}}\;\m{d}h\lesssim[n]_{1,0}^2\int_{B(0,1)\setminus B(0,|\xi|/2)}|h|^{-d-2\del}\;\m{d}h\lesssim\tp{[n]_{1,0}|\xi|^{-\del}}^2.
    \end{equation}
    Equations~\eqref{QUAD} and~\eqref{WUAD} give~\eqref{FUAD}. Now we may return to the final term in~\eqref{LUAD} and invoke Lemma~\ref{lem on hardys inequality}:
    \begin{equation}
        \bp{\int_{\R^d}\tabs{\mathscr{F}[g](\xi)}^2\bp{\int_{B(0,1)}\f{|n(\xi+h)-n(\xi)|^2}{|h|^{d+2\del}}\;\m{d}h}\;\m{d}\xi}^{1/2}\lesssim[n]_{1,0}\bp{\int_{\R^d}\f{|\mathscr{F}[g](\xi)|^2}{|\xi|^{2\del}}\;\m{d}\xi}^{1/2}\lesssim[n]_{1,0}\tnorm{g}_{L^2_\del}.
    \end{equation}
    By collecting the above the claimed estimate on $n(D)g$ is verified and with that the proof is complete.
\end{proof}
\section{Derivation of the shallow water equations with bathymetry}\label{appendix on derivation of SWE w/ bathymetry}

In this appendix we compute the shallow water system of equations~\eqref{time-dependent variable-batheymetry shallow water equations} through a rescaling and asymptotic expansion procedure beginning with the free boundary Navier-Stokes equations over a smooth graphical bathymetry.  Our derivation is largely inspired by those of Marche~\cite{MR2281291}, Bresch~\cite{MR2562163}, and Mascia~\cite{mascia_2010}, but ours includes bathymetry, forcing, and a generalized parameter scaling.  The latter means that we can view the shallow water system as an approximation of the Navier-Stokes system in the regimes of either long wavelength or small depth, but our technique actually allows for intermediate scaling regimes as well.

Sections~\ref{a_der_1} and~\ref{a_der_2} set up the Navier-Stokes equations in a form amenable to asymptotic expansion while in Section~\ref{a_der_3} we finally compute the system of shallow water equations. We note that throughout this appendix we frequently abuse notation by neglecting our variables' time dependence.

\subsection{The free boundary Navier-Stokes system with bathymetry}\label{a_der_1}

The physical spatial dimension is denoted by $n\in\tcb{2,3}$. We decompose our differential operators into horizontal and vertical parts through the following notation $\pmb{\grad}=\tp{\pd_1,\dots,\pd_{n-1},\pd_n}$ and $\grad=\tp{\pd_1,\dots,\pd_{n-1}}$ so that $\pmb{\grad}=\tp{\grad,\pd_n}$. We also set $\Delta=\pd_1^2+\cdots+\pd_{n-1}^2$, $\pmb{\Delta}=\Delta+\pd_n^2$.

The free surface and the bathymetry shall be described graphically and are represented by the functions $\zeta,\be:\R^{n-1}\to\R$ (respectively); these are assumed to satisfy a strict `no penetration' condition: $\be < \zeta$. Using these functions we define the bulk domain
\begin{equation}\label{bulk domain}
    \Omega[\be,\zeta]=\tcb{(x,y)\in\R^{n-1}\times\R\;:\;\be(x)<y<\zeta(x)}
\end{equation}
along with its lower and upper boundaries
\begin{equation}\label{lower and upper boundaries}
    \Sigma[\be]=\tcb{(x,y)\in\R^{n-1}\times\R\;:\;y=\be(x)},\quad \Sigma[\zeta]=\tcb{(x,y)\in\R^{n-1}\times\R\;:\;y=\zeta(x)}.
\end{equation}
The fluid velocity and pressure are defined in the bulk and are denoted by $w:\Omega[\be,\zeta]\to\R^n$ and $r:\Omega[\be,\zeta]\to\R$. 

We allow the fluid to be acted upon, in addition to the gravitational force in the $-e_n$ direction with acceleration strength $g\in\R^+$, generic bulk and interface forces represented by the vector $F:\Omega[\be,\zeta]\to\R^n$ and the symmetric 2-tensor $T:\Sigma[\zeta]\to\R^{n\times n}_{\m{sym}}$. The fluid also experiences the internal forces of viscosity and surface tension; the coefficients of which are $\mu>0$ and $\varsigma\ge 0$. On the bottom the fluid experiences a frictional force which is encoded via the (laminar) Navier slip boundary condition with strength coefficient $\al\ge0$.

The free boundary Navier-Stokes equations are given by:
\begin{equation}\label{formulation in Eulerian coordinates}
    \begin{cases}
    \pd_tw+w\cdot\pmb{\grad} w + \pmb{\grad} r-\mu\pmb{\Delta} w=F,\quad \pmb{\grad}\cdot w=0&\text{in }\Omega[\be,\zeta],\\
    -\tp{r-\mu\pmb{\mathbb{D}}w}\mathcal{N}_\zeta+\tp{g\zeta-\varsigma\mathcal{H}(\eta)}\mathcal{N}_\zeta=T\mathcal{N}_\zeta,\quad \pd_t\zeta-w\cdot\mathcal{N}_\zeta=0&\text{on }\Sigma[\zeta],\\
    (I-\nu_\be\otimes\nu_\be)\tp{\mu\pmb{\mathbb{D}}w \nu_\be-\al w}=0,\quad  w\cdot \nu_\be=0&\text{on }\Sigma[\be].
    \end{cases}
\end{equation}
In the above we set $\mathcal{N}_\zeta=(-\grad\zeta,1)$, $\mathcal{N}_\be=(-\grad\be,1)$, $\nu_\be=-\mathcal{N}_\be/|\mathcal{N}_\be|$, $\pmb{\mathbb{D}}w=\pmb{\grad}w+\pmb{\grad}w^{\m{t}}$, and $\mathcal{H}(\eta)=\grad\cdot\tp{\tp{1+|\grad\zeta|^2}^{-1/2}\grad\zeta}$.

Our task now is to rewrite system~\eqref{formulation in Eulerian coordinates} into a horizontal-vertical decomposed form. We shall decompose the velocity vector field into its vertical and horizontal components labeled as $w=(U,V)$ with $U:\Omega[\beta,\zeta]\to\R^{n-1}$ and $V=\Omega[\beta,\zeta]\to\R$. The symmetrized gradient of $w$ shall be decomposed according to
\begin{equation}
    \pmb{\mathbb{D}}w=\bpm\mathbb{D}U&\grad V+\pd_n U\\\grad V+\pd_n U&2\pd_n V\epm\quad\text{where}\quad\mathbb{D}U=\grad U+\grad U^{\m{t}}.
\end{equation}
In terms of these decomposed variables, the equations~\eqref{formulation in Eulerian coordinates} are equivalently written as
\begin{equation}\label{broken down system of equations}
    \begin{cases}
        \pd_tU+U\cdot\grad U+V\pd_n U-\mu\Delta U-\mu\pd_n^2 U+\grad r=F_{\|}&\text{in }\Omega[\be,\zeta],\\
        \pd_t V+U\cdot\grad V+V\pd_n V-\mu\Delta V-\mu\pd_n^2V+\pd_nr=F_n&\text{in }\Omega[\be,\zeta],\\
        \grad\cdot U+\pd_nV=0&\text{in }\Omega[\be,\zeta],\\
        (-r+g\zeta-\varsigma\mathcal{H}(\zeta))\bpm-\grad\zeta\\1\epm\\\quad+\mu\bpm-\mathbb{D}U\grad\zeta+\grad V+\pd_nU\\-(\grad V+\pd_nU)\cdot\grad\zeta+2\pd_nV\epm=\bpm-\Xi\grad\zeta+\xi\\-\grad\zeta\cdot\xi+\varphi\epm&\text{on }\Sigma[\zeta],\\
        \pd_t\zeta=V-U\cdot\grad\zeta&\text{on }\Sigma[\zeta],\\
        V-U\cdot\grad\be=0&\text{on }\Sigma[\be],\\
        \f{2\grad\be}{1+|\grad\be|^2}\pd_nV+\sp{I-2\f{\grad\be\otimes\grad\be}{1+|\grad\be|^2}}\tp{\pd_nU+\grad V}&\\\quad-\sp{I-\f{\grad\be\otimes\grad\be}{1+|\grad\be|^2}}\mathbb{D}U\grad\be=\f{\al}{\mu}\tp{1+|\grad\be|^2}^{1/2}U&\text{on }\Sigma[\be],\\
        \f{2|\grad\be|^2}{1+|\grad\be|^2}\pd_n V+\f{1-|\grad\be|^2}{1+|\grad\be|^2}(\grad V+\pd_nU)\cdot\grad\be&\\\quad-\f{1}{1+|\grad\be|^2}\mathbb{D}U\grad\be\cdot\grad\be=\f{\al}{\mu}\tp{1+|\grad\be|^2}^{1/2}U\cdot\grad\be&\text{on }\Sigma[\be],
    \end{cases}
\end{equation}
where we have decomposed the force data as
\begin{equation}
    F=(F_{\|},F_n)\quad\text{and}\quad T=\bpm \Xi&\xi\\\xi&\varphi\epm.
\end{equation}

\subsection{Rescaled and flattened equations}\label{a_der_2}

We let $\ep\in\R^+$ denote a (small) dimensionless scaling factor, $\theta\in[0,1]$ an interpolation parameter, and we introduce (using the notation $x\in\R^{n-1}$ and $y\in\R$) the following new rescaled variables $u$, $v$, $p$, $\eta$, $b$, $\bf{g}$, $\bf{a}$, $\sig$, $f_{\|}$, $f_n$, $\Upxi$, $\upxi$, and $\upvarphi$:
\begin{multline}\label{rescaling ansatz}
    U(t,x,y)=\ep^\theta u(\ep^{2\theta}t,\ep^\theta x,\ep^{\theta-1}y),\;V(t,x,y)=\ep^{1+\theta}v(\ep^{2\theta}t,\ep^\theta x,\ep^{\theta-1}y),\\r(t,x,y)=\ep^{2\theta}p(\ep^{2\theta}t,\ep^\theta x,\ep^{\theta-1}y),\;\zeta(t,x)=\ep^{1-\theta}\eta(\ep^{2\theta}t,\ep^\theta x),\;\be(x)=\ep^{1-\theta}b(\ep^\theta x),\\g=\ep^{3\theta-1}\bf{g},\;\al=\ep^{1+\theta}\bf{a},\;\varsigma=\ep^{\theta-1}\sig,\;F_{\|}(t,x,y)=\ep^{3\theta}f_{\|}(\ep^{2\theta}t,\ep^\theta x,\ep^{\theta-1}y),\\F_n(t,x,y)=\ep^{1+3\theta}f_n(\ep^{2\theta}t,\ep^\theta x,\ep^{\theta-1}y),\;
    \Xi(t,x,y)=\ep^{2\theta}\Upxi(\ep^{2\theta}t,\ep^\theta x,\ep^{\theta-1}y),\\\xi(t,x,y)=\ep^{1+2\theta}\upxi(\ep^{2\theta}t,\ep^\theta x,\ep^{\theta-1}y),\;\varphi(t,x,y)=\ep^{2\theta}\upvarphi(\ep^{2\theta}t,\ep^\theta x,\ep^{\theta-1}y).
\end{multline}
The parameter $\theta$ is interpolating between a small depth approximation ($\theta=0$) and a long wavelength approximation ($\theta=1$).

The equivalent equations satisfied by the rescaled variables are given as follows.
\begin{equation}\label{rescaled equations, free boundary form, with bathymetry}
        \begin{cases}
        \ssb{-\mu\pd_n^2u}+\ep^2\ssb{\pd_tu+u\cdot\grad u+v\pd_n u-\mu\Delta u+\grad p-f_{\|}}=0&\text{in }\Omega[b,\eta],\\
        \ssb{-\mu\pd_n^2v+\pd_np}+O(\ep^2)=0&\text{in }\Omega[b,\eta],\\
        \grad\cdot u+\pd_n v=0&\text{in }\Omega[b,\eta],\\
        \ssb{\mu\pd_nu}+\ep^2\ssb{\tp{p-\bf{g}\eta+\sig\Delta\eta}\grad\eta-\mu\mathbb{D}u\grad\eta+\mu\grad v+\Upxi\grad\eta-\upxi}+O(\ep^4)=0&\text{on }\Sigma[\eta],\\
        \ssb{-p+\bf{g}\eta-\sig\Delta\eta-\mu\grad\eta\cdot\pd_nu+2\mu\pd_nv-\upvarphi}+O(\ep^2)=0&\text{on }\Sigma[\eta],\\
        \pd_t\eta=v-u\cdot\grad\eta&\text{on }\Sigma[\eta],\\
        v-u\cdot\grad b=0&\text{on }\Sigma[b],\\
        \ssb{\pd_n u}+\ep^2\ssb{-(\bf{a}/\mu)u+\grad v+2\grad b\pd_nv-2\pd_nu\cdot\grad b\grad b-\mathbb{D} u\grad b}+O(\ep^4)=0&\text{on }\Sigma[b],\\
        O(1)=O(1)&\text{on }\Sigma[b].
    \end{cases}
\end{equation}
All terms hidden in the expressions $O(\ep^k)$ for $k\in\tcb{0,2,4}$ are easily computed, but we do not write them above as they play no role in the derivation that follows; in particular the last equation in~\eqref{rescaled equations, free boundary form, with bathymetry} is completely irrelevant. Note that after this rescaling the dependence on the interpolation parameter $\theta$ drops out.

Our next task is to flatten these rescaled equations of~\eqref{rescaled equations, free boundary form, with bathymetry} to a simple slab domain $\Omega=\R^{n-1}\times(0,1)$. We utilize the linear homotopy flattening mapping
\begin{equation}
    \mathfrak{F}_{\eta,b}:\Omega\to\Omega[b,\eta],\quad\mathfrak{F}_{\eta,b}(x,z)=(x,z\eta(x)+(1-z)b(x)),\quad(x,z)\in\Omega.
\end{equation}
We then consider the equations as written in terms of the flattened unknowns $\bf{u}:\Omega\to\R^{n-1}$, $\bf{v}:\Omega\to\R$, $\bf{p}:\Omega\to\R$
\begin{equation}
    \bf{u}=u\circ\mathfrak{F}_{\eta,b},\quad\bf{v}=v\circ\mathfrak{F}_{\eta,b},\quad\bf{p}=p\circ\mathfrak{F}_{\eta,b}.
\end{equation}
The following geometric quantities and differential operators are then derived from the flattening map:
\begin{equation}
    \pmb{\grad}\mathfrak{F}_{\eta,b}=\bpm I_{n-1}&0_{(n-1)\times 1}\\z\grad\eta+(1-z)\grad b&\eta-b\epm,\quad J_{\eta,b}=\det\mathfrak{F}_{\eta,b}=\eta-b,\;K_{\eta,b}=1/J_{\eta,b}=(\eta-b)^{-1},
\end{equation}
and
\begin{equation}
    \mathcal{A}_{\eta,b}=\tp{\pmb{\grad}\mathfrak{F}_{\eta,b}}^{-\m{t}}=\bpm I_{(n-1)\times(n-1)}&-K_{\eta,b}\tp{z\grad\eta+(1-z)\grad b}\\0_{1\times(n-1)}&K_{\eta,b}\epm.
\end{equation}
Write $\pmb{\grad}^{\mathcal{A}_{\eta,b}}=\mathcal{A}_{\eta,b}\pmb{\grad}$  and $\pmb{\grad}^{\mathcal{A}_{\eta,b}}=\tp{\grad^{\mathcal{A}_{\eta,b}},\pd_n^{\mathcal{A}_{\eta,b}}}$ so that (for $j\in\tcb{1,\dots,n-1}$)
\begin{equation}
    \grad^{\mathcal{A}_{\eta,b}}=\grad-K_{\eta,b}\tp{z\grad\eta+(1-z)\grad b}\pd_n,\quad\pd_j^{\mathcal{A}_{\eta,b}}=\pd_j-K_{\eta,b}(z\pd_j\eta+(1-z)\pd_jb)\pd_n,\quad\pd_n^{\mathcal{A}_{\eta,b}}=K_{\eta,b}\pd_n.
\end{equation}
We also set $\pd_t^{\mathcal{A}_{\eta,b}}=\pd_t-z\pd_t\eta K_{\eta,b}\pd_n$, $\Delta^{\mathcal{A}_{\eta,b}}=\sum_{j=1}^{n-1}\tp{\pd_j^{\mathcal{A}_{\eta,b}}}^2$, and $\tp{\mathbb{D}^{\mathcal{A}_{\eta,b}}\bf{u}}_{ij}=\pd_j^{\mathcal{A}_{\eta,b}}\bf{u}_i+\pd_i^{\mathcal{A}_{\eta,b}}\bf{u}_j$.

We write the equations (equivalent to~\eqref{rescaled equations, free boundary form, with bathymetry}) for the flattened variables as
\begin{equation}\label{bathyflat}
\begin{cases}
    [-\mu K_{\eta,b}^2\pd_n^2\bf{u}]+\ep^2\ssb{\pd_t^{\mathcal{A}_{\eta,b}}\bf{u}+\bf{u}\cdot\grad^{\mathcal{A}_{\eta,b}}\bf{u}+\bf{v}K_{\eta,b}\pd_n\bf{u}-\mu\Delta^{\mathcal{A}_{\eta,b}}\bf{u}+\grad^{\mathcal{A}_{\eta,b}}\bf{p}-f_{\|}\circ\mathfrak{F}_{\eta,b}}=0&\text{in }\Omega,\\
    \ssb{-\mu K_{\eta,b}^2\pd_n^2\bf{v}+K_{\eta,b}\pd_n\bf{p}}+O(\ep^2)=0&\text{in }\Omega,\\
    \grad^{\mathcal{A}_{\eta,b}}\cdot\bf{u}+K_{\eta,b}\pd_n\bf{v}=0&\text{in }\Omega,\\
    \ssb{\mu K_{\eta,b}\pd_n\bf{u}}+\ep^2\ssb{\tp{\bf{p}-\bf{g}\eta+\sig\Delta\eta}\grad\eta-\mu\mathbb{D}^{\mathcal{A}_{\eta,b}}\bf{u}\grad\eta+\mu\grad^{\mathcal{A}_{\eta,b}}\bf{v}+\Upxi\circ\mathfrak{F}_{\eta,b}\grad\eta-\upxi\circ\mathfrak{F}_{\eta,b}}&\\\quad+O(\ep^4)=0&\text{on }\Sigma_1,\\
    \ssb{-\bf{p}+\bf{g}\eta-\sig\Delta\eta-\mu K_{\eta,b}\grad\eta\cdot\pd_n\bf{u}+2\mu K_{\eta,b}\pd_n\bf{v}-\upvarphi\circ\mathfrak{F}_{\eta,b}}+O(\ep^2)=0&\text{on }\Sigma_1,\\
    \pd_t\eta=\bf{v}-\bf{u}\cdot\grad\eta&\text{on }\Sigma_1,\\
    \bf{v}-\bf{u}\cdot\grad b=0&\text{on }\Sigma_0,\\
    \ssb{K_{\eta,b}\pd_n\bf{u}}+\ep^2\ssb{-(\bf{a}/\mu)\bf{u}+\grad^{\mathcal{A}_{\eta,b}}\bf{v}+2K_{\eta,b}\grad b\pd_n\bf{v}-2K_{\eta,b}\grad b\grad b\cdot\pd_n\bf{u}-\mathbb{D}^{\mathcal{A}_{\eta,b}}\bf{u}\grad b}+O(\ep^4)=0&\text{on }\Sigma_0,\\
    O(1)=O(1)&\text{on }\Sigma_0,
\end{cases}
\end{equation}
where in the above we have labeled the top and bottom boundaries of $\Omega$ as $\Sigma_i=\R^{n-1}\times\tcb{i}$ for $i\in\tcb{0,1}$.

\subsection{Expansion and shallow water equations}\label{a_der_3}

With system~\eqref{bathyflat} in hand we have now set the stage for the derivation to truly begin. To continue, we make the asymptotic expansion ansatz 
\begin{equation}\label{ansatz f}
\tp{\eta,\bf{u},\bf{v},\bf{p}}=\sum_{\nu=0}^\infty\ep^{2\nu}\tp{\eta_\nu,u_\nu,v_\nu,p_\nu}
\end{equation}
for the solution to system~\eqref{bathyflat}. In other words we write each of the unknowns $\eta$, $\bf{u}$, $\bf{v}$, and $\bf{p}$ as a power series in $\ep^2$. Our goal now is to collect the resulting equations corresponding to like powers of $\ep$ and then perform manipulations until we isolate a closed system for the lowest order unknowns $\eta_0$, $u_0$, $v_0$, and $p_0$. As it turns out the equations for $v_0$ and $p_0$ `trivialize' and what we are effectively left with is a closed system for $\eta_0$ and $u_0$.

After plugging in the ansatz~\eqref{ansatz f} into system~\eqref{bathyflat} we now collect the relevant zeroth order equations.
\begin{equation}\label{The zerothy bathy}
    \begin{cases}
        -\mu K_{\eta_0,b}\pd_n^2u_0=0&\text{in }\Omega,\\
        -\mu K_{\eta_0,b}\pd_n^2v_0+K_{\eta_0,b}\pd_np_0=0&\text{in }\Omega,\\
        \grad^{\mathcal{A}_{\eta_0,b}}\cdot u_0+K_{\eta_0,b}\pd_nv_0=0&\text{in }\Omega,\\
        \mu K_{\eta_0,b}\pd_nu_0=0&\text{on }\Sigma_1,\\
        -p_0+\bf{g}\eta_0-\sig\Delta\eta_0-\mu K_{\eta_0,b}\grad\eta\cdot\pd_n u_0+2\mu K_{\eta_0,b}\pd_nv_0-\upvarphi\circ\mathfrak{F}_{\eta_0,b}=0&\text{on }\Sigma_1,\\
        \pd_t\eta_0=v_0-u_0\cdot\grad\eta_0&\text{on }\Sigma_1,\\
        v_0-u_0\cdot\grad b=0&\text{on }\Sigma_0,\\
        K_{\eta_0,b}\pd_nu_0=0&\text{on }\Sigma_0.
    \end{cases}
\end{equation}

The first and fourth equations of~\eqref{The zerothy bathy} imply that $\pd_nu_0=0$ and so  $u_0$ is a function of $t$ and $x$. Next, we combine the third and seventh equations to deduce that
\begin{equation}\label{expression for the v0}
    \pd_n v_0=-(\eta_0-b)\grad\cdot u_0\quad\text{and hence}\quad v_0=u_0\cdot\grad b-z(\eta_0-b)\grad\cdot u_0.
\end{equation}
In particular, we deduce that $\pd_n^2v_0=0$; therefore, from this fact and the second equation of~\eqref{The zerothy bathy} we have that $\pd_np_0=0$. Then, from the fifth equation we obtain the following expression for the pressure
\begin{equation}\label{the pressure approximation}
    p_0=\bf{g}\eta_0-\sig\Delta\eta_0-2\mu\grad\cdot u_0-\upvarphi(\cdot,\eta_0).
\end{equation}
We come to the sixth equation of~\eqref{The zerothy bathy} and insert~\eqref{expression for the v0} to deduce
\begin{equation}\label{the continuity equation for bathy}
\pd_t\eta_0=u_0\cdot\grad b-(\eta_0-b) \grad\cdot u_0-u_0\cdot\grad\eta_0=-\grad\cdot\tp{(\eta_0-b)u_0}. 
\end{equation}
This is the shallow water free surface transport equation.

At this point, we have used all of the equations in~\eqref{The zerothy bathy} and so it is time to see what information can be gleaned from the next order. The relevant second order equations of expansion~\eqref{ansatz f} in system~\eqref{bathyflat} are as follows:
\begin{equation}\label{some of the first order bathy}
    \begin{cases}
        -\mu K_{\eta_0,b}^2\pd_n^2u_1+\pd_tu_0+u_0\cdot\grad u_0-\mu\Delta u_0+\grad p_0-f_{\|}\circ\mathfrak{F}_{\eta,b}=0&\text{in }\Omega,\\
        \mu K_{\eta_0,b}\pd_nu_1+(p_0-\bf{g}\eta_0+\sig\Delta\eta_0)\grad\eta_0-\mu\mathbb{D}u_0\grad\eta_0+\mu\grad^{\mathcal{A}_{\eta_0,b}}v_0+\Upxi(\cdot,\eta_0)\grad\eta_0-\upxi(\cdot,\eta_0)=0&\text{on }\Sigma_1,\\
        \mu K_{\eta_0,b}\pd_nu_1-\bf{a}u_0+\mu\grad^{\mathcal{A}_{\eta_0,b}}v_0+2\mu K_{\eta_0,b}\grad b\pd_nv_0-\mu\mathbb{D}u_0\grad b=0&\text{on }\Sigma_0.
    \end{cases}
\end{equation}

Now we multiply the first equation in~\eqref{some of the first order bathy} by $J_{\eta_0,b}$, integrate from $0$ to $1$ in the $z$-variable, and then use the fundamental theorem of calculus on the $\pd_n^2u_1$ term. This results in
\begin{equation}\label{eq1}
    (\eta_0-b)\bp{\pd_t u_0+u_0\cdot\grad u_0-\mu\Delta u_0+\grad p_0-\int_0^1f_{\|}(\cdot,z\eta_0+(1-z)b)\;\m{d}z}=\mu K_{\eta_0,b}\tp{\pd_nu_1(\cdot,1)-\pd_nu_1(\cdot,0)}.
\end{equation}
This expression is the momentum equation of the shallow water equations in disguise. We need only unpack it by examining the boundary values of $\pd_nu_1$ on the right hand side and plugging in the previously derived expressions for $p_0$ and $v_0$ (see~\eqref{the pressure approximation} and~\eqref{expression for the v0}).

Now by the second equation in~\eqref{some of the first order bathy} we have (at $z=1$):
\begin{equation}\label{eq2}
    \mu K_{\eta_0,b}\pd_n u_1=\tp{2\mu\grad\cdot u_0+\upvarphi(\cdot,\eta_0)}\grad\eta_0+\mu\mathbb{D}u_0\grad\eta_0+\mu\eta_0\grad\grad\cdot u_0-\mu\grad\grad\cdot\tp{bu_0}-\Upxi(\cdot,\eta_0)\grad\eta_0+\upxi(\cdot,\eta_0)=0.
\end{equation}
In the above we have tacitly used the identity for $z=1$:
\begin{equation}
    \grad^{\mathcal{A}_{\eta_0,b}}v_0=\grad v_0-K_{\eta_0,b}\grad\eta_0\pd_nv_0=\grad\tp{\grad\cdot\tp{bu_0}-\eta_0\grad\cdot u_0}+\grad\cdot u_0\grad\eta_0=\grad\grad\cdot\tp{bu_0}-\eta_0\grad\grad\cdot u_0.
\end{equation}
From the third equation in~\eqref{some of the first order bathy} we deduce that (at $z=0$)
\begin{equation}\label{eq3}
    \mu K_{\eta_0,b}\pd_n u_1=\bf{a}u_0-\mu\grad(u_0\cdot\grad b)-\mu\grad b\grad\cdot u_0+2\mu\grad b\grad\cdot u_0+\mu\mathbb{D}u_0\grad b=0.
\end{equation}
In~\eqref{eq3} we have used the identity for $z=0$:
\begin{equation}
    \grad^{\mathcal{A}_{\eta_0,b}}v_0=\grad\tp{u\cdot\grad b}+\grad b\grad\cdot u_0.
\end{equation}

We isolate the coefficient of $\mu$ in the synthesis of expressions~\eqref{eq1}, \eqref{eq2}, and~\eqref{eq3}. This will give us the form of the dissipation term:
\begin{multline}
    (\eta_0-b)\Delta u_0+2(\eta_0-b)\grad\grad\cdot u_0+2\grad\cdot u_0\grad\eta_0+\mathbb{D}u_0\grad\eta_0+\eta_0\grad\grad\cdot u_0-\grad\grad\cdot\tp{bu_0}\\+\grad\tp{u_0\cdot\grad b}+\grad b\grad\cdot u_0-2\grad b\grad\cdot u_0-\mathbb{D}u_0\grad b=(\eta_0-b)\Delta u_0+2(\eta_0-b)\grad\grad\cdot u_0+2\grad(\eta_0-b)\grad\cdot u_0\\+\mathbb{D}u_0\grad\tp{\eta_0-b}+(\eta_0-b_0)\grad\grad\cdot u_0=\grad\cdot\tp{(\eta_0-b)\mathbb{S}u_0},
\end{multline}
with $\mathbb{S}u_0=\mathbb{D}u_0+2(\grad\cdot u_0)I$. Therefore, we synthesize the above to arrive at the shallow water momentum equation:
\begin{multline}\label{shallow mom}
    (\eta_0-b)\tp{\pd_tu_0+u_0\cdot\grad u_0+\grad\tp{\bf{g}\eta_0-\sig\Delta\eta_0}}+\bf{a}u_0-\mu\grad\cdot\tp{(\eta_0-b)\mathbb{S}u_0}\\=\int_{b}^{\eta_0}f_{\|}(\cdot,y)\;\m{d}y+(\eta_0-b)\grad\tp{\upvarphi(\cdot,\eta_0)}+\upvarphi(\cdot,\eta_0)\grad\eta_0-\Upxi(\cdot,\eta_0)\grad\eta_0+\upxi(\cdot,\eta_0),
\end{multline}
By combining~\eqref{shallow mom} with the transport equation~\eqref{the continuity equation for bathy} we find the form of the equations~\eqref{time-dependent variable-batheymetry shallow water equations} mentioned in Section~\ref{intro_-0}.

\section*{Acknowledgments}

We would like to express our gratitude to Ming Chen, Sam Walsh, and Miles Wheeler for the helpful discussions about global implicit function theorems. We would also like to thank Junichi Koganemaru for his assistance with computing techniques.


\bibliographystyle{abbrv}
\bibliography{bib.bib}
\end{document}